\newtheorem{theorem}{Theorem}[section]
\newtheorem{definition}{Definition}[section]
\newtheorem{lemma}{Lemma}[section]
\newtheorem{remark}{Remark}[section]
\newtheorem{corollary}{Corollary}[section]
\numberwithin{equation}{section}
\numberwithin{figure}{section}
\makeatletter \@addtoreset{equation}{section} \makeatother
\def\tilde{\widetilde}
\newcommand{\fr}{\frac}
\newcommand{\ef}{\eqref}
\newcommand{\beq}{\begin{equation}}
\newcommand{\eeq}{\end{equation}}
\newcommand{\dv}{{\mathrm {div}}}
\begin{document}

\title[Degenerate compressible Navier-Stokes equations]{On regular solutions for   three-dimensional full compressible Navier-Stokes equations with degenerate viscosities and far field vacuum}

\author{Qin Duan}
\address[Q. Duan]{College of Mathematics and Computational Science, Shenzhen University, Shenzhen, China; The Institute of Mathematical Sciences, Chinese University of Hong Kong, Hong Kong.}
\email{\tt qduan@szu.edu.cn}

\author{Zhouping Xin}
\address[Z.P. Xin]{The Institute of Mathematical Sciences, The Chinese University of Hong Kong, Shatin, N.T., Hong Kong.}
\email{\tt zpxin@ims.cuhk.edu.hk}

\author{Shengguo Zhu}
\address[S. G.  Zhu]{School of Mathematical Sciences, CMA-Shanghai,  and MOE-LSC,  Shanghai Jiao Tong University, Shanghai, 200240, China; Mathematical Institute, University of Oxford,  Oxford OX2 6GG, UK.}
\email{\tt zhushengguo@sjtu.edu.cn}

\keywords{Compressible Navier-Stokes equations, three-dimensions, degenerate viscosity, far field vacuum, local-in-time existence,  asymptotic behavior}

\subjclass[2010]{35Q30, 35A09, 35A01, 35B44, 35B40,  76N10.}
\date{\today}

\begin{abstract}

In this paper, the Cauchy problem for the   three-dimensional (3-D) full  compressible  Navier-Stokes equations  (\textbf{CNS}) with zero thermal conductivity is considered.  First,   when shear and bulk viscosity coefficients  both depend on  the absolute  temperature $\theta$ in a  power  law ($\theta^\nu$ with $\nu>0$)  of Chapman-Enskog,   based on some elaborate analysis of this system’s intrinsic singular structures,  we identify one class of initial data admitting a   local-in-time regular solution with far field vacuum in terms of the mass density $\rho$, velocity $u$ and   entropy $S$. 
Furthermore, it is shown that within its life span of  such a regular  solution, 
the velocity  stays in an inhomogeneous Sobolev space,  i.e., $u\in H^3(\mathbb{R}^3)$,    $S$ has uniformly  finite lower and upper bounds in the whole space, and  the laws of  conservation of  total mass, momentum and  total energy are all satisfied. Note that  due to the appearance  of the vacuum, the momentum equations are  degenerate both in the time evolution and viscous stress tensor,  and the physical entropy for polytropic gases behaves singularly,
which make the  study on corresponding  well-posedness challenging. For  proving  the existence,  we first  introduce    an   enlarged reformulated structure by considering  some new variables, which  can transfer the degeneracies  of the full \textbf{CNS}   to the possible singularities of some special source terms related with $S$, and then carry out some  singularly weighted energy estimates carefully designed  for this reformulated system.

\end{abstract}
\maketitle

\tableofcontents

\section{Introduction}

The motion of a  compressible viscous, heat-conductive, and Newtonian polytropic fluid occupying a spatial domain $\Omega \subset \mathbb{R}^3$  is governed by the following full \textbf{CNS}:
\begin{equation}\label{1}
\left\{\begin{aligned}
&\rho_t+\text{div}(\rho u)=0,\\[5pt]
&(\rho u)_t+\text{div}(\rho u\otimes u)+\nabla P=\text{div}\mathbb{T},\\[5pt]
&(\rho \mathcal{E})_t+\text{div}(\rho \mathcal{E} u+Pu)=\text{div}(u\mathbb{T})+\text{div}(\kappa\nabla\theta).
\end{aligned}\right.
\end{equation}
Here and throughout, $\rho\geq 0$ denotes the mass density, $u=(u^{(1)},u^{(2)},u^{(3)})^{\top}$ the fluid velocity, $P$ the pressure of the fluid,  $\theta$ the absolute temperature, $\mathcal{E}=e+\frac{1}{2}|u|^2$  the specific total energy,  $e$ the specific internal energy, $x=(x_1,x_2,x_3)^\top\in \Omega$  the Eulerian  spatial coordinate  and finally $t\geq 0$  the time coordinate. The  equation of state for polytropic fluids satisfies 
\begin{equation}\label{2}
    P=R\rho\theta=(\gamma-1)\rho e=Ae^{S/c_v}\rho^\gamma,\hspace{2mm} e=c_v\theta,\hspace{2mm}c_v=\frac{R}{\gamma-1},
\end{equation}
where $R$ is the gas constant, $A$ is a   positive constant, $c_v$ is  the specific heat at constant volume,  $\gamma>1$ is the adiabatic exponent and $S$ is the entropy. $\mathbb{T}$ is the viscosity stress tensor given by
\begin{equation}\label{3}
\mathbb{T}=2\mu D(u)+\lambda \text{div}u \mathbb{I}_3,
\end{equation}
where $D(u)=\frac{\nabla u+(\nabla u)^\top}{2}$ is the deformation tensor, $\mathbb{I}_3$ is the $3\times 3$ identity matrix, $\mu$ is the shear viscosity coefficient, and $\lambda+\frac{2}{3}\mu$ is the bulk viscosity coefficient. $\kappa$ denotes the   coefficient of thermal conductivity.

In the theory of gas dynamics,  the compressible Navier-Stokes equations can be derived from the Boltzmann equations through the Chapman-Enskog expansion, cf. Chapman-Cowling \cite{chap} and Li-Qin \cite{tlt}. Under some proper physical assumptions,  the viscosity coefficients $(\mu,\lambda)$ and the   coefficient of thermal conductivity $\kappa$ are not constants but functions of the absolute temperature $\theta$ such as:
\begin{equation}
\label{eq:1.5g}
\begin{split}
\mu(\theta)=&r_1 \theta^{\frac{1}{2}}F(\theta),\quad  \lambda(\theta)=r_2 \theta^{\frac{1}{2}}F(\theta), \quad \kappa(\theta)=r_3 \theta^{\frac{1}{2}}F(\theta)
\end{split}
\end{equation}
for some   constants $r_i$ $(i=1,2,3)$.
  Actually for the cut-off inverse power force models, if the intermolecular potential varies as $r^{-\Upsilon}$,
 where $ r$ is intermolecular distance,  then 
 \begin{equation}
\label{eq:1.6g}
F(\theta)=\theta^{\varpi}\quad  \text{with}\quad   \varpi=\frac{2}{\Upsilon} \in [0,\infty)
\end{equation}
in \ef{eq:1.5g}.   In particular, for Maxwellian molecules,
$\Upsilon = 4$  and $\varpi=\frac{1}{2}$;
 for rigid elastic spherical molecules,
$\Upsilon=\infty$ and  $\varpi=0$; while for  ionized gas, $\Upsilon=1$ and $\varpi=2$.  

In the current  paper, we will consider the following case:
\begin{equation}\label{4}
\mu(\theta)=\alpha\theta^\nu,\hspace{2mm}\lambda(\theta)=\beta\theta^\nu,\hspace{2mm}\kappa=0,
\end{equation}
i.e., the thermal conductivity vanishes, where $(\alpha,\beta,\nu)$ are all constants satisfying
\begin{equation}\label{5}
\alpha>0,\quad  2\alpha+3\beta\geq 0 \quad \text{and} \quad 0<(\gamma-1)\nu <1.
\end{equation}
In terms of $(\rho,u,S)$, it follows from \eqref{2} and \eqref{4} that   \eqref{1} can be rewritten as
\begin{equation}\label{8}\left\{\begin{aligned}
\displaystyle
&\rho_t+\text{div}(\rho u)=0,\\[1pt]
\displaystyle
&(\rho u)_t+\text{div}(\rho u\otimes u)+\nabla P
=A^\nu R^{-\nu}\text{div}(\rho^{\delta}e^{\frac{S}{c_v}\nu}Q(u)),\\[1pt]
\displaystyle
&P \big(S_t+u\cdot \nabla S\big)=A^{\nu}R^{1-\nu}\rho^{\delta}e^{\frac{S}{c_v}\nu}H(u),
\end{aligned}\right.\end{equation}
where $\delta=(\gamma-1)\nu$, and 
\begin{equation}\label{QH}
Q(u)=\alpha(\nabla u+(\nabla u)^\top)+\beta\text{div}u\mathbb{I}_3,\quad H(u)=\text{div}(uQ(u))-u\cdot \text{div}Q(u).\end{equation}
Let $\Omega=\mathbb{R}^3$. We  study  the local-in-time well-posedness of smooth solutions $(\rho,u,S)$ with finite total mass and finite total  energy to the Cauchy problem    $\eqref{8}$  with \eqref{2}, \eqref{5}, \eqref{QH},  and  the following initial data and far field behavior:
\begin{align}
(\rho,u,S)|_{t=0}=(\rho_0(x)>0,u_0(x),S_0(x))\quad  &\text{for} \quad x\in\mathbb{R}^3, \label{6}\\[2pt]
(\rho,u,S)(t,x)\rightarrow(0,0,\bar{S})\quad  \text{as} \quad |x|\rightarrow\infty\quad &\text{for}\quad t\geq 0,  \label{7}
\end{align}
where $\bar{S}$ is some constant. 


Throughout this paper, we adopt the following simplified notations, most of them are for the standard homogeneous and inhomogeneous Sobolev spaces:
\begin{equation*}\begin{split}
 & \|f\|_s=\|f\|_{H^s(\mathbb{R}^3)},\quad |f|_p=\|f\|_{L^p(\mathbb{R}^3)},\quad \|f\|_{m,p}=\|f\|_{W^{m,p}(\mathbb{R}^3)},\\[1pt]  &|f|_{C^k}=\|f\|_{C^k(\mathbb{R}^3)},  \quad \|f\|_{X_1 \cap X_2}=\|f\|_{X_1}+\|f\|_{X_2},\\[1pt]  
 & D^{k,r}=\{f\in L^1_{loc}(\mathbb{R}^3): |f|_{D^{k,r}}=|\nabla^kf|_{r}<\infty\},\\[1pt]
&D^{1}_*=\{f\in L^6(\mathbb{R}^3):  |f|_{D^1_*}= |\nabla f|_{2}<\infty\},\quad  D^k=D^{k,2},  \\[1pt]
 & |f|_{D^{k,r}}=\|f\|_{D^{k,r}(\mathbb{R}^3)},\quad  |f|_{D^{1}_*}=\|f\|_{D^{1}_*(\mathbb{R}^3)}, \quad \int  f   =\int_{\mathbb{R}^3}  f \text{d}x, \\[1pt]
 &  X([0,T]; Y(\mathbb{R}^3))= X([0,T]; Y),\quad \|(f,g)\|_X=\|f\|_{X}+\|g\|_{X}.
\end{split}
\end{equation*}
 A detailed study of homogeneous Sobolev spaces  can be found in Galdi \cite{gandi}.

Under the assumption that    $(\mu,\lambda,\kappa)$ are all constants,
 when  $\inf_x {\rho_0(x)}>0$, the local well-posedness of classical solutions to the Cauchy problem of  \eqref{1} follows from the standard symmetric hyperbolic-parabolic structure which satisfies the well-known Kawashima's condition, cf. \cites{itaya1, itaya2, KA, nash, serrin,tani}. However, such an approach fails  in the presence of the vacuum due to some new difficulties, for example,  the degeneracy of the time evolution in the momentum equations:
 \begin{equation}\label{dege}
\displaystyle
 \underbrace{\rho(u_t+u\cdot \nabla u)}_{Degenerate   \ time \ evolution \ operator}+\nabla P= \text{div}(2\mu D(u)+\lambda \text{div}u \mathbb{I}_3).
\end{equation}
 Generally vacuum will appear in the far field under some physical requirements such as finite total mass and total energy in the whole space   $ \mathbb{R}^3$.  One of the  main issues in the presence of vacuum is   to understand the behavior of the  fluids velocity, temperature  and entropy near the vacuum.   For general initial data containing vacuum, the local well-posedness of strong solutions to the Cauchy problem  of the 3-D full  \textbf{CNS} was first obtained by Cho-Kim  \cite{CK} in a homogeneous Sobolev space in terms of $(\rho, u,\theta)$ under suitable initial  compatibility conditions,  which  has been extended recently   to be  the   global-in-time ones with small energy but large oscillations by Huang-Li \cite{huangli}, Wen-Zhu \cite{wenzhu} and so on.
 It should be noticed that, in \cite{CK, huangli,wenzhu}, the solution was established in some homogeneous  space,  that is,  $\sqrt{\rho}u$ rather than $u$ itself  has the $ L^\infty ([0,T]; L^2)$ regularity,  the regularities obtained for $(\rho, u,\theta)$ do not provide any information about the entropy near the vacuum, and in general the solutions do not lie in inhomogeneous Sobolev spaces and guarantee  the boundedness of the  entropy   near the vacuum.
 In fact,   if $\rho_0$ has compact support,  Li-Wang-Xin \cite{ins} prove that any non-trivial  classical solutions with finite energy to the Cauchy problem of \eqref{1} with constant viscosities and heat
conduction  do not exist in general in the standard  inhomogeneous Sobolev space for any short time, which indicates in particular that the homogeneous Sobolev space is crucial as studying the well-posedness (even locally in time) for the Cauchy problem of the \textbf{CNS} in the presence of such kind of  vacuum. However, when  the initial density vanishes only at far fields with a slow decay rate, recently in Li-Xin \cite{lx1,lx2,lx3}, for the Cauchy problem of the \textbf{CNS} with constant viscosities and heat conduction, it is shown that the uniform boundedness of $S$  and the $L^2$ regularity of $u$   can be propagated  within the solution's life span. Despite these important progress, it remains unclear whether   the laws of  conservation of   momentum and  total energy hold for the solutions  obtained in \cites{CK, huangli,lx1,lx2,lx3, wenzhu}.
We also  refer the readers to \cites{ hoff,jensen, song, fu2, fu3, KA2, lx1, lx2, mat,zx} and   the references therein  for some other related  results on  global existence of weak or strong solutions.

In contrast to the fruitful development of the classical setting, the treatment on the
physical case \eqref{eq:1.5g}-\eqref{eq:1.6g} is lacking due to some new difficulties introduced in such relations, which
lead to strong  degeneracy and  nonlinearity both in viscosity and  heat
conduction besides the degeneracy in the time evolution. 
Recently, for the Cauchy problem of the  isentropic system  (i.e.,   $\eqref{8}_1$-$\eqref{8}_2$  with $S(t,x)$  being constant), by introducing    an elaborate (linear) elliptic approach on the singularly weighted regularity estimates for  $u$ 
and  a symmetric hyperbolic system with singularity for some density related  quantity, Xin-Zhu \cite{zz2} identifies a class of initial data admitting one unique 3-D local regular solution with far field vacuum and finite energy to the  Cauchy problem of \eqref{8} for the case $0<\delta <1$ in some inhomogeneous Sobolev  spaces.  Indeed,  the momentum equations for   isentropic flows can be written as 
\begin{equation}\label{dengshang}
\displaystyle
 \underbrace{\rho(u_t+u\cdot \nabla u)}_{Degenerate   \ time \ evolution \ operator}+\nabla P= \underbrace{\text{div}(\rho^{\delta}Q(u))}_{Degenerate\ elliptic \ operator}.
\end{equation}
Since the coefficients of the time evolution and the viscous stress tensor near the vacuum
are  powers of $\rho$, it is easy  to compare  the order  of the degeneracy  of  these two operators near the vacuum, which enable us  to  select the dominant operator to control the behavior of  $u$  and lead  to the "hyperbolic-strong singular elliptic" coupled structure in \cite{zz2} and  the  `` quasi-symmetric hyperbolic"--``degenerate elliptic" coupled structure in Xin-Zhu \cite{zz}.
Some other interesting results on the well-posedness  with vacuum  to the  isentropic  \textbf{CNS}  with degenerate viscosities   can also  be found in \cites{bd6,bd, bd2, bvy, ding, zhu, sz3, sz333,  hailiang, lz,zhangting, vassu, sundbye2}  and the references therein.

Since $e$ and $S$ are fundamental dynamical variables for viscous compressible fluids, it is of great importance to study the corresponding theory for the non-isentropic  \textbf{CNS}. Yet, as indicated even in the constant $(\mu, \lambda, \kappa)$ case \cite{lx1,lx2,lx3}, this is a subtle and difficult problem in the presence of vacuum. 
Indeed, for  considering the well-posedenss of classical solutions with vacuum to the full  \textbf{CNS} \eqref{1}-\eqref{3} with degenerate viscosities of the form  \eqref{eq:1.5g}-\eqref{eq:1.6g} with $\kappa=0$, the coefficients' structures of the time evolution operator and the viscous stress tensor are different, and the entropy plays important roles and satisfies a highly degenerate  nonlinear transport equation near the vacuum, which cause substantial difficulties in the analysis and make it difficult to adapt the method for the isentropic case in \cite{zz2}. It should be pointed out tha due to the physical requirements on $\theta$ and $S$ near the vacuum, it is of more advantages to formulate the \textbf{CNS} \eqref{1}-\eqref{3} in terms of $(\rho, u,S)$ instead of $(\rho, u,\theta)$ in contrast to \cite{CK,huangli, wenzhu} as  illustrated below.
  Due  to the  equation  of state for polytropic fluids \eqref{2}, one has  in the fluids region that $\rho>0$,
\begin{equation}\label{keyobsevarion1}
\displaystyle
\theta=AR^{-1}\rho^{\gamma-1} e^{S/c_v},
\end{equation}
which  implies  that  $\eqref{1}_2$-$\eqref{1}_3$ can be rewritten into 
\begin{equation}\label{doubledegebbb}
\left\{\begin{aligned}
\displaystyle
& \underbrace{\rho(u_t+u\cdot \nabla u)}_{Degenerate   \ time \ evolution \ operator}+\nabla P= \underbrace{A^\nu R^{-\nu}\text{div}(\rho^{\delta}e^{\frac{S}{c_v}\nu}Q(u))}_{Degenerate\ elliptic \ operator},\\
\displaystyle
 &\underbrace{P \big(S_t+u\cdot \nabla S\big)}_{Degenerate   \ time \ evolution \ operator}=\underbrace{A^{\nu}R^{1-\nu}\rho^{\delta}e^{\frac{S}{c_v}\nu}H(u)}_{Strong   \ nonlinearity},
\end{aligned}\right.
\end{equation}
Thus, if  $S$ has uniform boundedness  in $\mathbb{R}^d$, then it is still possible to  compare  the orders of the degeneracy of the time evolution and  viscous stress tensor near the vacuum via the powers of $\rho$, and then to  choose proper structures to control the behaviors of the physical quantities.  However, due to the high degeneracy in the time evolution operator of the entropy equation $\eqref{doubledegebbb}_2$, the physical entropy for  polytropic  gases behaves singularly in the presence of vacuum, and it is thus a challgenge to study its dynamics.   It is worth pointing out that, when vacuum appears, the system formulated in terms of $(\rho, u,\theta)$ is not equivalent to the one formulated in terms of $(\rho, u,S)$, since the boundedness and regularities of $(\rho, \theta)$ cannot provide those  of $S$ near the vacuum. In fact,  for the constant $(\mu, \lambda, \kappa)$ case, most of the current progress on the well-posedness theory  with vacuum \cite{CK,fu2, fu3, huangli, wenzhu}  to \eqref{1} are based on the formulation in terms of $(\rho, u,\theta)$ except \cite{lx1,lx2,lx3}, where the   De Giorgi type iteration is  carried out to the entropy equation for establishing the lower and upper bounds of the entropy. However, since the assumption that $(\mu, \lambda, \kappa)$ are all constants plays a key role in the analysis of  \cite{lx1,lx2,lx3}, it seems difficult to adapt their arguments to the degenerate  system \eqref{8}.  As far as we know,   there have  no any  results on the boundedness and regularities of the  entropy  near the vacuum for flows with degenerate viscosities  in the existing literatures.

Note that with the constraints  \eqref{4}-\eqref{5},   the momentum equations and the entropy equation $\eqref{8}_2$-$\eqref{8}_3$ in the fluids region can be formally rewritten as
\begin{equation}\label{qiyi}
\left\{\begin{aligned}
\displaystyle
&u_t+u\cdot\nabla u +\frac{A\gamma}{\gamma-1}e^{\frac{S}{c_v}}\nabla\rho^{\gamma-1}+A\rho^{\gamma-1} \nabla e^{\frac{S}{c_v}}+A^\nu R^{-\nu}\rho^{\delta-1}e^{\frac{S}{c_v}\nu} Lu\\
=&A^\nu R^{-\nu}\frac{\delta}{\delta-1}\nabla\rho^{\delta-1}\cdot Q(u)e^{\frac{S}{c_v}\nu} + A^\nu R^{-\nu}\rho^{\delta-1}\nabla e^{\frac{S}{c_v}\nu} \cdot Q(u),\\
\displaystyle
& S_t+u\cdot \nabla S=A^{\nu-1}R^{1-\nu}\rho^{\delta-\gamma}e^{\frac{S}{c_v}(\nu-1)}H(u),
\end{aligned}\right.
\end{equation}
where $L$ is  the Lam\'e operator defined by
\begin{equation*}Lu \triangleq -\alpha\triangle u-(\alpha+\beta)\nabla \mathtt{div}u.
\end{equation*}
Then, to establish   the existences  of  classical  solutions with vacuum to \eqref{8},   one would  encounter some  essential difficulties  as follows:
\begin{enumerate}
\item
first,  the right hand side of $\eqref{qiyi}_1$ contains some strong singularities  as:
$$\nabla\rho^{\delta-1}\cdot Q(u)e^{\frac{S}{c_v}\nu} \quad \text{and} \quad \rho^{\delta-1}\nabla e^{\frac{S}{c_v}\nu} \cdot Q(u),$$
whose controls require certain singularly weighted energy estimates which are highly non-trivial due to the singularities in the entropy equation $\eqref{qiyi}_2$. Furthermore,  the second  term is  more singular than  the first  one.

\item
second, even in the case that the uniform boundedness of the entropy $S$ can be obtained, the coefficient $A^\nu R^{-\nu}\rho^{\delta-1}e^{\frac{S}{c_v}\nu}$ in front of the Lam\'e operator $ L$ will tend to $\infty$ as $\rho\rightarrow 0$ in the far filed. Then   it is  necessary  to show that the term $\rho^{\delta-1}e^{\frac{S}{c_v}\nu} Lu$ is well defined in some Sobolev space near the vacuum. 

\item at last but more importantly,  the time evolution equation $\eqref{qiyi}_2$  for the entropy $S$ also contains a  strong singularity   as:
$$ A^{\nu-1}R^{1-\nu}\rho^{\delta-\gamma}e^{\frac{S}{c_v}(\nu-1)}H(u),$$
and its  singularity can be measured  in the level of $\rho^{\delta-\gamma}$ near the vacuum. This singularity  will be the main obstacle preventing one from getting the  uniform boundedness of the entropy and high order regularities, thus whose analysis  becomes extremely crucial.
\end{enumerate}

Therefore, the four quantities
$$(\rho^{\gamma-1},\ \nabla \rho^{\delta-1},\ \rho^{\delta-1} Lu, \ e^{\frac{S}{c_v}})$$
will play significant roles in our analysis on the higher order  regularities of $(u,S)$. Due to  this observation, we first introduce a proper class of solutions called regular solutions to  the Cauchy problem \eqref{8} with \eqref{2} and \eqref{QH}-\eqref{7}.
\begin{definition} \label{def21} Let $T>0$ be a finite constant. The triple $(\rho,u,S)$  is called a regular solution to the Cauchy problem \eqref{8} with \eqref{2} and \eqref{QH}-\eqref{7}   in $[0,T]\times\mathbb{R}^3$  if $(\rho,u,S)$ satisfies this problem in the sense of distributions and{\rm:}
\begin{enumerate}
\item
$\rho>0,\hspace{2mm}\rho^{\gamma-1}\in C([0,T]; D^1_*\cap D^3),\hspace{2mm}\nabla\rho^{\delta-1}\in C([0,T]; L^\infty \cap D^2)$;\\
 \item $ u\in C([0,T];H^3)\cap L^2([0,T];H^4),\quad  \rho^{\frac{\delta-1}{2}}\nabla u \in C([0,T];L^2),$\\[6pt]
\quad $  \rho^{\delta-1}\nabla u\in L^\infty([0,T];D^1_*),\quad  \rho^{\delta-1}\nabla^2 u\in  L^\infty([0,T];H^1)\cap L^2([0,T];D^2)  $;\\
\item $S\in L^\infty([0,T]\times\mathbb{R}^3),\quad  e^{\frac{S}{c_v}}-e^{\frac{\bar{S}}{c_v}}\in C([0,T];D^1_*\cap D^3)$.
\end{enumerate}

\end{definition}
\begin{remark}\label{rz1}
First, it follows from  Definition \ref{def21}  that $\nabla \rho^{\delta-1}\in L^\infty$, which means that the vacuum occurs if and only  in the far field.

Second, we introduce some physical quantities to  be used in this paper:
\begin{align*}
m(t)=&\int \rho(t,\cdot)\quad \textrm{(total mass)},\\
\mathbb{P}(t)=&\int \rho(t,\cdot)u(t,\cdot) \quad \textrm{(momentum)},
\end{align*}
\begin{align*}
E_k(t)=&\frac{1}{2}\int \rho(t,\cdot)|u(t,\cdot)|^{2}\quad \textrm{ (total kinetic energy)},\\
E_p(t)=&\frac{1}{\gamma-1}\int_{} P(\rho(t,\cdot), S(t,\cdot))  \ \text{ (potential energy)},\\[6pt] 
\displaystyle
E(t)=&E_k(t)+E_p(t) \  \text{ (total energy)}.
\end{align*}
It then  follows from Definition \ref{def21} that a regular solution satisfies the conservation of   total mass, momentum and total energy (see Lemma \ref{lemmak}).
Note that the conservation of  momentum or total energy is not  clear   for strong solutions with far field vacuum  to the flows of constant $(\mu,\lambda,\kappa)$ obtained in \cites{CK, huangli, lx1,lx2,lx3, wenzhu}.
In this sense, the definition of regular solutions above  is consistent with the physical background of the compressible Navier-Stokes equations.
\end{remark}

The regular solutions select  $(\rho, u,S)$  in a physically reasonable way when the density approaches the  vacuum at far fields. With the help of the regularity assumptions in Definition \ref{def21}, $\eqref{8}$ can be reformulated into a system consisting of: a transport equation for the density,  a special quasi-linear parabolic system with some  singular source terms for the velocity, and  a transport equation  with one   singular source term for the entropy. Furthermore,  the coefficients in front  of the   Lam\'e operator $ L$ will tend to $\infty$ as $\rho\rightarrow 0$ at far fields. It will be shown that the problem becomes trackable by studying a suitably  designed enlarged system  through an  elaborate linearization and  approximation process.

The first main result in this paper can be stated as follows.
\begin{theorem} \label{th21}Let parameters $(\gamma,\nu,\alpha,\beta)$ satisfy
\begin{equation}\label{can1}
\gamma>1,\quad 0<\delta=(\gamma-1)\nu<1,\quad\alpha>0,\quad2\alpha+3\beta\geq 0,\quad 4\gamma+3\delta \leq 7.
\end{equation}
If the initial data $(\rho_0,u_0,S_0)$ satisfy
\begin{equation}\label{2.7}\begin{aligned}
&\rho_0>0,\quad\rho_0^{\gamma-1}\in  D^1_*\cap D^3,\quad\nabla\rho_0^{\delta-1}\in L^q\cap D^{1,3}\cap D^2,\\
&\nabla\rho_0^{\frac{\delta-1}{2}}\in L^6,\quad u_0\in H^3, \quad S_0-\bar{S}\in D^1_*\cap D^3,\\
\end{aligned}
\end{equation}
for some $q\in (3,\infty)$,
and the   compatibility conditions:
\begin{equation}\label{2.8}\begin{aligned}
&\nabla u_0=\rho_0^{\frac{1-\delta}{2}}g_1,\quad 
&Lu_0=\rho_0^{1-\delta}g_2,\\
&\nabla(\rho_0^{\delta-1}Lu_0)=\rho_0^{\frac{1-\delta}{2}}g_3,\quad &\nabla^2e^{\frac{S_0}{c_v}}=\rho_0^\frac{1-\delta}{2}g_4,
\end{aligned}\end{equation}
for some functions $(g_1,g_2,g_3,g_4)\in L^2$, then there exist a time $T_*>0$ and a unique regular solution $(\rho,u,S)$ in $[0,T_*]\times \mathbb{R}^3$  to the Cauchy problem \eqref{8} with \eqref{2} and \eqref{QH}-\eqref{7}, and the following additional regularities hold:
 \begin{equation}\label{2.9}\begin{aligned}
& \rho^{\gamma-1}_t \in C([0,T_*];H^2),\quad u_t\in C([0,T_*];H^1)\cap L^2([0,T_*];D^2),\\[2pt] &t^{\frac{1}{2}}u\in L^\infty([0,T_*];D^4),\quad  t^{\frac{1}{2}}u_t\in  L^\infty([0,T_*];D^2)\cap L^2([0,T_*];D^3),\\[2pt]
&u_{tt}\in L^2([0,T_*];L^2),\quad t^{\frac{1}{2}}u_{tt}\in L^\infty([0,T_*];L^2)\cap L^2([0,T_*];D^1_*),\\[2pt]
&  \nabla\rho^{\delta-1}\in C([0,T_*]; L^q \cap D^{1,3}\cap D^2),\quad   S-\bar{S}\in C([0,T_*];D^1_*\cap D^3),\\[2pt]
&S_t\in L^\infty([0,T_*];L^\infty\cap L^3\cap D^1_*\cap D^2),\quad S_{tt}\in L^2([0,T_*];D^1_*).
\end{aligned}\end{equation}
Moreover, it holds that 
\begin{enumerate}

 \item $(\rho,u,S)$ is a classical solution to  the  problem \eqref{8} with \eqref{2} and \eqref{QH}-\eqref{7} \rm(or the problem   \eqref{1}-\eqref{3} with \eqref{4} and \eqref{6}-\eqref{7}\rm) in $(0,T_*]\times \mathbb{R}^3$;
\item if one assumes   $m(0)<\infty$ additionally, then $(\rho,u,S)$ satisfies the conservation of   total mass, momentum and total energy:
$$  m(t)=m(0),\quad  \mathbb{P}(t)=\mathbb{P}(0),\quad  E(t)=E(0) \quad \text{for} \quad t\in [0,T_*]. $$
\end{enumerate}

\end{theorem}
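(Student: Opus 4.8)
The plan is to realize the Cauchy problem \eqref{8} with \eqref{2} and \eqref{QH}--\eqref{7} as the limit of an iteration scheme built on a carefully enlarged \emph{linear} system, obtain uniform-in-iteration bounds on a short time interval by means of singularly weighted energy estimates, pass to the limit to produce a regular solution in the sense of Definition \ref{def21}, and finally upgrade the solution to the additional regularity \eqref{2.9}, to a classical solution for $t>0$, and verify the conservation laws.

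\textbf{Step 1: reformulation into an enlarged system.} First I would rewrite \eqref{8} in terms of the ``good'' quantities $\phi=\rho^{\gamma-1}$, $\psi=\nabla\rho^{\delta-1}$, $n=e^{S/c_v}-e^{\bar S/c_v}$ together with an auxiliary unknown representing $\rho^{\delta-1}Lu$. From $\eqref{8}_1$ one gets the linear transport equation $\phi_t+u\cdot\nabla\phi+(\gamma-1)\phi\,\mathrm{div}\,u=0$ and a companion transport equation for $\psi$; dividing $\eqref{8}_2$ by $\rho$ produces, as in \eqref{qiyi}, a quasilinear parabolic system for $u$ whose principal part is the Lam\'e operator $Lu$ with the coefficient $A^\nu R^{-\nu}\rho^{\delta-1}e^{S\nu/c_v}$ that blows up as $\rho\to0$ in the far field, plus the source terms $\frac{\delta}{\delta-1}(\nabla\rho^{\delta-1})\cdot Q(u)e^{S\nu/c_v}$, $\rho^{\delta-1}\nabla e^{S\nu/c_v}\cdot Q(u)$ and the pressure terms; and a transport equation for $n$ (equivalently $S$) carrying the strongly singular source $\sim\rho^{\delta-\gamma}e^{S(\nu-1)/c_v}H(u)$. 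In the spirit of \cite{zz2}, the role of the new variables is to relocate the degeneracy of the Lam\'e coefficient into these lower-order singular source terms attached to $e^{S/c_v}$ and its derivatives.

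\textbf{Step 2: linearization and the core a priori estimates.} Next I would linearize: fix reference functions in a bounded ball of the target regularity class, mollify the initial data and lift the vacuum slightly (say $\rho_0\mapsto\rho_0+\varepsilon$) to remove the degeneracy, and solve (a) the linear transport equations for $\phi,\psi$ along the reference flow by standard transport estimates; (b) the linear parabolic system for $u$, where the diffusion coefficient is bounded below by a positive constant and unbounded only at infinity, so coercivity holds and existence in $H^3$ with $u_t\in H^1$, $u_{tt}\in L^2$, $t^{1/2}u\in L^\infty([0,T_*];D^4)$, etc., follows from parabolic theory for the Lam\'e system with variable, time-dependent coefficients; (c) the linear transport equation for $n$ along the reference flow with the prescribed source. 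The heart of the argument is then to close a uniform bound on $[0,T_*]$, with $T_*$ depending only on the data, for an energy functional collecting all the norms in Definition \ref{def21} and \eqref{2.9}: Gronwall-type transport estimates for $\phi,\psi$ against $\int_0^t\|u\|_3$; elliptic estimates for $Lu$ together with $t$- and $x$-differentiated energy estimates for $u$, $u_t$, $u_{tt}$ and for $\rho^{\delta-1}Lu$; an inhomogeneous $L^2$ estimate on $u$ itself (recall $u$ is sought in the \emph{inhomogeneous} space $H^3$); and the singularly weighted estimates controlling the singular sources. The key algebraic observation is that all the dangerous terms rewrite through the controlled weighted quantity $\rho^{\delta-1}\nabla u$: indeed $\rho^{\delta-1}\nabla e^{S\nu/c_v}\cdot Q(u)=\nabla e^{S\nu/c_v}\cdot\bigl(\rho^{\delta-1}Q(u)\bigr)$ is a product of quantities controlled in Definition \ref{def21}, while $H(u)$ is a \emph{nonnegative} quadratic form in $\nabla u$ (using $\alpha>0$, $2\alpha+3\beta\ge0$), so that $\rho^{\delta-\gamma}e^{S(\nu-1)/c_v}H(u)=\rho^{\,2-\gamma-\delta}e^{S(\nu-1)/c_v}\times(\text{quadratic in }\rho^{\delta-1}\nabla u)$; since $\gamma>1$ together with $4\gamma+3\delta\le7$ in \eqref{can1} forces $\gamma+\delta<2$, the exponent $2-\gamma-\delta$ is positive and, $\rho$ being bounded (which follows from Definition \ref{def21}), this source is tamed by $\rho^{\delta-1}\nabla u$. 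The sign $H(u)\ge0$ also makes $S$ monotone along characteristics, which together with the above yields the uniform lower and upper bounds on $S$.

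\textbf{Step 3: passage to the limit, uniqueness, classicality, conservation.} Once the uniform bound is in hand, standard arguments give convergence of the iteration in a lower-order weighted $L^2$ norm, removal of the regularization $\varepsilon\to0$, and hence a regular solution with the full regularity \eqref{2.9}; uniqueness follows from a weighted $L^2$ energy estimate on the difference of two regular solutions, again using the vacuum-compatible decay encoded in the compatibility conditions \eqref{2.8}. For $t\in(0,T_*]$ the extra regularity in \eqref{2.9} (notably $t^{1/2}u\in L^\infty([0,T_*];D^4)$, $t^{1/2}u_t\in L^\infty([0,T_*];D^2)$ and $S_t\in L^\infty([0,T_*];L^\infty\cap D^1_*\cap D^2)$) together with $\rho>0$ and Sobolev embeddings makes every term in \eqref{1}--\eqref{3}, equivalently \eqref{8}, continuous, so $(\rho,u,S)$ is classical there; and under the additional assumption $m(0)<\infty$, integrating $\eqref{1}_1$--$\eqref{1}_3$ over $\mathbb{R}^3$ and using that the regularity in Definition \ref{def21} plus $m(0)<\infty$ make the fluxes $\rho u$, $\rho u\otimes u+P\mathbb{I}_3-\mathbb{T}$ and $(\rho\mathcal{E}+P)u-u\mathbb{T}$ integrable with vanishing limits at infinity (recall $\kappa=0$, so there is no heat flux to control) yields conservation of total mass, momentum and total energy, cf. Lemma \ref{lemmak}. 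The hard part throughout is Step 2: one must propagate exactly the vacuum-compatible decay of $\nabla u$, $\rho^{\delta-1}Lu$ and $\nabla e^{S/c_v}$ dictated by \eqref{2.8}, and the bookkeeping of the powers of $\rho$ in the singular sources of the momentum and entropy equations is what forces — and is made to work precisely by — the parameter window \eqref{can1}.
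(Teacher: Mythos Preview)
Your overall architecture matches the paper's, but two concrete technical points would block the argument as written.

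First, your handling of the momentum source $\rho^{\delta-1}\nabla e^{S\nu/c_v}\cdot Q(u)$ is incomplete, and your reading of the constraint $4\gamma+3\delta\le7$ is too weak. Rewriting the term as $\nabla e^{S\nu/c_v}\cdot(\rho^{\delta-1}Q(u))$ closes a lowest-order $L^2$ bound, but once you differentiate in time to estimate $u_t$ and $u_{tt}$ the derivative hits $Q(u)$ and produces $\rho^{\delta-1}\nabla e^{S\nu/c_v}\cdot Q(u_t)$, $\rho^{\delta-1}\nabla e^{S\nu/c_v}\cdot Q(u_{tt})$. Since $\nabla u_t,\nabla u_{tt}$ carry only a $\rho^{(\delta-1)/2}$ weight in $L^2$ (this is exactly what the compatibility conditions \eqref{2.8} propagate), the factorization leaves half a weight unmatched and the estimate does not close. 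The paper's cure is an additional singularly weighted bound on the entropy gradient, $|\rho^{(\delta-1)/4}\nabla e^{S/c_v}|_{6}\le M(c_0)$ (in its notation $|h^{1/4}\nabla l|_6$), combined with the interpolation $|\rho^{3(\delta-1)/4}\nabla u_t|_3\le|\rho^{(\delta-1)/2}\nabla u_t|_2^{1/2}|\rho^{\delta-1}\nabla u_t|_6^{1/2}$; see \eqref{hnl}, \eqref{2.61k1}, \eqref{key1}--\eqref{key3}. Propagating that weighted entropy bound along $\eqref{2.3}_3$ is exactly where the \emph{full} strength of $4\gamma+3\delta\le7$ enters: it is equivalent to $\tfrac14+\tfrac{2-\gamma-\delta}{\delta-1}\le0$, which is the inequality needed in \eqref{J3}. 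Your use of $4\gamma+3\delta\le7$ only to secure $\gamma+\delta<2$ (hence $\rho^{2-\gamma-\delta}$ bounded) captures the estimate for $n=\rho^{2-\delta-\gamma}$ but discards the sharper content; for mere boundedness of $n$ the weaker $\gamma+\delta\le2$ would already do.

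Second, the linearization you describe (``lift the vacuum slightly, $\rho_0\mapsto\rho_0+\varepsilon$'' and solve a generic linear parabolic system) is too naive to carry the estimates. The paper explains in \S\ref{s2} that the natural linearization \eqref{eq:cccq-fenxi} destroys the relation $\psi=\tfrac{a\delta}{\delta-1}\nabla\phi^{2\iota}$, and then an uncontrolled term $\nabla\phi^{2\iota}\cdot Q(u)$ appears already in the first $L^2$ estimate \eqref{relationvainish}. The remedy is to introduce a \emph{separate} auxiliary unknown $h$ with its own linearized transport equation \eqref{h} (breaking $h=\phi^{2\iota}$ at the linear level but preserving $\psi=\tfrac{a\delta}{\delta-1}\nabla h$), to replace the Lam\'e coefficient by $\sqrt{h^2+\epsilon^2}$, and to run a genuinely two-parameter $(\epsilon,\eta)$ scheme: uniform estimates independent of both, pass $\epsilon\to0$ to the linear problem with physical viscosity, iterate to the nonlinear problem still at positive $\eta$, and only then let $\eta\to0$. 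Without this structure (or an equivalent substitute), you cannot produce approximate solutions on which the singularly weighted estimates of Step~2 can actually be proved.
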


\begin{remark} $\eqref{2.7}$-$\eqref{2.8}$ identifies a class of admissible initial data that provide unique solvability to  the problem \eqref{8} with \eqref{2} and \eqref{QH}-\eqref{7}. Such initial  data include
\begin{equation}\label{example}
\rho_0(x)=\frac{1}{(1+|x|^2)^{\varkappa}},\quad u_0(x)\in C^3_0(\mathbb{R}^3),\quad S_0=\bar{S}+f(x),
\end{equation}
for some $f(x)\in D^1_*\cap D^3$, where 
\begin{equation}\label{exampledege}
\hspace{2mm}\frac{1}{4(\gamma-1)}<\varkappa<\frac{1-3/q}{2(1-\delta)}\quad \text{and} \quad \frac{3}{2}+\frac{\delta}{2}<\gamma+\delta\leq 2.
\end{equation}

Note that, when  $\mu$, $\lambda$ and $\kappa$  are all constants,  in order to obtain the uniform boundedness of the entropy in the whole space, in  \cite{lx1, lx2,lx3}, it is  required  that   the initial density vanishes only at far fields with a rate no more than $O(\frac{1}{|x|^2})$, i.e., 
\begin{equation}\label{examplecons}
\hspace{2mm}\max\Big\{\frac{1}{4(\gamma-1)},\frac{3}{4}\Big\}<\varkappa \leq 1\quad \text{and} \quad \gamma>\frac{5}{4}.
\end{equation}
It follows from \eqref{exampledege}-\eqref{examplecons} that, different from the constant viscous flows,  the requirement on the decay rate of $\rho_0$ of the degenerate viscous flow depends on the value of   the parameter $\delta$. If $\delta$ is sufficient close to $1$, then $\varkappa>1$ in \eqref{example} is still admissible, which  means that the range of initial data can be enlarged substantially.
\end{remark}

\begin{remark}\label{rxiangrong}
 The compatibility conditions \eqref{2.8} are also necessary for the existence of  regular  solutions $(\rho, u,S)$ obtained in Theorem \ref{th21}.
In particular,
\begin{itemize}
          \item $\nabla  u_0=\rho_0^{\frac{1-\delta}{2}}g_1$ plays a key role in the derivation of $\rho^{\frac{1-\delta}{2}} \nabla u \in L^\infty([0,T_*];L^2)$; \\
        \item  $Lu_0=\rho_0^{1-\delta}g_2$ is crucial  in the derivation of  $u_t \in L^\infty([0,T_*];L^2)$,  which will be used in the uniform estimates for  $|u|_{D^2}$;  \\
        \item while $$\nabla(\rho_0^{\delta-1}Lu_0)=\rho_0^{\frac{1-\delta}{2}}g_3 \quad \text{and} \quad \nabla^2e^{\frac{S_0}{c_v}}=\rho_0^\frac{1-\delta}{2}g_4$$
        are indispensible in the derivation of $\rho^{\frac{\delta-1}{2}}\nabla u_t \in  L^\infty([0,T_*];L^2)$,  which will be used in the uniform estimates for  $|u|_{D^3}$.
\end{itemize}
\end{remark}

A natural and important  question is   whether the local   solution obtained  in Theorem \ref{th21} can be extended globally in time. In contrast to the classical theory for both constant and degenerate  viscous flows \cites{KA2, kelao, mat, sundbye2, wenzhu}, we show the following somewhat surprising phenomenon that such an extension is impossible if the velocity field decays to zero as $t\rightarrow \infty$,  the laws of  conservation of  total mass and  momentum are both satisfied,  and the initial total momentum is non-zero.
First, based on the  physical quantities introduced  above,  we define  a solution class $ D(T)$  as follows:
\begin{definition}\label{d2}
Let $T>0$ be a positive time.  For   the Cauchy problem \eqref{1}-\eqref{3} and \eqref{6}-\eqref{7},  a classical solution     $(\rho,u,S)$ is said to be in $ D(T)$ if $(\rho,u,S)$  satisfies the following conditions:
\begin{itemize}
 \item  $m(t)$,  $\mathbb{P}(t)$ and $E_k(t)$    all belong to  $L^\infty([0,T])$;\\

 \item Conservation of total mass:  $\frac{d}{dt}m(t)=0$ for  any $ t\in [0,T]$;\\

 \item  Conservation of momentum:   $\frac{d}{dt}\mathbb{P}(t)=0$  for  any $ t\in [0,T]$.

\end{itemize}
\end{definition}

Then one has:
\begin{theorem}\label{th25}
Assume $m(0)>0$, $|\mathbb{P}(0)|>0$, and the  parameters  $(\gamma, \mu,\lambda,\kappa)$ satisfy
\begin{equation}\label{canshu-22}
\gamma \geq  1, \quad \mu\geq 0, \quad 2\mu+3\lambda\geq 0,\quad \kappa\geq 0.
\end{equation}  Then for   the Cauchy problem  \eqref{1}-\eqref{3} and \eqref{6}-\eqref{7}, there is no classical solution $(\rho,u,S)\in D(\infty)$  with
\begin{equation}\label{eq:2.15}
\limsup_{t\rightarrow \infty} |u(t,\cdot)|_{\infty}=0.
\end{equation}

\end {theorem}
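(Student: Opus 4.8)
The plan is to argue by contradiction: suppose such a classical solution $(\rho,u,S)\in D(\infty)$ exists with $\limsup_{t\to\infty}|u(t,\cdot)|_\infty=0$. The idea is to exploit the conservation laws together with the decay hypothesis to derive an impossible statement about the momentum. First I would observe that, by conservation of total mass, $m(t)=m(0)>0$ for all $t\ge 0$, so that $\int\rho(t,\cdot)=m(0)$ is a fixed positive constant; and by conservation of momentum, $\mathbb{P}(t)=\mathbb{P}(0)$ with $|\mathbb{P}(0)|>0$. Next I would estimate the momentum pointwise in time by Cauchy--Schwarz in the density weight:
\begin{equation*}
|\mathbb{P}(t)|=\Big|\int \rho u\,(t,\cdot)\Big|\le \int \rho|u|\,(t,\cdot)\le |u(t,\cdot)|_\infty\int \rho(t,\cdot)=|u(t,\cdot)|_\infty\, m(0).
\end{equation*}
Here one only needs $\rho\ge 0$ and $m(t)\in L^\infty$, both of which hold in $D(\infty)$; no integrability of $u$ beyond its $L^\infty$ bound is required, which is why the statement is formulated with $|u|_\infty$ rather than an $L^2$-type decay.

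Having this inequality, the conclusion is immediate: taking $\limsup_{t\to\infty}$ on both sides and using \eqref{eq:2.15} gives
\begin{equation*}
|\mathbb{P}(0)|=\limsup_{t\to\infty}|\mathbb{P}(t)|\le m(0)\,\limsup_{t\to\infty}|u(t,\cdot)|_\infty=0,
\end{equation*}
contradicting the assumption $|\mathbb{P}(0)|>0$. Hence no such solution can exist, which is the assertion of the theorem. The role of the parameter hypotheses \eqref{canshu-22} and of "classical solution" is only to guarantee that the conservation laws in Definition \ref{d2} make sense and that $\rho\ge 0$ is propagated (via the transport structure of the continuity equation and the maximum principle); they are otherwise not used in the core argument.

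The only genuine subtlety — and the step I would be most careful about — is ensuring that the quantities $m(t)$ and $\mathbb{P}(t)$ are actually finite and that the conservation identities hold rigorously for solutions in $D(\infty)$; but this is precisely what membership in $D(\infty)$ postulates (the three conservation/boundedness bullets in Definition \ref{d2}), so in the proof it can simply be invoked. One could add a remark that the hypothesis $m(0)>0$ is what makes the bound nontrivial, and that the result shows a sharp incompatibility between the natural far-field decay of the velocity, the finite-mass/finite-momentum framework, and global-in-time existence — in sharp contrast to the constant-coefficient asymptotic theory. In short, the entire proof is the one-line inequality $|\mathbb{P}(t)|\le m(0)|u(t,\cdot)|_\infty$ combined with the two conservation laws, so there is essentially no hard analytic obstacle once the setup of $D(\infty)$ is granted.
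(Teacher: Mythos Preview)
Your proposal is correct and takes essentially the same approach as the paper: both combine conservation of mass and momentum with the elementary bound $|\mathbb{P}(t)|\le m(t)\,|u(t,\cdot)|_\infty$ to obtain the uniform lower bound $|u(t,\cdot)|_\infty\ge |\mathbb{P}(0)|/m(0)$, contradicting \eqref{eq:2.15}. The only cosmetic difference is that the paper routes the inequality through the kinetic energy via Cauchy--Schwarz ($|\mathbb{P}(t)|\le\sqrt{2m(t)E_k(t)}$ and $E_k(t)\le\tfrac12 m(0)|u|_\infty^2$), arriving at the same constant; your direct $L^1$--$L^\infty$ estimate is marginally cleaner.
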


As an immediate corollary of   Theorems \ref{th21}-\ref{th25},  one can obtain that

\begin{corollary}\label{co23}
For   the Cauchy problem \eqref{8} with \eqref{2} and \eqref{QH}-\eqref{7}, if one assumes   $0<m(0)<\infty$ and $|\mathbb{P}(0)|>0$ additionally, then  there is no global  regular  solution $(\rho,u,S)$ with the regularities  in Theorem \ref{th21}   satisfying \eqref{eq:2.15}.

\end {corollary}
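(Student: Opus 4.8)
The plan is to deduce the corollary directly from Theorems~\ref{th21} and \ref{th25} by contradiction. Suppose that, for initial data as in Theorem~\ref{th21} with additionally $0<m(0)<\infty$ and $|\mathbb{P}(0)|>0$, there is a regular solution $(\rho,u,S)$ possessing all the regularities of Theorem~\ref{th21} that is defined on $[0,T]\times\mathbb{R}^3$ for every $T>0$ and satisfies \eqref{eq:2.15}. By Theorem~\ref{th21}(1) this $(\rho,u,S)$ is a classical solution of the Cauchy problem \eqref{1}-\eqref{3} with \eqref{4} and \eqref{6}-\eqref{7} on $(0,\infty)\times\mathbb{R}^3$, and since $m(0)<\infty$, Theorem~\ref{th21}(2) provides the conservation laws $m(t)=m(0)$, $\mathbb{P}(t)=\mathbb{P}(0)$ and $E(t)=E(0)$ for all $t\ge0$. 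The goal is then to show that $(\rho,u,S)\in D(\infty)$ in the sense of Definition~\ref{d2}, so that Theorem~\ref{th25} applies and contradicts \eqref{eq:2.15}.

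To check membership in $D(\infty)$, observe that conservation of total mass and of momentum are two of the three defining properties of $D(\infty)$ and both have just been established. For the remaining requirement $m(t),\mathbb{P}(t),E_k(t)\in L^\infty([0,\infty))$: the functions $m(t)\equiv m(0)$ and $\mathbb{P}(t)\equiv\mathbb{P}(0)$ are constant, while $0\le E_k(t)\le E(t)=E(0)$ (using $E_p\ge0$), so it suffices to verify $E(0)<\infty$. Here $E_k(0)\le\frac12|u_0|_\infty^2\,m(0)<\infty$ because $u_0\in H^3(\mathbb{R}^3)\hookrightarrow L^\infty$, and $E_p(0)=\frac{A}{\gamma-1}\int e^{S_0/c_v}\rho_0^{\gamma}<\infty$ because $S_0-\bar{S}\in D^1_*\cap D^3\hookrightarrow L^\infty$ and $\rho_0^{\gamma-1}\in D^1_*\cap D^3\hookrightarrow L^\infty$ force $\rho_0^{\gamma}=\rho_0^{\gamma-1}\rho_0\le|\rho_0^{\gamma-1}|_\infty\,\rho_0\in L^1$ (again using $m(0)<\infty$). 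Hence $E_k\in L^\infty([0,\infty))$ and $(\rho,u,S)\in D(\infty)$.

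Finally I would match the coefficient hypotheses of Theorem~\ref{th25}: under \eqref{4}-\eqref{5} and using $\theta=AR^{-1}\rho^{\gamma-1}e^{S/c_v}\ge0$, one has $\mu(\theta)=\alpha\theta^{\nu}\ge0$, $2\mu(\theta)+3\lambda(\theta)=(2\alpha+3\beta)\theta^{\nu}\ge0$, $\kappa=0\ge0$ and $\gamma>1\ge1$, so \eqref{canshu-22} holds; combined with $m(0)>0$ and $|\mathbb{P}(0)|>0$, Theorem~\ref{th25} asserts that no classical solution in $D(\infty)$ can satisfy $\limsup_{t\to\infty}|u(t,\cdot)|_\infty=0$, which contradicts \eqref{eq:2.15}. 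This establishes Corollary~\ref{co23}. The argument is essentially bookkeeping and involves no genuine analytic obstacle; the only points that need a little attention are the \emph{uniform-in-time} bound on $E_k$ — which must be extracted from the total energy conservation law, since the regularity statements only bound $E_k$ on bounded time intervals — and the observation that Theorem~\ref{th25} is phrased for general $(\mu,\lambda,\kappa)$, so one has to verify that the degenerate viscosity law \eqref{4} falls within the structural conditions \eqref{canshu-22}.
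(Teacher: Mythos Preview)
Your proposal is correct and follows essentially the same approach as the paper: the paper's proof simply observes that a regular solution from Theorem~\ref{th21} lies in $D(T)$ for every $T>0$ and then invokes Theorem~\ref{th25}. Your version is just a more explicit unpacking of this two-line argument, with the additional bookkeeping (verifying $E(0)<\infty$, checking that the degenerate viscosities \eqref{4} satisfy \eqref{canshu-22}, and bounding $E_k$ via energy conservation) spelled out rather than left implicit.
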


Moreover, as a corollary of  Theorem \ref{th25}, Remark \ref{rz1} and \cites{CK,  wenzhu}, one can show that for the \textbf{CNS} with constant  $(\mu, \lambda, \kappa)$, there exists a  global classical solution which preserves   the conservation of total mass,  but  not  the conservation of momentum  for all the time $t\in (0,\infty)$.    Indeed,  consider the Cauchy problem of   \eqref{1}-\eqref{3} with  the following initial data and far field behavior:
\begin{align}
(\rho,u,\theta)|_{t=0}=(\rho_0(x),u_0(x),\theta_0(x))\quad  &\text{for} \quad x\in\mathbb{R}^3, \label{c6}\\[6pt]
(\rho,u,\theta)(t,x)\rightarrow(0,0,0)\quad  \text{as} \quad |x|\rightarrow\infty\quad &\text{for}\quad t\geq 0. \label{c7}
\end{align}

 For simplicity,  $C_0$ will denote a  positive constant that  depends only on fixed constants $\mu$, $\lambda$, $\kappa$, $R$, $\gamma$ and the initial data $(\rho_0,u_0,\theta_0)$.
\begin{corollary}\label{th:2.20-HLX}
Let  $\mu, \lambda$ and $\kappa$ all be constants  satisfying
$$
\mu>0, \quad  2\mu+3\lambda \geq 0, \quad \kappa>0.
$$
For two constants   $\overline{\rho}>0$ and $\overline{\theta}>0$, suppose that the initial data $(\rho_0,u_0,\theta_0)$ satisfies
\begin{equation}\label{initialhxl}
\begin{split}
& 0\leq \inf \rho_0 \leq \sup \rho_0 \leq \overline{\rho},\quad \rho_0\in L^1 \cap H^2\cap W^{2,p}, \\[6pt]
&  0\leq \inf \theta_0 \leq \sup \theta_0 \leq \overline{\theta},\quad (u_0,\theta_0)\in  D^1_*\cap D^2, 
\end{split}
\end{equation}
for some $p\in (3,6)$, and the compatibility conditions
\begin{equation}\label{changxiangrong}
    \begin{cases}
   -\mu \triangle u_0-(\mu+\lambda) \nabla \text{div}u_0+\nabla P(\rho_0,\theta_0)=\rho^{\frac{1}{2}}_0 g_5,\\[8pt]
   \kappa \triangle \theta_0+\frac{\mu}{2}| \nabla u_0+(\nabla u_0)^\top|^2+\lambda (\text{div}u_0)^2=\rho^{\frac{1}{2}}_0 g_6, 
    \end{cases}
\end{equation}
for some $g_i\in L^2$ $(i=5,6)$. Then there exists a positive constant $\zeta$ depending on $\mu$, $\lambda$, $\kappa$, $R$, $\gamma$, $\bar{\rho}$  and $\bar{\theta}$ such that if
\begin{equation}\label{smallness}
m(0)\leq \zeta,
\end{equation}
the Cauchy problem  \eqref{1}-\eqref{3} with \eqref{c6}-\eqref{c7} has a unique global classical solution $(\rho,u,\theta)$ in $(0,\infty)\times \mathbb{R}^3$ satisfying, for any $0<\tau <T<\infty$,
\begin{align}
& m(t)=m(0),\quad  \  t   \in [0,\infty); \\[4pt]
& 0\leq \rho(t,x)\leq 2\overline{\rho},\quad \theta(t,x)\geq 0,  \quad (t,x)   \in [0,\infty)\times \mathbb{R}^3;\label{lagrangian1q}\\[4pt]
&\sup_{t\in [0,T]} \int (\rho |\dot{u}|^2+|\nabla u|^2+|\nabla \theta|^2\big)(t,\cdot)\leq C_0;\label{lagrangian2q}\\[4pt]
& \rho \in C([0,T];H^2\cap W^{2,p}),\quad  \rho_t \in C([0,T];H^1); \label{lagrangian3q}\\[4pt]
&  (u,\theta)\in C([0,T]; D^1_*\cap D^2)\cap L^2([0,T] ; D^3)\cap L^\infty([\tau,T];D^3); \label{lagrangian4q}\\[4pt]
& (u_t,\theta_t)\in L^2([0,T]; D^1_*)\cap L^\infty([0,T]; D^1_*\cap D^2),\label{lagrangian5q}
\end{align}
for $\dot{u}=u_t+u\cdot \nabla u$, and the following large-time behavior
\begin{equation}\label{largetimehlx}
\begin{split}
\lim_{t\rightarrow \infty} \int \big(\rho |\theta|^2+|\nabla u|^2+|\nabla \theta|^2\big)(t,\cdot)=0.
\end{split}
\end{equation}

Furthermore, if  $m(0)>0$ and $|\mathbb{P}(0)|>0$, then the solution obtained above cannot preserve the conservation of the momentum  for all the time $t\in (0,\infty)$.
\end {corollary}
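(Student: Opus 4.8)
The plan is to take the first assertion from the literature and to obtain the non-conservation of momentum by placing the solution into the class $D(\infty)$ of Definition \ref{d2} and invoking Theorem \ref{th25}. The existence of a global classical solution with the regularities \eqref{lagrangian1q}--\eqref{lagrangian5q} and the large-time behavior \eqref{largetimehlx}, under the bounds \eqref{initialhxl}, the compatibility conditions \eqref{changxiangrong} and the smallness \eqref{smallness}, is exactly the content of the local theory of Cho--Kim \cite{CK} together with the global small-data theory of Wen--Zhu \cite{wenzhu}, so for the first part there is nothing new to prove. From this construction I shall use only: the time-uniform bounds $0\le\rho(t,x)\le 2\bar\rho$ of \eqref{lagrangian1q} and $\sup_{t\ge 0}\int(|\nabla u|^2+|\nabla\theta|^2)(t,\cdot)\le C_0$ of \eqref{lagrangian2q}, and the decay $|\nabla u(t,\cdot)|_2\to 0$ as $t\to\infty$, which is contained in \eqref{largetimehlx}.

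For the second assertion I would argue by contradiction: suppose the global solution $(\rho,u,\theta)$ does preserve conservation of momentum, i.e. $\mathbb{P}(t)=\mathbb{P}(0)$ for all $t\ge 0$. I would first check that the corresponding $(\rho,u,S)$ (with $S$ read off from \eqref{2}; observe that the defining conditions of $D(T)$, and indeed the argument behind Theorem \ref{th25}, only see $\rho$ and $u$ through $m$, $\mathbb{P}$ and $E_k$, so the formal difference between the far-field conditions \eqref{7} and \eqref{c7} is immaterial) lies in $D(\infty)$: conservation of total mass $m(t)=m(0)$ is already part of the conclusion; conservation of momentum is the assumed hypothesis; and $m,\mathbb{P},E_k\in L^\infty([0,\infty))$ follows from $m(t)=m(0)=|\rho_0|_1<\infty$ together with
\[
|\mathbb{P}(t)|\le|\rho(t)|_{6/5}|u(t)|_6\le C\,m(0)^{5/6}(2\bar\rho)^{1/6}|\nabla u(t,\cdot)|_2,\qquad E_k(t)\le C\,m(0)^{2/3}(2\bar\rho)^{1/3}|\nabla u(t,\cdot)|_2^2,
\]
which are bounded on $[0,\infty)$ by \eqref{lagrangian1q}--\eqref{lagrangian2q}; here I used $|u|_6\le C|\nabla u|_2$ on $\mathbb{R}^3$ and the interpolations $|\rho|_{6/5}\le|\rho|_1^{5/6}|\rho|_\infty^{1/6}$, $|\rho|_{3/2}\le|\rho|_1^{2/3}|\rho|_\infty^{1/3}$.

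Next I would establish the velocity decay $\limsup_{t\to\infty}|u(t,\cdot)|_\infty=0$. From \eqref{largetimehlx} one has $|\nabla u(t,\cdot)|_2\to 0$; combined with the Gagliardo--Nirenberg inequality $|u|_\infty\le C|\nabla u|_2^{1/2}|\nabla^2u|_2^{1/2}$ on $\mathbb{R}^3$ and the time-uniform bound $\sup_{t\ge 1}|\nabla^2u(t,\cdot)|_2\le C_0$ coming from the global estimates of \cite{wenzhu} (consistent with \eqref{lagrangian4q}), this gives $|u(t,\cdot)|_\infty\to 0$. Since $m(0)>0$, $|\mathbb{P}(0)|>0$ and the parameters satisfy \eqref{canshu-22}, the solution $(\rho,u,S)\in D(\infty)$ with $\limsup_{t\to\infty}|u(t,\cdot)|_\infty=0$ thus produced contradicts Theorem \ref{th25}, so the supposition fails and the global solution cannot preserve conservation of momentum for all $t\in(0,\infty)$, which is the claim.

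The only point I expect to need care is the velocity-decay step, since \eqref{largetimehlx} controls $|\nabla u|_2$ and not $|u|_\infty$, so one must extract the time-uniform higher-order bound $\sup_{t\ge 1}|\nabla^2u(t,\cdot)|_2\le C_0$ from \cite{wenzhu}; this is standard in the small-energy global theory with vacuum but should be quoted carefully. I would also note that Theorem \ref{th25} can be circumvented entirely: the first displayed estimate already yields $|\mathbb{P}(t)|\le C\,m(0)^{5/6}(2\bar\rho)^{1/6}|\nabla u(t,\cdot)|_2\to 0$, whence $\mathbb{P}(t)\equiv\mathbb{P}(0)$ forces $\mathbb{P}(0)=0$, contradicting $|\mathbb{P}(0)|>0$; this self-contained variant uses only \eqref{largetimehlx}, the conservation of mass, and $0\le\rho\le 2\bar\rho$, and it also dispenses with the velocity-decay estimate and with the far-field bookkeeping.
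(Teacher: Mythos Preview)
Your argument is correct and follows the same overall structure as the paper: quote \cite{CK,wenzhu} for the first assertion, then show $|u(t,\cdot)|_\infty\to 0$ and invoke Theorem~\ref{th25} for the second. The difference lies in how you obtain the $L^\infty$ decay of $u$. You use the Gagliardo--Nirenberg bound $|u|_\infty\le C|\nabla u|_2^{1/2}|\nabla^2 u|_2^{1/2}$ together with a time-uniform control of $|\nabla^2 u|_2$ extracted from \cite{wenzhu}; the paper instead uses the effective viscous flux/vorticity decomposition $F=(2\mu+\lambda)\operatorname{div}u-P$, $\omega=\nabla\times u$, the elliptic relations $\Delta F=\operatorname{div}(\rho\dot u)$, $\mu\Delta\omega=\nabla\times(\rho\dot u)$, and the chain
\[
|u|_\infty\le C|\nabla u|_2^{1/2}\bigl(|F|_6+|\omega|_6+|P|_6\bigr)^{1/2}\le C|\nabla u|_2^{1/2}\bigl(|\sqrt{\rho}\,\dot u|_2+|\nabla\theta|_2\bigr)^{1/2},
\]
which relies only on the quantities in \eqref{lagrangian1q}--\eqref{lagrangian2q} already stated in the corollary, so no additional uniform-in-time higher-order bound needs to be imported. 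Your route is shorter to write but leans on an estimate not recorded in the statement; the paper's route is more self-contained. Your final remark---that $|\mathbb P(t)|\le C\,m(0)^{5/6}(2\bar\rho)^{1/6}|\nabla u|_2\to 0$ already forces $\mathbb P(0)=0$ without ever touching $|u|_\infty$ or Theorem~\ref{th25}---is a genuinely simpler alternative that the paper does not mention.
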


\begin{remark}\label{zhunbei2} Note that for the regular solution $(\rho,u,S)$ obtained in Theorem \ref{th21}, $u$  stays in the inhomogeneous Sobolev space $H^3$ instead of the homogenous one $D^1_*\cap D^2$ in \cite{CK,wenzhu}  for   flows with constant viscosity and heat conductivity coefficients.

 Based on the conclusions obtained in Theorem \ref{th21} and \cite{ins},  there is a natural question  that whether   the conclusion obtained in \cite{ins} can  be applied to the  degenerate  system considered here?
Due to strong degeneracy near the vacuum in $\eqref{1}_2-\eqref{1}_3$,   such  questions are not easy and  will be discussed in the  future work Xin-Zhu \cite{XZ-L2}.

\end{remark}

\begin{remark}
The  framework established  in this paper is applicable to other physical dimensions, say 1 and 2, with some minor modifications. This is clear from the analysis carried out in the following sections.

\end{remark}

The rest of this paper is   organised as follows.   In Section $2$, we first  reformulate the  Cauchy problem \eqref{8} with \eqref{2} and \eqref{QH}-\eqref{7} into a specifically chosen enlarged  form, which makes the problem  trackable through an elaborate linearization and  approximation process. Then we outline the  main strategy to establish  the  well-posedness theory.
Section $3$ is devoted to proving  the  local-in-time well-posedness  theory stated in  Theorem \ref{th21}, which can be  achieved in five steps: 
\begin{enumerate}

 \item construct global approximate solutions away from the vacuum for
a specially designed linearized problem with an artificial viscosity
$$\sqrt{\rho^{\delta-1}+\epsilon^2}e^{\frac{S}{c_v}\nu} Lu$$
in the  momentum equations and 
$\inf_{x\in \mathbb{R}^3}\rho^{\gamma-1}_0=\frac{\gamma-1}{A\gamma}\eta$
for some positive constants $\epsilon>0$ and  $\eta>0$;
\item establish the a priori estimates independent of both $\epsilon$ and $\eta$;

\item  then pass to the limit $\epsilon\rightarrow 0$ to recover the solution
of the corresponding  linearized problem  away from the vacuum with only physical viscosities;

\item  prove the unique solvability away from the vacuum  of the reformulated nonlinear
problem through a standard iteration process;

\item  finally take  the limit $\eta\rightarrow 0$ to recover the solution
of the reformulated nonlinear problem  with  physical viscosities and far field vacuum.
\end{enumerate}
The global non-existence results stated in  Theorem \ref{th25} and Corollary \ref{co23}, and the non-conservation of momentum  stated in Corollary \ref{th:2.20-HLX} are proved in Section 4.
  Finally, for convenience of readers, we provide one appendix to list some basic facts which have been  used frequently in this paper.


\section{Reformulation and main strategy}\label{s2}

In this section, we first reformulate the highly degenerate system   \eqref{8}
into an enlarged  trackable system, and then sketch the  main strategies of our analysis.

\subsection{Reformulation} 
Set $\delta=(\gamma-1)\nu$, and 
\begin{equation}\label{2.1}\begin{aligned}
&\phi=\frac{A\gamma}{\gamma-1}\rho^{\gamma-1},\quad l=e^{\frac{S}{c_v}},\quad \psi=\frac{\delta}{\delta-1}\nabla\rho^{\delta-1},\quad n=\rho^{2-\delta-\gamma}.
\end{aligned}\end{equation}
It follows from  \eqref{8} with \eqref{2} and \eqref{QH}-\eqref{7} that 
\begin{equation}\label{2.3}\left\{\begin{aligned}
\displaystyle
&\  \phi_t+u\cdot\nabla\phi+(\gamma-1)\phi \text{div}u=0,\\[6pt]
\displaystyle
&\  u_t+u\cdot \nabla u+a_1\phi\nabla l+l\nabla\phi+a_2l^\nu\phi^{2\iota} Lu\\[6pt]
\displaystyle
=&a_2\phi^{2\iota}\nabla l^\nu\cdot Q(u)+a_3l^\nu\psi  \cdot  Q(u),\\[6pt]
\displaystyle
&\  l_t+u\cdot\nabla l=a_4l^\nu n\phi^{4\iota}H(u),\\[4pt]
\displaystyle
&\ \psi_t+\sum_{k=1}^3 A_k(u) \partial_k\psi+B(u)\psi+\delta a\phi^{2\iota}\nabla \text{div} u=0,
\end{aligned}\right.\end{equation}
where
\begin{equation}\label{2.2}
\begin{split}
a_1=&\frac{\gamma-1}{\gamma},\quad a_2=a\Big(\frac{A}{R}\Big)^\nu,\quad a_3=\Big(\fr{A}{R}\Big)^\nu,\\
a_4=&\fr{A^{\nu-1}a^2(\gamma-1)}{R^\nu},\quad 
\iota=\fr{\delta-1}{2(\gamma-1)},\quad a=\Big(\fr{A\gamma}{\gamma-1}\Big)^{\fr{1-\delta}{\gamma-1}},
\end{split}
\end{equation}
 $A_k(u)=(a^k_{ij})_{3\times 3}$ for  $i$, $j$, $k=1$, $2$, $3$,
are symmetric  with
$$a^k_{ij}=u^{(k)}\quad \text{for}\ i=j;\quad \text{otherwise}\  a^k_{ij}=0, $$
 and $B(u)=(\nabla u)^\top+(\delta-1)\text{div}u\mathbb{I}_3$.

\eqref{2.3} will be regarded as a system for the new variable $(\phi,u,l,\psi)$, which is equivalent to the \eqref{8} with \eqref{2} and \eqref{QH}-\eqref{7} if one imposes the initial data
\begin{equation}\label{2.4}
\begin{split}
&(\phi,u,l,\psi)|_{t=0}=(\phi_0,u_0,l_0,\psi_0)\\
=&\Big(\fr{A\gamma}{\gamma-1}\rho_0^{\gamma-1}(x),u_0(x),e^{S_0(x)/c_v},\fr{\delta}{\delta-1}\nabla\rho_0^{\delta-1}(x)\Big)\quad  \text{for} \quad x\in\mathbb{R}^3,
\end{split}
\end{equation}
and   far filed behavior:
\begin{equation}\label{2.5}
(\phi,u,l,\psi)\rightarrow (0,0,\bar{l},0) \hspace{2mm} \text{as} \hspace{2mm}|x|\rightarrow \infty \hspace{2mm} \text{for} \quad t\geq 0,
\end{equation}
where $\bar{l}=e^{\frac{\bar{S}}{c_v}}>0$ is some positive constant.

Note that  the reformulated system \eqref{2.3}   consists of  
\begin{itemize}
 \item  one scalar transport equation $\eqref{2.3}_1$ for $\phi$;

 \item one  {\it singular parabolic}  system  $\eqref{2.3}_2$ for the velocity $u$;

 \item  one scalar transport equation   $\eqref{2.3}_3$ but with a possible singular first order term for $l$;

 \item  one symmetric hyperbolic system  $\eqref{2.3}_4$ but with a possible singular second order term for $\psi$.
\end{itemize}
Such a structure is important in establishing the local well-posedness of the problem \ef{2.3}-\ef{2.5} and thus to 
 solve the Cauchy problem \eqref{8} with \eqref{2} and \eqref{QH}-\eqref{7} locally in time.  We will establish  the following theorem.

\begin{theorem}\label{3.1} Let $\ef{can1}$ hold. If the initial data $(\phi_0,u_0,l_0,\psi_0)$ satisfies:
\begin{equation}\label{a}\begin{aligned}
&\phi_0>0,\quad \phi_0\in D^1_*\cap D^3,\quad u_0\in H^3,\quad l_0-\bar{l}\in D^1_*\cap D^3,\\
&\inf_{x\in \mathbb{R}^3} l_0>0, \quad \psi_0\in L^q \cap D^{1,3}\cap D^2,\quad \nabla\phi_0^\iota\in L^6,
\end{aligned}\end{equation}
for some $q\in (3,\infty)$, and the   compatibility conditions:
\begin{equation}\label{2.8*}\begin{aligned}
\nabla u_0=&\phi_0^{-\iota}g_1,&\quad Lu_0=\phi_0^{-2\iota}g_2,\\
\nabla(\phi_0^{2\iota}Lu_0)=&\phi_0^{-\iota}g_3,&\quad \nabla^2l_0=\phi_0^{-\iota}g_4,
\end{aligned}\end{equation}
for some $(g_1,g_2,g_3,g_4)\in L^2$, then there exist a time $T^*>0$ and a unique strong solution $(\phi,u,l,\psi=\fr{a\delta}{\delta-1}\nabla\phi^{2\iota})$  in $[0,T^*]\times \mathbb{R}^3$   to the Cauchy problem \ef{2.3}-\ef{2.5}, such that 
\begin{equation}\label{b}\begin{aligned}
&\phi\in C([0,T^*];D^1_*\cap D^3),\quad \psi\in  C([0,T^*];L^q \cap D^{1,3}\cap D^2),\\
&u\in C([0,T^*];H^3)\cap L^2([0,T^*];H^4),\quad \phi^{2\iota}\nabla u\in L^\infty ([0,T^*];D^1_*),\\
&\phi^{2\iota}\nabla^2 u\in C([0,T^*];H^1)\cap L^2([0,T^*];D^2),\\
&\phi^\iota\nabla u\in C([0,T^*];L^2),\quad\phi^\iota\nabla u_t\in L^\infty([0,T^*];L^2),\\
&u_t\in  C([0,T^*];H^1)\cap L^2([0,T^*];D^2),\quad (\phi^{2\iota}\nabla^2u)_t\in L^2([0,T^*];L^2),\\
&u_{tt}\in L^2([0,T^*];L^2),\quad t^{\fr{1}{2}}u\in L^\infty([0,T^*];D^4),\\
&t^{\fr{1}{2}}u_t\in L^\infty([0,T^*];D^2)\cap L^2([0,T^*];D^3),\quad  l-\bar{l}\in  C([0,T^*];D^1_* \cap D^3),\\
& \inf_{(t,x)\in [0,T^*]\times \mathbb{R}^3} l>0, \quad l_t\in  C([0,T^*];D^1_*\cap D^2),\quad l_{tt}\in L^2([0,T^*];D^1_*).
\end{aligned}\end{equation}
\end{theorem}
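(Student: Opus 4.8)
\emph{Strategy.} I would prove Theorem \ref{3.1} by the five--step regularization/linearization scheme announced in the Introduction, describing only its architecture and the places where the enlarged structure of \eqref{2.3} is essential, and leaving the lengthy weighted energy estimates aside.

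\emph{Step 1: double regularization and linearization.} Fix $\epsilon>0$ and $\eta>0$. In $\eqref{2.3}_2$ replace the singular viscous term $a_2 l^\nu\phi^{2\iota}Lu$ --- recall $\iota<0$, so $\phi^{2\iota}\sim\rho^{\delta-1}$ blows up at the vacuum --- by the bounded, uniformly elliptic artificial viscosity $\sqrt{\rho^{\delta-1}+\epsilon^2}\,l^\nu Lu$ of the Introduction, and lift the density away from vacuum (replacing $\phi_0$ by $\phi_0+\eta$), so the approximate problem has no far-field vacuum. Given a velocity field $v$ in a ball of a suitable function space $X$, one then solves successively: $(i)$ the linear transport equation $\eqref{2.3}_1$ for $\phi$ along the flow of $v$, which keeps $\phi$ between two positive constants and propagates the $D^1_*\cap D^3$ regularity; $(ii)$ the linear transport equation $\eqref{2.3}_3$ for $l$ with source $a_4 l^\nu n\phi^{4\iota}H(v)$, with $\phi$ (hence $n=\rho^{2-\delta-\gamma}$) taken from $(i)$, which keeps $\inf l>0$, propagates $l-\bar l\in D^1_*\cap D^3$, and where the singular source $n\phi^{4\iota}H(v)\sim\rho^{\delta-\gamma}(\nabla v)^2$ must be controlled; $(iii)$ the symmetric hyperbolic system $\eqref{2.3}_4$ for $\psi$, uniquely solvable in $L^q\cap D^{1,3}\cap D^2$ by the standard theory and, since it is exactly $\nabla$ applied to the transport equation for $\phi^{2\iota}$, propagating the constraint $\psi=\frac{a\delta}{\delta-1}\nabla\phi^{2\iota}$ (this is the reason $\psi$ is carried as an independent unknown: its regularity cannot be read off from that of $\phi$ near the vacuum); and $(iv)$ the linear, uniformly elliptic parabolic system $\eqref{2.3}_2$ for $u$ with coefficients assembled from $(\phi,l,\psi,v)$, uniquely solvable in $C([0,T];H^3)\cap L^2([0,T];H^4)$ by standard parabolic theory thanks to the $\epsilon$-floor. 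This defines the iteration map $\mathcal T:v\mapsto u$.

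\emph{Step 2: uniform a priori estimates.} The heart of the proof is a bound on $\mathcal T(v)$ in $X$, uniform in $\epsilon$ and $\eta$, that closes on a fixed ball over a short time $T^*$. One propagates, in increasing order of difficulty: the zeroth-order bounds on $\phi$, $l$, $\psi$; the weighted bound $\phi^\iota\nabla u\in L^\infty_tL^2$, obtained by testing $\eqref{2.3}_2$ against $\phi^{2\iota}Lu$ and using the compatibility $\nabla u_0=\phi_0^{-\iota}g_1$; the bound $u_t\in L^\infty_tL^2$, obtained after differentiating $\eqref{2.3}_2$ in $t$ and using $Lu_0=\phi_0^{-2\iota}g_2$, which then controls $\|u\|_{D^2}$; and the top-order weighted bound $\phi^\iota\nabla u_t\in L^\infty_tL^2$, which needs the remaining compatibilities $\nabla(\phi_0^{2\iota}Lu_0)=\phi_0^{-\iota}g_3$ and $\nabla^2 l_0=\phi_0^{-\iota}g_4$, and then closes $\|u\|_{H^3}$. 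Throughout one uses the identities $\phi^{2\iota}\sim\rho^{\delta-1}$, $n\phi^{4\iota}\sim\rho^{\delta-\gamma}$ and $H(u)=Q(u):\nabla u$ to rewrite each singular power of $\rho$ as (a weighted $\nabla u$ factor controlled above) times (a leftover power of $\rho$); the constraints $0<\delta<1$ and $4\gamma+3\delta\le7$ in $\eqref{can1}$ are precisely what make each leftover power summable against the available norms. The time-weighted quantities $t^{1/2}u\in L^\infty_tD^4$, $t^{1/2}u_t\in L^\infty_tD^2\cap L^2_tD^3$, $u_{tt}\in L^2_tL^2$ and the rest of \eqref{b} are recovered afterwards by the usual parabolic-smoothing argument (multiply the higher-order differential inequalities by $t$ and integrate).

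\emph{Step 3: limits, uniqueness, and the main obstacle.} With the uniform bounds in hand, let $\epsilon\to0$ to recover a solution of the linearized problem with the genuine coefficient $a_2 l^\nu\phi^{2\iota}$ (still with $\inf\phi_0\ge\eta>0$, so $\phi^{2\iota}$ stays bounded); then run $\mathcal T$ at fixed $\eta$, showing it maps a ball of $X$ into itself and contracts in a weaker norm, hence has a unique fixed point solving the reformulated \emph{nonlinear} problem away from vacuum, the constraint $\psi=\frac{a\delta}{\delta-1}\nabla\phi^{2\iota}$ surviving the iteration; finally let $\eta\to0$, using the $\eta$-uniformity from Step 2, to obtain a solution of \eqref{2.3}--\eqref{2.5} with genuine far-field vacuum in the class \eqref{b}, uniqueness following from a weighted energy estimate for the difference of two solutions (again using $0<\delta<1$). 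The principal obstacle is Step 2, and within it the uniform-in-$\eta$ control near the vacuum of the two genuinely singular source terms --- $a_2\phi^{2\iota}\nabla l^\nu\cdot Q(u)$ on the right of $\eqref{2.3}_2$ and, above all, the $\rho^{\delta-\gamma}$-singular source $a_4 l^\nu n\phi^{4\iota}H(u)$ in the $l$ (entropy) equation --- whose control forces the precise weighting by $\phi^\iota$ and the full strength of the compatibility conditions \eqref{2.8*}, and is essentially where the present degenerate full system departs both from the constant-viscosity theory of \cite{CK,lx1,lx2,lx3} and from the isentropic degenerate theory of \cite{zz2,zz}, and where the constraint $4\gamma+3\delta\le7$ gets used.
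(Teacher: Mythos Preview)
Your five-step architecture and the roles you assign to the compatibility conditions \eqref{2.8*} and to the constraint $4\gamma+3\delta\le7$ match the paper. Your linearization in Step~1, however, is not the paper's. A minor point first: keeping $l^\nu$ on the right in (ii) makes the entropy equation nonlinear in $l$; the paper replaces it by $w^\nu$ with $w$ the previous $l$-iterate (and uses $g^2$, $g=h^k$, in place of $\phi^{4\iota}$). The substantive difference is that the paper does \emph{not} take $\psi=\frac{a\delta}{\delta-1}\nabla\phi^{2\iota}$ with $\phi$ from step (i). As explained in \S2.2.2 and implemented in \eqref{ln} and \eqref{k+1}, it introduces an auxiliary unknown $h\neq\phi^{2\iota}$ via $h_t+v\cdot\nabla h+(\delta-1)g\,\text{div}\,v=0$, sets $\psi:=\frac{a\delta}{\delta-1}\nabla h$, puts $\sqrt{h^2+\epsilon^2}$ (not $\sqrt{\phi^{4\iota}+\epsilon^2}$) in the artificial viscosity, and defines $n=(ah)^b$. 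The reason is that the weighted iteration hypotheses \eqref{2.16} come in the form $|g\nabla^2 v|$; with this choice the source of the $\psi$-equation is $a\delta(g\nabla\text{div}\,v+\nabla g\,\text{div}\,v)$, which matches those hypotheses, and integration by parts in the $u$-estimates produces $\nabla h\cdot Q(u)\propto\psi\cdot Q(u)$, which is controllable. The identity $h=\phi^{2\iota}$ is recovered only for the nonlinear fixed point, by a separate argument (see \eqref{liyhn-recover}--\eqref{relationrecover}).

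Your variant --- iterate on $v$ alone and use the $\phi^{2\iota}$ of step (i) everywhere --- does keep $\psi=\frac{a\delta}{\delta-1}\nabla\phi^{2\iota}$ at every stage, which is conceptually cleaner, but then the $\psi$-equation carries the source $(\phi^{k+1})^{2\iota}\nabla^2 u^k$, mixing the \emph{current} weight with the \emph{previous} velocity. Closing the $\psi$-estimate therefore needs an additional comparison $(\phi^{k+1})^{2\iota}\sim(\phi^k)^{2\iota}$ on $[0,T_*]$, in the spirit of the paper's Lemma~\ref{gh}, which you do not mention. That comparison can be made (both quantities are transported from the same data by flows with bounded divergence), so your scheme is a legitimate alternative; but it is not the paper's, and the proposal should acknowledge the difference rather than present it as the obvious linearization.
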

\begin{remark}\label{strongsolution}
In Theorem \ref{3.1}, $(\phi,u,l,\psi=\fr{a\delta}{\delta-1}\nabla\phi^{2\iota})$  in $[0,T^*]\times \mathbb{R}^3$ is called a strong solution   to the Cauchy problem \ef{2.3}-\ef{2.5}, if it satisfies \ef{2.3}-\ef{2.5} in the sense of distributions, and  satisfies the  equations \eqref{2.3}-\ef{2.2} a.e. $(t,x)\in (0,T^*]\times \mathbb{R}^3$.
\end{remark}

\subsection{Main strategy}
Now we sketch the  main strategy for the proof of Theorem \ref{3.1}. 
\subsubsection{Closed energy estimates based on the singular structure introduced}
We now formally  indicate   how to obtain  closed energy estimates based on the singular structure described above. 

Note first that  the   velocity $u$  can be controlled by the following equations:
$$
u_t+u\cdot \nabla u+a_1\phi\nabla l+l\nabla\phi+\underbrace{a_2\phi^{2\iota}l^\nu Lu}_{\textbf{SSE}}
=\underbrace{a_2\phi^{2\iota}\nabla l^\nu\cdot Q(u)+a_3l^\nu\psi \cdot  Q(u)}_{\textbf{SSS}},
$$
where \textbf{SSE} denotes the elliptic operator which is strongly singular near the vacuum, while \textbf{SSS} represents the source terms which are also highly singular near the vacuum. 
Due to \eqref{4}, $\kappa=0$, so the entropy is expected to be bounded below uniformly such that  $l=e^{\frac{S}{c_v}}$ and $\phi^{2\iota}$ with $\iota<0$  should  have   uniformly positive lower bounds in the whole space. Then
for this   quasi-linear parabolic system, one can find formally that  even though the coefficients $a_2\phi^{2\iota} l^\nu$ in front  of  Lam\'e operator $Lu$ will tend to $\infty$ as $\rho\rightarrow 0$ in the far filed, yet this structure could give a better a priori estimate on $u$ in $H^3$ than those of \cites{CK,sz3,sz333,zz}  if one can control  the possible singular term $\psi$ in $L^q\cap D^{1,3}\cap D^2$, $l-\bar{l}$ in $D^1_* \cap D^3$ and the  first-order product term  
$\phi^{2\iota}\nabla l^\nu\cdot Q(u)$
with singular coefficient   in proper spaces successfully. 

On the one hand,  $\ef{2.3}_4$
implies that the subtle  term $\psi$  could be controlled by a  symmetric hyperbolic  system with a possible singular higher order term $\delta a\phi^{2\iota}\nabla \text{div} u$, while
$l$ is governed by  $\eqref{2.3}_3$,
which is a scalar  transport equation with a  possible singular term $a_4l^\nu n\phi^{4\iota} H(u)$.
Thus in order to close the desired  estimates, one needs to control both  $\phi^{2\iota}\nabla \text{div} u$ in $L^q\cap D^{1,3}\cap D^2$ and $l^\nu n\phi^{4\iota} H(u)$ in $H^2$.

First, the necessary   estimates on $\phi^{2\iota}\nabla \text{div} u$    can be obtained by regarding the momentum equations as the following inhomogeneous Lam\'e  equations:
\begin{equation*}
\begin{split}
a_2 L(\phi^{2\iota}u)=&-l^{-\nu}(u_t+u\cdot \nabla u+a_1\phi\nabla l+l\nabla\phi)+a_3\psi\cdot Q(u) \\
&+a_2l^{-\nu}\phi^{2\iota}\nabla l^\nu\cdot Q(u)-\frac{\delta-1}{\delta}\Big(\frac{A}{R}\Big)^\nu G(\psi,u)=W,
\end{split}
\end{equation*}
where
$$
G(\psi,u)= \alpha  \psi\cdot \nabla u+\alpha \text{div}(u\otimes \psi)+(\alpha+\beta)\big(\psi \text{div}u+\psi \cdot \nabla u+u  \cdot \nabla \psi\big).
$$
In fact, one has
\begin{equation}\label{singularelliptic}
\begin{split}
|\phi^{2\iota}\nabla^2u|_{D^1_*}\leq & C(|\psi|_\infty|\nabla^2u|_2+|\phi^{2\iota}\nabla^3u|_2)\\
\leq & C(|\psi|_\infty|\nabla^2u|_2+|\nabla \psi|_3|\nabla u|_6+|\nabla^2\psi|_2|u|_\infty+|W|_{D^1_*}),
\end{split}
\end{equation}
for some constant  $C>0$ independent of the lower bound of $\phi$ provided that
$$
\phi^{2\iota}u\rightarrow 0\qquad \text{as} \qquad |x| \rightarrow \infty,
$$
which can be  verified  in  an approximation process from non-vacuum flows  to the flow with   far field vacuum. Similar calculations can be done for  $|\phi^{2\iota}\nabla^2u|_{D^2}$.

Next, we turn to the estimates on   $l^\nu n\phi^{4\iota} H(u)$, which are more complicated and depend on the  estimates of $n$ and $\phi^{4\iota}|\nabla u|^2$. 
An observation used here  is that the initial assumption (\ref{2.7}) and  the definition of $n$ in \eqref{2.1} imply that
$$
n(0,x) \in L^\infty\cap D^{1,q}\cap D^{1,6} \cap D^{2,3} \cap D^3.
$$
It is easy to check  that $n$ can be controlled by the following  hyperbolic equation:
\begin{equation}\label{eqofn}
n_t+u\cdot\nabla n+(2-\delta-\gamma)n\dv u=0,
\end{equation}
which, along with the expected regularities of $u$, implies that
$$
n(t,x) \in L^\infty\cap D^{1,q}\cap D^{1,6} \cap D^{2,3} \cap D^3
$$
within the solution's life span.
In fact, it follows from  the assumption \eqref{can1}  that the upper bound of $n$ depends on that of $\phi$,  and the  estimates on  its space  derivatives can be obtained from  those of $\psi$ via the relation $
n=(a\phi^{2\iota})^{\frac{2-\delta-\gamma}{\delta-1}}$, which, along with the equation \eqref{eqofn} and the  weighted  estimates on $u$, yields  the  estimates on  its time   derivatives.  While $\phi^{4\iota} |\nabla u|^2$ can be controlled by using the  weighted  estimates on $u$ including $|\phi^{\iota}\nabla u|_2$, $|\phi^{\iota}\nabla u_t|_2$, $|\phi^{2\iota}\text{div}u|_\infty$,  $|\phi^{2\iota}\nabla^2u|_{D^2}$, $|\phi^{2\iota}\nabla^2u_t|_{2}$ and so on.

Finally,  in order to deal with  the high singularity  and  nonlinearity of the source term  $\phi^{2\iota}\nabla l^\nu\cdot Q(u)$ in the time evolution equations  $\eqref{2.3}_2$ of $u$,  one still needs one singular weighted estimate on the entropy: $|\phi^{\frac{\iota}{2}}\nabla l|_6$, which can be obtained from the transport structure of the equation $\eqref{2.3}_3$ of $l$. It is worth pointing out that for achieving this key estimate, we require that $4\gamma +3\delta\leq 7$ in \eqref{2.1}.


\subsubsection{An  elaborate linearization of the nonlinear singular problem} To prove Theorem \ref{3.1}, it is crucial to carry out the strategy of energy estimates discussed above for suitably chosen approximate solutions while are constructed by an elaborate linear scheme.  In \S 3.1,   we design  an  elaborate linearization \eqref{ln}  of the nonlinear one \ef{2.3}-\ef{2.5} based on a careful analysis on the structure of the nonlinear system \eqref{2.3}, and the  global approximate solutions for this linearized problem when $\phi(0,x)=\phi_0$ has positive lower bound $\eta$ are established.  The choice of the linear scheme for this problem needs to be  careful due to the appearance of the far field vacuum.
Some  necessary structures should be preserved  in order  to establish the desired a priori estimates as  mentioned above.  For the  problem \eqref{2.3}-\eqref{2.5}, a crucial point is how to deal with the estimates on $\psi$. According to the analysis in the above paragraphs, we need to keep the two factors $\phi^{2\iota}$  and $\nabla \text{div} u$ of the source term $\delta a\phi^{2\iota}\nabla \text{div} u$  in equations $\ef{2.3}_4$ in the same step. Then let $v=(v^{(1)},v^{(2)}, v^{(3)})^\top\in \mathbb{R}^3$ be  a  known vector, $g$ and $w$ be  known real (scalar) functions satisfying $(v(0,x), g(0,x),w(0,x))=(u_0, \phi^{2\iota}_0,l_0)$ and (\ref{4.1*}). A natural linearization of the system \eqref{2.3} seems to be 
  \begin{equation}
\begin{cases}
\label{eq:cccq-fenxi}
\displaystyle
\  \ \phi_t+v\cdot\nabla\phi+(\gamma-1)\phi \text{div}v=0,\\[2pt]
\displaystyle
\  \ u_t+v\cdot \nabla v+a_1\phi\nabla l+l\nabla\phi+a_2\phi^{2\iota}l^\nu Lu\\[2pt]
\displaystyle
=a_2g\nabla l^\nu\cdot Q(v)+a_3l^\nu \psi \cdot  Q(v),\\[2pt]
\displaystyle
\  \  l_t+v\cdot\nabla l=a_4 w^\nu  n  g^2H(v),\\[2pt]
\displaystyle
\  \ \psi_t+\sum_{k=1}^3 A_k(v) \partial_k\psi+B(v)\psi+\delta a g\nabla \text{div} v=0.
 \end{cases}
\end{equation}

However, it  should be noted  that, in (\ref{eq:cccq-fenxi}),   the important relationship
 $$
 \psi=\frac{a\delta}{\delta-1}\nabla \phi^{2\iota}
  $$
  between $\psi$ and $\phi$ cannot be guaranteed  due to the term  $g \nabla \text{div} v$ in   $(\ref{eq:cccq-fenxi})_4$. Then one would encounter the following difficulties in deriving    $L^2$ estimate for  $u$:
  \begin{equation}\label{relationvainish}
  \begin{split}
&\frac{1}{2} \frac{d}{dt}|u|^2_2+a_2\alpha |l^{\frac{\nu}{2}}\phi^{\iota}\nabla u|^2_2+a_2(\alpha+\beta)|l^{\frac{\nu}{2}}\phi^{\iota}\text{div} u|^2_2\\
=&-\int \big(v\cdot \nabla v+a_1\phi\nabla l+l\nabla\phi  +a_2 l^\nu  \underbrace{\nabla \phi^{2\iota}}_{\neq \frac{\delta-1}{a\delta}\psi}\cdot Q(u)  \big)\cdot u\\
&+\int \big(-a_2  \phi^{2\iota} \nabla l^\nu \cdot Q(u) +a_2g\nabla l^\nu\cdot Q(v)+a_3l^\nu \psi \cdot Q(v) \big)\cdot u.
  \end{split}
  \end{equation}
Since  $\nabla \phi^{2\iota}$ does not coincide with $\frac{\delta-1}{a\delta}\psi$ in \eqref{eq:cccq-fenxi}, it seems extremely difficult  to control the term $a_2 l^\nu   \nabla \phi^{2\iota}\cdot Q(u)$ in the above energy estimates.
In order to overcome this difficulty, in  (\ref{ln}),   we  first linearize the equation for  $h=\phi^{2\iota}$ as:
\begin{equation}\label{h}
h_t+v\cdot \nabla h+(\delta-1)g\text{div} v=0,
\end{equation}
 and then use $h$ to define 
 $\psi=\frac{a\delta}{\delta-1}\nabla h$ again. Here, it should be pointed out that, due to  the term $(\delta-1)g\text{div} v$ in \eqref{h}, the relation  $h=\phi^{2\iota}$  between $h$ and $\phi$ no  longer exists in the linear problem.
 The linear equations for $u$ are chosen as
  \begin{equation*}
  \begin{split}
&u_t+v\cdot \nabla v+a_1\phi\nabla l+l\nabla\phi+a_2\sqrt{h^2+\epsilon^2}l^\nu Lu\\
=& a_2 g\nabla l^\nu \cdot Q(v)+a_3l^\nu \psi  \cdot Q(v),
  \end{split}
  \end{equation*}
for any positive constant $\epsilon >0$. Here the appearance of $\epsilon$ is used to compensate the lack of lower bound of $h$. It follows from (\ref{h}) and the relation  $\psi=\frac{a\delta}{\delta-1}\nabla h$ that
$$
\psi_t+\sum_{k=1}^3 A_k(v) \partial_k\psi+(\nabla v)^\top\psi+a\delta \big(g\nabla \text{div} v+\nabla g \text{div} v\big)=0,
$$
which  turns out to be the right   structure to ensure the  desired estimates on $\psi$.

Another subtle issue is that  to linearize the  equation  $\eqref{2.3}_3$ for the entropy, one would face the problem how to define $n$ (by $\phi$ or $h$?) since the relation $h=\phi^{2\iota}$ does not hold for the linearized scheme above.
Here, in order to make full use of the estimates on $\psi$ and the singular weighted estimates on $u$, we will define $n$ as 
$$
n=(ah)^{\frac{2-\delta-\gamma}{\delta-1}}.$$

Then in \S 3.2,    the uniform a priori estimates independent for the lower bound $(\epsilon,\eta)$ of the solutions $(\phi,u,l,h)$  to  the linearized problem   (\ref{ln}) are established. Based on these uniform estimates, one can obtain the local-in-time well-posedness of the regular solution with far field vacuum  by passing to the limit $\epsilon \rightarrow 0$, an iteration process that connects the linear approximation  problems to the nonlinear one, and finally  passing to the limit as $\eta \rightarrow 0$.

\section{Local-in-time well-posedness with far field vacuum}
In this section,  the proof for  the local-in-time well-posedness of strong  solutions with far field vacuum  stated in   Theorem \ref{3.1} will be given.
\subsection{Linearization}
Let $T>0$ be some positive time. In order to solve the nonlinear problem  $\ef{2.3}$-$\ef{2.5}$, we consider the following  linearized problem for $(\phi^{\epsilon,\eta}, u^{\epsilon,\eta}, \\ l^{\epsilon,\eta}, h^{\epsilon,\eta})$  in $[0,T]\times \mathbb{R}^3$:
\begin{equation}\label{ln}\left\{\begin{aligned}
&\phi^{\epsilon,\eta}_t+v\cdot\nabla\phi^{\epsilon,\eta}+(\gamma-1)\phi^{\epsilon,\eta} \text{div}v=0,\\[2pt]
&u^{\epsilon,\eta}_t+v\cdot \nabla v+a_1\phi^{\epsilon,\eta}\nabla l^{\epsilon,\eta}+l^{\epsilon,\eta}\nabla\phi^{\epsilon,\eta}+a_2(l^{\epsilon,\eta})^\nu \sqrt {(h^{\epsilon,\eta})^2+\epsilon^2} Lu^{\epsilon,\eta}\\
=&a_2g\nabla (l^{\epsilon,\eta})^\nu\cdot Q(v)+a_3(l^{\epsilon,\eta})^\nu \psi^{\epsilon,\eta} \cdot Q(v),\\[2pt]
&l^{\epsilon,\eta}_t+v\cdot\nabla l^{\epsilon,\eta}=a_4w^\nu n^{\epsilon,\eta} g^2H(v),\\[2pt]
&h^{\epsilon,\eta}_t+v\cdot \nabla h^{\epsilon,\eta}+(\delta-1)g\text{div}v=0,\\[2pt]
&(\phi^{\epsilon,\eta},u^{\epsilon,\eta},l^{\epsilon,\eta},h^{\epsilon,\eta})|_{t=0}=(\phi^\eta_0,u^\eta_0,l^\eta_0,h^\eta_0)\\[2pt]
=&(\phi_0+\eta,u_0,l_0,(\phi_0+\eta)^{2\iota})\quad  \text{for} \quad x\in\mathbb{R}^3,\\[2pt]
&(\phi^{\epsilon,\eta},u^{\epsilon,\eta},l^{\epsilon,\eta},h^{\epsilon,\eta})\rightarrow (\eta,0,\bar{l},\eta^{2\iota}) \quad \text{as} \hspace{2mm}|x|\rightarrow \infty \quad \rm for\quad t\geq 0,
\end{aligned}\right.\end{equation}
where $\epsilon$ and $\eta$ are  positive constants, 
\begin{equation}\label{2.11}
 \psi^{\epsilon,\eta}=\fr{a\delta}{\delta-1}\nabla h^{\epsilon,\eta},\quad n^{\epsilon,\eta}=(ah^{\epsilon,\eta})^b,\quad b=\fr{2-\delta-\gamma}{\delta-1}\leq 0,
\end{equation}
$v=(v^{(1)},v^{(2)},v^{(3)})^\top \in\mathbb{R}^3$ is a given vector, $g$ and $w$ are given real functions satisfying $w\geq 0$, $(v(0,x),g(0,x),w(0,x))=(u_0(x),h_0(x)=(\phi^\eta_0)^{2\iota}(x),l_0(x))$, and
\begin{equation}\label{4.1*}
\begin{split}
&g\in L^\infty\cap C([0,T]\times \mathbb{R}^3),\quad \nabla g\in C([0,T];L^q\cap D^{1,3}\cap D^2),\\[3pt]
& g_t\in C([0,T];H^2),\quad \nabla g_{tt}\in L^2([0,T];L^2),\\[3pt]
& v\in C([0,T];H^3)\cap L^2([0,T];H^4),\quad t^{\fr{1}{2}}v\in L^\infty([0,T];D^4),\\[3pt]
&v_t\in C([0,T];H^1)\cap L^2([0,T];D^2),\quad v_{tt}\in L^2([0,T];L^2),\\[3pt]
&  t^{\fr{1}{2}}v_t\in L^\infty([0,T];D^2)\cap L^2([0,T];D^3),\\[3pt]
& t^{\fr{1}{2}}v_{tt}\in L^\infty([0,T];L^2)\cap L^2([0,T];D^1_*),\quad w\in C([0,T];D^1_*\cap D^3),\\[3pt]
& w_t\in C([0,T];D^1_*\cap D^2),\quad \nabla w_{tt}\in  L^2([0,T];L^2).
\end{split}
\end{equation}
For the sake of clarity, we declare that the functions $(\phi_0,u_0,l_0)$ and the constant $\bar{l}=e^{\frac{\bar{S}}{c_v}}>0$ shown above in problem \eqref{ln} is exactly the ones in  \eqref{2.4}-\eqref{2.5}, and also define here  $$\psi_0=\frac{a\delta}{\delta-1}\nabla \phi^{2\iota}_0.$$

It follows from the classical theory \cites{KA,oar,amj}, at least when $\eta$
and $\epsilon$ are positive, that the following global well-posedness of \ef{ln} in $[0,T]\times \mathbb{R}^3$ holds.
\begin{lemma}\label{ls}
	Let $\ef{can1}$ hold, $\eta>0$ and $\epsilon>0$ and $(\phi_0,u_0,l_0,h_0)$ satisfy 
	\eqref{a}-\eqref{2.8*}. Then for any time $T>0$,  there exists a unique strong solution $(\phi^{\epsilon,\eta},u^{\epsilon,\eta},l^{\epsilon,\eta},h^{\epsilon,\eta})$ in $[0,T]\times \mathbb{R}^3$ to $\ef{ln}$ such that
	\begin{equation}\label{2.13}\begin{aligned}
	&\phi^{\epsilon,\eta}-\eta\in C([0,T];D^1_*\cap D^3), \quad  \phi^{\epsilon,\eta}_t\in C([0,T];H^2), \\[3pt]
	& h^{\epsilon,\eta}\in L^\infty\cap C([0,T]\times \mathbb{R}^3),\quad  \nabla h^{\epsilon,\eta}\in C([0,T];H^2),\\[3pt]
	& h^{\epsilon,\eta}_t\in C([0,T];H^2),\quad 	u^{\epsilon,\eta}\in C([0,T];H^3)\cap L^2([0,T];H^4),\\[3pt]
	& u^{\epsilon,\eta}_t\in C([0,T];H^1)\cap L^2([0,T];D^2),\quad  u^{\epsilon,\eta}_{tt}\in L^2([0,T];L^2),\\[3pt]
	& t^{\fr{1}{2}}u^{\epsilon,\eta}\in L^\infty([0,T];D^4), \quad t^{\fr{1}{2}}u^{\epsilon,\eta}_t\in L^\infty([0,T];D^2)\cap L^2([0,T];D^3),\\[3pt]
 & t^{\fr{1}{2}}u^{\epsilon,\eta}_{tt}\in L^\infty([0,T];L^2)\cap L^2([0,T];D^1_*),\quad  l^{\epsilon,\eta}-\bar{l}\in C([0,T];D^1_*\cap D^3),\\[3pt]
& l^{\epsilon,\eta}_t\in  C([0,T^*];D^{1}_*\cap D^2),\quad \nabla l^{\epsilon,\eta}_{tt}\in L^2([0,T^*];L^2).
	\end{aligned}\end{equation}
\end{lemma}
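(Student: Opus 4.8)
The plan is to exploit the \emph{triangular (cascade) structure} of \eqref{ln} once the functions $(v,g,w)$ are prescribed. Indeed, $\eqref{ln}_1$ and $\eqref{ln}_4$ are linear transport equations for $\phi^{\epsilon,\eta}$ and $h^{\epsilon,\eta}$ whose coefficients and sources involve only the given $(v,g)$; once $h^{\epsilon,\eta}$ has been found, $\psi^{\epsilon,\eta}=\frac{a\delta}{\delta-1}\nabla h^{\epsilon,\eta}$ and $n^{\epsilon,\eta}=(ah^{\epsilon,\eta})^b$ in \eqref{2.11} are determined; then $\eqref{ln}_3$ is a linear transport equation for $l^{\epsilon,\eta}$ with an entirely known right-hand side; and finally $\eqref{ln}_2$ is a linear second-order parabolic system for $u^{\epsilon,\eta}$ all of whose coefficients have been computed in the previous steps. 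Uniqueness for the full system then follows from uniqueness of each sub-problem: a difference of two solutions satisfies, level by level, the corresponding homogeneous equation with zero initial data, and a Gronwall argument forces it to vanish.

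For the transport steps I would invoke the classical $L^2$-based existence theory for linear transport and symmetric hyperbolic systems with variable coefficients (as in \cites{KA,oar,amj}): regularize the coefficients, run energy estimates for the equation and for its spatial derivatives up to the required order, and pass to the limit. Since $v\in C([0,T];H^3)\cap L^2([0,T];H^4)$ with $v_t\in C([0,T];H^1)\cap L^2([0,T];D^2)$, and $(g,\nabla g,g_t,w,w_t)$ satisfy \eqref{4.1*}, this produces $\phi^{\epsilon,\eta}-\eta\in C([0,T];D^1_*\cap D^3)$, $\phi^{\epsilon,\eta}_t\in C([0,T];H^2)$, $\nabla h^{\epsilon,\eta}\in C([0,T];H^2)$, $h^{\epsilon,\eta}_t\in C([0,T];H^2)$, and, using the known source of $\eqref{ln}_3$, $l^{\epsilon,\eta}-\bar l\in C([0,T];D^1_*\cap D^3)$, $l^{\epsilon,\eta}_t\in C([0,T];D^1_*\cap D^2)$, $\nabla l^{\epsilon,\eta}_{tt}\in L^2([0,T];L^2)$. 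Integrating $\eqref{ln}_1$ along the flow of $v$ gives $\inf_{[0,T]\times\mathbb{R}^3}\phi^{\epsilon,\eta}\ge\eta\exp\bigl(-(\gamma-1)\int_0^T|\text{div}\,v|_\infty\,\mathrm{d}t\bigr)>0$, while the same device together with the lower bound $\inf_x l_0>0$ from \eqref{a} shows that $l^{\epsilon,\eta}$ and $h^{\epsilon,\eta}$ remain in bounded, strictly positive intervals on $[0,T]$ (shrinking $T$ in terms of $\epsilon$, $\eta$ and the size of $(v,g,w)$ if necessary, which is harmless since only the uniform-in-$(\epsilon,\eta)$ estimates of \S3.2 fix the eventual life span). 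Hence $(l^{\epsilon,\eta})^\nu$, $\sqrt{(h^{\epsilon,\eta})^2+\epsilon^2}\ (\ge\epsilon)$ and $n^{\epsilon,\eta}=(ah^{\epsilon,\eta})^b$ are all bounded above and below by positive constants, so the reciprocal weights never appear.

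With these coefficients frozen, $\eqref{ln}_2$ reads $u_t+\mathcal{A}(t,x)Lu=F(t,x)$, where $\mathcal{A}=a_2(l^{\epsilon,\eta})^\nu\sqrt{(h^{\epsilon,\eta})^2+\epsilon^2}$ is uniformly elliptic and $F$ is a now-known function of $v,g,\phi^{\epsilon,\eta},l^{\epsilon,\eta},\psi^{\epsilon,\eta}$. I would construct $u^{\epsilon,\eta}$ by a Galerkin scheme (or by the classical theory of linear parabolic systems), obtaining $u^{\epsilon,\eta}\in C([0,T];H^3)\cap L^2([0,T];H^4)$; then differentiate the equation in $t$, using the regularity of the data and \eqref{2.8*} to identify $u^{\epsilon,\eta}_t|_{t=0}\in H^1$, to get $u^{\epsilon,\eta}_t\in C([0,T];H^1)\cap L^2([0,T];D^2)$ and $u^{\epsilon,\eta}_{tt}\in L^2([0,T];L^2)$; and finally establish the time-weighted bounds $t^{1/2}u^{\epsilon,\eta}\in L^\infty([0,T];D^4)$, $t^{1/2}u^{\epsilon,\eta}_t\in L^\infty([0,T];D^2)\cap L^2([0,T];D^3)$, $t^{1/2}u^{\epsilon,\eta}_{tt}\in L^\infty([0,T];L^2)\cap L^2([0,T];D^1_*)$ by the usual parabolic smoothing argument — multiply the $k$-th order differentiated energy identities by suitable powers of $t$ and integrate in time — which is precisely what bridges the gap between $u_0\in H^3$ and the $D^4$-regularity valid for $t>0$. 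The only point demanding real care is this last bootstrap: $\mathcal{A}$ and $F$ inherit only the finite time-regularity of $(v,g,w)$, so one must arrange the hierarchy of (time-weighted) energy estimates so that each level uses only quantities already controlled, and track how the weights $t^{1/2}$ propagate up the hierarchy. Since $\epsilon$ and $\eta$ are fixed here, this is entirely standard — it is the non-degenerate, linear shadow of the estimates carried out in detail in \S3.2 — so one may either cite \cites{KA,oar,amj} directly or run the same Galerkin-plus-energy scheme with $\epsilon,\eta$ kept positive throughout.
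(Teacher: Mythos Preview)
Your proposal is correct and in fact supplies considerably more detail than the paper itself: the paper does not prove this lemma at all but simply cites the classical theory \cites{KA,oar,amj}, so your explicit unpacking of the cascade structure (transport for $\phi^{\epsilon,\eta}$ and $h^{\epsilon,\eta}$ first, then $l^{\epsilon,\eta}$, then the uniformly parabolic system for $u^{\epsilon,\eta}$ with $\sqrt{(h^{\epsilon,\eta})^2+\epsilon^2}\ge\epsilon>0$) is exactly the intended route. Your caveat about possibly shrinking $T$ to keep $h^{\epsilon,\eta}>0$ (so that $n^{\epsilon,\eta}=(ah^{\epsilon,\eta})^b$ with $b\le 0$ remains finite) is well taken and, as you note, immaterial for the sequel since only the short-time estimates of \S3.2 are ever used.
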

 
Now we are going to derive the uniform a priori estimates, independent of $(\epsilon,\eta)$, for the strong  solution $(\phi^{\epsilon,\eta},u^{\epsilon,\eta},l^{\epsilon,\eta},h^{\epsilon,\eta})$ to $\ef{ln}$ obtained  in Lemma \ref{ls}.

\subsection{A priori estimates independent of $(\epsilon,\eta)$}

For any fixed $\eta\in(0,1]$, since 
$$(\phi^\eta_0,u^\eta_0,l^\eta_0,h^\eta_0)=(\phi_0+\eta,u_0,l_0,(\phi_0+\eta)^{2\iota}),$$ $(\phi_0,u_0,l_0,h_0)$  satisfy 
	\eqref{a}-\eqref{2.8*}  and $\psi_0=\frac{a\delta}{\delta-1}\nabla \phi^{2\iota}_0$, there exists a constant $c_0>0$ independent of $\eta$ such that
\begin{equation}\label{2.14}\begin{aligned}
&2+\eta+\bar{l}+\|\phi^\eta_0-\eta\|_{D^1_*\cap D^3}+\|u^\eta_0\|_{3}+\|\nabla h^\eta_0\|_{L^q\cap D^{1,3}\cap D^2}+|\nabla (h^\eta_0)^{\frac{1}{2}}|_6
\\
&|(h^\eta_0)^{-1}|_\infty+|g^\eta_1|_2+|g^\eta_2|_2+|g^\eta_3|_2+|g^\eta_4|_2+\|l^\eta_0-\bar{l}\|_{D^1_*\cap D^3}+|(l^\eta_0)^{-1}|_\infty\leq c_0,
\end{aligned}\end{equation}
where 
\begin{equation*}
g^\eta_1=(\phi^\eta_0)^{\iota}\nabla u^\eta_0,\quad g^\eta_2=(\phi^\eta_0)^{2\iota}Lu^\eta_0,\quad 
g^\eta_3=(\phi^\eta_0)^{\iota}\nabla((\phi^\eta_0)^{2\iota}Lu^\eta_0),\quad g_4^\eta=(\phi^\eta_0)^{\iota}\nabla^2l^\eta_0.
\end{equation*}

\begin{remark}\label{r1}
First, it follows from  the definition of $g^\eta_2$, $\phi^\eta_0>\eta$ and the far field behavior as in $\ef{ln}_6$  that 
	\begin{equation}\label{incom}\left\{\begin{aligned}
	&L((\phi^\eta_0)^{2\iota}u^\eta_0)=g^\eta_2-\fr{\delta-1}{a\delta}G(\psi^\eta_0,u^\eta_0),\\
&(\phi^\eta_0)^{2\iota}u^\eta_0\longrightarrow 0 \ \ \text{as}\ \ |x|\longrightarrow \infty,\\
	\end{aligned}\right.\end{equation}
	where $\psi^\eta_0=\frac{a\delta}{\delta-1}\nabla (\phi^\eta_0)^{2\iota}=\frac{a\delta}{\delta-1}\nabla h^\eta_0$, and 
\begin{equation}\label{Gdingyi}
G=\alpha\psi^\eta_0\cdot\nabla u^\eta_0+\alpha \text{div}(u^\eta_0\otimes\psi^\eta_0)+(\alpha+\beta)(\psi^\eta_0\text{div} u^\eta_0+\psi^\eta_0\cdot\nabla u^\eta_0+u^\eta_0\cdot\nabla\psi^\eta_0).
\end{equation}
Then it follows from the standard elliptic theory and $\ef{2.14}$ that
\begin{equation}\label{incc}
\begin{split}
|(\phi^\eta_0)^{2\iota}u^\eta_0|_{D^2}\leq & C(|g^\eta_2|_2+|G(\psi^\eta_0,u^\eta_0)|_2)\leq C_1<\infty,\\
|(\phi^\eta_0)^{2\iota}\nabla^2u^\eta_0|_2\leq & C(|(\phi^\eta_0)^{2\iota}u^\eta_0|_{D^2}+|\nabla\psi^\eta_0|_3|u^\eta_0|_6+|\psi^\eta_0|_\infty|\nabla u^\eta_0|_2)\leq C_1,
\end{split}
\end{equation}
where $C_1>0$ is a generic  constant independent of $(\epsilon,\eta)$. Due to $\nabla^2\phi^{2\iota}_0\in L^3$ and $\ef{incc}$, it holds that 
\begin{equation}\label{incc1}
|(\phi^\eta_0)^\iota\nabla^2\phi^\eta_0|_2+|(\phi^\eta_0)^\iota\nabla(\psi^\eta_0\cdot Q(u^\eta_0))|_2\leq C_1,
\end{equation}
where one has used the fact that
\begin{equation*}
\begin{split}
|\phi_0^\iota\nabla^2\phi_0 |_2\leq & C_1(|\phi_0|_6|\phi_0|^{-\iota}_\infty|\nabla^2\phi^{2\iota}_0|_3+|\nabla \phi^\iota_0|_6|\nabla \phi_0|_3)\leq  C_1,\\
|(\phi^\eta_0)^\iota\nabla^2\phi^\eta_0|_2=& \Big|\phi_0^\iota\nabla^2\phi_0 \frac{\phi_0^{-\iota}}{(\phi_0+\eta)^{-\iota}}\Big|_2\leq  |\phi_0^\iota\nabla^2\phi_0 |_2\leq C_1.
\end{split}
\end{equation*}

Second, it follows from  the initial  compatibility condition 
$$\nabla((\phi^\eta_0)^{2\iota}Lu^\eta_0)=(\phi^\eta_0)^{-\iota}g^\eta_3\in L^2,$$ 
that formally,
\begin{equation}\label{incom1}\left\{\begin{aligned}
	&L((\phi^\eta_0)^{2\iota}u^\eta_0)=\triangle^{-1}\text{div}((\phi^\eta_0)^{-\iota}g^\eta_3)-\fr{\delta-1}{a\delta}G(\psi^\eta_0,u^\eta_0),\\
&(\phi^\eta_0)^{2\iota}u^\eta_0\longrightarrow 0 \ \ \text{as}\ \ |x|\longrightarrow \infty.\\
	\end{aligned}\right.\end{equation}
Thus the standard  elliptic theory yields
\begin{equation}\label{incc*}
\begin{split}
|(\phi^\eta_0)^{2\iota}u^\eta_0|_{D^3}\leq & C(|\phi^\eta_0)^{-\iota}g^\eta_3|_2+|G(\psi^\eta_0,u^\eta_0)|_{D^1})\leq C_1<\infty,\\
|(\phi^\eta_0)^{2\iota}\nabla^3u^\eta_0|_2\leq & C(|(\phi^\eta_0)^{2\iota}u^\eta_0|_{D^3}+|\nabla\psi^\eta_0|_3|\nabla u^\eta_0|_6\\
&+|\psi^\eta_0|_\infty|\nabla^2 u^\eta_0|_2+|\nabla^2\psi_0^\eta|_2|u_0^\eta|_\infty)\leq C_1.
\end{split}
\end{equation}
Actually, the  rigorous proof for \ef{incom1} can be obtained by a standard smoothing process of the initial data,  which is  omitted here.
	\end{remark}
Now let $T$ be a positive fixed constant, and  assume that there exist some time $T^*\in(0,T]$ and constants $c_i (i=1,\cdots,5)$ such that
\begin{equation}\label{2.15}
1<c_0\leq c_1\leq c_2\leq c_3\leq c_4\leq c_5,
\end{equation}
and
\begin{equation}\label{2.16}\begin{aligned}
\sup_{0\leq t\leq T^*}\|\nabla g(t)\|^2_{L^q\cap D^{1,3}\cap D^2}\leq c_1^2,\quad  \sup_{0\leq t\leq T^*}\|w(t)-\bar{l}\|_{D^1_*\cap D^3}^2\leq c^2_1,&\\
\inf_{[0,T_*]\times \mathbb{R}^3} w(t,x)\geq c^{-1}_1,\quad  \sup_{0\leq t\leq T^*}\|v(t)\|^2_{1}+\int^{T^*}_0(|v|^2_{D^2}+|v_t|^2_2)\text{d}t\leq c_2^2,&\\
\sup_{0\leq t\leq T^*}(|v|^2_{D^2}+|v_t|^2_2+|g\nabla^2v|^2_2)(t)+\int^{T^*}_0(|v|_{D^3}^2+|v_t|^2_{D^1_*})\text{d}t\leq c_3^2,&\\
\sup_{0\leq t\leq T^*}(|v|^2_{D^3}+|\sqrt{g}\nabla v_t|^2_2+|\nabla v_t|^2_2)(t)+\int^{T^*}_0(|v|^2_{D^4}+|v_t|^2_{D^2}+|v_{tt}|^2_2)\text{d}t\leq c_4^2,&\\
\sup_{0\leq t\leq T^*}(|g_t|^2_{D^1_*}+|g\nabla^2v|^2_{D^1_*})(t)+\int^{T^*}_0(|(g\nabla^2v)_t|^2_2+|g\nabla^2v|^2_{D^2})\text{d}t\leq c_4^2,&\\
\sup_{0\leq t\leq T^*}(|g_t|^2_\infty+ |w_t|^2_3+|w_t|^2_{D^1_*})(t)\leq c^2_4,&\\
\text{ess}\sup_{0\leq t\leq T^*}(t|v|^2_{D^4}+t|\nabla^2v_t|^2_2+t|g\nabla^2v_t|^2_2)(t)+\int^{T^*}_0|g_{tt}|^2_{D^1_*}\text{d}t\leq c^2_5,&\\ 
  \sup_{0\leq t\leq T^*}( |w_t|^2_\infty+|w_t(t)|^2_{D^2})+\int^{T^*}_0|w_{tt}|^2_{D^1_*}\text{d}t\leq c^2_5,&\\
\text{ess}\sup_{0\leq t\leq T^*}t|v_{tt}(t)|^2_2+\int^{T^*}_0t(|v_{tt}|^2_{D^1_*}+|\sqrt{g}v_{tt}|_{D^1_*}^2+|v_t|^2_{D^3})\text{d}t\leq c_5^2.&
\end{aligned}
\end{equation}
$T^*$ and $c_i(i=1,\cdots,5)$ will be determined later, and depend only on $c_0$ and the fixed constants $(A, R, c_v, \alpha,\beta,\gamma,\delta, T)$.
Hereinafter,  $ M(c)\geq 1$ will denote  a generic continuous  and increasing function on $[0,\infty)$, and $C\geq 1$ will denote  a generic positive constant. Both $M(c)$ and $C$  depend only on fixed constants $(A, R, c_v, \alpha, \beta, \gamma, \delta, T)$, and  may be different from  line to line. 
Moreover, in  the rest of \S 3.2, without causing ambiguity,
we simply  drop the superscript $\epsilon$ and $\eta$ in 
$(\phi^\eta_0,u^\eta_0,l^\eta_0,h^\eta_0,\psi^\eta_0)$, 
$(\phi^{\epsilon,\eta},u^{\epsilon,\eta},l^{\epsilon,\eta},h^{\epsilon,\eta},\psi^{\epsilon,\eta})$, and $(g_1^\eta,g_2^\eta,g_3^\eta,g_4^\eta)$.

\subsubsection{The a priori estimates for $\phi$.} 

In the rest of \S 3.2, let  $(\phi,u,l,h)$ be the unique classical solution to $\ef{ln}$ in $[0,T]\times\mathbb{R}^3$ obtained in Lemma \ref{ln}. 
\begin{lemma}\label{phiphi}
	
	\begin{equation}\label{phi1}\begin{aligned}
	&\|\phi(t)-\eta\|_{D^1_*\cap D^3}\leq Cc_0,\quad|\phi_t(t)|_2\leq Cc_0c_2,\quad|\phi_t(t)|_{D^1_*}\leq Cc_0c_3,\\
	&|\phi_t(t)|_{D^2}\leq Cc_0c_4,\quad|\phi_{tt}(t)|_2\leq Cc_4^3,\quad\int^t_0\|\phi_{tt}(s)\|^2_1 \text{d}s\leq Cc_0^2c_4^2,
	\end{aligned}\end{equation}
	for $0\leq t\leq T_1=\min\{T^*,(1+Cc_4)^{-6}\}$.
\end{lemma}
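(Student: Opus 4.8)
\textbf{Proof plan for Lemma \ref{phiphi}.}

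The plan is to treat $\eqref{ln}_1$, namely $\phi_t+v\cdot\nabla\phi+(\gamma-1)\phi\,\mathrm{div}\,v=0$, as a linear transport equation for $\phi-\eta$ with coefficients controlled by the a priori bounds \eqref{2.16} on $v$, and then to differentiate in space and time to close the higher-order estimates via Gr\"onwall's inequality. First I would rewrite the equation for $\Phi:=\phi-\eta$ as $\Phi_t+v\cdot\nabla\Phi+(\gamma-1)\Phi\,\mathrm{div}\,v=-(\gamma-1)\eta\,\mathrm{div}\,v$. Applying $\nabla^k$ for $k=1,2,3$, multiplying by $\nabla^k\Phi$, integrating over $\mathbb{R}^3$, and using the standard commutator (Moser-type) estimates from the appendix together with $\|v\|_{H^3}\le c_2$ (and the time-integrated $|v|_{D^3}^2$, $|v|_{D^4}^2$ bounds), one gets an inequality of the form $\frac{d}{dt}\|\nabla^k\Phi\|_2^2 \le C(1+|v|_{D^3}+\cdots)\big(\|\Phi\|_{D^1_*\cap D^k}^2 + \eta^2\|\mathrm{div}\,v\|_{H^{k-1}}^2\big)$; since $\eta\le 1$ and the $v$-norms are integrable in time on $[0,T^*]$, Gr\"onwall gives $\|\Phi(t)\|_{D^1_*\cap D^3}\le Cc_0$ on a time interval $[0,T_1]$ with $T_1=\min\{T^*,(1+Cc_4)^{-6}\}$ — the power $6$ being dictated by how many factors of the top-order constant $c_4$ appear when one tracks the worst terms (e.g. $|v|_{D^4}$ paired with $|\nabla^3\Phi|_2$, and the $L^\infty$-in-time bounds on $v$, $\nabla v$). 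The choice of $T_1$ is exactly what absorbs the growth constant.

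Next I would handle the time derivatives directly from the equation rather than from energy estimates: $\phi_t=-v\cdot\nabla\phi-(\gamma-1)\phi\,\mathrm{div}\,v$, so $|\phi_t|_2\le |v|_\infty|\nabla\Phi|_2+(\gamma-1)|\phi|_\infty|\mathrm{div}\,v|_2\le Cc_0c_2$ using $|v|_\infty\le C|v|_{H^2}\le Cc_2$, $|\nabla\Phi|_2\le Cc_0$, and $|\phi|_\infty\le \eta + C|\nabla\Phi|_{H^1}\le Cc_0$ (Sobolev embedding $D^1_*\cap D^2\hookrightarrow L^\infty$ for the gradient, plus the far-field value $\eta$). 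For $|\phi_t|_{D^1_*}$ and $|\phi_t|_{D^2}$ I would apply $\nabla$ and $\nabla^2$ to the expression for $\phi_t$ and estimate each product by H\"older plus Gagliardo--Nirenberg, feeding in $|v|_{D^2}\le c_3$, $|v|_{D^3}\le c_4$ respectively, which yields $|\phi_t|_{D^1_*}\le Cc_0c_3$ and $|\phi_t|_{D^2}\le Cc_0c_4$. For $|\phi_{tt}|_2$, differentiate the equation once more in time: $\phi_{tt}=-v_t\cdot\nabla\phi-v\cdot\nabla\phi_t-(\gamma-1)(\phi_t\,\mathrm{div}\,v+\phi\,\mathrm{div}\,v_t)$, and bound each term using $|v_t|_2\le c_3$, $|\nabla\phi|_\infty\le Cc_0$, $|\mathrm{div}\,v_t|_2\le |v_t|_{D^1_*}$... but note that $|v_t|_{D^1_*}$ is only controlled in $L^2_t$ (via $\int_0^{T^*}|v_t|^2_{D^1_*}\le c_3^2$) and $|\nabla v_t|_2\le c_4$ pointwise, so the pointwise-in-time bound $|\phi_{tt}|_2\le Cc_4^3$ comes from using the $L^\infty_t$ bound $|\nabla v_t|_2\le c_4$ together with $|\phi|_\infty\le Cc_0\le Cc_4$ and $|v|_\infty\le Cc_4$, $|\nabla\phi_t|_2\le Cc_0c_3\le Cc_4^2$; the product of these gives the cubic power. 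Finally $\int_0^t\|\phi_{tt}\|_1^2\,ds$ is obtained by the same expression for $\phi_{tt}$ and its spatial gradient, this time integrating in time and using the $L^2_t$ bounds $\int|v_t|^2_{D^1_*}$, $\int|v_{tt}|^2_2$, $\int|v|^2_{D^3}$ from \eqref{2.16}, which yields the $Cc_0^2c_4^2$ bound.

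The main obstacle I anticipate is bookkeeping the precise powers of the constants $c_i$ so that everything genuinely closes on a common interval $[0,T_1]$ with $T_1$ independent of $(\epsilon,\eta)$: one must be careful that the transport estimate for $\|\Phi\|_{D^3}$ only costs $Cc_0$ (not a higher power of $c_4$) after shrinking time, since this bound will be reused downstream, while the time-derivative bounds are allowed to carry powers of $c_4$. This requires separating, at each order, the ``linear in $\Phi$'' terms (which Gr\"onwall absorbs) from the ``$\eta\,\mathrm{div}\,v$'' forcing (which is small because $\eta\le 1$ and is integrated against the already-controlled $v$-norms), and checking that the commutator terms never force a norm of $v$ stronger than what \eqref{2.16} provides at the relevant $L^\infty_t$ or $L^2_t$ level — in particular that the top-order term $\int v\cdot\nabla^4\Phi\,\nabla^3\Phi$ is handled by integration by parts in the transport variable so that only $\mathrm{div}\,v\in L^\infty$ (hence $|v|_{D^3}$, controlled) appears, not $\nabla^4\Phi$ itself. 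Once the power counting is pinned down, the exponent in $T_1=\min\{T^*,(1+Cc_4)^{-6}\}$ is forced, and the rest is routine.
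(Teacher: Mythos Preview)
Your approach is essentially the same as the paper's: treat $\eqref{ln}_1$ as a transport equation for $\phi-\eta$, close $\|\phi-\eta\|_{D^1_*\cap D^3}$ by Gr\"onwall with the $\eta\,\mathrm{div}\,v$ forcing, and read off the time-derivative bounds directly from the equation and its $t$-derivative. The paper's proof is terse but proceeds exactly along these lines (it also first records $|\phi|_\infty\le Cc_0$ from the characteristics formula, rather than via Sobolev embedding as you do, but either route is fine).

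One concrete correction on the constant bookkeeping: you write $|v|_\infty\le C\|v\|_{H^2}\le Cc_2$ and $\|v\|_{H^3}\le c_2$, but \eqref{2.16} only gives $\|v\|_1\le c_2$ while $|v|_{D^2}\le c_3$ and $|v|_{D^3}\le c_4$ pointwise in time. So your H\"older pairing $|v|_\infty|\nabla\Phi|_2$ would yield $Cc_0c_3$, not $Cc_0c_2$. To land on the stated bound $|\phi_t|_2\le Cc_0c_2$ the paper instead uses the sharper split $|\phi_t|_2\le C(|v|_3|\nabla\phi|_6+|\phi|_\infty|\nabla v|_2)$, where $|v|_3\le C\|v\|_1\le Cc_2$ and $|\nabla\phi|_6\le C|\nabla^2\phi|_2\le Cc_0$. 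Similar care with the H\"older exponents is needed at each level to match the precise $c_i$ powers claimed; your scheme is right, but you should re-read \eqref{2.16} carefully to see which norm of $v$ lives at which tier $c_i$ before doing the product estimates.
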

\begin{proof} First, it follows directly from  $\ef{ln}_1$  that, for $0\leq t\leq T_1$, 
\begin{equation}\label{phiinf}
|\phi|_\infty\leq |\phi_0|_\infty\exp\big({C\int^t_0|\text{div} v|_\infty \text{d}s}\big)\leq Cc_0.
\end{equation}

Second,
the standard energy estimates  for transport equations, $\ef{2.16}$ and $\ef{phiinf}$ yield that,  for $0\leq t\leq T_1$,
\begin{equation*}\begin{aligned}
\|\phi-\eta\|_{D^1_*\cap D^3}\leq &C(\|\phi_0-\eta\|_{D^1_*\cap D^3}+\eta\int^t_0\|\nabla v\|_3\text{d}s)\exp\Big(\int^t_0C\|v\|_4 \text{d}s\Big)
\leq Cc_0.
\end{aligned}\end{equation*}

This, together with $\ef{ln}_1$, yields
 that for $0\leq t\leq T_1$, 
\begin{equation}\label{phit}\left\{\begin{aligned}
&|\phi_t(t)|_2\leq C(|v|_3|\nabla\phi|_6+|\phi|_\infty|\nabla v|_2)\leq Cc_0c_2,\\
&|\phi_t(t)|_{D^1_*}\leq C(|v|_\infty|\nabla^2\phi|_2+|\nabla\phi|_6|\nabla v|_3+|\phi|_\infty|\nabla^2 v|_2)\leq Cc_0c_3,\\
&|\phi_t(t)|_{D^2}\leq C\|v\|_3(\|\nabla\phi\|_2+|\phi|_\infty) \leq Cc_0c_4.
\end{aligned}\right.\end{equation}

Similarly, it follows from 
\begin{equation*}
\phi_{tt}=-v_t\cdot\nabla\phi-v\cdot\nabla\phi_t-(\gamma-1)\phi_t\dv v-(\gamma-1)\phi \dv v_t
\end{equation*}
and $\ef{2.16}$ that for $0\leq t\leq T_1$,
\begin{equation*}\begin{aligned}
|\phi_{tt}|_2\leq & C(|v_t|_3|\nabla\phi|_6+|v|_\infty|\nabla\phi_t|_2+|\nabla v|_\infty|\phi_t|_2+|\phi|_\infty|\nabla v_t|_2)\leq Cc_4^3,\\
\int^t_0\|\phi_{tt}\|^2_1\text{d}s\leq & \int_0^t( \|v_t\cdot\nabla\phi\|_1+\|v\cdot\nabla\phi_t\|_1+\|\phi_t\dv v\|_1+\|\phi \dv v_t\|_1)^2\text{d}s \leq  Cc_0^2c_4^2.
\end{aligned}\end{equation*}

The proof of Lemma \ref{phiphi} is complete.
\end{proof}

\subsubsection{The a priori estimates for $\psi$.}
The following estimates for $\psi$ are needed  to deal with the degenerate elliptic operator.
\begin{lemma}\label{psi} 	\rm {For} $t\in [0, T_1]$ and  $q>3$, it holds that 
	\begin{equation}\label{2.24}\begin{aligned}
	&|\psi(t)|_\infty^2+\|\psi(t)\|^2_{L^q\cap D^{1,3}\cap D^2}\leq Cc_0^2,\hspace{2mm}|\psi_t(t)|_2\leq Cc_3^2,\\
	&|h_t(t)|^2_\infty\leq Cc_3^3c_4,\quad |\psi_t(t)|^2_{D^1_*}+\int^t_0(|\psi_{tt}|^2_2+|h_{tt}|^2_6)\text{d}s\leq Cc_4^4.
	\end{aligned}\end{equation}

\end{lemma}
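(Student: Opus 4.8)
\textbf{Proof proposal for Lemma~\ref{psi}.}
The plan is to treat $h$ (equivalently $\psi=\frac{a\delta}{\delta-1}\nabla h$) via its own linear transport equation $\ef{ln}_4$, namely
$h_t+v\cdot\nabla h+(\delta-1)g\,\mathrm{div}\,v=0$, and to differentiate it in space to get the transport system for $\psi$ that was exhibited in \S\ref{s2}:
$\psi_t+\sum_{k}A_k(v)\partial_k\psi+(\nabla v)^\top\psi+a\delta\big(g\nabla\mathrm{div}\,v+\nabla g\,\mathrm{div}\,v\big)=0$.
First I would run the standard $L^q$ (and $L^\infty$ via $q\to\infty$, or directly) energy estimate for this transport system: multiply by $|\psi|^{q-2}\psi$, integrate, use that the coefficient matrices $A_k(v)$ are symmetric so that the principal part contributes only $\int(\mathrm{div}\,v)|\psi|^q$, and absorb $\|\nabla v\|_{L^\infty}\le C\|v\|_3\le Cc_2$ and the forcing $\|a\delta(g\nabla\mathrm{div}\,v+\nabla g\,\mathrm{div}\,v)\|_{L^q}$, which is bounded by $M(c)(\|g\|_\infty|\nabla^2 v|_q+|\nabla g|_q|\nabla v|_\infty)\le M(c_4)$ using \ef{2.16} and \ef{4.1*}. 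Gronwall on a short interval $[0,T_1]$ then gives $\|\psi(t)\|_{L^q}\le C\|\psi_0\|_{L^q}\le Cc_0$; the $D^{1,3}$ and $D^2$ bounds follow identically after applying $\nabla$ and $\nabla^2$ to the $\psi$-equation, where each extra derivative lands on the forcing term as $g\nabla^3v$, $\nabla g\nabla^2v$, $\nabla^2 g\,\nabla v$, etc., each controlled by \ef{2.16}; the smallness of $T_1=\min\{T^*,(1+Cc_4)^{-6}\}$ is what keeps the exponential factor $\exp(\int_0^t C\|v\|_4)$ bounded by $C$.

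For the time-derivative bounds, $|\psi_t|_2$ comes straight from the $\psi$-equation read pointwise: $|\psi_t|_2\le C(|v|_\infty|\nabla\psi|_2+|\nabla v|_6|\psi|_3+|g|_\infty|\nabla^2v|_2+|\nabla g|_6|\nabla v|_3)$, and feeding in $\|\psi\|_{D^1_*}\le Cc_0$, $\|v\|_{D^2}\le c_3$, $\|g\|_{L^\infty\cap\dots}\le c_1$ yields $|\psi_t|_2\le Cc_3^2$. For $|h_t|_\infty$ one uses $\ef{ln}_4$ directly, $|h_t|_\infty\le |v|_\infty|\nabla h|_\infty+|\delta-1||g|_\infty|\nabla v|_\infty$, and then $|\nabla h|_\infty=\frac{|\delta-1|}{a\delta}|\psi|_\infty\le C\|\psi\|_{D^{1,3}\cap D^2}\le Cc_0$ by Sobolev embedding, $|\nabla v|_\infty\le C|v|_{D^2}^{1/2}|v|_{D^3}^{1/2}$ or simply $\le C\|v\|_3$, giving $|h_t|_\infty^2\le Cc_3^3c_4$ after using the relevant entries of \ef{2.16}. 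For $|\psi_t|_{D^1_*}$ differentiate the $\psi$-equation once more in space and estimate $|\nabla\psi_t|_2$ by the same scheme, now needing $|v|_{D^3}$, $|\nabla^3 v|_2$, $|g\nabla^3 v|_2$, all in \ef{2.16} at level $c_4$. Finally $\int_0^t|\psi_{tt}|_2^2$ and $\int_0^t|h_{tt}|_6^2$ are obtained by differentiating $\ef{ln}_4$ twice in $t$: $h_{tt}=-v_{tt}\cdot\nabla h-2v_t\cdot\nabla h_t-v\cdot\nabla h_{tt}\cdot(\text{lower})-(\delta-1)(g_{tt}\mathrm{div}\,v+2g_t\mathrm{div}\,v_t+g\,\mathrm{div}\,v_{tt})$ — rearranged to solve for $h_{tt}$ — and the time-integrals of $|v_{tt}|_2^2$, $|v_t|_{D^1_*}^2$, $|g_{tt}|_{D^1_*}^2$, $|v_t|_{D^2}^2$ in \ef{2.16} close the bound at level $c_4^4$ on $[0,T_1]$.

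The only genuinely delicate point is the appearance of the forcing term $a\delta\,g\nabla\mathrm{div}\,v$ in the $\psi$-equation with one derivative \emph{more} than $v$ carries pointwise: at the $D^2$ level this is $g\nabla^3 v$, and at the $\psi_t$-in-$D^1_*$ level it is $g\nabla^3 v_t$-type quantities, so the estimate is forced to use exactly the weighted/high-order bounds $\int_0^{T^*}|v|_{D^4}^2$, $\int_0^{T^*}|v_t|_{D^2}^2$ and the products $|g\nabla^2v|_{D^2}$, $|(g\nabla^2v)_t|_2$ that were deliberately placed into the induction hypothesis \ef{2.16}. This is precisely why the linearization in \S\ref{s2} kept the two factors $g$ and $\nabla\mathrm{div}\,v$ together in one step rather than splitting them, and why $h$ (not $\phi^{2\iota}$) is the evolved quantity; I expect the bookkeeping of these top-order product terms against \ef{2.16}, and the verification that $T_1$ is small enough to kill the Gronwall factors uniformly in $(\epsilon,\eta)$, to be the main obstacle, while everything else is routine transport-equation energy estimates. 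Note $\epsilon$ does not enter this lemma at all — $h$ and $\psi$ solve equations independent of $\epsilon$ — and $\eta$ enters only through the $(\epsilon,\eta)$-independent initial bound \ef{2.14}, so the estimates are automatically uniform.
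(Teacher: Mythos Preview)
Your overall plan coincides with the paper's: derive the transport equation for $\psi$ from $\ef{ln}_4$, run $L^q$, $D^{1,3}$, $D^2$ energy estimates with Gronwall on $[0,T_1]$, then read off $\psi_t$, $h_t$, $\psi_{tt}$, $h_{tt}$ directly from the equation and its time derivative. There is, however, one recurring gap that would break the proof as written.

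In the $L^q$ forcing bound, in $|\psi_t|_2$, and in $|h_t|_\infty$ you invoke $|g|_\infty$. But $|g|_\infty$ is \emph{not} among the quantitative hypotheses \ef{2.16}; the assertion $g\in L^\infty$ in \ef{4.1*} is purely qualitative. Since $g(0,x)=h_0^\eta=(\phi_0+\eta)^{2\iota}$ with $\iota<0$, one has $|g(0)|_\infty=\eta^{2\iota}\to\infty$ as $\eta\to0$, so any estimate containing $|g|_\infty$ cannot be uniform in $\eta$ and would destroy the limit in \S3.5. The paper avoids this by always keeping $g$ paired with derivatives of $v$ and estimating the \emph{product}: for the $L^q$ step it uses $|g\nabla^2v|_q\le C\|g\nabla^2v\|_{H^2}$ and $\int_0^t\|g\nabla^2v\|_2\,ds\le c_4t^{1/2}$; for $|\psi_t|_2$ the relevant term is $|g\nabla^2v|_2\le c_3$; and for $|h_t|_\infty$ it applies Gagliardo--Nirenberg to the product, $|g\,\mathrm{div}\,v|_\infty\le C|g\,\mathrm{div}\,v|_{D^1}^{1/2}|g\,\mathrm{div}\,v|_{D^2}^{1/2}$, each factor then splitting into $|\nabla g|_\infty|\nabla^kv|$ and $|g\nabla^{k+1}v|$ pieces that \emph{are} in \ef{2.16}. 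This is exactly the point you raise in your final paragraph, so you have identified the mechanism --- but your explicit estimates contradict it. Once you rewrite every occurrence of $|g|_\infty|\nabla^kv|$ as the product quantity already placed in \ef{2.16}, the rest of your argument goes through and matches the paper.

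A minor secondary slip: for $h_{tt}$ and $\psi_{tt}$ you differentiate once, not twice (your displayed identity for $h_{tt}$ is actually the equation for $h_{ttt}$); one time derivative of $\ef{ln}_4$ already gives $h_{tt}$ explicitly, and the dangerous term $g\,\mathrm{div}\,v_t$ is handled via $|g\nabla v_t|_6\le C(|\nabla g|_\infty|\nabla v_t|_2+|(g\nabla^2v)_t|_2+|g_t|_\infty|\nabla^2v|_2)$, again products only.
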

\noindent
\begin{proof} It follows from  $\psi=\frac{a\delta}{\delta-1}\nabla h$  and   $\ef{ln}_4$ that 
\begin{equation}\label{psieq}
\psi_t+\sum_{k=1}^3 A_k(v) \partial_k\psi+B^*(v)\psi+a\delta ( g\nabla \text{div} v +\nabla g \text{div} v ) =0,
\end{equation}
with  $B^*(v)=(\nabla v)^\top$ and $A_k(v)$ defined in \eqref{2.3}.

First, multiplying $(\ref{psieq})$ by $q \psi |\psi|^{q-2}$ and integrating over $\mathbb{R}^3$ yield that 
\begin{equation}\label{psiq}
\begin{split}
\fr{d}{dt}|\psi|^q_q \leq & C(|\nabla v|_\infty|\psi|^q_q+|\dv v|_\infty|\nabla g |_q|\psi|_q^{q-1}+|g\nabla^2v|_{q}|\psi|_q^{q-1})\\
\leq & C(|\nabla v|_\infty|\psi|^q_q+|\dv v|_\infty|\nabla g |_q|\psi|_q^{q-1}+\|g\nabla^2v\|_{2}|\psi|_q^{q-1}).
\end{split}
\end{equation}

According to  \eqref{2.16}, one can obtain that 
\begin{equation*}
\int^t_0\|g\nabla^2v\|_2\text{d}s\leq t^{\fr{1}{2}}\big(\int^t_0\|g\nabla^2v\|^2_2\text{d}s\big)^{\fr{1}{2}}\leq c_4t^{\fr{1}{2}},
\end{equation*}
which, together  with \eqref{psiq} and  Gronwall's inequality,  yields that 
\begin{equation*}
|\psi(t)|_q \leq Cc_0 \quad \text{for} \quad  0\leq t\leq T_1.
\end{equation*}

Second, set $\varsigma=(\varsigma_1,\varsigma_2,\varsigma_3)^\top$ ($ |\varsigma|=1$ and $\varsigma_i=0,1$). Applying  $\partial_{x}^{\varsigma} $ to $\ef{psieq}$,
multiplying by $3|\partial_{x}^{\varsigma} \psi|\partial_{x}^{\varsigma} \psi$ and then integrating over $\mathbb{R}^3$, one can get
\begin{equation}\label{2.26}\begin{split}
\frac{d}{dt}|\partial_{x}^{\varsigma}  \psi|^3_3
\leq & \Big(\sum_{k=1}^{3}|\partial_{k}A_k(v)|_\infty+|B^*(v)|_\infty\Big)|\partial_{x}^{\varsigma}  \psi|^3_3+C|\Theta_\varsigma |_3|\partial_{x}^{\varsigma}  \psi|^2_3,
\end{split}
\end{equation}
where
\begin{equation*}
\begin{split}
\Theta_\varsigma=\partial_{x}^{\varsigma} (B^*\psi)-B^*\partial_{x}^{\varsigma}  \psi+\sum_{k=1}^{3}\big(\partial_{x}^{\varsigma} (A_k \partial_k \psi)-A_k \partial_k\partial_{x}^{\varsigma}  \psi\big)+ a\delta \partial_{x}^{\varsigma}\big(g\nabla \text{div} v+\nabla g \text{div} v\big).
\end{split}
\end{equation*}

On the other hand, for $|\varsigma|=2$ and $\varsigma_i=0,1,2$, applying  $\partial_{x}^{\varsigma} $ to $\ef{psieq}$, multiplying by $2\partial_{x}^{\varsigma} \psi$ and then integrating over $\mathbb{R}^3$ lead to 
\begin{equation}\label{2.26*}\begin{split}
\frac{d}{dt}|\partial_{x}^{\varsigma}  \psi|^2_2
\leq & \Big(\sum_{k=1}^{3}|\partial_{k}A_k(v)|_\infty+|B^*(v)|_\infty\Big)|\partial_{x}^{\varsigma}  \psi|^2_2+C|\Theta_\varsigma |_2|\partial_{x}^{\varsigma}  \psi|_2.
\end{split}
\end{equation}

For $|\varsigma|=1$, it is easy  to obtain
\begin{equation}\label{zhen2}
\begin{split}
|\Theta_\varsigma |_3\leq &C\big(|\nabla^2 v|_3(|\psi|_\infty+|\nabla g|_\infty)+ |\nabla v|_\infty( |\nabla\psi|_3+|\nabla^2 g|_3)
+|\nabla(g\nabla^2 v)|_{3}\big).
\end{split}
\end{equation}
Similarly, for $|\varsigma|=2$, one has
\begin{equation}\label{zhen2*}
\begin{split}
|\Theta_\varsigma |_2\leq& C\big(|\nabla v|_\infty(|\nabla^2\psi|_2+|\nabla^3 g|_2)+|\nabla^2 v|_6(|\nabla\psi|_3+|\nabla^2 g|_3)\big)\\
&+C|\nabla^3 v|_2 (|\psi|_\infty+|\nabla g|_\infty)+C|g\nabla \text{div}v|_{D^2}.
\end{split}
\end{equation}
It follows from $\ef{2.26}$-$\ef{zhen2*}$ and the Gagliardo-Nirenberg inequality
\begin{equation*}
|\psi|_\infty\leq C |\psi|^\xi_q|\nabla\psi|^{1-\xi}_6\leq C|\psi|_q^\xi|\nabla^2\psi|_2^{1-\xi} \quad \text{with} \quad \xi=\fr{q}{6+q},
\end{equation*}
that
\begin{equation*}
\frac{d}{dt}\|\psi(t)\|_{D^{1,3}\cap D^2}\leq Cc_4\|\psi(t)\|_{D^{1,3}\cap D^2}+C|g\nabla \text{div}v|_{D^2}+C c^2_4,
\end{equation*}
which, along with   Gronwall's inequality,  implies that for $0\leq t \leq T_1$,
\begin{equation}\label{2.26a}\begin{split}
\|\psi(t)\|_{D^{1,3}\cap D^2}\leq&  \Big(c_0+Cc^2_4t+C\int_0^t |g\nabla \text{div}v|_{D^2} \text{d}s\Big) \exp(Cc_4t)\leq Cc_0.
\end{split}
\end{equation}

Next, due to  $\ef{psieq}$, it holds that  for  $0\leq t \leq T_1$,
\begin{equation*}\left\{\begin{aligned}
&|\psi_t(t)|_2\leq C\big(|\nabla v|_2| \psi|_{D^{1,3}}+|\nabla v|_2|\psi|_{\infty}+|g\nabla^2 v|_2+|\nabla g|_\infty |\nabla v|_2\big)\leq Cc^2_3,\\[10pt]
&|\nabla \psi_t(t)|_{2}\leq C \big(\|v\|_{3}(\|\psi\|_{L^q\cap D^{1,3}\cap D^2}+\|\nabla g\|_{L^q\cap D^{1,3}\cap D^2})+|g\nabla^2 v|_{D^1_*}\big) \leq Cc^2_4.
\end{aligned}\right.
\end{equation*}
Similarly, via the relation
$$\psi_{tt}=-\nabla (v \cdot \psi)_t-a\delta \big(g\nabla \text{div} v+\nabla g \text{div} v\big)_t,$$
for $0\leq t \leq T_1$,  one gets
\begin{equation}\label{2.26c}\begin{split}
\int_0^t |\psi_{tt}|^2_{2} \text{d}s
\leq& C\int_0^t \Big(|v_t|^2_6|\nabla \psi|^2_3+|\nabla v|^2_\infty|\psi_t|^2_{2}+|v|^2_\infty|\nabla \psi_t|^2_2+|\psi|^2_\infty|\nabla v_t|^2_{2}\\
&+|(g \nabla \text{div} v)_t|^2_{2}+|\nabla g|^2_\infty|\nabla v_t|^2_2+|\nabla v|^2_\infty|\nabla g_t|^2_2\Big) \text{d}s
\leq Cc^4_4.
\end{split}
\end{equation}

Finally, it follows from Gagliardo-Nirenberg inequality and $\ef{2.16}$ that
\begin{equation}\label{2.26d}\begin{split}
|g\text{div} v|_\infty\leq & C|g\text{div} v|^{\frac{1}{2}}_{D^1}|g\text{div} v|^{\frac{1}{2}}_{D^2}
\leq  C\big(|\nabla g|_\infty|\nabla v|_2+|g\nabla^2 v|_2\big)^{\frac{1}{2}}\\
&\cdot \big(|\nabla^2 g|_2|\nabla v|_\infty+|\nabla g|_\infty|\nabla^2 v|_2+|g\nabla^2v|_{D^1_*}\big)^{\frac{1}{2}}
\leq  Cc^{\frac{3}{2}}_3c^{\frac{1}{2}}_4.
\end{split}
\end{equation}

Then, together   with $\ef{ln}_4$,  it yields  that for $0\leq t \leq T_1$,
\begin{equation*}\begin{split}
|h_t(t)|_\infty\leq C(|v|_\infty|\psi|_\infty+|g\text{div}v|_\infty)\leq Cc^{\frac{3}{2}}_3c^{\frac{1}{2}}_4,&\\
\int_0^t |h_{tt}|^2_6\text{d}s\leq C\int_0^t \big(|v|_\infty|\psi_t|_6+|v_t|_6|\psi|_\infty+|g_{t}|_\infty|\nabla v|_6+|g\nabla v_t|_6\big)^2 \text{d}s\leq  Cc^4_4,&
\end{split}
\end{equation*}
where  one has used the fact that
\begin{equation}\begin{split}\label{gnablavt}
|g\nabla v_t|_6\leq& C\big(|\nabla g|_\infty|\nabla v_t|_2+| g\nabla^2 v_t|_2)\\
\leq &C\big(|\nabla g|_\infty|\nabla v_t|_2+| (g\nabla^2 v)_t|_2+| g_t|_\infty|\nabla^2 v|_2).
\end{split}
\end{equation}

The proof of Lemma \ref{psi} is complete.
\end{proof}

\subsubsection{The a priori estimates for h-related auxiliary variables}  Set
$$ 
\varphi=h^{-1}\quad \text{and} \quad 
\quad n=(ah)^b=a^bh^{\frac{2-\delta-\gamma}{\delta-1}}.
$$
\begin{lemma}\label{varphi}\rm {For} $t\in [0, T_1]$ and  $q>3$, it holds that 
	\begin{equation}\label{2.34}\begin{aligned}
	h(t,x)>\fr{1}{2c_0},\quad\fr{2}{3}\eta^{-2\iota}<\varphi(t,x)<2|\varphi_0|_\infty\leq & 2c_0,\\
	|\nabla\sqrt{h}(t)|_6\leq Cc_0,\quad 
 	\|n(t)\|_{ L^\infty\cap D^{1,q}\cap D^{1,6} \cap D^{2,3} \cap D^3}\leq & M(c_0),\\
	|n_t(t)|_\infty\leq M(c_0)c_4^2,\quad |n_t(t)|_6\leq & M(c_0)c_3^2, \\
	|\nabla n_t(t)|_3\leq M(c_0)c_4^2,\quad |\nabla n_t(t)|_6\leq &M(c_0)c_4^2.
	\end{aligned}\end{equation}

\end{lemma}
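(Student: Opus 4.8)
The plan is to derive every bound from three ingredients: the transport equation $\ef{ln}_4$ for $h$, the estimates for $\psi=\frac{a\delta}{\delta-1}\nabla h$ and for $h_t$ already obtained in Lemmas \ref{phiphi}--\ref{psi}, and the chain rule applied to $n=(ah)^b$ with $b=\frac{2-\delta-\gamma}{\delta-1}\le0$. The decisive observation is that one needs only a pointwise \emph{lower} bound on $h$: once $h(t,x)>\frac{1}{2c_0}$ is in hand, every negative power obeys $|h^{-s}|_\infty\le M(c_0)$ for $s\ge0$ uniformly in $(\epsilon,\eta)$, and all of $n$, $\nabla^k n$, $n_t$, $\nabla^k n_t$ become, via the chain rule, finite sums of such powers times products of $v,\nabla v,\psi,\nabla\psi,\nabla^2\psi,\nabla\sqrt h,g,\nabla g,g\nabla^2 v$ and $h_t$. (One should keep in mind that $h$ itself is \emph{not} bounded above uniformly in $\eta$, since $h_0=(\phi_0+\eta)^{2\iota}\to\eta^{2\iota}$ as $|x|\to\infty$; only the lower bound survives the limit $\eta\to0$, and it is exactly this bound that $n$ inherits.) To prove the pointwise bounds for $h$ and $\varphi$, I would rewrite $\ef{ln}_4$ as $h_t+v\cdot\nabla h=(1-\delta)g\,\dv v$ and integrate along the characteristics $\dot X=v(t,X)$: using $|g\,\dv v|_\infty\le Cc_3^{3/2}c_4^{1/2}$ from $\ef{2.26d}$ one gets $|h(t,X(t))-h_0(X(0))|\le Cc_3^{3/2}c_4^{1/2}t$ on $[0,T_1]$. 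Since $\inf_x h_0\ge c_0^{-1}$ by $\ef{2.14}$ and $Cc_3^{3/2}c_4^{1/2}T_1\le Cc_4^2(1+Cc_4)^{-6}\le\frac{1}{2c_0}$ (here $c_0\le c_4$ and $C$ is taken large), it follows that $h(t)\ge\frac{1}{2c_0}$ and hence $\varphi=h^{-1}\le2\varphi_0(X(0))\le2|\varphi_0|_\infty\le2c_0$; and since $\phi_0>0$, $2\iota<0$ force $h_0<\eta^{2\iota}$ with $\eta^{2\iota}\ge1$, the same estimate gives $h(t)\le\eta^{2\iota}+Cc_3^{3/2}c_4^{1/2}t<\frac32\eta^{2\iota}$, i.e. $\varphi(t)>\frac23\eta^{-2\iota}$.

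Next I would establish the $L^6$ bound for $\nabla\sqrt h$, which is \emph{not} a consequence of the control of $\psi$ alone. Writing $\sigma=\sqrt h$, the equation $\sigma_t+v\cdot\nabla\sigma=\frac{1-\delta}{2}\sigma^{-1}g\,\dv v$ gives, after applying $\nabla$ and testing against $6|\nabla\sigma|^4\nabla\sigma$, a differential inequality of the form $\frac{d}{dt}|\nabla\sigma|_6\le C\big(|\nabla v|_\infty+|\varphi|_\infty|g\,\dv v|_\infty\big)|\nabla\sigma|_6+M(c_0)c_4$, where one uses $|\sigma^{-1}|_\infty^2=|\varphi|_\infty\le2c_0$ from the previous step, $|\nabla g|_\infty\le Cc_1$, $|\nabla v|_\infty\le Cc_4$, $|g\nabla^2 v|_6\le C|g\nabla^2 v|_{D^1_*}\le Cc_4$ (since $g\nabla^2 v$ vanishes at infinity), and the a priori bounds $\ef{2.16}$. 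As $|\nabla\sigma(0)|_6=|\nabla(\phi_0+\eta)^\iota|_6\le|\nabla\phi_0^\iota|_6\le c_0$ by $\ef{2.14}$, Gronwall's inequality together with the smallness of $T_1$ yields $|\nabla\sqrt h(t)|_6\le Cc_0$ on $[0,T_1]$.

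Finally I would treat $n$ and its derivatives. Since $\nabla h=\frac{\delta-1}{a\delta}\psi=2\sqrt h\,\nabla\sqrt h$, the chain rule expresses $\nabla n$, $\nabla^2 n$, $\nabla^3 n$ as sums of $h^{b-s}$ times products of $\psi,\nabla\psi,\nabla^2\psi$; whenever an $L^6$ or $L^3$ norm is needed I would re-express the $\psi$-factors through $\nabla\sqrt h$ (so that, e.g., $h^{b-2}\psi\otimes\psi=4h^{b-1}(\nabla\sqrt h)^{\otimes2}$), pull out $|h^{b-s}|_\infty\le M(c_0)$, and close with Lemma \ref{psi} ($\|\psi\|_{L^q\cap D^{1,3}\cap D^2}+|\psi|_\infty\le Cc_0$) and the bound $|\nabla\sqrt h|_6\le Cc_0$ (so that $(\nabla\sqrt h)^2\in L^3$ and $(\nabla\sqrt h)^3\in L^2$); this gives $\|n\|_{L^\infty\cap D^{1,q}\cap D^{1,6}\cap D^{2,3}\cap D^3}\le M(c_0)$. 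For the time derivatives, differentiating $n=(ah)^b$ and inserting $h_t=-v\cdot\nabla h-(\delta-1)g\,\dv v$ and its gradient — always keeping the negative powers of $h$ attached — I would estimate the resulting terms using $|h_t|_\infty\le Cc_3^{3/2}c_4^{1/2}$ (Lemma \ref{psi}), $|g\,\dv v|_6\le C|g\,\dv v|_{D^1_*}\le Cc_3^2$, $|\nabla\psi|_6\le C|\nabla^2\psi|_2\le Cc_0$, $|g\nabla^2 v|_6\le Cc_4$ and $\ef{2.16}$, obtaining $|n_t|_\infty\le M(c_0)c_4^2$, $|n_t|_6\le M(c_0)c_3^2$, $|\nabla n_t|_3\le M(c_0)c_4^2$ and $|\nabla n_t|_6\le M(c_0)c_4^2$.

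The hard part will be the uniform (in $\epsilon,\eta$) lower bound for $h$ in the first step: one must extract the $M(c_0)$-control of all negative powers of $h$ — which is precisely what makes the whole $n$-estimate independent of $(\epsilon,\eta)$ — from the very thin information $\inf h_0\ge c_0^{-1}$ together with the fact that the source $g\,\dv v$ is only controlled on the short interval $[0,T_1]$, while simultaneously keeping track of the $\eta$-dependent \emph{upper} bound of $h$ that is needed for the lower bound on $\varphi$. The second subtle point is that $\nabla\sqrt h$, rather than $\nabla h$, is the quantity that must be paired with the negative powers of $h$ throughout the $n$-estimates, because it is $\nabla\sqrt h$ (and not $\psi=\frac{a\delta}{\delta-1}\nabla h$, when $q\ge6$) that lies in $L^6$; its $L^6$ bound therefore has to be produced independently from the transport structure of $\nabla\sqrt h$, which is the content of the second step.
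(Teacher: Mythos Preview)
Your proposal is correct and follows essentially the same three-step strategy as the paper: characteristic integration for the pointwise bounds on $h$ and $\varphi$, an $L^6$ energy estimate for $\nabla\sqrt h$, and the chain rule applied to $n=(ah)^b$ together with the $\psi$- and $\nabla\sqrt h$-bounds for the remaining estimates. The only cosmetic difference is that the paper integrates the Bernoulli-type equation for $\varphi=h^{-1}$ along characteristics to obtain the explicit formula $\varphi(t,X)=\varphi_0\big(1+(1-\delta)\varphi_0\int_0^t g\,\dv v\,ds\big)^{-1}$, whereas you integrate the linear equation for $h$ directly; both yield the same bounds under the smallness of $T_1$.
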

\noindent
\begin{proof} \textbf{Step 1:} Estimates on $\varphi$.
Note that 
\begin{equation}\label{2.34a}
\varphi_t+v\cdot \nabla \varphi-(\delta-1)g\varphi^2\text{div} v=0.
\end{equation}
Let $X(t;x)$ be  the particle path defined by
\begin{equation}\begin{cases}
\label{2.34b}
\frac{d}{ds}X(t;x)=v(s,X(t; x)),\quad  0\leq t\leq T;\\[8pt]
X(0;x)=x, \quad \qquad \qquad \ \ \quad   x\in \mathbb{R}^3
\end{cases}
\end{equation}
Then 
\begin{equation}\label{2.34c}
\begin{split}
\displaystyle
\varphi(t,X(t;x))=\varphi_0(x)\Big(1+(1-\delta)\varphi_0(x)\int_0^t g\text{div}v (s,X(s;x))\text{d}s\Big)^{-1}.
\end{split}
\end{equation}
This, along with $\ef{2.26d}$, implies that 
\begin{equation}\label{2.34d}
\begin{split}
\displaystyle
\frac{2}{3}\eta^{-2\iota}<\varphi(t,x)<2|\varphi_0|_\infty\leq 2c_0\quad \text{for} \quad  [t,x]\in [0,T_1]\times \mathbb{R}^3.
\end{split}
\end{equation}

\noindent\textbf{Step 2:} Estimates on $\nabla \sqrt{h}$.
It follows from  $\ef{ln}_4$ that 
\begin{equation}\label{2.23b}
(\sqrt{h})_t+v\cdot\nabla\sqrt{h}+\fr{1}{2}(\delta-1)h^{-\fr{1}{2}}g\text{div}v=0,
\end{equation}
which implies 
\begin{equation}\label{2.23c}
(\nabla\sqrt{h})_t+\nabla(v\cdot\nabla\sqrt{h})+\fr{1}{2}(\delta-1)(\nabla h^{-\fr{1}{2}}g\text{div}v+h^{-\fr{1}{2}}\nabla(g\text{div}v))=0.
\end{equation}
Multiplying $\ef{2.23c}$ by $6|\nabla\sqrt{h}|^4\nabla\sqrt{h}$ and  integrating with respect to $x$ over $\mathbb{R}^3$ yield
\begin{equation}\label{2.23d}\begin{split}
\fr{d}{dt}|\nabla\sqrt{h}|^6_6 \leq& C|\nabla v|_\infty|\nabla\sqrt{h}|^6_6+C\big(|\varphi|^{\fr{1}{2}}_\infty(|g\nabla^2v|_6+|\nabla g|_\infty|\nabla v|_6)|\nabla\sqrt{h}|^5_6 \\
& +|\varphi|^{\fr{3}{2}}_\infty|g\text{div}v|_6|\psi|_\infty|\nabla\sqrt{h}|^5_6\big).
\end{split}
\end{equation}
Integrating $\ef{2.23d}$ with respect to $t$ and using  $\ef{2.16}$, \eqref{2.34d} and Lemma \ref{psi} lead to 
\begin{equation}\label{2.23e}
|\nabla\sqrt{h}(t)|_6\leq Cc_0\quad \text{for} \quad 0\leq t\leq T_1.
\end{equation}


\noindent\textbf{Step 3:} Estimates on $n$. Since 
$n=(ah)^b$, then 
\begin{equation}\label{neq}
n_t+v\cdot\nabla n+(2-\delta-\gamma)a^b h^{b-1}g\dv v=0.
\end{equation}

Then  it follows from Lemma \ref{psi}, \eqref{2.34d} and \eqref{2.23e} that for $0\leq t \leq T_1$,
\begin{equation*}
\begin{split}
&|n|_\infty\leq  a^b|\varphi|_\infty^{-b}\leq M(c_0),\quad |\nabla n|_q=a^b|bh^{b-1}\nabla h|_q\leq M(c_0),\\
&|\nabla n|_6=2a^b|h^{b-\fr{1}{2}}\nabla \sqrt{h}|_6\leq M(c_0),\\
&|\nabla^2n|_6\leq  C(|h^{b-1}\nabla^2h|_6+|h^{b-\frac{3}{2}}\nabla h \cdot \nabla \sqrt{h}|_6)\leq M(c_0),\\
&|\nabla^3n|_2\leq  C(|h^{b-1}\nabla^3 h|_2+|h^{b-\frac{3}{2}}\nabla^2 h \cdot \nabla \sqrt{h}|_2+|h^{b-\frac{3}{2}}| \nabla \sqrt{h}|^3|_2)\leq M(c_0),\\
&|\nabla^2 n|_3\leq  C(|h^{b-1}\nabla^2h|_3+|h^{b-1}| \nabla \sqrt{h}|^2|_3)\leq M(c_0),\\
&|n_t|_\infty\leq C(|v|_\infty|\nabla n|_\infty+|\varphi^{1-b}|_\infty|g \text{div} v|_\infty)\leq M(c_0)c_4^2,\\
&|n_t|_6\leq  C(|v|_\infty|\nabla n|_6+|\varphi^{1-b}|_\infty|g \text{div} v|_6)\leq M(c_0)c_3^2,\\
&|\nabla n_t|_{3}\leq C(|\nabla v\cdot \nabla n|_3+|v\cdot \nabla^2 n|_3+|h^{b-1}\nabla g \text{div}v|_3)\\
&\qquad \ \ \ \ +C(|h^{b-\frac{3}{2}} \nabla \sqrt{h} g \text{div}v|_3+|h^{b-1} g \nabla \text{div}v|_3)\leq  M(c_0)c_4^2,\\
&|\nabla n_t|_{6}\leq  C(|\nabla v\cdot \nabla n|_6+|v\cdot \nabla^2 n|_6+|h^{b-1}\nabla g \text{div}v|_6)\\
&\qquad \ \ \ \ +C(|h^{b-1} \nabla h g \text{div}v|_6+|h^{b-1} g \nabla \text{div}v|_6)\leq  M(c_0)c_4^2.
\end{split}
\end{equation*}

The proof of Lemma \ref{varphi} is complete.
\end{proof}

\subsubsection{The a priori estimates for $l$} We now turn to the estimates involving the entropy.
\begin{lemma}\label{l}Set $T_2=\min\{T_1,(1+Cc_4)^{-20-2\nu}\}$. Then for $t\in [0,T_2]$, 
	\begin{equation}\label{lt}\begin{split}
	c_0^{-1}\leq l(t,x)\leq M(c_0),\quad\|l-\bar{l}\|_{D^1_*\cap D^3}\leq & M(c_0),\\
	|l_t|_{\infty}\leq   M(c_0)c_3^{3+\nu}c_4,\quad 	|l_t|_3\leq   M(c_0)c_3^{4+\nu},\quad 
	|l_t|_{D^1_*}\leq  & M(c_0)c_3^{6+\nu}c_4^{\fr{1}{2}},\\
 |\nabla^2l_t|_2\leq 	 M(c_0)c_3^{8+\nu}c_4^{\fr{3}{2}},\quad 
 \int^t_0| l_{tt}|^2_6dt\leq & M(c_0)c_4^{6+2\nu},\\
 \int^t_0|\nabla l_{tt}|^2_2dt\leq  M(c_0)c_4^{10+2\nu},\quad
	|h^{\fr{1}{4}}\nabla l|_6\leq & M(c_0).
	\end{split}\end{equation}

\end{lemma}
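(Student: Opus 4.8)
\medskip

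The plan is to regard $\eqref{ln}_3$, written as $l_t+v\cdot\nabla l=F$ with $F:=a_4w^\nu n g^2H(v)$, as a linear transport equation for $l$ driven by a (mildly singular) source $F$ controlled through the bounds on $v,g,w$ in $\eqref{4.1*}$--$\eqref{2.16}$ and on $h,\varphi,\psi,n$ already obtained in Lemmas \ref{phiphi}--\ref{varphi}; all the estimates in $\eqref{lt}$ then follow from the corresponding transport estimates for $l$, $\nabla^k l$, $\partial_t^j l$ and for the weighted quantity $h^{\frac14}\nabla l$, closed for $t\le T_2$ by the smallness of $T_2$. I begin with the $L^\infty$ bounds. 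Since
\[
H(u)=\tfrac{\alpha}{2}\big|\nabla u+(\nabla u)^\top\big|^2+\beta(\text{div}\,u)^2=2\alpha\big|D(u)-\tfrac13\text{div}\,u\,\mathbb I_3\big|^2+\big(\tfrac{2\alpha}{3}+\beta\big)(\text{div}\,u)^2,
\]
the constraint $2\alpha+3\beta\ge0$ in $\eqref{can1}$ forces $H(v)\ge0$, hence $F\ge0$ as $w\ge0$, $a_4>0$ and $n=(ah)^b>0$ by Lemma \ref{varphi}. Integrating along the particle path $X(t;x)$ of $\eqref{2.34b}$ gives $l(t,X(t;x))=l_0(x)+\int_0^tF(s,X(s;x))\,\text{d}s$, so $l\ge l_0\ge c_0^{-1}$ by $\eqref{2.14}$, while $|l(t)|_\infty\le|l_0|_\infty+\int_0^t|F(s)|_\infty\,\text{d}s$; since $|F|_\infty\le M(c_0)c_1^{2+\nu}|\nabla v|_\infty^2$ (using $|w|_\infty\le Cc_1$, $|n|_\infty\le M(c_0)$, and that $g$ enters only through the controlled products $g\,\text{div}\,v$, $g\nabla v$) and $\int_0^t|\nabla v|_\infty^2\,\text{d}s\le C\int_0^t|v|_{D^2}|v|_{D^3}\,\text{d}s\le Cc_3c_4\,t$ by Gagliardo--Nirenberg and $\eqref{2.16}$, one gets $l(t)\le M(c_0)$ for $t\le T_2$.

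Next, applying $\nabla$ and $\nabla^3$ to $\eqref{ln}_3$ and running the standard energy estimate for transport equations (with the $v$-commutators handled by Moser-type inequalities) yields
\[
\|l-\bar l\|_{D^1_*\cap D^3}(t)\le\Big(\|l_0-\bar l\|_{D^1_*\cap D^3}+\int_0^t\|F\|_{D^1_*\cap D^3}\,\text{d}s\Big)\exp\Big(C\int_0^t\|v\|_4\,\text{d}s\Big).
\]
Grouping $g^2H(v)$ as a quadratic form in $g\nabla v$ and expanding $\nabla^3\big(w^\nu n\,g^2H(v)\big)$ by Leibniz, every term of $\|F\|_{D^1_*\cap D^3}$ is a product of controlled factors ($\nabla^{\le3}w$, $\nabla^{\le3}n$, $g\nabla^{\le4}v$, $\nabla^{\le3}g$, $v$-derivatives) with at most one factor in $L^2_t$ (from $\nabla^4v$ entering $\nabla^3H(v)$), so $\int_0^t\|F\|_{D^1_*\cap D^3}\,\text{d}s$ is a polynomial in the $c_i$ times $t^{\frac12}$, and for $t\le T_2$ the right-hand side is $\le M(c_0)$. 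The bounds on $l_t$, $|l_t|_3$, $|l_t|_{D^1_*}$ and $|\nabla^2l_t|_2$ then follow from $l_t=F-v\cdot\nabla l$ after differentiation, using the above and the bounds on $v_t$, $g_t$, $w_t$, $n_t$; similarly $\int_0^t|l_{tt}|_6^2\,\text{d}s$ and $\int_0^t|\nabla l_{tt}|_2^2\,\text{d}s$ follow from $l_{tt}=F_t-v_t\cdot\nabla l-v\cdot\nabla l_t$, the factors $|v_{tt}|_2$, $|(g\nabla^2v)_t|_2$, $|w_{tt}|_{D^1_*}$, $|\nabla n_t|_3$, $|\nabla n_t|_6$ all being available from $\eqref{4.1*}$, $\eqref{2.16}$, Lemma \ref{varphi}. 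Each estimate carries a power of the $c_i$ (the $c^\nu$-type powers coming from $w^\nu$ and $w^{\nu-1}$), so $T_2=\min\{T_1,(1+Cc_4)^{-20-2\nu}\}$ keeps every time-integrated quantity bounded.

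The delicate point, and the reason for $4\gamma+3\delta\le7$ in $\eqref{can1}$, is the weighted bound $|h^{\frac14}\nabla l|_6\le M(c_0)$. I would differentiate $\eqref{ln}_3$ in $x$ to get $(\nabla l)_t+v\cdot\nabla(\nabla l)+(\nabla v)^\top\nabla l=\nabla F$, multiply by $6h^{\frac32}|\nabla l|^4\nabla l$, and use $\eqref{ln}_4$ for $h_t$ to recombine the convective terms into $-v\cdot\nabla\big(h^{\frac32}|\nabla l|^6\big)$; integrating over $\mathbb R^3$, integrating by parts, and using $h^{\frac12}=\varphi\,h^{\frac32}$ gives
\[
\frac{d}{dt}\big|h^{\frac14}\nabla l\big|_6^6\le C\big(|\nabla v|_\infty+|\varphi|_\infty|g\,\text{div}\,v|_\infty\big)\big|h^{\frac14}\nabla l\big|_6^6+6\,\big|h^{\frac14}\nabla l\big|_6^5\,\big|h^{\frac14}\nabla F\big|_6,
\]
hence $\frac{d}{dt}\big|h^{\frac14}\nabla l\big|_6\le C\big(|\nabla v|_\infty+|\varphi|_\infty|g\,\text{div}\,v|_\infty\big)\big|h^{\frac14}\nabla l\big|_6+C\big|h^{\frac14}\nabla F\big|_6$, so everything reduces to estimating $|h^{\frac14}\nabla F|_6$. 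Here the coefficients $|\varphi|_\infty\le2c_0$, $|\psi|_\infty\le Cc_0$ (Lemmas \ref{varphi}, \ref{psi}) and $|g\,\text{div}\,v|_\infty\le Cc_3^{\frac32}c_4^{\frac12}$ ($\eqref{2.26d}$) are already at hand; expanding $\nabla F$ with $\psi=\tfrac{a\delta}{\delta-1}\nabla h$ and $n=(ah)^b$, $b\le0$ ($\eqref{2.11}$), the worst contribution to $h^{\frac14}\nabla F$ has the form $h^{\frac14}w^\nu\big(h^{b-1}\psi\big)g^2H(v)$, and placing it in $L^6$ requires distributing the negative power $h^{b-\frac34}$ of the uniformly positive quantity $h$ against the weighted $\nabla v$-norms of $\eqref{2.16}$ (such as $|g\nabla^2v|_2$, $|g\nabla^2v|_{D^1_*}$, $|g\,\text{div}\,v|_\infty$, and $|g\nabla v|_6\le C(|\nabla g|_\infty|\nabla v|_2+|g\nabla^2v|_2)$) together with $|\psi|_\infty\le Cc_0$, $|\varphi|_\infty\le2c_0$; the exponent arithmetic closes precisely when $4\gamma+3\delta\le7$. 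A Gronwall argument on $[0,T_2]$ together with the initial bound $|h_0^{\frac14}\nabla l_0|_6\le M(c_0)$ — obtained from $g_4=\phi_0^\iota\nabla^2l_0\in L^2$, $\nabla l_0\in L^2$ and $\psi_0\in L^\infty$ via a weighted interpolation — then yields $|h^{\frac14}\nabla l|_6\le M(c_0)$ and completes the proof. I expect this last estimate to be the main obstacle: the weight $h^{\frac14}$ degenerates as $\eta\to0$, yet must still absorb the triple singularity in $\nabla F$ coming from $n=(ah)^b$, from $g^2$ and from $H(v)\sim|\nabla v|^2$, and it is balancing these that forces $4\gamma+3\delta\le7$.
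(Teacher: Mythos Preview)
Your approach matches the paper's: lower bound on $l$ from $H(v)\ge0$, transport estimates for $\|l-\bar l\|_{D^1_*\cap D^3}$ and the time derivatives, and a separate $L^6$ energy inequality for $z=h^{1/4}\nabla l$; the differential inequality you derive for $|z|_6$ is correct and agrees with \eqref{hnl*}.

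However, your explanation of where $4\gamma+3\delta\le7$ enters is inverted. The term you single out as ``worst'', $h^{1/4}w^\nu(h^{b-1}\psi)g^2H(v)$, carries the $h$-exponent $b-\tfrac34$, which is already $\le-\tfrac34<0$ since $b\le0$ by \eqref{2.11}; because $h>\tfrac{1}{2c_0}$ (Lemma~\ref{varphi}), every \emph{non-positive} power of $h$ is bounded by $M(c_0)$ with no further constraint, so nothing needs to be ``distributed''. The real obstruction lies in the terms where $\nabla$ does \emph{not} land on $n$, e.g.\ $h^{1/4}w^\nu n\,\nabla(g^2H(v))$ and $h^{1/4}(\nabla w^\nu)\,n\,g^2H(v)$, which contain the factor $h^{1/4}n=a^bh^{1/4+b}$. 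Since $h$ is \emph{not} bounded above uniformly in $\eta$ (recall $h\sim\phi^{2\iota}$ with $\iota<0$, so $h\to\infty$ at spatial infinity as $\eta\to0$), this factor is controlled only when $\tfrac14+b\le0$, which is exactly $4\gamma+3\delta\le7$. This is precisely the content of \eqref{J3}, where the common prefactor is $|\varphi|_\infty^{-1/4-b}$. With this correction your argument closes as written.
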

\begin{proof} \textbf{Step 1:} It follows from the equation for entropy, $\ef{ln}_3$ that 
\begin{equation*}
\fr{dl(t,X(t,x))}{dt}=a_4w^\nu ng^2 H(v)=a_4w^\nu ng^2(2\alpha|Dv|^2+\beta|\text{div}v|^2)\geq 0,
\end{equation*}
which yields that 
\begin{equation}\label{linf}
l(t,X(t,x))\geq l_0(x)\geq c_0^{-1} \quad \text{for} \quad 0\leq t \leq T_1.
\end{equation}

\noindent\textbf{Step 2:} Set $\varsigma=(\varsigma_1,\varsigma_2,\varsigma_3)^\top$ with $\varsigma_i$ nonnegative integer and  $|\varsigma|=\varsigma_1+\varsigma_2+\varsigma_3$. Applying  $\partial_{x}^{\varsigma} $ to $\ef{ln}_3$, multiplying by $2\partial_{x}^{\varsigma} l$ and then integrating over $\mathbb{R}^3$, one has 
\begin{equation}\label{parl}\begin{split}
\frac{d}{dt}|\partial_{x}^{\varsigma} l  |^2_2
\leq & |\nabla v|_\infty|\partial_x^{\varsigma}l|_2^2+|\varLambda_\varsigma |_2|\partial_{x}^{\varsigma} l|_2,
\end{split}
\end{equation}
where
$$
\varLambda_\varsigma=\partial_{x}^{\varsigma} (a_4w^\nu ng^2 H(v))-(\partial_x^\varsigma(v\cdot\nabla l)-v\cdot\nabla\partial_x^\varsigma l),$$ 
which  will be estimated according to $|\varsigma|$.

For $|\varsigma|=1$, direct calculations show that 
\begin{equation}\label{nablal}
\begin{split}
|\varLambda_\varsigma|_2\leq &C|w|^\nu_\infty(|\nabla n|_6|g\nabla v|^2_6+|n|_\infty|g\nabla v|_\infty(|\nabla g|_\infty|\nabla v|_2+|g\nabla^2v|_2))\\
&+C(|w^{\nu-1}|_\infty|n|_\infty|g\nabla v|_6^2|\nabla w|_6+|\nabla v|_\infty|\nabla l|_2).
\end{split}
\end{equation}

Next,   for $|\varsigma|=2$, it holds that 
\begin{equation}\label{var2}
\begin{split}
|\varLambda_\varsigma|_2&\leq C(|w^\nu n\nabla^2(g^2H(v))|_2+|\nabla(w^\nu n)\nabla(g^2H(v))|_2\\
&+|\nabla^2(w^\nu n)g^2H(v)|_2+|\partial_x^\varsigma(v\cdot\nabla l)-v\cdot\nabla\partial_x^\varsigma l|_2)=C\sum^4_{i=1}A_i.
\end{split}
\end{equation}
By direct calculations, Sobolev inequalities  and  Lemma \ref{zhen1}, one can obtain 
\begin{equation}\label{a1}
\begin{split}
A_1\leq &C|w^\nu|_\infty|n|_\infty|\nabla^2(g^2H(v))|_2\\
\leq &C|w^\nu|_\infty|n|_\infty(|g\nabla v|_\infty|\nabla^2 g|_3|\nabla v|_6+|g\nabla v|_\infty|\nabla g|_\infty |\nabla^2 v|_2\\
&+|\nabla g|^2_\infty|\nabla v|_\infty|\nabla v|_2+|g\nabla v|_\infty|g\nabla^3 v|_2+|g\nabla^2v|^2_4),\\
A_2\leq& C|w^\nu|_\infty|\nabla n|_\infty|g\nabla v|_\infty(|\nabla g|_\infty|\nabla v|_2+|g\nabla^2 v|_2)\\
&+C|w^{\nu-1}|_\infty|n|_\infty|\nabla w|_\infty|g\nabla v|_\infty(|g\nabla ^2v|_2+|\nabla g|_\infty|\nabla v|_2),\\
A_3\leq& C|w^\nu|_\infty|\nabla^2 n|_6|g\nabla v|^2_6+C|w^{\nu-1}|_\infty|g\nabla v|^2_6(|n|_\infty|\nabla^2w|_6+|\nabla n|_\infty|\nabla w|_6)\\
&+C|w^{\nu-2}|_\infty|n|_\infty|\nabla w|^2_4|g\nabla v|^2_\infty,\\
A_4\leq& C(|\nabla^2v|_3+|\nabla v|_\infty)|\nabla^2l|_2.
\end{split}
\end{equation}

Similarly, for $|\varsigma|=3$, one has
\begin{equation}\label{var3}
\begin{split}
|\varLambda_\varsigma|_2\leq& C(|\nabla^3(w^\nu n)(g^2H(v))|_2+|w^\nu n\nabla^3(g^2H(v))|_2\\
&+|\nabla^2(w^\nu n)\nabla (g^2H(v))|_2
+|\nabla(w^\nu n)\nabla^2(g^2H(v))|_2\\
&+|\partial_x^\varsigma(v\cdot\nabla l)-v\cdot\nabla\partial_x^\varsigma l|_2=C\sum^5_{i=1}B_i,
\end{split}
\end{equation}
and each $B_i$ $(i=1,...,5)$ can be estimated as follows:
\begin{equation}\label{b1}
\begin{split}
B_1\leq& |g\nabla v|_\infty^2|\nabla^3(w^\nu n)|_2\\
\leq& C|g\nabla v|_\infty^2(|w^\nu|_\infty|\nabla^3n|_2+|w^{\nu-1}|_\infty|\nabla^2n|_6|\nabla w|_3\\
&+|w^{\nu-1}|_\infty|n|_\infty|\nabla^3w|_2
+|w^{\nu-1}|_\infty|\nabla n|_6|\nabla^2w|_3\\
&+ |w^{\nu-2}|_\infty|\nabla n|_6|\nabla w|^2_6+ |w^{\nu-2}|_\infty|n|_\infty|\nabla w|_6|\nabla^2 w|_3\\
&+|w^{\nu-3}|_\infty|n|_\infty|\nabla w|^3_6),\\
B_2\leq& |w^\nu|_\infty |n|_\infty|\nabla^3(g^2H(v))|_2\\
\leq &C|w^\nu|_\infty |n|_\infty\big(|g\nabla v|_\infty|g\nabla^4v|_2+|g\nabla^3v|_6|g\nabla^2v|_3\\
&+|\nabla g|^2_\infty|\nabla^2 v|_2|\nabla v|_\infty+|g\nabla v|_\infty|\nabla^2 g|_6|\nabla^2v|_3\\
&+|g\nabla v|_\infty|\nabla ^3g|_2|\nabla v|_\infty+|\nabla g|_\infty|\nabla^2g|_3|\nabla v|_6|\nabla v|_\infty\\
&+|g\nabla^3v|_2|\nabla g|_\infty|\nabla v|_\infty+|g\nabla^2v|_6|\nabla g|_\infty|\nabla^2 v|_3\big),\\
B_3\leq& |\nabla^2(w^\nu n)|_3 |\nabla(g^2H(v))|_6\\
\leq &C\big(|w^\nu|_\infty |\nabla^2n|_3+|n|_\infty(|w^{\nu-1}|_\infty|\nabla^2w|_3+|w^{\nu-2}|_\infty|\nabla w|^2_6)\\
&+|w^{\nu-1}|_\infty|\nabla n|_\infty|\nabla w|_3\big)\cdot(|g\nabla v|_\infty|\nabla g|_\infty|\nabla v|_6+|g\nabla v|_\infty|g\nabla^2v|_6),\\
B_4\leq& |\nabla(w^\nu n)|_\infty|\nabla^2(g^2H(v))|_2\\
\leq& C(|w^{\nu-1}|_\infty|\nabla w|_\infty|n|_\infty+|w^\nu|_\infty|\nabla n|_\infty)\cdot(|g\nabla v|_\infty|\nabla^2g|_3|\nabla v|_6\\
&+|\nabla g|^2_\infty|\nabla v|^2_4+|g\nabla v|_\infty|g\nabla^3v|_2+|g\nabla^2v|_4^2+|g\nabla v|_\infty|\nabla g|_\infty|\nabla^2v|_2),\\
B_5\leq&  C(|\nabla^3v|_2+|\nabla^2v|_3+|\nabla v|_\infty)\|\nabla l\|_2.
\end{split}
\end{equation}

$\ef{2.16}$ implies that
\begin{equation}\label{wguji}
\begin{split}
|w^{\nu}|_\infty\leq Cc^\nu_1,\quad |w^{\nu-1}|_\infty\leq Cc^{\nu+1}_1,\quad |w^{\nu-2}|_\infty\leq & Cc^{\nu+2}_1,\\
|w^{\nu-3}|_\infty\leq Cc^{\nu+3}_1,\quad |\nabla w|_\infty+\|\nabla w\|_2\leq & Cc_1,
\end{split}
\end{equation}
which, along with \eqref{parl}-\eqref{b1},   $\ef{2.16}$ and \eqref{2.34}, yields that 
\begin{equation*}
\begin{split}
\frac{d}{dt}\|\nabla l  \|_2
\leq & C\| v\|_3\|\nabla l  \|_2+M(c_0)(c_4^{\nu+10}+c_4^{\nu+2}|g\nabla^4v|_2).
\end{split}
\end{equation*}
It then follows from  Gronwall's inequality and $\ef{2.16}$ again that 
\begin{equation}\label{2.42h}
\|\nabla l\|_2\leq (\|\nabla l_0\|_2+M(c_0)c_4^{\nu+10}(t+t^{\frac{1}{2}}))\exp(Cc_4t)\leq M(c_0)\ \  \text{for} \ 0\leq t\leq T_2.
\end{equation}

Second, according to  $\ef{ln}_3$, for  $0\leq t\leq T_2$, it holds that
\begin{equation}\label{2.42j}
\begin{split}
& |l_t|_\infty\leq C(|v|_\infty|\nabla l|_\infty+|w^\nu|_\infty|n|_\infty|g\nabla v|_\infty^2)\leq M(c_0)c_3^{3+\nu}c_4, \\
& |l_t|_3\leq C(|v|_\infty|\nabla l|_3+|w^\nu|_\infty|n|_\infty|g\nabla v|^2_6)\leq M(c_0)c_3^{4+\nu}.
\end{split}
\end{equation}
It follows from  \eqref{nablal} and $\ef{ln}_3$ that for $|\varsigma|=1$,
\begin{equation}\label{2.42k}
|\partial_x^\varsigma l_t|_2 \leq  C(|v|_\infty|\nabla^2 l|_2+|\varLambda_\varsigma|_2)\leq M(c_0)c_3^{6+\nu}c_4^{\fr{1}{2}}.
\end{equation}

 Similarly,  for $|\varsigma|=2$, from \eqref{var2}-\eqref{a1}, one can obtain
\begin{equation}\label{2.42l}
|\partial_x^\varsigma l_t|_2 \leq C(
|v|_\infty|\nabla^3 l|_2+
|\varLambda_\varsigma|_2)\leq M(c_0)c_3^{8+\nu}c_4^{\fr{3}{2}}.
\end{equation}



On the other hand, since
\begin{equation}\label{ltt}
l_{tt}=-(v\cdot \nabla l)_t+a_4(w^\nu ng^2H(v))_t,
\end{equation}
one gets
\begin{equation*}\label{}
\begin{split}
|l_{tt}|_6 \leq& C(|v_t|_6|\nabla l|_\infty+|v|_\infty|\nabla l_t|_6+|w^\nu|_\infty|g\nabla v |_\infty(|n_t|_\infty|g\nabla v |_6\\
&+|n|_\infty|g_t|_\infty|\nabla v|_6+ |n|_\infty|g\nabla v_t|_6)+|w^{\nu-1}|_\infty|n|_\infty|w_t|_6|g\nabla v|^2_\infty),
\end{split}
\end{equation*}
which implies that 
\begin{equation}\label{2.42n}
\int^t_0|l_{tt}|^2_6\text{d}s\leq M(c_0)c_4^{6+2\nu} \quad \text{for} \quad  0\leq t\leq T_2.
\end{equation}

\noindent Due to \eqref{ltt}, one has that for $|\varsigma|=1$, 
\begin{equation}\label{naltt}
\partial_x^\varsigma l_{tt}=-\partial_x^\varsigma \big((v\cdot \nabla l)_t\big)+\partial_x^\varsigma (a_4w^\nu ng^2H(v))_t.
\end{equation}

It follows  from Lemma \ref{varphi}, \eqref{2.42h}-\eqref{2.42j}, \eqref{2.42k}-\eqref{2.42l} and \eqref{2.16} that
\begin{equation*}
\begin{split}
|\partial_x^\varsigma \big((v\cdot \nabla l)_t\big)|_2 \leq & C(|v_t|_6|\nabla^2l|_3+|\nabla v_t|_2|\nabla l|_\infty+|\nabla v|_\infty|\nabla l_t|_2+|v|_\infty|\nabla^2 l_t|_2)\\
\leq & M(c_0)|\nabla v_t|_2+ M(c_0)c_4^{11+\nu},\\
|\partial_x^\varsigma (w^\nu ng^2H(v))_t|_2 \leq & C(|\partial_x^\varsigma ((w^\nu)_t ng^2H(v))|_2+|\partial_x^\varsigma (w^\nu n_tg^2H(v))|_2\\
&+|\partial_x^\varsigma (w^\nu ngg_tH(v))|_2+|\partial_x^\varsigma (w^\nu ng^2H(v)_t)|_2)
=C\sum^4_{i=1}K_i,
\end{split}
\end{equation*}
and each $K_i$ $(i=1,...,4)$ can be estimated as follows:
\begin{equation*}
\begin{split}
K_1\leq& C \big(|n|_\infty|g\nabla v|_\infty^2(|w^{\nu-1}|_\infty|\nabla w_t|_2+|w^{\nu-2}|_\infty|w_t|_3|\nabla w|_6)\\
&+|w^{\nu-1}|_\infty|w_t|_3|\nabla n|_6|g\nabla v|_\infty^2+|w^{\nu-1}|_\infty|n|_\infty|\nabla g|_\infty|w_t|_3|g\nabla v|_\infty|\nabla v|_6\\
&+|w^{\nu-1}|_\infty|n|_\infty|w_t|_3|g\nabla v|_\infty|g\nabla^2v|_6\big),\\
K_2\leq& C\big(|w^{\nu-1}|_\infty|n_t|_\infty|g\nabla v|_\infty^2|\nabla w|_2+|w^{\nu}|_\infty|g\nabla v|_\infty|g\nabla v|_6|\nabla n_t|_3\\
&+|w^{\nu}|_\infty|n_t|_\infty|\nabla g|_\infty|g\nabla v|_\infty|\nabla v|_2+|w^{\nu}|_\infty|n_t|_\infty|g\nabla v|_\infty|g\nabla^2 v|_2\big),\\
K_3\leq& C\big(|w^{\nu-1}|_\infty|n|_\infty|g\nabla v|_\infty|g_t|_\infty|\nabla w|_3|\nabla v|_6+|w^{\nu}|_\infty|g\nabla v|_\infty|g_t|_\infty|\nabla n|_6|\nabla v|_3\\
&+|w^{\nu}|_\infty|n|_\infty(|g_t|_\infty|\nabla g|_\infty|\nabla v|_4^2+|g\nabla v|_\infty|\nabla v|_\infty|\nabla g_t|_2+|g\nabla v|_\infty|g_t|_\infty|\nabla^2v|_2)\big),\\
K_4\leq& C\big(|w^{\nu-1}|_\infty|n|_\infty|g\nabla v|_\infty|g\nabla   v_t|_6|\nabla w|_3+|w^{\nu}|_\infty|g\nabla v|_6|g\nabla v_t|_6|\nabla n|_6\\
&+|w^{\nu}|_\infty|n|_\infty(|g\nabla v|_\infty|\nabla g|_\infty|\nabla v_t|_2+|g\nabla v_t|_6|g\nabla^2 v|_3+|g\nabla v|_\infty|g\nabla^2v_t|_2)\big),
\end{split}
\end{equation*}
which yields that 
\begin{equation*}
|\partial_x^\varsigma (w^\nu ng^2H(v))_t|_2 \leq M(c_0)c_4^{8+\nu}+M(c_0)c_4^{4+\nu}|g\nabla^2v_t|_2+M(c_0)c_4^{5+\nu}|\nabla v_t|_2.
\end{equation*}
These and  \eqref{naltt} yield
\begin{equation}\label{2.42n}
\int^t_0|\nabla l_{tt}|^2_2\text{d}s\leq M(c_0)c_4^{10+2\nu} \quad \text{for} \quad 0\leq t\leq T_2.
\end{equation}

\noindent\textbf{Step 3:} The  estimate on  $|h^{\fr{1}{4}}\nabla l|_6$.
Applying $h^{\fr{1}{4}}\nabla$ to $\ef{ln}_3$ yields

\begin{equation}\label{nlt1}
\begin{split}
(h^{\fr{1}{4}}\nabla l)_t-(h^{\fr{1}{4}})_t\nabla l+h^{\fr{1}{4}}\nabla(v\cdot\nabla l)=a_4h^{\fr{1}{4}}\nabla(w^\nu ng^2H(v)).
\end{split}
\end{equation}
Denoting $h^{\fr{1}{4}}\nabla l=z$, multiplying $6|z|^4z$ on both side of $\ef{nlt1}$, integrating over $\mathbb{R}^3$, and integration by part, one has
\begin{equation}\label{hnl*}
\begin{split}
\fr{d}{dt}|z|_6^6\leq& C\int\big|\big((h^{\fr{1}{4}})_t\nabla l - h^{\fr{1}{4}}\nabla(v\cdot\nabla l)+a_4h^{\fr{1}{4}}\nabla(w^\nu ng^2H(v)) \big)\cdot |z|^4z\big|\\
\leq & C(|h_t|_\infty|\varphi|_\infty+ |\nabla v|_\infty+|v|_\infty|\psi|_\infty|\varphi|_\infty) |z|^6_6+J_*,
\end{split}
\end{equation}
where $J_*=C\int |h^{\fr{1}{4}}\nabla(w^\nu ng^2H(v)) \cdot |z|^4z|$.

Note  that $n=(ah)^b$ and $\fr{1}{4}+b\leq 0$ due to \ef{can1}.
One can get
\begin{equation}\label{J3}
\begin{split}
J_*\leq& C|\varphi|_\infty^{-\fr{1}{4}-b}(|w^{\nu-1}|_\infty|\nabla w|_6|g\nabla v|^2_\infty+|w^\nu|_\infty|g\nabla v|_\infty|\nabla g|_6|\nabla v|_\infty\\
&+|w^\nu|_\infty|g\nabla v|_\infty|g\nabla^2v|_6+|\varphi|_\infty|\psi|_\infty|w^\nu|_\infty|g\nabla v|_\infty|g\nabla v|_6)|z|^5_6.
\end{split}
\end{equation}
Note that \ef{2.14} implies 
\begin{equation}\label{nbl0}
\begin{split}
|h_0^{\fr{1}{4}}\nabla l_0|_6&\leq C|\nabla(\phi_0^{\fr{\iota}{2}}\nabla l_0)|_2\leq C(|\phi_0^{-\fr{\iota}{2}}|_\infty|\phi_0^\iota\nabla^2 l_0|_2+|\nabla\phi_0^{\fr{\iota}{2}}\cdot\nabla l_0|_2)\leq M(c_0).
\end{split}
\end{equation}
It follows from \ef{hnl*}-\ef{nbl0} and Gronwall's inequality that
\begin{equation}\label{hnl}
|h^{\fr{1}{4}}\nabla l|_6\leq M(c_0)\quad \text{for} \quad 0\leq t\leq T_2.
\end{equation}

The proof of Lemma \ref{l} is complete.
\end{proof}
\subsubsection{The equivalence of  $g$ and $h$ in a short time}
\begin{lemma}\label{gh}
It holds that
\begin{equation}\label{g/h}
\tilde{C}^{-1}\leq gh^{-1}\leq \tilde{C}
\end{equation}
for $0\leq t\leq T_2$, where $\tilde{C}$ is a suitable constant.
\end{lemma}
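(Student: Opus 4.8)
The plan is to control the quotient $\rho:=gh^{-1}$ directly by tracking it along the particle paths $X(s;x)$ from $\ef{2.34b}$, using that $\rho\equiv1$ at $t=0$ (because $g(0,\cdot)=h(0,\cdot)=h^\eta_0$) while its evolution is driven only by quantities that are already under uniform control on $[0,T_2]$. Since $h$ solves $\ef{ln}_4$, i.e.\ $h_t+v\cdot\nabla h=(1-\delta)g\,\text{div}\,v$, and $h>\tfrac1{2c_0}>0$ by Lemma \ref{varphi} so that $\varphi=h^{-1}$ is well defined, a direct computation yields the Riccati-type transport equation
\begin{equation*}
\rho_t+v\cdot\nabla\rho=(g_t+v\cdot\nabla g)\,\varphi+(\delta-1)\rho^2\,\text{div}\,v,
\end{equation*}
equivalently, writing $Y(s):=\rho(s,X(s;x))$,
\begin{equation*}
Y'(s)=\big((g_t+v\cdot\nabla g)\,\varphi\big)(s,X(s;x))+(\delta-1)\,\text{div}\,v\,(s,X(s;x))\,Y(s)^2,\qquad Y(0)=1.
\end{equation*}

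Next I would bound the two ingredients of this ODE uniformly in $(\epsilon,\eta)$. The coefficient of $Y^2$ satisfies $|\text{div}\,v|_\infty\le Cc_4$ by $\ef{2.16}$ and Sobolev embedding. For the forcing term, $\ef{2.16}$ gives $|g_t|_\infty\le c_4$, $|v|_\infty\le Cc_2$, and $|\nabla g|_\infty\le Cc_1$ (the last via $L^q\cap D^{1,3}\cap D^2\hookrightarrow W^{1,\infty}$ for $q>3$), hence $|g_t+v\cdot\nabla g|_\infty\le Cc_4^2$; and $|\varphi|_\infty\le2c_0$ by Lemma \ref{varphi}, so $|(g_t+v\cdot\nabla g)\varphi|_\infty\le Cc_0c_4^2$. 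The one point demanding care — and it is really the heart of the matter — is that one must \emph{not} bound $g$ or $h$ individually in $L^\infty$: both grow like $\eta^{2\iota}\to\infty$ in the far field, so only the quotient $gh^{-1}$, together with $\varphi=h^{-1}$ itself (whose initial sup-norm $|(h^\eta_0)^{-1}|_\infty\le c_0$ is part of $\ef{2.14}$), are controlled uniformly in $\eta$. This is exactly why the argument is run on $\rho$, and not on the difference $g-h$, whose forcing term $(1-\delta)g\,\text{div}\,v$ is not uniformly bounded.

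Finally, a standard continuity argument closes the estimate. Let $T'\in(0,T_2]$ be maximal such that $Y(s)\in[\tfrac12,\tfrac32]$ for all $s\le T'$. On $[0,T')$ one has $|Y'(s)|\le Cc_0c_4^2+(1-\delta)Cc_4\cdot(\tfrac32)^2\le Cc_0c_4^2$, whence $|Y(t)-1|\le Cc_0c_4^2\,t$ there. By the definition $T_2=\min\{T_1,(1+Cc_4)^{-20-2\nu}\}$ from Lemma \ref{l}, and since $c_4\ge c_0\ge2$ by $\ef{2.14}$, one gets $Cc_0c_4^2\,T_2\le Cc_4^3(1+Cc_4)^{-20}\le C^{-19}c_4^{-17}\le\tfrac14$ (the exponent $20$ dwarfs the polynomial prefactor). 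Hence $T'$ cannot be strictly less than $T_2$, so $\tfrac12\le Y(t)\le\tfrac32$ for every $t\le T_2$; translating back, $\tfrac12\le gh^{-1}\le2$ on $[0,T_2]\times\mathbb{R}^3$, which is $\ef{g/h}$ with $\tilde C=2$. The only genuine subtlety is the non-uniformity of $|g|_\infty$ and $|h|_\infty$ in $\eta$, neutralized by working with the ratio; everything else is routine.
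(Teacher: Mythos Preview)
Your proof is correct, but it takes a more elaborate route than the paper's. You work along the particle paths $X(s;x)$, which produces the Riccati-type ODE $Y'=F+(\delta-1)(\text{div}\,v)Y^2$ and therefore requires the continuity/bootstrap argument you supply to handle the quadratic term. The paper instead observes that at each \emph{fixed} $x$ the ratio $y=gh^{-1}$ satisfies the \emph{linear} scalar ODE
\[
y_t + (h^{-1}h_t)\,y = g_t\varphi,\qquad y(0,x)=1,
\]
which it solves explicitly by an integrating factor and then bounds using $|h_t|_\infty$, $|\varphi|_\infty$, $|g_t|_\infty$ from Lemmas \ref{psi}--\ref{varphi} and $\ef{2.16}$. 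No characteristics, no bootstrap. Your approach does buy something: it makes transparent exactly which objects need uniform-in-$\eta$ control (the ratio and $\varphi$, never $g$ or $h$ separately), and your remark on that point is well taken. Two minor quibbles: your notation $\rho$ for the ratio collides with the paper's density; and $|v|_\infty$ requires $|v|_{D^2}$, so the correct bound is $Cc_3$ (or $Cc_2^{1/2}c_3^{1/2}$) rather than $Cc_2$---immaterial, since everything is dominated by $c_4$.
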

\begin{proof}
Set $gh^{-1}=y(t,x)$. Then a simple computation shows 
\begin{equation}\label{ghe}
y_t+yh^{-1}h_t=g_t\varphi; \quad y(0,x)=1.
\end{equation}
Thus 
\begin{equation}\label{y}
y(t,x)=\exp\big(-\int^t_0h_s h^{-1}\text{d}s\big)\big(1+\int^t_0g_s\varphi\exp\big(\int^s_0 h_\tau h^{-1}\text{d}\tau\big)\text{d}s\big),
\end{equation}
which, along with  Lemmas \ref{psi}-\ref{varphi} and \ef{2.16}, yields \eqref{g/h}.

The proof of Lemma \ref{gh} is complete.
\end{proof} 
\subsubsection{The a priori estimates for $u$}
Based on the estimates of $\phi$, $h$ and $l$ obtained above, now we are ready to derive the lower order energy estimates for $u$.
\begin{lemma}\label{lu}
\rm {For} $t\in [0, T_2]$, it holds that 
	\begin{equation}\label{2.61}\begin{aligned}
	|\sqrt{h}\nabla u(t)|^2_2+\|u(t)\|_1^2+\int^t_0(\|\nabla u\|_1^2+|u_t|^2_2)\text{d}s\leq & M(c_0),\\
	(|u|^2_{D^2}+|h\nabla^2u|^2_2+|u_t|^2_2)(t)+\int^t_0(|u|_{D^3}^2+|u_t|^2_{D^1_*})\text{d}s\leq & M(c_0)c_2^{3}c_3.
	\end{aligned}\end{equation}
\end{lemma}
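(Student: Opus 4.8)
\emph{Approach.} I would regard $\ef{ln}_2$ as a degenerate quasilinear parabolic system for $u$ and close the estimates by exploiting the $(\epsilon,\eta)$-uniform coercivity of the Lam\'e term $a_2l^\nu\sqrt{h^2+\epsilon^2}Lu$: by Lemmas \ref{varphi}--\ref{l}, on $[0,T_2]\times\R^3$ one has $l^\nu\sqrt{h^2+\epsilon^2}\ge l^\nu h\ge M(c_0)^{-1}>0$, the crucial point being that $h$ has an $\eta$-independent \emph{lower} bound $h\ge(2c_0)^{-1}$ (no upper bound on $h$ is ever used), which lets $|\sqrt h\nabla u|_2$ dominate $|\nabla u|_2$ and $|\sqrt h\nabla u_t|_2$ dominate $|\nabla u_t|_2$. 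All estimates are carried out on the fixed interval $[0,T_2]$, on which, by the choice of $T_2$, the Gronwall factors $\exp(C(1+c_4)^{N}t)$ stay $\le2$.

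\emph{First line of $\ef{2.61}$.} Testing $\ef{ln}_2$ with $u$ produces, after integration by parts in the Lam\'e term, the coercive quantity $a_2\int l^\nu\sqrt{h^2+\epsilon^2}(\alpha|\nabla u|^2+(\alpha+\beta)|\dv u|^2)\ge M(c_0)^{-1}|\sqrt h\nabla u|_2^2$ (here $\alpha>0$, $2\alpha+3\beta\ge0$) plus commutator and forcing terms; the bounded piece $l^\nu h\nabla h/\sqrt{h^2+\epsilon^2}$ of the commutator is controlled by $|\psi|_\infty\le Cc_0$, while the delicate pieces — those in which the singular weight $\sqrt{h^2+\epsilon^2}$ or $g$ sits next to $\nabla l$ — are handled by integrating by parts to shift $\nabla l$ onto the smoother factors (trading it for $\nabla g\in L^\infty$ and $g\nabla^2v\in L^2$, both controlled in $\ef{2.16}$), by the weight splitting $h^{1/2}=h^{1/4}\cdot h^{1/4}$, and by invoking $|h^{1/4}\nabla l|_6\le M(c_0)$ (Lemma \ref{l}) and $g\approx h$ (Lemma \ref{gh}), the coercive term absorbing the surplus factor $(h^2+\epsilon^2)^{1/4}|\nabla u|$; the remaining forcing $v\cdot\nabla v+a_1\phi\nabla l+l\nabla\phi+a_3l^\nu\psi\cdot Q(v)$ is routine via Lemmas \ref{phiphi}--\ref{l} and $\ef{2.16}$. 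Gronwall then gives $\sup_{[0,t]}|u|_2^2+\int_0^t|\sqrt h\nabla u|^2\le M(c_0)$. Next, testing $\ef{ln}_2$ with $u_t$ and using $\int f\nabla u:\nabla u_t=\tfrac12\tfrac{d}{dt}\!\int f|\nabla u|^2-\tfrac12\!\int f_t|\nabla u|^2$ turns the Lam\'e term into $\tfrac{d}{dt}$ of the weighted Dirichlet energy, plus a $(l^\nu\sqrt{h^2+\epsilon^2})_t$-term (bounded by $|l_t|_\infty,|h_t|_\infty$ from Lemma \ref{l} times the energy, after $\int\sqrt{h^2+\epsilon^2}|\nabla u|^2\le M(c_0)|\sqrt h\nabla u|_2^2$) and the commutator $\int\nabla(l^\nu\sqrt{h^2+\epsilon^2})\otimes u_t:\nabla u$ (split as above, with $\tfrac18|u_t|_2^2$ absorbed on the left); since the compatibility condition $Lu_0=\phi_0^{-2\iota}g_2=h_0^{-1}g_2$ gives $\sqrt{h_0^2+\epsilon^2}\,Lu_0=\sqrt{1+\epsilon^2/h_0^2}\,g_2$ with $\epsilon^2/h_0^2\le Cc_0^2$, hence $|u_t(0)|_2\le M(c_0)$, Gronwall yields $\sup_{[0,t]}|\sqrt h\nabla u|_2^2+\int_0^t|u_t|^2\le M(c_0)$. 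Finally, dividing $\ef{ln}_2$ by the uniformly positive bounded coefficient of $Lu$ (note $g/\sqrt{h^2+\epsilon^2}\le\tilde C$, so the source is harmless here), the elliptic estimate $|\nabla^2u|_2\le C|Lu|_2$ gives $\int_0^t|\nabla^2u|^2\le M(c_0)$; together with $|\nabla u(t)|_2\le\sqrt{2c_0}\,|\sqrt h\nabla u(t)|_2$ this yields the first line of $\ef{2.61}$.

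\emph{Second line of $\ef{2.61}$.} Differentiating $\ef{ln}_2$ in $t$ and testing with $u_t$, the coercivity now furnishes $\int|\sqrt h\nabla u_t|^2$, the term $\partial_t(l^\nu\sqrt{h^2+\epsilon^2})\,Lu\cdot u_t$ couples to $|\nabla^2u|_2$ and is closed against the elliptic bound of the previous step, and the rest of the forcing is estimated by the $\ef{2.16}$-bounds on $v,v_t,g,g_t$ and Lemma \ref{l} for $l_t$, yielding $\sup_{[0,t]}|u_t|_2^2+\int_0^t|\nabla u_t|^2\le M(c_0)c_2^3c_3$ (which also gives $\int_0^t|u_t|_{D^1_*}^2$). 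Then $|u(t)|_{D^2}\le C|Lu(t)|_2\le M(c_0)(|u_t(t)|_2+\mathrm{l.o.t.})$; for $|h\nabla^2u|_2$ one rewrites $\ef{ln}_2$ as an inhomogeneous Lam\'e equation $a_2L(hu)=W$ (the analogue, for the present linear scheme, of the identity in \S\ref{s2}), whence $|h\nabla^2u|_2\le|\nabla^2(hu)|_2+|\nabla h|_\infty|\nabla u|_2+|\nabla^2h|_3|u|_6\le M(c_0)(|W|_2+1)$ with $|W|_2\lesssim M(c_0)(|u_t|_2+\mathrm{l.o.t.})$ plus the $h^{1/4}$-weighted source term; one more $\nabla$ applied to the Lam\'e identity gives $|\nabla^3u|_2\le M(c_0)(|\nabla u_t|_2+\mathrm{l.o.t.})$, so $\int_0^t|u|_{D^3}^2\le M(c_0)c_2^3c_3$ since $\int|\nabla u_t|^2\le\int|\sqrt h\nabla u_t|^2$. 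Tracking the $v$-dependence throughout accounts for the factor $M(c_0)c_2^3c_3$.

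\emph{Main obstacle.} The hard part is the singular first-order product term $a_2\phi^{2\iota}\nabla l^\nu\cdot Q(u)$ of $\ef{2.3}_2$, which enters the analysis here both through the coefficient $\sqrt{h^2+\epsilon^2}$ of $Lu$ (via commutators with $\nabla l$) and through the source $a_2g\nabla l^\nu\cdot Q(v)$: because $\sqrt{h^2+\epsilon^2}\ge h\sim\eta^{2\iota}\to\infty$ as $|x|\to\infty$, neither an $L^\infty$- nor any $L^p$-bound on this weight is available, so the term must be controlled by the weight splitting $h^{1/2}=h^{1/4}\cdot h^{1/4}$ — one factor fed into the weighted entropy bound $|h^{1/4}\nabla l|_6\le M(c_0)$ of Lemma \ref{l} (whose proof, and hence this whole argument, requires $4\gamma+3\delta\le7$ from $\ef{can1}$), the other into the coercive Lam\'e energy — while uniformity in $\epsilon$ relies on the $\eta$-independent lower bound $h\ge(2c_0)^{-1}$ and uniformity in $\eta$ on never using an upper bound for $h$.
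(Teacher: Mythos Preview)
Your overall strategy is sound and would close, but it differs from the paper's proof in one organizational point that you should know about, and there is one inaccuracy worth flagging.

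\textbf{The paper's device versus yours.} Before testing with $u$ (and later with $u_t$, and $u_t$ again after differentiating in $t$), the paper first divides $\ef{ln}_2$ by $l^\nu$, obtaining
\[
l^{-\nu}\big(u_t+v\cdot\nabla v+a_1\phi\nabla l+l\nabla\phi\big)+a_2\sqrt{h^2+\epsilon^2}\,Lu
=a_2l^{-\nu}g\nabla l^\nu\!\cdot Q(v)+a_3\psi\!\cdot Q(v).
\]
Testing this with $u$ produces $\tfrac{d}{dt}|l^{-\nu/2}u|_2^2$ plus the coercive term $|(h^2+\epsilon^2)^{1/4}\nabla u|_2^2$, and the only commutator from the Lam\'e part is $\nabla\sqrt{h^2+\epsilon^2}\cdot Q(u)\cdot u$, which is harmless since $|\nabla\sqrt{h^2+\epsilon^2}|\le|\nabla h|\le C|\psi|\in L^\infty$. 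In other words, the potentially dangerous product $\sqrt{h^2+\epsilon^2}\,\nabla l^\nu\cdot Q(u)\cdot u$ \emph{never appears}: the factor $l^\nu$ has been moved onto the time derivative, at the cost of an innocuous $(l^{-\nu})_t|u|^2$ term (controlled by $|l_t|_\infty$ from Lemma~\ref{l}). The same trick is used when testing with $u_t$ and with $l^{-\nu}u_t$ after time-differentiation. By contrast, testing $\ef{ln}_2$ directly (your route) does create that commutator; your proposed cure --- write $\sqrt{h^2+\epsilon^2}\le C(c_0)h$, split $h=h^{1/4}h^{1/4}h^{1/2}$, and use $|h^{1/4}\nabla l|_6\le M(c_0)$ together with $|u|_3\le C|u|_2^{1/2}|\nabla u|_2^{1/2}$ and absorption into the coercive term --- does close, but it is more laborious, and your write-up blurs this commutator (which involves $Q(u)\cdot u$) with the source term $g\nabla l^\nu\!\cdot Q(v)\cdot u$ (which involves the known $g,v$); the ``trading for $\nabla g$, $g\nabla^2v$'' you mention applies only to the latter.

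\textbf{Elliptic step.} You write that the coefficient $a_2l^\nu\sqrt{h^2+\epsilon^2}$ of $Lu$ is ``uniformly positive bounded''; it is uniformly positive but \emph{not} bounded above ($h\sim\eta^{2\iota}\to\infty$ at infinity). What you actually need --- and what holds --- is that its reciprocal is bounded, so $|Lu|_2\le M(c_0)|\mathcal H|_2$ and $|\nabla^2u|_2\le C|Lu|_2$ is fine. The paper instead rewrites the equation as $a_2L(\sqrt{h^2+\epsilon^2}\,u)=l^{-\nu}\mathcal H-a_2G(\nabla\sqrt{h^2+\epsilon^2},u)$ and reads off $|\sqrt{h^2+\epsilon^2}\,u|_{D^2}$, then $|\sqrt{h^2+\epsilon^2}\,\nabla^2u|_2$; this yields $|h\nabla^2u|_2$ and $|\nabla^2u|_2$ simultaneously, whereas you obtain them in two separate steps ($Lu$ and $L(hu)$). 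Both routes arrive at the same bounds.
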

\begin{proof}
    \textbf{Step 1:} Estimate on $|u|_2$.
It follows from  $\ef{ln}_2$ that 
\begin{equation}\label{2.61a}
l^{-\nu} (u_t+v\cdot \nabla v+a_1\phi\nabla l+l\nabla\phi)+a_2\sqrt {h^2+\epsilon^2}Lu=a_2gl^{-\nu}\nabla l^\nu\cdot Q(v)+a_3\psi\cdot Q(v).
\end{equation}
Multiplying $\ef{2.61a}$ by $u$ and integrating over $\mathbb{R}^3$, one can obtain by integration by parts, \text{Gagliardo-Nirenberg} inequality, \text{H\"older's} inequality and \text{Young's} inequality that
\begin{equation}\label{2.61b}
\begin{split}
&\fr{1}{2}\fr{d}{dt}|l^{-\fr{\nu}{2}}u|_2^2+a_2\alpha|(h^2+\epsilon^2)^{\fr{1}{4}}\nabla u|^2_2+a_2(\alpha+\beta)|(h^2+\epsilon^2)^{\fr{1}{4}}\text{div} u|^2_2 \\
 =&-\int l^{-{\nu}}(v\cdot\nabla v+a_1\phi\nabla l+l\nabla\phi
-a_2\nabla l^\nu \cdot gQ(v)-a_3l^{{\nu}}\psi\cdot Q(v))\cdot u\\
&+\fr{1}{2}\int(l^{-\nu})_t|u|^2-a_2\int\nabla\sqrt{h^2+\epsilon^2}\cdot Q(u)\cdot u\\
\leq& C\big(|l^{-\fr{\nu}{2}}|_\infty(|v|_\infty|\nabla v|_2+|\nabla l|_2|\phi|_\infty+|l|_\infty|\nabla\phi|_2)
+|l^{\frac{\nu}{2}-1}|_\infty|g\nabla v|_\infty|\nabla l|_2\\
&+|l^{\fr{3}{2}\nu}|_\infty|\psi|_\infty|\nabla v|_2\big)|l^{-\fr{\nu}{2}}u|_2+C|l^{-1}|_\infty|l_t|_\infty| l^{-\frac{\nu}{2}}u|^2_2\\
&+C|\psi|_\infty|l^{\fr{\nu}{2}}|_\infty|\varphi|^{\fr{1}{2}}_\infty|\sqrt{h}\nabla u|_2|l^{-\fr{\nu}{2}}u|_2\\
\leq& M(c_0)c_4^{4+\nu}|l^{-\fr{\nu}{2}}u|^2_2+M(c_0)c_4^4+\fr{1}{2}a_2\alpha|\sqrt{h}\nabla u|^2_2,\\
\end{split}
\end{equation}
which, along with $\text{Gronwall's} $ inequalty and Lemma \ref{l}, yields  that for $0\leq t\leq T_2$,
\begin{equation}\label{ul2}
\begin{split}
&|u|_2^2+|l^{-\fr{\nu}{2}}u|_2^2+\int^t_0|\sqrt{h}\nabla u|^2_2\text{d}s\\
\leq &  M(c_0) (|u_0|_2^2+c_4^4t)\exp{(M(c_0)c_4^{4+\nu}t)}\leq M(c_0).
\end{split}
\end{equation}
\textbf{Step 2:} Estimate on $|\nabla u|_2$. Multiplying $\ef{2.61a}$ by $u_t$ , integrating over $\mathbb{R}^3$ and integration by parts, one gets by  the \text{Gagliardo-Nirenberg} inequality, \text{H\"older's}
inequality, $\text{Young's}$ inequality and Lemmas \ref{phiphi}-\ref{l} that
\begin{equation*}
\begin{split}
& \fr{1}{2}\fr{d}{dt}(a_2\alpha|(h^2+\epsilon^2)^{\fr{1}{4}}\nabla u|^2_2+a_2(\alpha+\beta)|(h^2+\epsilon^2)^{\fr{1}{4}}\text{div} u|^2_2)+|l^{-\fr{\nu}{2}}u_t|^2_2 \\
 =&-\int l^{-\nu}\big(v\cdot\nabla v+a_1\phi\nabla l+l\nabla\phi
-a_2g\nabla l^\nu\cdot Q(v)-a_3 l^{\nu}\psi\cdot Q(v)\big)\cdot u_t\\
&+\fr{1}{2}\int a_2\fr{h}{\sqrt{h^2+\epsilon^2}}h_t(\alpha|\nabla u|^2+(\alpha+\beta)|\text{div}u|^2)\\
&-\int a_2\nabla\sqrt{h^2+\epsilon^2}\cdot Q(u)\cdot u_t\\
\leq& C|l^{-\fr{\nu}{2}}|_\infty(|v|_\infty|\nabla v|_2+|\nabla l|_2|\phi|_\infty+|l|_\infty|\nabla\phi|_2+|g\nabla v|_\infty|l^{\nu-1}|_\infty|\nabla l|_2\\
&+|\psi|_\infty|l^\nu|_\infty|\nabla v|_2)|l^{-\fr{\nu}{2}}u_t|_2+C|h_t|_\infty|\varphi|_\infty|\sqrt{h}\nabla u|^2_2\\
&+C|l^{\fr{\nu}{2}}|_\infty|\psi|_\infty|l^{-\fr{\nu}{2}}u_t|_2|\varphi|^{\fr{1}{2}}_\infty|\sqrt{h}\nabla u|_2\\
\leq& M(c_0)c_4^4|\sqrt{h}\nabla u|_2^2+M(c_0)c_4^4+\fr{1}{2}|l^{-\fr{\nu}{2}}u_t|^2_2,
\end{split}
\end{equation*}
which, along with \text{Gronwall's} inequality and $\ef{2.14}$,  implies that for $0\leq t\leq T_2$,
\begin{equation}\label{2.61f}
\begin{split}
&|\sqrt{h}\nabla u|^2_2+|\nabla u|^2_2+\int^t_0\big(|l^{-\fr{\nu}{2}}u_t|^2_2+|u_t|^2_2\big)\text{d}s\\
\leq &  M(c_0)(1+c_4^4t)\exp{( M(c_0)c_4^4t)}\leq  M(c_0).
\end{split}
\end{equation}

By the definitions of the \text{Lam$\acute{\text{e}}$} operator $L$ and $\psi$, it holds that
\begin{equation}\label{2.61h}
\begin{split}
&a_2L(\sqrt{h^2+\epsilon^2}u) =a_2\sqrt{h^2+\epsilon^2}Lu-a_2G(\nabla\sqrt{h^2+\epsilon^2},u) \\  
=&l^{-\nu}\mathcal{H}-a_2G(\nabla\sqrt{h^2+\epsilon^2},u),
\end{split}
\end{equation}
where 
\begin{equation}\label{Hdingyi}
\mathcal{H}=-u_t-v\cdot\nabla v-l\nabla\phi-a_1\phi\nabla l+a_2g\nabla l^\nu \cdot Q(v)+a_3 l^\nu \psi\cdot Q(v).
\end{equation}
Next, for  giving  the $L^2$ estimate of $\nabla^2 u$, we  consider the $L^2$ estimates of $$(\mathcal{H},\tilde{G}=G(\nabla\sqrt{h^2+\epsilon^2},u)).$$  It follows from \eqref{Gdingyi}, \eqref{2.16}, \eqref{ul2}-\eqref{2.61f}, \eqref{Hdingyi} and  Lemmas \ref{phiphi}-\ref{l} that 
\begin{equation}\label{2.61illl}
\begin{split}
|\mathcal{H}|_2\leq & C(|u_t|_2+|v|_6|\nabla v|_3+|l|_\infty|\nabla \phi|_2+|\phi|_\infty|\nabla l|_2\\
&+|g\nabla l^\nu \cdot Q(v)|_2+|l^\nu |_\infty|\psi|_\infty|\nabla v|_2)\\
\leq & M(c_0)(|u_t|_2+c_2^{\fr{3}{2}}c_3^{\fr{1}{2}}),\\
|\tilde{G}|_2
\leq & |\nabla\sqrt{h^2+\epsilon^2}|_\infty |\nabla u|_2+|\nabla^2 \sqrt{h^2+\epsilon^2}|_3 |u|_6)\leq M(c_0),
\end{split}
\end{equation}
where one has used the facts that 
\begin{equation}\label{hl2}
\begin{split}
|\nabla v|_3\leq & |\nabla v|^{\fr{1}{2}}_2|\nabla v|^{\fr{1}{2}}_6 \leq  C|\nabla v|^{\fr{1}{2}}_2|\nabla^2 v|^{\fr{1}{2}}_2\leq Cc_2^{\fr{3}{2}}c_3^{\fr{1}{2}},\\
|\nabla l^\nu \cdot  gQ(v)|_2=&|(\nabla l^\nu \cdot  gQ(v))(0,x)+\int^t_0(\nabla l^\nu \cdot  gQ(v))_s\text{d}s|_2\\
\leq& |\nabla l_0^\nu\cdot h_0Q(u_0)|_2+t^{\fr{1}{2}}\Big(\int^t_0|(\nabla l^\nu \cdot gQ(v))_s|_2^2\text{d}s\Big)^{\fr{1}{2}}\\
\leq& |l_0^{\nu-1}|_\infty|h_0^{\fr{1}{4}}\nabla l_0|_6|h_0^{\fr{3}{4}}\nabla u_0|_3\\
&+Ct^{\fr{1}{2}}\Big(\int^t_0\big(|g_t|^2_\infty|\nabla l^\nu|^2_\infty|\nabla v|^2_2+|\nabla l^\nu|^2_3|g\nabla v_t|^2_6\big)\text{d}s\Big)^{\fr{1}{2}}\\
&+Ct^{\fr{1}{2}}\Big(\int_0^t |g\nabla v|^2_\infty\big(|\nabla l_t|^2_2|l^{\nu-1}|^2_\infty+|(l^{\nu-1})_t|^2_\infty|\nabla l|_2^2\big)\text{d}s\Big)^{\fr{1}{2}}\\
\leq &|l_0^{\nu-1}|_\infty|h_0^{\fr{1}{4}}\nabla l_0|_6|\sqrt{h_0}\nabla u_0|^{\fr{1}{2}}_2|h_0\nabla u_0|^{\fr{1}{2}}_6\\
&+M(c_0)c_4^{8.5+\nu}(t+t^{\frac{1}{2}})
\leq M(c_0),\\
|\nabla\sqrt{h^2+\epsilon^2}|_\infty \leq &  C|\nabla h|_\infty\leq C|\psi|_\infty\leq Cc_0,\\
|\nabla^2\sqrt{h^2+\epsilon^2}|_3 \leq &  C(|\nabla \sqrt{h}|^2_6+|\nabla^2 h |_3)\leq   C(|\nabla \sqrt{h}|^2_6+|\nabla \psi|_3)\leq Cc^2_0.
\end{split}
\end{equation}

Then it follows from \eqref{ul2}-\eqref{2.61illl},  the classical theory for elliptic equations and  Lemmas \ref{psi}-\ref{l}  that 
\begin{equation}\label{2.61i}
\begin{split}
|\sqrt{h^2+\epsilon^2}u|_{D^2}\leq& C(|l^{-\nu}\mathcal{H}|_2+|G(\nabla\sqrt{h^2+\epsilon^2},u)|_2)\\
\leq& C(|l^{-\nu}|_\infty|\mathcal{H}|_2+|G(\nabla\sqrt{h^2+\epsilon^2},u)|_2)\\
\leq & M(c_0)(|u_t|_2+c_2^{\fr{3}{2}}c_3^{\fr{1}{2}}),\\
|\sqrt{h^2+\epsilon^2}\nabla^2u|_2\leq& C( |\sqrt{h^2+\epsilon^2}u|_{D^2} +|\nabla\psi|_3|u|_6+|\psi|_\infty|\nabla u|_2  \\
&+ M(c_0)|\psi|^2_\infty|u|_2|\varphi|_\infty) 
\leq  C |\sqrt{h^2+\epsilon^2}u|_{D^2} +M(c_0).
\end{split}
\end{equation}

Finally, it follows from $\ef{2.61f}$, $\ef{2.61i}$ and Lemma \ref{varphi}  that 
\begin{equation*}
\int^t_0(|h\nabla^2u|^2_2+|\nabla^2u|^2_2)\text{d}s\leq M(c_0) \quad \text{for}\quad 0\leq t\leq T_2.
\end{equation*}

\textbf{Step 3}: Estimate on $|u|_{D^2}$. Applying $\partial_t$ to $\ef{ln}_2$ yields
\begin{equation}\label{2.61j}\begin{split}
 u_{tt}+a_2l^{\nu}\sqrt{h^2+\epsilon^2}Lu_t=&-(v\cdot \nabla v)_t-(l\nabla\phi)_t-a_1(\phi\nabla l)_t\\
&-a_2(l^\nu)_t\sqrt{h^2+\epsilon^2}Lu -a_2\fr{h}{\sqrt{h^2+\epsilon^2}}l^\nu h_tLu\\
&+(a_2g\nabla l^\nu\cdot Q(v)+a_3l^\nu\psi \cdot Q(v))_t.
\end{split}
\end{equation}
Multiplying $\ef{2.61j}$ by $l^{-\nu}u_t$, integrating over $\mathbb{R}^3$ and integration by part lead to 
\begin{equation}\label{2.61l}\begin{split}
&  \fr{1}{2}\fr{d}{dt}|l^{-\fr{\nu}{2}}u_{t}|^2_2+a_2\alpha|(h^2+\epsilon^2)^{\fr{1}{4}}\nabla u_t|_2^2+a_2(\alpha+\beta)|(h^2+\epsilon^2)^{\fr{1}{4}}\text{div} u_t|_2^2\\
=&\int l^{-\nu}\Big(-(v\cdot \nabla v)_t-(l \nabla \phi)_t-a_1( \phi\nabla l)_t-a_2(l^\nu)_t\sqrt{h^2+\epsilon^2}Lu\\
&+(a_2g\nabla l^\nu\cdot Q(v)+a_3l^\nu\psi\cdot Q(v))_t\Big)\cdot u_t-\int a_2\fr{h}{\sqrt{h^2+\epsilon^2}} h_tLu\cdot u_t\\
&-\int a_2\nabla\sqrt{h^2+\epsilon^2}\cdot Q(u_t)\cdot u_t+\frac{1}{2}\int (l^{-\nu})_t|u_t|^2\\
\leq& C\Big(|l^{-\frac{\nu}{2}}|_\infty(|v|_\infty|\nabla v_t|_2+|v_t|_2|\nabla v|_\infty+|l_t|_\infty|\nabla\phi|_2+|\nabla l_t|_2|\phi|_\infty\\
&+|\phi_t|_\infty|\nabla l|_2+|l|_\infty|\nabla\phi_t|_2)
+|l^{\frac{\nu}{2}-1}|_\infty|l_t|_\infty|\sqrt{h^2+\epsilon^2}\nabla^2u|_2
\\
&+|l^{\frac{\nu}{2}-2}|_\infty|g\nabla v|_\infty|l_t|_\infty|\nabla l|_2+|l^{\frac{\nu}{2}-1}|_\infty(|g_t|_\infty|\nabla v|_\infty|\nabla l|_2+|g\nabla v|_\infty|\nabla l_t|_2)\\
&+|l^{\frac{\nu}{2}}|_\infty(|\psi_t|_2|\nabla v|_\infty+|\psi|_\infty
|\nabla v_t|_2)+|l^{\frac{\nu}{2}-1}|_\infty|\psi|_\infty|l_t|_\infty|\nabla v|_2\Big)|l^{-\frac{\nu}{2}}u_t|_2\\
&+C|l^{-1}|_\infty|\sqrt{g}\nabla v_t|_2|\nabla l|_3(|\sqrt{h}\nabla u_t|_2+|l^{\frac{\nu}{2}}|_\infty|\varphi|^{\frac{1}{2}}_\infty|\psi|_\infty|l^{-{\frac{\nu}{2}}}u_t|_2)\\
&+C|l^{-1}|_\infty|l_t|_\infty|l^{-\frac{\nu}{2}}u_t|^2_2\\
&+C|l^{\frac{\nu}{2}}|_\infty(|h_t|_\infty|\nabla^2u|_2+|\varphi|^{\fr{1}{2}}_\infty|\psi|_\infty|\sqrt{h}\nabla u_t|_2)|l^{-\frac{\nu}{2}}u_t|_2,
\end{split}
\end{equation}
where one has used 
\begin{equation}\label{2.61k1}\begin{split}
&\int |l^{-\nu}g\nabla l^\nu\cdot Q(v_t)\cdot u_t|= \nu\int \Big|l^{-1}\frac{\sqrt{g}}{\sqrt{h}}\nabla l\cdot\sqrt{g}Q(v_t) \cdot \sqrt{h}u_t\Big|\\
\leq & C|l^{-1}|_\infty|\sqrt{g}\nabla v_t|_2|\nabla l|_3|\sqrt{h}u_t|_6 \\
\leq & C|l^{-1}|_\infty|\sqrt{g}\nabla v_t|_2|\nabla l|_3(|\sqrt{h}\nabla u_t|_2+|l^{\frac{\nu}{2}}|_\infty|\varphi|^{\frac{1}{2}}_\infty|\psi|_\infty|l^{-{\frac{\nu}{2}}}u_t|_2).
\end{split}
\end{equation}
Integrating $\ef{2.61l}$ over $(\tau,t)$ $(\tau\in(0,t))$,  one can get by using \ef{2.16}, Lemmas \ref{phiphi}-\ref{l}  and \text{Young's} inequality that
\begin{equation}\label{2.61n}
\begin{split}
&\fr{1}{2}|l^{-\fr{\nu}{2}}u_t(t)|^2_2+\fr{a_2\alpha}{2}\int^t_\tau|\sqrt{h}\nabla u_t|^2_2\text{d}s   \\
 \leq & \fr{1}{2}|l^{-\fr{\nu}{2}}u_t(\tau)|^2_2+M(c_0)c_4^{8+2\nu}\int^t_0|l^{-\fr{\nu}{2}}u_t|^2\text{d}s+M(c_0)c_4^9t+M(c_0).
\end{split}
\end{equation}
Due to $\ef{ln}_2$, it holds that
\begin{equation}\label{2.61o}
\begin{split}
|u_t(\tau)|_2\leq& C(|v|_\infty|\nabla v|_2+|\phi|_\infty|\nabla l|_2+|\nabla\phi|_2|l|_\infty+|l|^\nu_\infty|(h+\epsilon)Lu|_2
\\
& +|l^{\nu-1}|_\infty|g\nabla v|_\infty|\nabla l|_2+|\psi|_\infty|l^\nu|_\infty|\nabla v|_2)(\tau).
\end{split}
\end{equation}
 It follows from this, $\ef{4.1*}$, $\ef{2.14}$, $\ef{incc}$ and Lemma \ref{ls} that
\begin{equation*}\label{2.611p}\begin{split}
\lim\sup_{\tau\rightarrow 0}|u_t(\tau)|_2\leq& C(|u_0|_\infty|\nabla u_0|_2+|\phi_0|_\infty|\nabla l_0|_2+|\nabla\phi_0|_2|l
_0|_\infty+|\psi_0|_\infty|l_0^\nu|_\infty|\nabla u_0|_2 \\
&+|l_0^\nu|_\infty(|g_2|_2+|Lu_0|_2)+|l_0^{\nu-1}|_\infty|\phi_0^{2\iota}\nabla u_0|_\infty|\nabla l_0|_2)\leq M(c_0).
\end{split}
\end{equation*}
Letting $\tau\rightarrow 0$ in \ef{2.61n} and using \text{Gronwall's} inequality give that for $0\leq t\leq T_2$,
\begin{equation}\label{2.61q}
\begin{split}
&|u_t(t)|_2^2+\int^t_0\big(|\sqrt{h}\nabla u_t(s)|^2_2+|\nabla u_t(s)|^2_2\big)\text{d}s\\
\leq & (M(c_0)c_4^9t+M(c_0))\exp{(M(c_0)c_4^{8+2\nu}t)}\leq M(c_0),
\end{split}
\end{equation}
which, along with  $\ef{2.61i}$, yields that  for $0\leq t\leq T_2$,
\begin{equation}\label{2.61s}
|\sqrt{h^2+\epsilon^2}u(t)|_{D^2}+ |h\nabla^2 u(t)|_2+ |u(t)|_{D^2}\leq M(c_0)c_2^{\fr{3}{2}}c_3^{\fr{1}{2}}.
\end{equation}

Next, for  giving  the $L^2$ estimate of $\nabla^3 u$, we  consider the $L^2$ estimates of 
$$(\nabla \mathcal{H},\nabla \tilde{G}=\nabla G(\nabla\sqrt{h^2+\epsilon^2},u)).$$
It follows from \eqref{Gdingyi}, \eqref{2.16}, \eqref{ul2}-\eqref{2.61f}, \eqref{Hdingyi}, \eqref{hl2},  \eqref{2.61s}  and  Lemmas \ref{phiphi}-\ref{l} that
\begin{equation}\label{2.61t}\begin{split}
 |\mathcal{H}|_{D^1_*}
\leq & C(|u_t|_{D^1_*}+|v|_\infty|\nabla^2 v|_2+|\nabla v|_6|\nabla v|_3+|l|_\infty|\nabla^2 \phi|_2\\
&+|\nabla \phi|_2|\nabla l|_\infty+| \phi|_\infty|\nabla^2 l|_2+|\nabla g|_\infty |\nabla l^\nu|_\infty |\nabla v|_2\\
&+ |\nabla^2 l^\nu|_3|g\nabla v|_6+ |\nabla l^\nu|_\infty |g\nabla^2 v|_2\\
&+|\nabla l^\nu |_\infty|\psi|_\infty|\nabla v|_2+|l^\nu |_\infty|\nabla \psi|_3|\nabla v|_6\\
&+ |l^\nu |_\infty|\psi|_\infty|\nabla^2 v|_2)\leq M(c_0)(|u_t|_{D^1_*}+c_3^2),\\
 |\tilde{G}|_{D^1_*}
\leq &C( |\nabla\sqrt{h^2+\epsilon^2}|_\infty |\nabla^2 u|_2+|\nabla^2 \sqrt{h^2+\epsilon^2}|_3 |\nabla u|_6\\
&+|\nabla^3 \sqrt{h^2+\epsilon^2}|_2 | u|_\infty)\leq  M(c_0)c^2_3,
\end{split}
\end{equation}
where one has used the fact that 
\begin{equation}\label{2.61rrnnn}
\begin{split}
|\nabla^3\sqrt{h^2+\epsilon^2}|_2 \leq &  M(c_0)(|\nabla \sqrt{h}|^3_6+|\nabla^2 h |_3|\nabla \sqrt{h}|_6+|\nabla^3 h|_2)\\
\leq &   M(c_0)(|\nabla \sqrt{h}|^3_6+|\nabla \psi|_3|\nabla \sqrt{h}|_6+|\nabla^2 \psi|_2)\leq M(c_0).
\end{split}
\end{equation}

It follows from \eqref{ul2}-\eqref{2.61h}, \eqref{2.61illl},  \eqref{2.61s}-\eqref{2.61t},  the classical theory for elliptic equations and  Lemmas \ref{psi}-\ref{l} that
\begin{equation}\label{2.61tkkk}\begin{split}
|\sqrt{h^2+\epsilon^2}u(t)|_{D^3}\leq& C|l^{-\nu}\mathcal{H}|_{D^1_*}+ C|G(\nabla\sqrt{h^2+\epsilon^2},u)|_{D^1_*}\\
\leq & C(|l^{-\nu}|_\infty|\mathcal{H}|_{D^1_*}+|\nabla l^{-\nu}|_\infty|\mathcal{H}|_{2}\\ &+|G(\nabla\sqrt{h^2+\epsilon^2},u)|_{D^1_*})\\
\leq& M(c_0)(|u_t|_{D^1_*}+c_3^2),\\
|\sqrt{h^2+\epsilon^2}\nabla^3u(t)|_2\leq& C( |\sqrt{h^2+\epsilon^2}u(t)|_{D^3} +\|\psi\|_{L^\infty \cap D^{1,3}\cap D^2}\|u\|_2)\\
&+C(1+\|\psi\|_{L^\infty \cap D^{1,3}\cap D^2}^3\| u\|_1)(1+|\varphi|^2_\infty)\\
\leq& M(c_0)(|\sqrt{h^2+\epsilon^2}u(t)|_{D^3}+c_3^2).
\end{split}
\end{equation}

Finally, it follows from  $\ef{2.61q}$, \eqref{2.61tkkk} and Lemma \ref{varphi} that
\begin{equation}\label{2.61u}
\int^t_0(|h\nabla^3u|^2_2+|h\nabla^2u|_{D^1_*}^2+|u|^2_{D^3})\text{d}s\leq M  (c_0) \quad \text{for} \quad 0\leq t\leq T_2.
\end{equation}

The proof of Lemma \ref{lu} is complete.

\end{proof}
Next, we estimate the higher order derivatives of the velocity $u$.
\begin{lemma}\label{hu}
\rm {For} $t\in [0, T_2]$, it holds that 
	\begin{equation}\label{2.62}\begin{aligned}
	(|\sqrt{h}\nabla u_t|^2_2+|u_t|^2_{D^1_*}+|u|^2_{D^3}+|h\nabla^2 u|^2_{D^1_*})(t)
	\leq M(c_0)c_3^4,&\\
	\int^t_0(|u_{tt}|^2_2+|u_t|^2_{D^2})\text{d}s\leq M(c_0),&\\
	\int^t_0(|h\nabla^2u_t|^2_2+|u|^2_{D^4}+|h\nabla^2u|_{D^2}^2+|(h\nabla^2u)_t|^2_2)\text{d}s\leq M(c_0).&
	\end{aligned}\end{equation}

\end{lemma}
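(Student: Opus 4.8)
The plan is to push the energy/elliptic scheme of Lemma \ref{lu} one derivative level higher: differentiate the momentum equation a second time in $t$ to get pointwise-in-$t$ control of $|\sqrt h\,\nabla u_t|_2$ and $\int_0^t|u_{tt}|_2^2$, and then feed these back into the degenerate-elliptic identity \ef{2.61h} (and its spatial derivatives) to upgrade the bounds on $u$ in $D^3,D^4$ and on $h\nabla^2u$ in $D^1_*,D^2$.

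First I would test the equation \ef{2.61j} for $u_{tt}$ against $l^{-\nu}u_{tt}$ and integrate over $\mathbb{R}^3$. After integrating by parts in the Lam\'e term $a_2\sqrt{h^2+\epsilon^2}Lu_t\cdot u_{tt}$ (moving one derivative onto $u_{tt}$ and absorbing $\nabla\sqrt{h^2+\epsilon^2}=$ a multiple of $\psi$ into lower-order terms), this produces
\begin{equation*}
\tfrac{d}{dt}\big(\alpha|(h^2+\epsilon^2)^{\frac14}\nabla u_t|_2^2+(\alpha+\beta)|(h^2+\epsilon^2)^{\frac14}\mathrm{div}\,u_t|_2^2\big)+c\,|l^{-\frac{\nu}{2}}u_{tt}|_2^2\le \mathcal{R},
\end{equation*}
where $\mathcal R$ is controlled by the estimates already available for $\phi,h,l,\psi,n$ (Lemmas \ref{phiphi}--\ref{varphi}) together with $|u|_{D^2}$, $|u_t|_{D^1_*}$, $|u_t|_2$ from Lemma \ref{lu} and \ef{2.16}, plus absorbable copies of $|\sqrt h\,\nabla u_t|_2$ and $|u_{tt}|_2$. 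The delicate contributions in $\mathcal R$ are the time derivative of the singular source $a_2 g\nabla l^\nu\cdot Q(v)$ and the term $(l^\nu)_t\sqrt{h^2+\epsilon^2}Lu$: for the first one writes $\nabla l^\nu=\nu l^{\nu-1}\nabla l$ and uses the weighted entropy bound $|h^{\frac14}\nabla l|_6\le M(c_0)$ of Lemma \ref{l} together with the $D^1_*\cap D^2$ estimates for $l_t$, while for the second one uses $|\sqrt{h^2+\epsilon^2}\,Lu|_2\lesssim |\sqrt{h^2+\epsilon^2}\nabla^2u|_2$ from \ef{2.61i}. To fix the initial value of the weighted norm $|\sqrt{h_0}\,\nabla u_t(0)|_2$ when integrating in time from $\tau\to0$, I would differentiate $\ef{ln}_2$ once in $t$, solve for $u_t(0)$, apply $\sqrt{h_0}\,\nabla$, and invoke the compatibility conditions $\nabla(\phi_0^{2\iota}Lu_0)=\phi_0^{-\iota}g_3$ and $\nabla^2 l_0=\phi_0^{-\iota}g_4$ from \ef{2.8*} (as flagged in Remark \ref{rxiangrong}), giving $\limsup_{\tau\to0}|\sqrt h\,\nabla u_t(\tau)|_2\le M(c_0)$. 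Gronwall on $[0,T_2]$ (where $T_2\le(1+Cc_4)^{-20-2\nu}$) then yields the first line of \ef{2.62} and $\int_0^t|u_{tt}|_2^2\,ds\le M(c_0)$.

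Next I would return to the elliptic side. Writing \ef{2.61j} as $a_2 L(\sqrt{h^2+\epsilon^2}\,u_t)=l^{-\nu}\mathcal H_\sharp-a_2 G(\nabla\sqrt{h^2+\epsilon^2},u_t)$ with $\mathcal H_\sharp$ collecting $-u_{tt}$ and lower-order terms, standard $H^2$-elliptic regularity (as in \ef{2.61i}) gives $|u_t|_{D^2}$ and $|\sqrt{h^2+\epsilon^2}\,u_t|_{D^2}$, hence $|h\nabla^2 u_t|_2$, in terms of $|u_{tt}|_2$ plus already-bounded quantities; integrating in $t$ yields $\int_0^t(|u_t|_{D^2}^2+|h\nabla^2 u_t|_2^2)\,ds\le M(c_0)$. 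Differentiating the Lam\'e identity \ef{2.61h} once, resp.\ twice, in space and using the bounds just obtained (together with $|\nabla^3\sqrt{h^2+\epsilon^2}|_2\le M(c_0)$ from \ef{2.61rrnnn} and the $D^2$-regularity of $\psi$ from Lemma \ref{psi}), as in \ef{2.61t}--\ef{2.61tkkk}, closes $|u|_{D^3}$ and $|h\nabla^2u|_{D^1_*}$ pointwise in $t$ and $\int_0^t(|u|_{D^4}^2+|h\nabla^2u|_{D^2}^2)\,ds\le M(c_0)$. Finally $(h\nabla^2u)_t=h_t\nabla^2u+h\nabla^2u_t$, so $|(h\nabla^2u)_t|_2\le |h_t|_\infty|\nabla^2u|_2+|h\nabla^2u_t|_2$, and its $L^2_tL^2_x$ norm follows from the bound on $|h_t|_\infty$ in Lemma \ref{psi}, $|\nabla^2u|_2\le M(c_0)$, and the $\int_0^t|h\nabla^2u_t|_2^2$ estimate above.

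The main obstacle is the $L^2$-control of the twice-differentiated singular structure, specifically the terms $(a_2 g\nabla l^\nu\cdot Q(v))_t$ and $(l^\nu)_t\sqrt{h^2+\epsilon^2}Lu$ in $\mathcal R$: these couple the entropy's singular weighted estimate $|h^{\frac14}\nabla l|_6$, the $D^1_*\cap D^2$-bound for $l_t$, and the weighted velocity norms $|\sqrt h\,\nabla u|_2$, $|h\nabla^2u|_2$, and one must check that the resulting powers of $c_3,c_4$ are swallowed by the smallness of $T_2$. The second delicate point, as above, is the initial-layer argument: establishing $\limsup_{\tau\to0}|\sqrt h\,\nabla u_t(\tau)|_2\le M(c_0)$ requires using all four compatibility conditions of \ef{2.8*} simultaneously through the explicit formula for $u_t(0)$.
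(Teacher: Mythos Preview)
Your proposal is correct and follows essentially the same route as the paper: multiply \ef{2.61j} by $l^{-\nu}u_{tt}$ to get the $\tfrac{d}{dt}|(h^2+\epsilon^2)^{1/4}\nabla u_t|_2^2+|l^{-\nu/2}u_{tt}|_2^2$ energy identity, handle the initial layer $\limsup_{\tau\to0}|\sqrt h\,\nabla u_t(\tau)|_2\le M(c_0)$ via the compatibility conditions \ef{2.8*}, then feed the resulting bounds into the elliptic representations (first \ef{2.61tkkk} for $|u|_{D^3},|h\nabla^2u|_{D^1_*}$, then the analogue of \ef{2.61h} for $u_t$, and finally a twice-differentiated version for $|h\nabla^2u|_{D^2}$). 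The paper organizes the elliptic bookkeeping in the order $|u|_{D^3}\to u_t\text{-elliptic}\to |u|_{D^4}\to |h\nabla^2u|_{D^2}$ rather than your $u_t\text{-elliptic}$ first, and names the right-hand side $\mathcal G$ instead of your $\mathcal H_\sharp$, but the argument is the same.
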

\begin{proof}
Multiplying \ef{2.61j} by $l^{-\nu}u_{tt}$ and integrating over $\mathbb{R}^3$ lead to 
\begin{equation}\label{2.62a}
\begin{split}
& \fr{1}{2}\fr{d}{dt}(a_2\alpha|(h^2+\epsilon^2)^{\fr{1}{4}}\nabla u_t|^2_2+a_2(\alpha+\beta)|(h^2+\epsilon^2)^{\fr{1}{4}}\text{div}u_t|^2_2)+|l^{-\fr{\nu}{2}}u_{tt}|_2^2
=\sum^4_{i=1}I_i,
\end{split}
\end{equation}
where  $I_i$, $i=1,2,3,4$,  are given and  estimated as follows:
\begin{equation}\label{i11}
\begin{split}
I_1=&\int l^{-\nu}\Big(-(v\cdot \nabla v)_t-(l\nabla \phi)_t-a_1(\phi\nabla l)_t\\
&-a_2(l^\nu)_t\sqrt{h^2+\epsilon^2}Lu-a_2l^\nu\fr{h}{\sqrt{h^2+\epsilon^2}} h_tLu\Big)\cdot u_{tt}\\
\leq& C|l^{-\fr{\nu}{2}}|_\infty\Big(|v|_\infty|\nabla v_t|_2+|v_t|_2|\nabla v|_\infty+|l_t|_\infty|\nabla\phi|_2+|\nabla l_t|_2|\phi|_\infty\\
&+|\phi_t|_\infty|\nabla l|_2+|l|_\infty|\nabla\phi_t|_2\\
&+|l^{\nu-1}|_\infty|l_t|_\infty|\sqrt{h^2+\epsilon^2}\nabla^2u|_2+|l^{\nu}|_\infty|h_t|_\infty|\nabla^2u|_2\Big)|l^{-\fr{\nu}{2}}u_{tt}|_2,\\
I_2=&\int l^{-\nu} \big(a_2g\nabla l^\nu\cdot Q(v)+a_3l^\nu\psi\cdot Q(v)\big)_t\cdot u_{tt}\\
\leq &C|l^{-\fr{\nu}{2}}|_\infty\big(|(\nabla l^\nu)_t|_2|g\nabla v|_\infty+|g_t|_\infty|\nabla l^\nu|_3|\nabla v|_6\\
&+|h^{\fr{1}{4}}\nabla l^\nu|_6|h^{-\fr{1}{4}}g^{\fr{1}{4}}|_\infty|g^{\fr{3}{4}}\nabla v_t|_3+|l^\nu|_\infty|\psi|_\infty|\nabla v_t|_2\\
&+|l^\nu|_\infty|\psi_t|_2|\nabla v|_\infty+|(l^\nu)_t|_\infty|\psi|_\infty|\nabla v|_2\big)|l^{-\fr{\nu}{2}}u_{tt}|_2,\\
I_3+I_4=&-\int a_2\nabla\sqrt{h^2+\epsilon^2}Q(u_t)\cdot u_{tt}\\
&
+\frac{1}{2}\int a_2\fr{h}{\sqrt{h^2+\epsilon^2}}h_t(\alpha|\nabla u_t|^2+(\alpha+\beta)|\text{div} u_t|^2)\\
\leq &C(|l^{\fr{\nu}{2}}|_\infty|\varphi|^{\fr{1}{2}}_\infty|\psi|_\infty|\sqrt{h}\nabla u_t|_2|l^{-\fr{\nu}{2}}u_{tt}|_2
+|h_t|_\infty|\sqrt{h}\nabla u_t|^2_2|\varphi|_\infty).
\end{split}
\end{equation}
\noindent Integrating $\ef{2.62a}$ over $(\tau,t)$ and combining $\ef{i11}$ yield that for $0\leq t\leq T_2$,
\begin{equation}\label{2.62b}
\begin{split}
&|\sqrt{h}\nabla u_t(t)|^2_2+\int^t_\tau|l^{-\fr{\nu}{2}}u_{tt}|^2_2\text{d}s\\
\leq& C|(h^2+\epsilon^2)^{\fr{1}{4}}\nabla u_t(\tau)|^2_2+M(c_0)c_4^4\int^t_0|\sqrt{h}\nabla u_t|^2_2\text{d}s +M(c_0)(c_4^{17+2\nu}t+1),
\end{split}
\end{equation}
where one has used the fact \eqref{g/h} and 
\begin{equation}\label{key}
\begin{split}
|g^{\fr{3}{4}}\nabla v_t|_3\leq C|\sqrt{g}\nabla v_t|_2^{\fr{1}{2}}|g\nabla v_t|_6^{\fr{1}{2}}.
\end{split}
\end{equation}

Due to  $\ef{ln}_2$, one gets
\begin{equation}\label{2.62c}
\begin{split}
|\sqrt{h}\nabla u_t(\tau)|_2  \leq& (|\sqrt{h}\nabla(v\cdot \nabla v+a_1\phi\nabla l+l\nabla\phi+a_2\sqrt {h^2+\epsilon^2}l^\nu Lu\\
& -a_2\nabla l^\nu\cdot gQ(v)-a_3 l^\nu\psi\cdot Q(v))|_2)(\tau). \\
\end{split}
\end{equation}
It follows from  \ef{4.1*}, $\ef{2.14}$, Lemma 3.1 and Remark 3.1 that 
\begin{equation}\label{2.62d}
\begin{split}
&\lim\sup_{\tau\rightarrow 0}|\sqrt{h}\nabla u_t(\tau)|_2\\
\leq & C(|\sqrt{h_0}\nabla(u_0\cdot\nabla u_0)|_2+|\sqrt{h_0}l_0\nabla^2\phi_0|_2+|\sqrt{h_0}\nabla^2l_0\phi_0|_2\\
&+|\sqrt{h_0}\nabla l_0\cdot \nabla\phi_0|_2 
 +|\sqrt{h_0}\nabla(\sqrt{h_0^2+\epsilon^2}l_0^\nu Lu_0)|_2\\
 &+|\sqrt{h_0}\nabla(\nabla l_0^\nu \cdot  h_0Q(u_0))|_2+|\sqrt{h_0}\nabla(l_0^\nu\psi_0 \cdot  Q(u_0))|_2)\\
 \leq& C(|\phi_0^\iota u_0|_6|\nabla^2u_0|_3+|\nabla u_0|_\infty|\phi_0^\iota\nabla u_0|_2+|l_0|_\infty|\phi_0^\iota\nabla^2\phi_0|_2\\
 &+|\nabla^2l_0\phi_0^{\iota+1}|_2
 +|\nabla l_0|_3|\phi_0^\iota\nabla \phi_0|_6+|l_0^\nu|_\infty(|\nabla\psi_0|_3|\phi_0^\iota\nabla u_0|_6\\
 &+|\psi_0|_\infty|\phi^\iota_0\nabla^2u_0|_2)+|l^{\nu-1}_0|_\infty|\psi_0|_\infty|\phi^\iota_0\nabla u_0|_6|\nabla l_0|_3\\
 &+|\sqrt{h_0}\nabla(\sqrt{h_0^2+\epsilon^2}l_0^\nu Lu_0)|_2+|\sqrt{h_0}\nabla(\nabla l_0^\nu \cdot h_0Q(u_0))|_2).
\end{split}
\end{equation}
\ef{2.14} and \ef{incc1} imply
\begin{equation}\label{2.62e}
\begin{split}
&|\phi_0^{\iota}\nabla^2l_0|_2=|g_4|_2\leq c_0,\quad 
|\phi_0^\iota\nabla^2\phi_0|_2\leq M(c_0).
\end{split}
\end{equation}
Note that
\begin{equation}\label{2.62ennn}
\begin{split}
&|\sqrt{h_0}\nabla(\sqrt{h_0^2+\epsilon^2}l_0^\nu Lu_0)|_2
\leq  |\sqrt{h_0}l_0^\nu\nabla(\sqrt{h_0^2+\epsilon^2} Lu_0)|_2\\
&\qquad \qquad +|\sqrt{h_0}\sqrt{h_0^2+\epsilon^2} Lu_0\otimes \nabla l_0^\nu|_2=|J_1|_2+|J_2|_2,
\end{split}
\end{equation}
which will be estimated as follows.
For $J_1$, it holds that
\begin{equation*}
  \begin{split}
 J_1&=\sqrt{h_0}l_0^\nu \Big(\sqrt{h_0^2+\epsilon^2}\nabla Lu_0+\fr{h_0}{\sqrt{h_0^2+\epsilon^2}}Lu_0\otimes\nabla h_0 \Big) \\
    &=l_0^\nu\Big(\fr{h_0}{\sqrt{h_0^2+\epsilon^2}}g_3+\epsilon^2\nabla Lu_0\fr{\sqrt{h_0}}{\sqrt{h_0^2+\epsilon^2}}\Big),
 \end{split}
\end{equation*}
which yields 
\begin{equation}\label{2.62i}
 |J_1|_2\leq C|l_0^\nu|_\infty(|g_3|_2+\epsilon^2|\varphi_0|^{\fr{1}{2}}_\infty|\nabla^3u_0|_2).
\end{equation}
For $J_2$, one gets from \ef{2.14} that 
\begin{equation}\label{2.62j}\begin{split}
|J_2|_2&\leq C(|h_0^{\fr{3}{2}}l_0^{\nu-1}Lu_0\otimes \nabla l_0|_2+\epsilon|h_0^{\fr{1}{2}}l_0^{\nu-1}Lu_0\otimes \nabla l_0|_2)\\
 &\leq C|l_0^{\nu-1}|_\infty|h_0^{\fr{3}{2}}Lu_0|_6|\nabla l_0|_3+C|g_2|_2|\phi_0^{-\iota}|_\infty|l_0^{\nu-1}|_\infty|\nabla l_0|_\infty\\
&\leq C|l_0^{\nu-1}|_\infty|\nabla (h_0^{\fr{3}{2}}Lu_0)|_2|\nabla l_0|_3+C|g_2|_2|\phi_0^{-\iota}|_\infty|l_0^{\nu-1}|_\infty|\nabla l_0|_\infty\leq M(c_0).
\end{split}
\end{equation}
On the other hand, it follows  from  \ef{2.14} and Remark 3.1 that 
\begin{equation*}
\begin{split}
&|\sqrt{h_0}\nabla(\nabla l_0^\nu \cdot  h_0Q(u_0))|_2\\
\leq &C(|\sqrt{h_0}\nabla^2l_0^\nu|_2| h_0
\nabla u_0|_\infty+|\sqrt{h_0}\nabla l_0^\nu|_6(|h_0\nabla^2 u_0|_3+|\psi_0|_\infty|\nabla u_0|_3))\\
\leq & C(|\sqrt{h_0}\nabla^2l_0^\nu|_2| h_0
\nabla u_0|_6^{\fr{1}{2}}|\nabla(h_0
\nabla u_0)|_6^{\fr{1}{2}}\\
&+(|\sqrt{h_0}\nabla^2 l_0^\nu|_2+|\varphi_0^{\fr{1}{2}}\psi_0|_\infty|\nabla l_0^\nu|_2)(|h_0\nabla^2 u_0|_3
+|\psi_0|_\infty|\nabla u_0|_3))\leq M(c_0).
\end{split}
\end{equation*}
Hence, combining all the estimates with  $\ef{2.62d}$ yields 
\begin{equation}\label{2.62k}
\lim\sup_{\tau\rightarrow 0}|\sqrt{h}\nabla u_t(\tau)|_2\leq M(c_0),
\end{equation}
which implies also 
\begin{equation*}
\lim\sup_{\tau\rightarrow 0}|\sqrt{\epsilon}\nabla u_t(\tau)|_2\leq \lim\sup_{\tau\rightarrow 0}\sqrt{\epsilon}|\varphi|_\infty^{\fr{1}{2}}|\sqrt{h}\nabla u_t(\tau)|_2\leq M(c_0).
\end{equation*}
Letting $\tau\rightarrow 0$, one gets  from \eqref{2.62b} and  Gronwall's inequality that for $0\leq t\leq T_2$,
\begin{equation}\label{2.62m}\begin{split}
&|\sqrt h\nabla u_t(t)|_2^2+|\nabla u_t(t)|^2_2+\int^t_0|u_{tt}(s)|^2_2\text{d}s\\
\leq &  M(c_0)(1+c_4^{17+2\nu}t)\exp( M(c_0)c_4^4t)\leq  M(c_0),
\end{split}
\end{equation}
which, along with   $\ef{2.61tkkk}$, yields  
\begin{equation}\label{2.63}
|\sqrt{h^2+\epsilon^2}u|_{D^3}+|\sqrt{h^2+\epsilon^2}\nabla^3u|_2+|h\nabla^2u|_{D^1}+|\nabla^3u|_2\leq M(c_0)c^2_3.
\end{equation}
Note that  $\ef{2.61j}$ gives
\begin{equation}\label{2.64}\begin{split}
a_2L(\sqrt{h^2+\epsilon^2}u_t)&=a_2\sqrt{h^2+\epsilon^2}Lu_t-a_2G(\nabla\sqrt{h^2+\epsilon^2},u_t)\\
&=l^{-\nu}\mathcal{G}-a_2G(\nabla\sqrt{h^2+\epsilon^2},u_t),
\end{split}
\end{equation}
with 
\begin{equation}\label{huagdingyi}\begin{split}
\mathcal{G}=&-u_{tt}-(v\cdot \nabla v)_t-(l\nabla \phi)_t-a_1(\phi\nabla l)_t-a_2(l^\nu)_t\sqrt{h^2+\epsilon^2}Lu\\
&-a_2\fr{h}{\sqrt{h^2+\epsilon^2}} h_t l^{\nu}Lu+(a_2g\nabla l^\nu\cdot Q(v)+a_3l^\nu \psi \cdot Q(v))_t.
\end{split}
\end{equation}
Next, for  giving  the $L^2$ estimates of $(\nabla^2 u_t,\nabla^4 u)$, we  consider the $L^2$ estimates of $$(\mathcal{G},\widehat{G}=G(\nabla\sqrt{h^2+\epsilon^2},u_t),\nabla^2 \mathcal{H}).$$
In fact, it follows from \eqref{Gdingyi}, \eqref{2.16}, \eqref{Hdingyi}, \eqref{hl2}, \eqref{2.62m}-\eqref{2.63}, \eqref{huagdingyi} and  Lemmas \ref{phiphi}-\ref{lu} that 

\begin{equation}\label{2.61ccvv}
\begin{split}
|\mathcal{G}|_2\leq & C(|u_{tt}|_2+\|v\|_2|\nabla v_t|_2+\|l\|_{L^\infty\cap D^1\cap D^2}\| \phi_t\|_1+\|\phi\|_2\|l_t\|_{L^3\cap D^1}\\
&+|(l^\nu)_t |_\infty|\sqrt{h^2+\epsilon^2} Lu|_2 +|l^\nu |_\infty |h_t|_\infty|\nabla^2u|_2+|g_t|_\infty|\nabla l^\nu|_2
|\nabla v|_\infty\\
&+|g\nabla v|_\infty|\nabla(l^\nu)_t|_2+|l^{\nu-1}|_\infty|g^{\fr{1}{4}}\nabla l|_6|g^{\fr{3}{4}}\nabla v_t|_3\\
&+|(l^\nu)_t|_3|\psi|_\infty|\nabla v|_6+|l^\nu|_\infty|\psi_t|_2|\nabla v|_\infty+|l^\nu|_\infty|\psi|_\infty|\nabla v_t|_2\\
\leq & M(c_0)(|u_{tt}|_2+c_4^{8.5+\nu}+c_4^{\fr{1}{2}}|g\nabla v_t|^{\fr{1}{2}}_{D^1_*}),\\
|\mathcal{H}|_{D^2}\leq & C(|u_t|_{D^2}+\|v\|_2\|\nabla v\|_2+\|l\|_{L^\infty\cap D^1\cap D^3}\|\nabla \phi\|_2\\
&+\|\nabla l^\nu\|_2 (\|g\nabla v\|_{L^\infty\cap D^1\cap D^2}+\|\nabla g\|_{L^\infty\cap D^2 }\|\nabla v\|_2)\\
& +\|l^\nu \|_{L^\infty\cap D^1\cap D^2}\|\psi\|_{L^q\cap D^{1,3}\cap D^2}\|\nabla v\|_2)\\
\leq & M(c_0)(|u_t|_{D^2}+c_4^2),\\
|\widehat{G}|_2
\leq & C(|\nabla\sqrt{h^2+\epsilon^2}|_\infty |\nabla u_t|_2+|\nabla^2 \sqrt{h^2+\epsilon^2}|_3 |u_t|_6)\leq M(c_0),
\end{split}
\end{equation}
where one has used  \eqref{g/h}  and 
\begin{equation*}
\begin{split}
|g^{\fr{3}{4}}\nabla v_t|_3
\leq  C|\sqrt{g}\nabla v_t|_2^{\fr{1}{2}}|g\nabla v_t|_6^{\fr{1}{2}}
\leq Cc_4^{\fr{1}{2}}|g\nabla v_t|^{\fr{1}{2}}_{D^1_*}.
\end{split}
\end{equation*}
It follows from \eqref{2.61h}, \eqref{2.61illl}, \eqref{2.61t}, \eqref{2.62m}-\eqref{2.64},  \eqref{2.61ccvv}, the classical theory for elliptic equations and  Lemmas \ref{phiphi}-\ref{lu} that
\begin{equation}\label{2.65}\begin{split}
\displaystyle
|\sqrt{h^2+\epsilon^2}u_t|_{D^2}\leq &  C|l^{-\nu}\mathcal{G}|_2+C|G(\nabla\sqrt{h^2+\epsilon^2},u_t)|_2\\
\displaystyle
\leq& M(c_0)(|u_{tt}|_2+c_4^{\fr{1}{2}}|g\nabla v_t|^{\fr{1}{2}}_{D^1_*}+c_4^{8.5+\nu}),\\[2pt]
\displaystyle
|\sqrt{h^2+\epsilon^2}\nabla^2u_t|_2\leq & C(|\sqrt{h^2+\epsilon^2}u_t|_{D^2}+|\nabla u_t|_2(|\psi|_\infty+|\nabla\psi|_3)\\
&+|\psi|^2_\infty|u_t|_2|\varphi|_\infty)\\
\displaystyle
\leq & M(c_0)(|u_{tt}|_2+c_4^{\fr{1}{2}}|g\nabla v_t|^{\fr{1}{2}}_{D^1_*}+c_4^{8.5+\nu}),\\[2pt]
\displaystyle
|(h\nabla^2u)_t|_2\leq & C(|h\nabla^2u_t|_2+|h_t|_\infty|\nabla^2 u|_2)\\
\displaystyle
\leq & M(c_0)(|u_{tt}|_2+c_4^{\fr{1}{2}}|g\nabla v_t|^{\fr{1}{2}}_{D^1_*}+c_4^{8.5+\nu}),\\[2pt]
\displaystyle
|u|_{D^4}\leq& C|(h^2+\epsilon^2)^{-\fr{1}{2}}l^{-\nu}\mathcal{H}|_{D^2}\\
\leq &  M(c_0)(|u_{t}|_{D^2}
+c_4^{2})\\
\leq & M(c_0)(|u_{tt}|_2+c_4^{\fr{1}{2}}|g\nabla v_t|^{\fr{1}{2}}_{D^1_*}+c_4^{8.5+\nu}).
\end{split}
\end{equation}

 $\ef{ln}_2$ yield that  for multi-index $\xi\in\mathbb{Z}^3_+$ with $|\xi|=2$,  
\begin{equation}\label{2.67}
\begin{split}
&a_2L(\sqrt{h^2+\epsilon^2}\nabla^\xi u)=a_2\sqrt{h^2+\epsilon^2}\nabla^\xi Lu-a_2G(\nabla\sqrt{h^2+\epsilon^2},\nabla^\xi u)\\
=& \sqrt{h^2+\epsilon^2}\nabla^\xi\big[\big(\sqrt{h^2+\epsilon^2})^{-1}l^{-\nu}\mathcal{H}\big]
-a_2G(\nabla\sqrt{h^2+\epsilon^2},\nabla^\xi u),\\
\end{split}
\end{equation}
which, along with \eqref{2.61illl}, \eqref{2.61t}, \eqref{2.62m}-\eqref{2.63}, \eqref{2.61ccvv}-\eqref{2.65}, the classical theory for elliptic equations and  Lemmas \ref{phiphi}-\ref{lu},  implies that 
\begin{equation}\label{2.68}
\begin{split}
|\sqrt{h^2+\epsilon^2}\nabla^2u(t)|_{D^2}
\leq& C|\sqrt{h^2+\epsilon^2}\nabla^\xi\big[\big(\sqrt{h^2+\epsilon^2})^{-1}l^{-\nu}\mathcal{H}\big]|_{2}\\
&+C(|\psi|_\infty|u|_{D^3}+|\nabla\psi|_3|\nabla^2u|_6+|\nabla^2u|_2|\psi|^2_\infty|\varphi|_\infty)\\
\leq & M(c_0)(|u_{t}|_{D^2}+c_4^{2})\\
\leq & M(c_0)(|u_{tt}|_2+c_4^{\fr{1}{2}}|g\nabla v_t|^{\fr{1}{2}}_{D^1_*}+c_4^{8.5+\nu}).
\end{split}
\end{equation}

At last, it follows from \eqref{2.16}, \eqref{2.62m}, \eqref{2.65}, \eqref{2.68} and Lemma \ref{varphi} that  
\begin{equation}\label{2.69}
\int^{T_2}_0(|h\nabla^2u_t|^2_2+|u_t|^2_{D^2}+|u|^2_{D^4}+|h\nabla^2u|_{D^2}^2+|(h\nabla^2u)_t|^2_2)\text{d}t\leq  M(c_0).
\end{equation}

The proof of Lemma \ref{hu} is complete.
\end{proof}
\smallskip
\smallskip
\smallskip
Finally, we derive the time weighted estimates for the velocity $u$.
\begin{lemma}\label{hu2}Set $T_3=\min\{T_2,(1+Cc_5)^{-20-2\nu}\}$. Then for $t\in [0,T_3]$, 
	\begin{equation}\label{2.70}
	\begin{split}
	t|u_t(t)|^2_{D^2}+t|h\nabla^2u_t(t)|^2_2+t|u_{tt}(t)|^2_2+t|u(t)|^2_{D^4}(t) & \leq  M(c_0)c^{19+2\nu}_4,\\
	\int^t_0s(|u_{tt}|_{D^1_*}^2+|u_t|^2_{D^3}+|\sqrt{h} u_{tt}|_{D^1_*}^2)\text{d}s & \leq  M(c_0)c^{19+2\nu}_4.
	\end{split}
	\end{equation}
\end{lemma}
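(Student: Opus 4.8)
The plan is to obtain time-weighted $L^\infty$ and $L^2$-in-time bounds for the highest-order derivatives of $u$ by revisiting the differentiated momentum equation $\eqref{2.61j}$, this time testing against $l^{-\nu} u_{tt}$ after multiplying by the weight $t$, and by further differentiating $\eqref{2.61j}$ once more in $t$ to get an equation for $u_{tt}$. Throughout, all constants will be tracked in terms of $c_0$ and $c_5$ (the a priori bounds $\eqref{2.16}$ on $(v,g,w)$), and we repeatedly use the previously established Lemmas \ref{phiphi}--\ref{hu} to control $\phi$, $\psi$, $l$, $h$-related quantities and the non-weighted norms of $u$ on $[0,T_2]$. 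First I would multiply $\eqref{2.61j}$ by $t\, l^{-\nu} u_{tt}$, integrate over $\mathbb{R}^3$, and produce the identity
\begin{equation*}
\frac{1}{2}\frac{d}{dt}\Big(t\,a_2\alpha|(h^2+\epsilon^2)^{\frac14}\nabla u_t|_2^2+t\,a_2(\alpha+\beta)|(h^2+\epsilon^2)^{\frac14}\mathrm{div}u_t|_2^2\Big)+t|l^{-\frac{\nu}{2}}u_{tt}|_2^2=\frac{1}{2}|(h^2+\epsilon^2)^{\frac14}\nabla u_t|_2^2(1+\cdots)+t\sum_{i=1}^4 I_i,
\end{equation*}
where the $I_i$ are exactly as in $\eqref{i11}$; the extra term $\frac12|(h^2+\epsilon^2)^{\frac14}\nabla u_t|_2^2$ coming from $\partial_t t=1$ is already controlled by Lemma \ref{hu}. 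After Cauchy--Schwarz and Young, the right side is bounded by $M(c_0)c_4^{\,p}\big(t|\sqrt h\nabla u_t|_2^2+1\big)+\frac12 t|l^{-\frac\nu2}u_{tt}|_2^2$ plus a term $t\,c_4^{\,q}|g\nabla v_t|_{D^1_*}$ whose time integral is $\le M(c_0)c_5^{\,r}$ by $\eqref{2.16}$; Gronwall then yields $t|\sqrt h\nabla u_t(t)|_2^2+\int_0^t s|l^{-\frac\nu2}u_{tt}|_2^2\,ds\le M(c_0)c_4^{19+2\nu}$ on $[0,T_3]$, with $T_3$ chosen so that $M(c_0)c_5^{\,p}T_3\le 1$, i.e.\ $T_3=\min\{T_2,(1+Cc_5)^{-20-2\nu}\}$. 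This simultaneously gives the $t|u_t|_{D^2}^2$ and $t|h\nabla^2 u_t|_2^2$ bounds once we feed this into the elliptic estimates $\eqref{2.65}$ (those estimates already express $|\sqrt{h^2+\epsilon^2}u_t|_{D^2}$, hence $|u_t|_{D^2}$ and $|h\nabla^2 u_t|_2$, in terms of $|u_{tt}|_2$ and lower-order data), and likewise the $t|u|_{D^4}^2$ bound from the last line of $\eqref{2.65}$.

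Next I would handle $t|u_{tt}(t)|_2^2$ and $\int_0^t s|u_{tt}|_{D^1_*}^2\,ds$. For this, apply $\partial_t$ to $\eqref{2.61j}$ to obtain
\begin{equation*}
u_{ttt}+a_2 l^\nu\sqrt{h^2+\epsilon^2}\,Lu_{tt}=\mathcal{R},
\end{equation*}
where $\mathcal{R}$ collects all remaining terms: $-(v\cdot\nabla v)_{tt}$, the $tt$-derivatives of $l\nabla\phi$ and $\phi\nabla l$, the commutator-type terms $\partial_t\big(a_2(l^\nu)_t\sqrt{h^2+\epsilon^2}Lu+a_2\frac{h}{\sqrt{h^2+\epsilon^2}}l^\nu h_t Lu\big)$, and $\big(a_2 g\nabla l^\nu\cdot Q(v)+a_3 l^\nu\psi\cdot Q(v)\big)_{tt}$. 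Multiply by $t\, l^{-\nu}u_{tt}$ and integrate; the principal term gives $\frac12\frac{d}{dt}\big(t|l^{-\frac\nu2}u_{tt}|_2^2\big)+a_2\alpha\, t|(h^2+\epsilon^2)^{\frac14}\nabla u_{tt}|_2^2$ up to the harmless $\frac12|l^{-\frac\nu2}u_{tt}|_2^2$ (already integrable by the first step) and a term from $\partial_t(\sqrt{h^2+\epsilon^2}l^\nu)$ that is absorbed. The terms in $\mathcal{R}$ involving $v_{tt}$, $v_t$, $\phi_{tt}$, $l_{tt}$, $\psi_{tt}$ are estimated in $H^{-1}$ or $L^2$ using $\eqref{2.16}$, Lemmas \ref{phiphi}, \ref{psi}, \ref{l}; those involving $Lu$, $Lu_t$, i.e.\ $\nabla^2 u$, $\nabla^2 u_t$, use Lemma \ref{hu} and the bounds just obtained in the first step; the genuinely delicate pieces are $\int t\,l^{-\nu}g\,\nabla l^\nu\cdot Q(v_{tt})\cdot u_{tt}$ and $\int t\,l^{-\nu}\nabla(l^\nu)_t\cdot g Q(v_t)\cdot u_{tt}$, which are treated exactly as in $\eqref{2.61k1}$ by pulling out $l^{-1}\sqrt g/\sqrt h$, estimating $|\sqrt g\nabla v_{tt}|_2$ and $|g^{3/4}\nabla v_t|_3$ via $\eqref{key}$ and $\eqref{2.16}$, and using $|h^{1/4}\nabla l|_6$ from Lemma \ref{l} together with $|\sqrt h u_{tt}|_6\le C(|\sqrt h\nabla u_{tt}|_2+|\varphi|_\infty^{1/2}|\psi|_\infty|u_{tt}|_2)$ so the $\nabla u_{tt}$-part is absorbed into the good term. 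Gronwall on $[0,T_3]$ (with $T_3$ already small enough) then gives $t|u_{tt}(t)|_2^2+\int_0^t s|\sqrt h\nabla u_{tt}|_2^2\,ds\le M(c_0)c_4^{19+2\nu}$, whence $\int_0^t s|u_{tt}|_{D^1_*}^2\,ds$ and $\int_0^t s|\sqrt h u_{tt}|_{D^1_*}^2\,ds$ follow since $|\nabla u_{tt}|_2\le |\varphi|_\infty^{1/2}|\sqrt h\nabla u_{tt}|_2+\cdots$; to initialize the Gronwall argument one checks $\limsup_{t\to 0^+}t|u_{tt}(t)|_2^2=0$, which is automatic from $u_{tt}\in L^2([0,T];L^2)$ (Lemma \ref{ls}).

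Finally, $\int_0^t s|u_t|_{D^3}^2\,ds$ is obtained from the elliptic system $\eqref{2.64}$ for $\sqrt{h^2+\epsilon^2}u_t$: differentiating once more in space and applying the classical $D^1_*\to D^3$ elliptic estimate expresses $|\sqrt{h^2+\epsilon^2}u_t|_{D^3}$, hence $|u_t|_{D^3}$, in terms of $|\mathcal G|_{D^1_*}$ and lower-order terms; $|\mathcal G|_{D^1_*}$ in turn is controlled by $|u_{tt}|_{D^1_*}$ plus quantities already bounded, so multiplying by $s$ and integrating uses the $\int_0^t s|u_{tt}|_{D^1_*}^2$ bound from the previous paragraph. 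The main obstacle, as in the earlier lemmas, is the simultaneous presence of the strongly singular coefficient $\sqrt{h^2+\epsilon^2}\,l^\nu$ in front of $L$ (so that elliptic regularity for $u$, $u_t$, $u_{tt}$ must always be run on the weighted quantities $\sqrt{h^2+\epsilon^2}\,(\cdot)$ and then converted back, paying factors of $|\psi|$, $|\varphi|$) and the singular product $g\nabla l^\nu\cdot Q(v_t)$, $g\nabla l^\nu\cdot Q(v_{tt})$ on the right, whose control rests entirely on the $|h^{1/4}\nabla l|_6$ estimate of Lemma \ref{l} (this is where the constraint $4\gamma+3\delta\le 7$ is used) and on the interpolation trick $\eqref{2.61k1}$; getting the powers of $c_4$, $c_5$ and the size of $T_3$ right so that Gronwall closes is the bookkeeping crux, but no new idea beyond those in Lemmas \ref{lu}--\ref{hu} is needed.
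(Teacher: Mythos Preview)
Your second and third paragraphs contain the correct argument and match the paper's approach: differentiate $\eqref{2.61j}$ once more in $t$ to obtain an equation for $u_{ttt}$, test against $l^{-\nu}u_{tt}$, multiply by $t$, handle the singular source $g\nabla l^\nu\cdot Q(v_{tt})$ via the $|h^{1/4}\nabla l|_6$ trick, and initialize Gronwall using a sequence $s_k\to 0$ with $s_k|u_{tt}(s_k)|_2^2\to 0$ (the paper invokes Lemma~\ref{bjr} for this). After obtaining $t|u_{tt}(t)|_2^2+\int_0^t s|\sqrt h\nabla u_{tt}|_2^2\,ds$, the paper then feeds the pointwise bound $t|u_{tt}|_2^2$ into $\eqref{2.65}$ to get $t|u_t|_{D^2}^2$, $t|h\nabla^2u_t|_2^2$, $t|u|_{D^4}^2$, and finally estimates $|\mathcal G|_{D^1_*}$ to close $\int_0^t s|u_t|_{D^3}^2\,ds$.

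Your first paragraph, however, is both redundant and contains an overclaim. Multiplying $\eqref{2.61j}$ by $t\,l^{-\nu}u_{tt}$ produces $t|\sqrt h\nabla u_t(t)|_2^2+\int_0^t s|u_{tt}|_2^2\,ds$, but both of these are \emph{weaker} than the unweighted bounds $|\sqrt h\nabla u_t|_2^2\le M(c_0)$ and $\int_0^t|u_{tt}|_2^2\,ds\le M(c_0)$ already given by Lemma~\ref{hu}; this step buys nothing. More importantly, your claim that ``this simultaneously gives the $t|u_t|_{D^2}^2$ and $t|h\nabla^2 u_t|_2^2$ bounds once we feed this into $\eqref{2.65}$'' is incorrect: $\eqref{2.65}$ bounds $|u_t|_{D^2}$ by $|u_{tt}|_2$ \emph{pointwise in $t$}, so you need $t|u_{tt}(t)|_2^2$, not the time-integrated quantity $\int_0^t s|u_{tt}|_2^2\,ds$. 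The correct order is the one the paper follows: first establish $t|u_{tt}(t)|_2^2$ via the $u_{ttt}$ equation (your second paragraph), and only then invoke the elliptic estimates. Drop the first paragraph and reorder accordingly.
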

\begin{proof} Differentiating  $\ef{2.61j}$ with respect to $t$ yields 
\begin{equation}\label{2.71}
\begin{split}
&u_{ttt}+a_2\sqrt{h^2+\epsilon^2}l^\nu Lu_{tt}\\
=& -2v_t\cdot\nabla v_t-v_{tt}\cdot\nabla v-v\cdot\nabla v_{tt}-2a_1\phi_t\nabla l_t-a_1\phi\nabla l_{tt}-a_1\phi_{tt}\nabla l\\
&-2l_t\nabla\phi_t-l_{tt}\nabla\phi-l\nabla\phi_{tt}
+a_3l^\nu\psi_{tt}\cdot Q(v)+a_3(l^\nu)_{tt}\psi\cdot  Q(v)
\\
&+a_3l^\nu\psi \cdot Q(v_{tt})+2a_3(l^\nu)_t\psi_t\cdot Q(v)+2a_3l^\nu \psi_t\cdot Q(v_t)+2a_3(l^\nu)_t\psi\cdot Q(v_t)\\
&+a_2g(\nabla l^\nu)_{tt}\cdot Q(v)+a_2\nabla l^\nu\cdot (gQ(v))_{tt}\\
&+2a_2(\nabla l^\nu)_t\cdot(gQ(v))_t
-a_2(l^\nu)_{tt}\sqrt{h^2+\epsilon^2}L u\\
&-2a_2(l^\nu)_{t}\sqrt{h^2+\epsilon^2}L u_t-a_2l^\nu\frac{h}{\sqrt{h^2+\epsilon^2}}h_{tt} Lu\\
&-a_2l^\nu \frac{\epsilon^2h^2_t}{(h^2+\epsilon^2)^{\frac{3}{2}}}Lu-2a_2\frac{h}{\sqrt{h^2+\epsilon^2}}(h_{t} l^\nu Lu_t+(l^\nu)_th_{t} Lu).
\end{split}
\end{equation}
Multiplying $\ef{2.71}$ by $l^{-\nu}u_{tt}$ and integrating over $\mathbb{R}^3$ give
\begin{equation}\label{2.72}
\begin{split}
&\fr{1}{2}\fr{d}{dt}|l^{-\fr{\nu}{2}}u_{tt}|^2_2+a_2\alpha|(h^2+\epsilon^2)^{\fr{1}{4}}\nabla u_{tt}|^2_2+a_2(\alpha+\beta)|(h^2+\epsilon^2)^{\fr{1}{4}}\text{div}u_{tt}|^2_2=\sum^{4}_{i=1}H_i,
\end{split}
\end{equation}
where  $H_i$, $i=1,2,3,4$, are given and estimated as follows.
\begin{equation}\label{h1}
\begin{split}
H_1=&\int l^{-\nu}\big(-2v_t\cdot\nabla v_t-v_{tt}\cdot\nabla v-v \cdot\nabla v_{tt}-2a_1\phi_t\nabla l_t-a_1\phi\nabla l_{tt}\\
&-a_1\phi_{tt}\nabla l
-2l_t\nabla\phi_t-l_{tt}\nabla\phi-l\nabla\phi_{tt}\big)\cdot u_{tt}\\
\leq& C|l^{-\fr{\nu}{2}}|_\infty\big(|\nabla v_t|_6|v_t|_3+|\nabla v|_\infty|v_{tt}|_2+|v|_\infty|\nabla v_{tt}|_2\\
&+
|\phi_{tt}|_2|\nabla l|_\infty+|\phi_t|_6|\nabla l_t|_3+|\phi|_\infty|\nabla l_{tt}|_2\\
&+|l_t|_\infty|\nabla\phi_t|_2+|l_{tt}|_6|\nabla\phi|_3+|l|_\infty|\nabla\phi_{tt}|_2)|l^{-\fr{\nu}{2}}u_{tt}|_2,\\
H_2=&\int l^{-\nu} (a_3l^\nu \psi_{tt}\cdot Q(v)+a_3(l^\nu)_{tt}\psi\cdot Q(v)+a_3l^\nu\psi \cdot Q(v_{tt})\\
&+2a_3(l^\nu)_t\psi_t\cdot Q(v)
+2a_3l^\nu\psi_t\cdot Q(v_t)+2a_3(l^\nu)_t\psi\cdot Q(v_t))\cdot u_{tt}\\
\leq& C|l^{-\fr{\nu}{2}}|_\infty(|l^\nu|_\infty|\psi_{tt}|_2|\nabla v|_\infty+|(l^\nu)_{tt}|_6|\psi|_\infty|\nabla v|_3
\\
&+|l^\nu|_\infty|\psi|_\infty|\nabla v_{tt}|_2+|\psi_t|_3|(l^\nu)_t|_\infty|\nabla v|_6
\\
&+|\psi|_\infty|(l^\nu)_t|_\infty|\nabla v_t|_2
+|l^\nu|_\infty|\psi_t|_3|\nabla v_t|_6
)|l^{-\fr{\nu}{2}}u_{tt}|_2,
\end{split}
\end{equation}
\begin{equation}\label{h1mmm}
\begin{split}
H_3=&\int l^{-\nu}(a_2g(\nabla l^\nu)_{tt}\cdot Q(v)
+a_2\nabla l^\nu \cdot  (g Q(v))_{tt}\\
&+2a_2(\nabla l^\nu)_t\cdot(g Q(v))_t)\cdot u_{tt}\\
\leq& C|l^{-\fr{\nu}{2}}|_\infty(|(\nabla l^\nu)_{tt}|_2|g\nabla v|_\infty+|(\nabla l^\nu)_t|_3|(g\nabla v)_t|_6)|l^{-\fr{\nu}{2}}u_{tt}|_2\\
&+M(c_0)\int\big(|\nabla l^\nu\cdot g\nabla v_{tt}\cdot u_{tt}|
+|\nabla l^\nu\cdot g_{tt}\nabla v\cdot u_{tt}|\\
&+|\nabla l^\nu\cdot g_t\nabla v_{t}\cdot u_{tt}|\big)\\
\leq &C|l^{-\fr{\nu}{2}}|_\infty(|(\nabla l^\nu)_{tt}|_2|g\nabla v|_\infty+|(\nabla l^\nu)_t|_3|(g\nabla v)_t|_6)|l^{-\fr{\nu}{2}}u_{tt}|_2\\
&+M(c_0)(|\nabla l|_3|g_{tt}|_6|\nabla v|_\infty|l^{-\fr{\nu}{2}}u_{tt}|_2+|\nabla l|_3|g_t|_\infty|\nabla v_t|_6|l^{-\fr{\nu}{2}}u_{tt}|_2)\\
&+M(c_0)\int|\nabla l\cdot g\nabla v_{tt}\cdot u_{tt}|.
\end{split}
\end{equation}
In order  to estimate $\int|\nabla l\cdot g\nabla v_{tt}\cdot u_{tt}|$, one uses Lemma \ref{l}-\ref{gh} to get 
\begin{equation}\label{key1}
\begin{split}
&\int|\nabla l\cdot g\nabla v_{tt}\cdot u_{tt}|=\int|h^{\fr{1}{4}}\nabla l\sqrt{g}\cdot \nabla v_{tt}\cdot \fr{\sqrt{g}}{\sqrt{h}}h^{\fr{1}{4}}u_{tt}|\\
\leq & |h^{\fr{1}{4}}\nabla l|_6|\fr{\sqrt{g}}{\sqrt{h}}|_\infty|\sqrt{g}\nabla v_{tt}|_2|h^{\fr{1}{4}}u_{tt}|_3
\leq M(c_0)|\sqrt{g}\nabla v_{tt}|_2|h^{\fr{1}{4}}u_{tt}|_3.
\end{split}
\end{equation}
Note also that
\begin{equation}\label{key2}
|h^{\fr{1}{4}}u_{tt}|_3\leq |u_{tt}|_2^{\fr{1}{2}}|\sqrt{h}u_{tt}|_6^{\fr{1}{2}}.
\end{equation}
It then follows from the Sobolev and  \text{H\"older} inequalities, and Lemmas \ref{psi}-\ref{varphi} that 
\begin{equation}\label{key3}
\begin{split}
&\int|\nabla l\cdot g\nabla v_{tt}\cdot u_{tt}|\leq M(c_0)|\sqrt{g}\nabla v_{tt}|_2|\sqrt{h}u_{tt}|_6^{\fr{1}{2}}|l^{-\fr{\nu}{2}}u_{tt}|_2^{\fr{1}{2}}\\
\leq & M(c_0)|\sqrt{g}\nabla v_{tt}|_2(|\sqrt{h}\nabla u_{tt}|^{\fr{1}{2}}_2+|\varphi|_\infty^{\fr{1}{4}}|\psi|^{\fr{1}{2}}_\infty|l^{-\fr{\nu}{2}}u_{tt}|^{\fr{1}{2}}_2)|l^{-\fr{\nu}{2}}u_{tt}|^{\fr{1}{2}}_2\\
\leq & M(c_0)(|\sqrt{g}\nabla v_{tt}|_2|\sqrt{h}\nabla u_{tt}|^{\fr{1}{2}}_2|l^{-\fr{\nu}{2}}u_{tt}|^{\fr{1}{2}}_2+|\sqrt{g}\nabla v_{tt}|_2|\varphi|_\infty^{\fr{1}{4}}|\psi|^{\fr{1}{2}}_\infty|l^{-\fr{\nu}{2}}u_{tt}|_2)\\
\leq & \fr{a_2\alpha}{4}|\sqrt{h}\nabla u_{tt}|_2^2+M(c_0)|\sqrt{g}\nabla v_{tt}|_2^{\fr{4}{3}}|l^{-\fr{\nu}{2}}u_{tt}|_2^{\fr{2}{3}}+ M(c_0)|\sqrt{g}\nabla v_{tt}|_2|l^{-\fr{\nu}{2}}u_{tt}|_2.
\end{split}
\end{equation}
Hence, 
\begin{equation}\label{h3}
\begin{split}
H_3\leq& M(c_0)\Big(\big(|\nabla l_{tt}|_2+|l_t|_3|\nabla l_t|_6+|l_{tt}|_6|\nabla l|_3+|l_t|^2_\infty|\nabla l|_2)|g\nabla v|_\infty\\
&+(|\nabla l_t|_3+|l_t|_\infty|\nabla l|_3)|(g\nabla v)_t|_6\Big)|l^{-\fr{\nu}{2}}u_{tt}|_2+\fr{a_2\alpha}{4}|\sqrt{h}\nabla u_{tt}|_2^2\\
&+M(c_0)(|\nabla l|_3|g_{tt}|_6|\nabla v|_\infty|l^{-\fr{\nu}{2}}u_{tt}|_2+|\nabla l|_3|g_t|_\infty|\nabla v_t|_6|l^{-\fr{\nu}{2}}u_{tt}|_2)\\
&+M(c_0)(|\sqrt{g}\nabla v_{tt}|_2^{\fr{4}{3}}|l^{-\fr{\nu}{2}}u_{tt}|_2^{\fr{2}{3}}+ |\sqrt{g}\nabla v_{tt}|_2|l^{-\fr{\nu}{2}}u_{tt}|_2),
\end{split}
\end{equation}
where one has used 
\begin{equation*}\begin{split}
|(\nabla l^\nu)_{tt}|_2\leq& M(c_0)(|\nabla l_{tt}|_2+|l_t|_3|\nabla l_t|_6+|l_{tt}|_6|\nabla l|_3+|l_t|^2_\infty|\nabla l|_2),\\
|(\nabla l^\nu)_t|_3\leq& M(c_0)(|\nabla l_t|_3+|l_t|_\infty|\nabla l|_3).
\end{split}\end{equation*}

Similarly, one can also obtain that 
\begin{equation}\label{h4}
\begin{split}
H_4=&-\int l^{-\nu}\big(a_2\frac{h l^\nu}{\sqrt{h^2+\epsilon^2}}\nabla h\cdot Q(u_{tt})
+a_2(l^\nu)_{tt}\sqrt{h^2+\epsilon^2}L u\\
&+2a_2(l^\nu)_{t}\sqrt{h^2+\epsilon^2}L u_t
+a_2l^\nu\frac{h}{\sqrt{h^2+\epsilon^2}}h_{tt} Lu+a_2l^\nu\frac{\epsilon^2h^2_t}{(h^2+\epsilon^2)^{\frac{3}{2}}} Lu\\
&+2a_2\frac{h}{\sqrt{h^2+\epsilon^2}}(h_{t} l^\nu Lu_t+(l^\nu)_th_{t} Lu)\big) \cdot u_{tt}+\fr{1}{2}\int (l^{-\nu})_t|u_{tt}|^2\\
\leq& C|l^{-\fr{\nu}{2}}|_\infty
\big(|l^\nu|_\infty|\psi|_\infty|\sqrt{h}\nabla u_{tt}|_2|\varphi|^{\fr{1}{2}}_\infty
+|(l^\nu)_{tt}|_6|\sqrt{h^2+\epsilon^2}\nabla^2u|_3\\[1pt]
&+|(l^\nu)_t|_\infty|\sqrt{h^2+\epsilon^2}\nabla^2u_t|_2
+|l^\nu|_\infty|h_{tt}|_6|\nabla^2u|_3\\[1pt]
&+|l^\nu|_\infty|h_t|^2_\infty|\varphi|^3_\infty|\nabla^2 u|_2
+|l^\nu|_\infty|h_t|_\infty|\varphi|_\infty|h\nabla^2 u_{t}|_2\\[1pt]
&+|(l^\nu)_t|_\infty|h_t|_\infty|\nabla^2 u|_2\big)|l^{-\fr{\nu}{2}}u_{tt}|_2
+ M(c_0)|l_t|_\infty|l^{-\fr{\nu}{2}}u_{tt}|_2^2.
\end{split}
\end{equation}
Multiplying $\ef{2.72}$ by $t$ and integrating over $(\tau,t)$, one can obtain from above estimates on $H_i$, \eqref{2.16} and Lemmas \ref{phiphi}-\ref{hu} that 
\begin{equation}\label{2.73}
\begin{split}
&t|l^{-\fr{\nu}{2}}u_{tt}(t)|^2_2+\fr{a_2\alpha}{4}\int^t_\tau s|\sqrt{h}\nabla u_{tt}|^2_2\text{d}s\\
\leq & \tau|l^{-\fr{\nu}{2}}u_{tt}(\tau)|^2_2+M(c_0)c^{19+2\nu}_4(1+t)+M(c_0)c^{12+2\nu}_5\int^t_\tau s|l^{-\fr{\nu}{2}}u_{tt}|^2_2\text{d}s.
\end{split}
\end{equation}
It follows from $\ef{2.62m}$ and Lemma \ref{bjr} that  there exists a sequence $s_k$ such that
\begin{equation*}
s_k\longrightarrow 0, \quad \text{and} \quad s_k|u_{tt}(s_k,x)|^2_2\longrightarrow 0,\quad \text{as}\quad k\longrightarrow \infty.
\end{equation*}
Taking $\tau=s_k$ and letting $k\rightarrow \infty$ in $\ef{2.73}$, one can get  by  Gronwall's inequality that
\begin{equation}\label{2.74}
\begin{split}
t|u_{tt}(t)|^2_2+&\int^t_0s|\sqrt{h}\nabla u_{tt}|^2_2 \text{d}s+\int^t_0s|\nabla u_{tt}|^2_2\text{d}s\leq M(c_0)c^{19+2\nu}_4,
\end{split}
\end{equation}
for $0\leq t\leq T_3=\min\{T_2,(1+Cc_5)^{-20-2\nu}\}$.

It follows from $\ef{2.65}$  and $\ef{2.74}$  that
\begin{equation}\label{2.75}
t^{\fr{1}{2}}|\nabla^2u_t(t)|_2+t^{\fr{1}{2}}|h\nabla^2u_t(t)|_2+ t^{\fr{1}{2}}|\nabla^4u(t)|_2\leq  M(c_0)c^{9.5+\nu}_4.
\end{equation}
Next, for  giving  the $L^2$ estimate of $\nabla^3 u_t$, we  consider the $L^2$ estimates of 
$$(\nabla \mathcal{G},\nabla \widehat{G}=\nabla G(\nabla\sqrt{h^2+\epsilon^2},u_t)).$$
It follows from  \eqref{Gdingyi},  \eqref{hl2}, \eqref{2.61rrnnn}, \eqref{huagdingyi} and Lemmas \ref{phiphi}-\ref{hu} that 
\begin{equation}\label{2.61ccvvmmmm}
\begin{split}
|\mathcal{G}|_{D^1_*}\leq & C(|u_{tt}|_{D^1_*}+\|\nabla v\|_2|\nabla v_t|_2+|v|_\infty|\nabla^2 v_t|_2\\
&+ \|l\|_{L^\infty\cap D^1\cap D^3}\| \phi_t\|_2+\|l_t\|_{L^3\cap D^1_*\cap D^2}\|\phi\|_3\\
&+\|l^{\nu-1} \|_{1,\infty}\|l_t\|_{L^\infty\cap D^2}(\|\sqrt{h^2+\epsilon^2} Lu\|_1+|\psi|_\infty|\nabla^2 u|_2)\\
&+(1+|\psi|_\infty)(1+|\varphi|_\infty)\|h_t\|_{L^\infty\cap D^2}\|l^{\nu} \|_{1,\infty} \|\nabla^2 u\|_1\\
&+\|g_t\|_{L^\infty\cap D^1}\|\nabla l^\nu\|_2\|\nabla v\|_2+\|\nabla l^\nu\|_2(|\nabla g|_\infty|\nabla v_t|_2+|g\nabla^2 v_t|_2)\\
&+(|g\nabla v|_\infty+|\nabla g|_\infty\|\nabla v\|_2+\|g\nabla^2 v\|_1)\|l_t\|_{{ D^1_*\cap  D^2}}\|l^{\nu-1} \|_{ L^\infty\cap D^1\cap  D^3}\\
&+\|l^{\nu-1} \|_{1,\infty}\|l_t\|_{L^\infty \cap  D^1_*}\|\psi\|_{L^\infty \cap  D^{1,3}}\|\nabla v\|_2\\
&+\|l^{\nu} \|_{1,\infty}\|\psi_t\|_1\|\nabla v\|_2+\|l^{\nu} \|_{1,\infty}\|\psi\|_{L^\infty \cap  D^{1,3}}\|\nabla v_t\|_1)\\
\leq &M(c_0)(|\nabla u_{tt}|_2+c_4|v_t|_{D^2}+|g\nabla^2 v_t|_2+c_4^{11.5+\nu}),\\
|\widehat{G}|_{D^1_*}\leq &C( |\nabla\sqrt{h^2+\epsilon^2}|_\infty |\nabla^2 u_t|_2+|\nabla^2 \sqrt{h^2+\epsilon^2}|_3 |\nabla u_t|_6\\
&+|\nabla^3 \sqrt{h^2+\epsilon^2}|_2 | u_t|_\infty)\leq  M(c_0)(|u_t|_{D^2}+c^2_4).
\end{split}
\end{equation}
Hence $\ef{2.64}$, \eqref{2.61ccvvmmmm}, the classical theory for elliptic equations and   Lemmas \ref{phiphi}-\ref{hu} yield that  for $0\leq t\leq T_3$,
\begin{equation*}
\begin{split}
|\sqrt{h^2+\epsilon^2}u_t|_{D^3}\leq&  C|l^{-\nu}\mathcal{G}|_{D^1_*}+C|G(\nabla\sqrt{h^2+\epsilon^2},u_t)|_{D^1_*}\\
\leq &M(c_0)(|\nabla u_{tt}|_2+c_4(|u_t|_{D^2}+|v_t|_{D^2})+|g\nabla^2 v_t|_2+c_4^{11.5+\nu}),\\
|\sqrt{h^2+\epsilon^2}\nabla^3u_t(t)|_2\leq& C(|\sqrt{h^2+\epsilon^2}u_t|_{D^3}+|u_t|_\infty|\nabla^2\psi|_2+|\nabla u_t|_6|\nabla \psi|_3\\
&+|\nabla^2u_t|_2|\psi|_\infty
+|\nabla u_t|_2\|\psi\|^2_{L^\infty \cap D^{1,3}\cap D^2}|\varphi|_\infty+|u_t|_2|\psi|_\infty^3|\varphi|^2_2)\\
\leq & C|\sqrt{h^2+\epsilon^2}u_t|_{D^3}+M(c_0)(|u_t|_{D^2}+c_4^{2}),
\end{split}
\end{equation*}
which, along with $\ef{2.62}$, $\ef{2.74}$-$\ef{2.75}$ and Lemma \ref{varphi}, implies that 
\begin{equation*}\label{2.78}
\int^{T_3}_0s(|\sqrt{h^2+\epsilon^2}u_t|_{D^3}^2+|h\nabla^3u_t|_2^2+|\nabla^3u_t|^2_2)\text{d}s\leq M(c_0)c^{19+2\nu}_4.
\end{equation*}

The proof of Lemma \ref{hu2} is complete.
\end{proof}
It then  follows from Lemmas \ref{phiphi}-\ref{hu2} that for 
$0\leq t\leq T_3=\min\{T^*,(1+Cc_5)^{-20-2\nu}\}$,
\begin{equation*}\begin{aligned}
\|(\phi-\eta)(t)\|^2_{D^1_*\cap D^3}+\|\phi_t(t)\|^2_2+
|\phi_{tt}(t)|^2_2+\int^t_0\|\phi_{tt}\|^2_1\text{d}s\leq &Cc_4^6,\\
\|\psi(t)\|^2_{L^q\cap D^{1,3}\cap D^2}\leq M(c_0),\hspace{2mm}|\psi_t(t)|_2\leq Cc_3^2,\quad 
|h_t(t)|_\infty^2\leq & Cc_3^2c_4,\\ 
h(t,x)>\fr{1}{2c_0}, \quad
|\psi_t(t)|^2_{D^1_*}+\int^t_0(|\psi_{tt}|^2_2+|h_{tt}|^2_6)\text{d}s\leq & Cc_4^4,\\ 
\fr{2}{3}\eta^{-2\iota}<\varphi,
\ \  	|h^{\fr{1}{4}}\nabla l(t)|_6\leq  M(c_0),\quad   c_0^{-1}\leq|l(t)|_\infty\leq & M(c_0),\\
\|(l-\bar{l})(t)\|^2_{D^1_*\cap D^3}\leq M(c_0),\quad 
|l_t(t)|^2_{\infty}\leq  M(c_0)c_4^{8+2\nu},\quad |l_t(t)|^2_3\leq  & M(c_0)c_3^{8+2\nu},\\
|l_t(t)|^2_{D^1_*}\leq M(c_0)c_3^{12+2\nu}c_4,\ \ |\nabla^2l_t(t)|_2^2+\int^t_0|\nabla l_{tt}|^2_2\text{d}s\leq & M(c_0)c_4^{19+2\nu},
\end{aligned}\end{equation*}
\begin{equation*}\begin{aligned}
|\sqrt{h}\nabla u(t)|^2_2+\|u(t)\|^2_1+\int^t_0\big(\|\nabla u\|^2_1+|u_t|^2_2\big)\text{d}s\leq & M(c_0),\\
(|u|_{D^2}^2+|h\nabla^2u|^2_2+|u_t|^2_2)(t)+\int^t_0(|u|^2_{D^3}+|h\nabla^2u|_{D^1_*}^2+|u_t|^2_{D^1_*})\text{d}s\leq & M(c_0)c_2^3c_3,\\
(|u_t|^2_{D^1_*}+|\sqrt{h}\nabla u_t|^2_2+|u|^2_{D^3}+|h\nabla^2u|^2_{D^1_*})(t)+\int^t_0|u_t|^2_{D^2}\text{d}s\leq &  M(c_0)c^4_3,\\
\int^t_0(|u_{tt}|^2_2+|u|^2_{D^4}+|h\nabla u|^2_{D^1_*}+|h\nabla^2u|^2_{D^2}+|(h\nabla^2u)_t|^2_2)\text{d}s\leq & M(c_0),\\
	t|u_t(t)|^2_{D^2}+t|h\nabla^2u_t(t)|^2_2+t|u_{tt}(t)|^2_2+t|u(t)|^2_{D^4}(t)  \leq & M(c_0)c^{19+2\nu}_4,\\
	\int^t_0s(|u_{tt}|_{D^1_*}^2+|u_t|^2_{D^3}+|\sqrt{h} u_{tt}|_{D^1_*}^2)\text{d}s  \leq & M(c_0)c^{19+2\nu}_4.
\end{aligned}\end{equation*}
Therefore, defining the time
\begin{equation}\label{timedefinition}
T^*=
\min\{T,(1+CM(c_0)^{466+125\nu+8\nu^2})^{-20-2\nu}\},
\end{equation}
and constants
\begin{equation}
\begin{split}
c_1^2=& c_2^2=M(c_0)^2,\quad c_3^2=M(c_0)^{8},\\
c_4^2=& M(c_0)^{98+16\nu},\quad c_5^2=M(c_0)^{932+250\nu+16\nu^2},
\end{split}
\end{equation}
one can arrive at  the following desirable estimates  for $0\leq t\leq T^*$:
\begin{equation}\begin{aligned}\label{key1kk}
\|(\phi-\eta)(t)\|^2_{D^1_*\cap D^3}+\|\phi_t(t)\|^2_2+
|\phi_{tt}(t)|^2_2+\int^t_0\|\phi_{tt}\|^2_1\text{d}s\leq & c_5^2,\\
 \|\psi(t)\|^2_{L^q\cap D^{1,3}\cap D^2}+	|h^{\fr{1}{4}}\nabla l(t)|^2_6\leq c_1^2,\quad|h_t(t)|_\infty^2+|\psi_t(t)|_2\leq & c_4^2,\\
|l_t(t)|^2_{\infty}+|\nabla^2l_t(t)|_2^2+ |\psi_t(t)|^2_{D^1_*}+\int^t_0(|\nabla l_{tt}|^2_2+|\psi_{tt}|^2_2+|h_{tt}|^2_6)\text{d}s\leq & c_5^2,\\
 \inf_{[0,T_*]\times \mathbb{R}^3} l(t,x)\geq c^{-1}_0,\quad  h(t,x)>  \fr{1}{2c_0},  \quad \fr{2}{3}\eta^{-2\iota}<&\varphi,\\
\|(l-\bar{l})(t)\|^2_{D^1_*\cap D^3}\leq  c_1^2,\quad 
|l_t(t)|^2_{3}+|l_t(t)|^2_{D^1_*}\leq & c^2_4,\\
 |\sqrt{h}\nabla u(t)|^2_2+\|u(t)\|^2_1+\int^t_0\big(\|\nabla u\|^2_1+|u_t|^2_2\big)\text{d}s\leq &c_2^2,\\
(|u|_{D^2}^2+|h\nabla^2u|^2_2+|u_t|^2_2)(t)+\int^t_0(|u|^2_{D^3}+|h\nabla^2u|_{D^1_*}^2+|u_t|^2_{D^1_*})\text{d}s\leq & c_3^2,\\
(|u_t|^2_{D^1_*}+|\sqrt{h}\nabla u_t|^2_2+|u|^2_{D^3}+|h\nabla u|^2_{D^1_*}+|h\nabla^2u|^2_{D^1_*})(t)+\int^t_0|u_t|^2_{D^2}\text{d}s\leq & c_4^2,\\
\int^t_0(|u_{tt}|^2_2+|u|^2_{D^4}+|h\nabla^2u|^2_{D^2}+|(h\nabla^2u)_t|^2_2)\text{d}s\leq  c_4^2,\quad t|h\nabla^2u_t(t)|^2_2\leq & c_5^2,\\
t(|u_t|^2_{D^2}+|u_{tt}|^2_2+|u|^2_{D^4})(t)+
\int^t_0s(|u_{tt}|_{D^1_*}^2+|u_t|^2_{D^3}+|\sqrt{h} u_{tt}|_{D^1_*}^2)\text{d}s\leq & c_5^2.
\end{aligned}\end{equation}

\subsection{Passing to the limit $\epsilon \rightarrow 0$}

With the help of the $(\epsilon,\eta)$-independent estimates established in \ef{key1kk}, one can now obtain  the local well-posedness of   the linearized problem \ef{ln} with  $\epsilon=0$ and $\phi^\eta_0\geq \eta$.
\begin{lemma}\label{epsilon0}
	Let $\ef{can1}$ hold and $\eta>0$. Assume $(\phi_0,u_0,l_0,h_0)$ satisfy
	\eqref{a}-\eqref{2.8*}, and there exists a positive constant $c_0$ independent of $\eta$ such that \ef{2.14} holds. Then there exists a time $T^*>0$ independent of $\eta$, and a unique strong  solution $
	(\phi^\eta,u^\eta,l^\eta, h^\eta)$
	  in $[0,T^*]\times\mathbb{R}^3$ to \ef{ln} with  $\epsilon=0$ satisfying \ef{2.13} with $T$ replaced by $T^*$. Moreover, the uniform estimates (independent of $\eta$) \ef{key1kk} hold.
\end{lemma}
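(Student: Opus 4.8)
\textbf{Proof plan for Lemma \ref{epsilon0}.}

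The plan is to remove the artificial parameter $\epsilon$ from the linearized problem \ef{ln} by a compactness argument, relying entirely on the uniform bounds \ef{key1kk} already obtained in \S 3.2. First I would note that Lemma \ref{ls} gives, for each $\epsilon>0$, a unique strong solution $(\phi^{\epsilon,\eta},u^{\epsilon,\eta},l^{\epsilon,\eta},h^{\epsilon,\eta})$ of \ef{ln} on $[0,T]\times\mathbb{R}^3$, and that these solutions satisfy \ef{key1kk} on the common time interval $[0,T^*]$ with $T^*$ and the $c_i$ depending only on $c_0$ and the fixed constants (crucially not on $\epsilon$ or $\eta$). In particular $\phi^{\epsilon,\eta}-\eta$, $\psi^{\epsilon,\eta}$, $l^{\epsilon,\eta}-\bar l$ and $u^{\epsilon,\eta}$ are bounded, uniformly in $\epsilon$, in the spaces listed in \ef{b} (with $h^{2\iota}$ replaced by $h$), together with all the time-derivative bounds; and $h^{\epsilon,\eta}$ stays bounded above and below away from $0$ by \ef{2.34} (note $h>\frac{1}{2c_0}$ there is $\epsilon$-independent). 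The key step is then to extract, via the Aubin--Lions--Simon lemma, a subsequence $\epsilon_k\to 0$ along which $(\phi^{\epsilon_k,\eta},u^{\epsilon_k,\eta},l^{\epsilon_k,\eta},h^{\epsilon_k,\eta})$ converges --- weakly-$*$ in the high-norm spaces of \ef{key1kk} and strongly in $C([0,T^*];H^2_{loc})$-type spaces (and in lower norms on the whole space, using that the spatial decay at infinity is uniform because $\psi^{\epsilon_k,\eta}\in L^q$ uniformly) --- to a limit $(\phi^\eta,u^\eta,l^\eta,h^\eta)$ which inherits all the bounds in \ef{key1kk} by weak lower semicontinuity.

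Next I would pass to the limit in each equation of \ef{ln}. The transport equations $\ef{ln}_1$, $\ef{ln}_3$, $\ef{ln}_4$ are linear in the unknown and have $v,g,w$ fixed, so strong $L^2_{loc}$ convergence of the unknowns plus weak convergence of their gradients suffices; the only mildly nonlinear term is $n^{\epsilon,\eta}=(ah^{\epsilon,\eta})^b$ in $\ef{ln}_3$, which converges because $h^{\epsilon,\eta}$ converges strongly and is bounded above and below uniformly. For the momentum equation $\ef{ln}_2$, the only term affected by $\epsilon$ is $a_2(l^{\epsilon,\eta})^\nu\sqrt{(h^{\epsilon,\eta})^2+\epsilon^2}\,Lu^{\epsilon,\eta}$; here $\sqrt{(h^{\epsilon,\eta})^2+\epsilon^2}\to h^\eta$ uniformly (since $\big|\sqrt{h^2+\epsilon^2}-h\big|\le \epsilon$), $Lu^{\epsilon,\eta}\rightharpoonup Lu^\eta$ weakly in $L^2([0,T^*];H^1)$ from the $u$-bounds, and $(l^{\epsilon,\eta})^\nu\to (l^\eta)^\nu$ strongly with uniform bounds; the product passes to the limit in the sense of distributions. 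Thus $(\phi^\eta,u^\eta,l^\eta,h^\eta)$ solves \ef{ln} with $\epsilon=0$ and satisfies \ef{2.13} (with $T$ replaced by $T^*$) and the uniform estimates \ef{key1kk}. The regularity class in \ef{2.13} then upgrades the distributional solution to a strong one in the sense of Remark \ref{strongsolution}, and the continuity-in-time statements follow from the standard argument combining weak continuity with norm continuity (the energy-type identities give $\limsup$ control of norms, weak continuity gives $\liminf$).

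For uniqueness of the $\epsilon=0$ solution in this class, I would take two solutions with the same data, subtract the equations, and perform a basic $L^2$ energy estimate on the differences $(\bar\phi,\bar u,\bar l,\bar h)$: the transport equations give Gronwall control of $|\bar\phi|_2,|\bar l|_2,|\bar h|_2$ in terms of $|\bar u|$ lower-order norms through the fixed coefficients $v,g,w$ and the regularity of the solutions themselves; the momentum equation, now genuinely parabolic since $h^\eta\ge \frac{1}{2c_0}>0$, gives $\frac12\frac{d}{dt}|\bar u|_2^2+c\,|\nabla\bar u|_2^2\le C(\text{lower order})$ --- this is exactly where the lower bound on $h^\eta$ (and hence coercivity of the Lam\'e term) is essential and where the $\epsilon$-regularization is no longer needed. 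Closing the Gronwall loop gives $(\bar\phi,\bar u,\bar l,\bar h)\equiv 0$. I expect the main obstacle to be bookkeeping rather than conceptual: one must be careful that the strong convergence is genuinely on $\mathbb{R}^3$ and not merely local for the terms that are quadratic or involve the singular-looking coefficients (e.g. $\nabla l^\nu\cdot Q(v)$, $\psi\cdot Q(v)$), which requires using the uniform decay encoded in the $L^q$, $D^{1,3}$, $L^6$ norms from \ef{key1kk} to rule out mass escaping to infinity, and that all the time-weighted ($t^{1/2}$) estimates pass to the limit, which they do since the weight is a fixed continuous function.
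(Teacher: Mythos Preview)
Your proposal is correct and follows essentially the same route as the paper: extract a subsequence via Aubin--Lions from the $\epsilon$-uniform bounds \ef{key1kk}, pass to the limit in each equation of \ef{ln} (the paper does this only with local strong convergence in $C([0,T^*];H^2(B_R))$ plus global weak/weak$^*$ convergence, so your parenthetical about global strong convergence in lower norms is unnecessary), and obtain uniqueness from a direct energy estimate exploiting the $\epsilon$-independent lower bound $h^\eta>\tfrac{1}{2c_0}$. The paper additionally records a simple $\eta$-dependent but $\epsilon$-independent bound on $(|h^{\epsilon,\eta}|_\infty,|\nabla h^{\epsilon,\eta}|_2,|h^{\epsilon,\eta}_t|_2)$ to feed into Aubin--Lions for $h$, since \ef{key1kk} only places $\psi=\tfrac{a\delta}{\delta-1}\nabla h$ in $L^q$ with $q>3$; you should be aware of this small bookkeeping point, but it does not change the strategy.
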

\begin{proof} The well-posedness of \ef{ln} with  $\epsilon=0$ can be proved as follows: 

\noindent\textbf{Step 1:} Existence. First, it follows from Lemmas \ref{ls}--\ref{hu2} that  for every $\epsilon>0$ and $\eta>0$,   there exist a time $T^*>0$ independent of $(\epsilon,\eta)$, and   a unique strong solution $(\phi^{\epsilon,\eta}, u^{\epsilon,\eta}, l^{\epsilon,\eta}, h^{\epsilon,\eta})(t,x)$ in $[0,T^*]\times \mathbb{R}^3$  to the linearized problem (\ref{ln}) satisfying  the estimates in  \ef{key1kk}, which are independent of  $(\epsilon,\eta)$.

Second,   by  using of the characteristic method and the  standard energy estimates for $(\ref{ln})_4$,  one can show that for $0\leq t\leq T^*$,
\begin{equation}\label{related}
|h^{\epsilon,\eta}(t)|_{\infty}+|\nabla h^{\epsilon,\eta}(t)|_2+ |h^{\epsilon,\eta}_t(t)|_{2}\leq C(A, R, c_v, \eta, \alpha, \beta, \gamma, \delta, T^*, \phi_0,u_0).
\end{equation}

Thus, it follows from  \ef{key1kk}, (\ref{related})  and   Lemma \ref{aubin} (see \cite{jm} )  that for any $R> 0$,  there exists a subsequence of solutions (still denoted by) $(\phi^{\epsilon,\eta}, u^{\epsilon,\eta},l^{\epsilon,\eta},  h^{\epsilon,\eta} )$, which  converges  to  a limit $(\phi^\eta,  u^\eta, l^\eta, h^\eta) $ as $\epsilon \rightarrow 0$ in the following  strong sense:
\begin{equation}\label{ert1}\begin{split}
&(\phi^{\epsilon,\eta}, u^{\epsilon,\eta}, l^{\epsilon,\eta}, h^{\epsilon,\eta}) \rightarrow (\phi^\eta,  u^\eta, l^\eta, h^\eta ) \ \ \text{ in } \ C([0,T^*];H^2(B_R))\ \ \text{as}\ \ \epsilon \rightarrow 0,
\end{split}
\end{equation}
where $B_R$ is a ball centered at origin with radius $R$, and 
in the following  weak  or  weak* sense:
\begin{equation}\label{ruojixian}
\begin{split}
(\phi^{\epsilon,\eta}-\eta, u^{\epsilon,\eta})\rightharpoonup  (\phi^\eta-\eta, u^\eta) \quad &\text{weakly* \ in } \ L^\infty([0,T^*];H^3),\\
(\phi^{\epsilon,\eta}_t,\psi^{\epsilon,\eta},h^{\epsilon,\eta}_t)\rightharpoonup ( \phi^\eta_t,\psi^\eta,h^{\eta}_t) \quad &\text{weakly* \ in } \ L^\infty([0,T^*];H^2),\\
u^{\epsilon,\eta}_t \rightharpoonup   u^\eta_t \quad &\text{weakly* \ in } \ L^\infty([0,T^*];H^1),\\
 t^{\frac{1}{2}}(\nabla^2u_t^{\epsilon,\eta},\nabla^4u^{\epsilon,\eta}) \rightharpoonup  t^{\frac{1}{2}}(\nabla^2u_t^{\eta}, \nabla^4u^{\eta}) \quad &\text{weakly* \ in } \ L^\infty([0,T^*];L^2),\\
(\phi^{\epsilon,\eta}_{tt}, t^{\frac{1}{2}}u^{\epsilon,\eta}_{tt}) \rightharpoonup ( \phi^{\eta}_{tt}, t^{\frac{1}{2}} u^{\eta}_{tt}) \quad &\text{weakly* \ in } \ L^\infty([0,T^*];L^2),\\
(h^{\epsilon,\eta}, h^{\epsilon,\eta}_t) \rightharpoonup  (h^{\eta}, h^{\eta}_t) \quad &\text{weakly* \ in } \ L^\infty([0,T^*];L^\infty),\\
\nabla u^{\epsilon,\eta} \rightharpoonup  \nabla u^{\eta} \quad &\text{weakly \ \  in } \ \ L^2([0,T^*];H^3),\\
u^{\epsilon,\eta}_{t} \rightharpoonup  u^{\eta}_{t}\quad &\text{weakly \ \  in } \ \ L^2([0,T^*];H^2),\\
\phi^{\epsilon,\eta}_{tt} \rightharpoonup    \phi^{\eta}_{tt} \quad &\text{weakly \ \  in } \ \ L^2([0,T^*];H^1),\\
(\psi^{\epsilon,\eta}_{tt},u^{\epsilon,\eta}_{tt}) \rightharpoonup  (\psi^{\eta}_{tt},u^{\eta}_{tt})\quad &\text{weakly \ \  in } \ \ L^2([0,T^*];L^2),\\
h^{\epsilon,\eta}_{tt} \rightharpoonup h^{\eta}_{tt} \quad &\text{weakly \ \  in } \ \ L^2([0,T^*];L^6),\\
t^{\frac{1}{2}}(\nabla u^{\epsilon,\eta}_{tt},\nabla^3u^{\epsilon,\eta}_t) \rightharpoonup  t^{\frac{1}{2}}(\nabla u^{\eta}_{tt},\nabla^3u^{\eta}_t) \quad &\text{weakly \ \  in } \ \ L^2([0,T^*];L^2),\\
l^{\epsilon,\eta}-\bar{l}\rightharpoonup l^\eta-\bar{l}\quad &\text{weakly* \ in } \ L^\infty([0,T^*];D^1_*\cap D^3),\\
l^{\epsilon,\eta}_t\rightharpoonup l^\eta_t\quad &\text{weakly* \ in } \ L^\infty([0,T^*];D^1_*\cap D^2),\\
l^{\epsilon,\eta}_{tt}\rightharpoonup l^\eta_{tt}\quad &\text{weakly \ in } \ L^2([0,T^*];D^1_*).
\end{split}
\end{equation}
Furthermore, it follows from  the lower semi-continuity of weak or weak* convergence  that $(\phi^\eta,  u^\eta, l^\eta, h^\eta) $  satisfies also the corresponding  estimates in \ef{key1kk} and (\ref{related}) except those weighted estimates on $u^\eta$.

Now,  the uniform estimates on $(\phi^\eta,  u^\eta, l^\eta, h^\eta) $ obtained above  and  the    convergences in (\ref{ert1})-(\ref{ruojixian}) imply  that
\begin{equation}\label{ruojixian2}
\begin{split}
\sqrt{h^{\epsilon,\eta}} \nabla u^{\epsilon,\eta} \rightharpoonup  \sqrt{h^{\eta}}  \nabla u^{\eta} \quad &\text{weakly* \ in } \ L^\infty([0,T^*];L^2),\\
\sqrt{h^{\epsilon,\eta}} \nabla u^{\epsilon,\eta}_t\rightharpoonup   \sqrt{h^{\eta}}  \nabla u^{\eta}_t\quad &\text{weakly* \ in } \ L^\infty([0,T^*];L^2),\\
h^{\epsilon,\eta}\nabla^2 u^{\epsilon,\eta} \rightharpoonup  h^{\eta}\nabla^2 u^{\eta} \quad &\text{weakly* \ in } \ L^\infty([0,T^*];H^1),\\
 (h^{\epsilon,\eta}\nabla^2 u^{\epsilon,\eta})_t \rightharpoonup  (h^{\eta}\nabla^2 u^{\eta})_t \quad &\text{weakly \ \  in } \ \ L^2([0,T^*];L^2),\\
h^{\epsilon,\eta}\nabla^2 u^{\epsilon,\eta} \rightharpoonup  h^{\eta}\nabla^2 u^{\eta}\quad &\text{weakly \ \  in } \ \ L^2([0,T^*]; D^1_*\cap D^2),\\
\end{split}
\end{equation}
which, along with the lower semi-continuity of weak or weak* convergence again, implies that $(\phi^\eta,  u^\eta, l^\eta, h^\eta) $  satisfies also the uniform weighted  estimates on  $ u^{\eta}$.

Now we  are ready to   show that $(\phi^\eta,  u^\eta, l^\eta, h^\eta) $ is a weak solution in the sense of distributions  to  \ef{ln} with  $\epsilon=0$.
First, multiplying $(\ref{ln})_2$ by  any given  $X(t,x)=(X^{(1)},X^{(2)},X^{(3)})^\top\in C^\infty_c ([0,T^*)\times \mathbb{R}^3)$ on both sides, and integrating over $[0,t)\times \mathbb{R}^3$ for $t\in (0,T^*]$, one has
\begin{equation}\label{zhenzheng1}
\begin{split}
&\int_0^t \int_{\mathbb{R}^3}  \Big(u^{\epsilon,\eta}\cdot X_t - (v\cdot \nabla) v \cdot X -a_1\phi^{\epsilon,\eta}\nabla l^{\epsilon,\eta}\cdot X-l^{\epsilon,\eta}\nabla \phi^{\epsilon,\eta}\cdot X\Big)\text{d}x\text{d}s\\
=&\int u^{\epsilon,\eta}(t,x)\cdot X(t,x)+a_2\int_0^t \int_{\mathbb{R}^3} l^{\epsilon,\eta}\sqrt{(h^{\epsilon,\eta})^2+\epsilon^2} Lu^{\epsilon,\eta}\cdot X \text{d}x\text{d}s\\
&-\int_0^t \int_{\mathbb{R}^3}\Big(a_2g\nabla (l^{\epsilon,\eta})^\nu\cdot Q(v)\cdot X+a_3(l^{\epsilon,\eta})^\nu\psi^{\epsilon,\eta} \cdot Q(v) \cdot X\Big)\text{d}x\text{d}s.
\end{split}
\end{equation}
It follows from   the  uniform estimates obtained above and  (\ref{ert1})-(\ref{ruojixian2}) that  one can take limit  $\epsilon \rightarrow 0$ in (\ref{zhenzheng1}) to get 
\begin{equation}\label{zhenzhengxx}
\begin{split}
&\int_0^t \int_{\mathbb{R}^3} \Big(u^{\eta}\cdot X_t - (v\cdot \nabla) v \cdot X -a_1\phi^{\eta}\nabla l^{\eta}\cdot X-l^{\eta}\nabla \phi^{\eta}\cdot X \Big)\text{d}x\text{d}s\\
=&\int u^{\eta}(t,x)\cdot X(t,x)+a_2\int_0^t \int_{\mathbb{R}^3} l^{\eta}h^{\eta} Lu^{\eta}\cdot X \text{d}x\text{d}s\\
&-\int_0^t \int_{\mathbb{R}^3}\Big(a_2g\nabla (l^{\eta})^\nu\cdot Q(v)\cdot X+a_3(l^{\eta})^\nu\psi^{\eta} \cdot Q(v) \cdot X\Big)\text{d}x\text{d}s.
\end{split}
\end{equation}
Similarly,  one can  show that $(\phi^\eta,l^\eta, h^\eta )$ satisfies also the equations  $(\ref{ln})_1$, $(\ref{ln})_3$-$(\ref{ln})_4$ and the initial data in the sense of distributions.
So it is clear that $(\phi^\eta,  u^\eta, l^\eta, h^\eta) $ is a weak solution in the sense of distributions  to the linearized problem \ef{ln} with  $\epsilon=0$ satisfying  the  following regularities
\begin{equation*}\label{zheng}
\begin{split}
&\phi^\eta-\eta\in L^\infty([0,T^*]; H^3), \quad h^\eta\in L^\infty([0,T^*]\times \mathbb{R}^3),\\
&(\nabla h^\eta, h^\eta_t)\in L^\infty([0,T^*];H^2), \  \ \ u^\eta\in L^\infty([0,T^*]; H^3)\cap L^2([0,T^*]; H^4), \\
 &  u^\eta_t \in L^\infty([0,T^*]; H^1)\cap L^2([0,T^*]; D^2),\ \   u^\eta_{tt}\in L^2([0,T^*];L^2),\\
  &  t^{\frac{1}{2}}u^\eta\in L^\infty([0,T^*];D^4),\quad   t^{\frac{1}{2}}u^\eta_t\in L^\infty([0,T^*];D^2)\cap L^2([0,T^*]; D^3),\\
&   t^{\frac{1}{2}}u^\eta_{tt}\in L^\infty([0,T^*];L^2)\cap L^2([0,T^*];D^1_*),\\
&l^\eta-\bar{l}\in L^\infty([0,T^*]; D^1_*\cap D^3),\quad l^\eta_t\in L^\infty([0,T^*]; D^1_*\cap D^2).
\end{split}
\end{equation*}
Therefore, this weak solution $(\phi^\eta,  u^\eta, l^\eta, h^\eta) $  is actually
a strong one.

\noindent\textbf{Step 2:} Uniqueness and time continuity.  Since the estimate $h^\eta>\frac{1}{2c_0}$ holds,  the uniqueness and the time continuity of the strong solution obtained above can be obtained by the  same arguments as in Lemma \ref{ls}, so details are omitted.

Thus the proof of Lemma  \ref{epsilon0} is complete.

\end{proof}

\subsection{Nonlinear approximation solutions away from vacuum}

In this subsection, we will prove the local well-posedness of the classical solution to the following Cauchy problem under the assumption that $\phi^\eta_0\geq \eta$.
\begin{equation}\label{nl}\left\{\begin{aligned}
&\phi^{\eta}_t+u^{\eta}\cdot\nabla\phi^{\eta}+(\gamma-1)\phi^{\eta} \text{div}u^{\eta}=0,\\[2pt]
&u^{\eta}_t+u^{\eta}\cdot \nabla u^{\eta}+a_1\phi^{\eta}\nabla l^{\eta}+l^{\eta}\nabla\phi^{\eta}+a_2(l^\eta)^\nu h^{\eta} Lu^{\eta}\\[2pt]
=& a_2 h^{\eta} \nabla (l^\eta)^\nu  \cdot Q(u^{\eta})+a_3(l^\eta)^\nu  \psi^{\eta} \cdot Q(u^{\eta}),\\[3pt]
&l^{\eta}_t+u^{\eta}\cdot\nabla l^{\eta}=a_4(l^\eta)^\nu  n^{\eta}(\phi^\eta)^{4\iota} H(u^{\eta}),\\[3pt]
&h^{\eta}_t+u^\eta\cdot\nabla h^\eta+(\delta-1) (\phi^\eta)^{2\iota}\text{div} u^{\eta}=0,\\[3pt]
&(\phi^{\eta},u^{\eta},l^{\eta},h^{\eta})|_{t=0}=(\phi^\eta_0,u^\eta_0,l^\eta_0,h_0^\eta)\\[3pt]
=&(\phi_0+\eta,u_0,l_0,(\phi_0+\eta)^{2\iota})\quad\quad x\in\mathbb{R}^3,\\[3pt]
&(\phi^{\eta},u^{\eta},l^{\eta},h^{\eta})\rightarrow (\eta,0,\bar{l},\eta^{2\iota}) \quad \text{as} \hspace{2mm}|x|\rightarrow \infty \quad \rm for\quad t\geq 0,
\end{aligned}\right.\end{equation}
where $\psi^\eta=\fr{a\delta}{\delta-1}\nabla h^\eta$ and $n^{\eta}=(ah^{\eta})^b$.
\begin{theorem}\label{nltm}
Let $\ef{can1}$ hold and $\eta>0$. Assume that $(\phi_0,u_0,l_0,h_0)$ satisfy
	\eqref{a}-\eqref{2.8*}, and there exists a positive constant $c_0$ independent of $\eta$ such that \ef{2.14} holds. Then there exists a time $T_*>0$ independent of $\eta$, and a unique strong solution 
	$$
	(\phi^\eta, u^\eta,l^\eta, h^\eta=\phi^{2\iota})
	$$
	  in $[0,T_*]\times\mathbb{R}^3$ to \ef{nl} satisfying \ef{2.13}, where $T_*$ is independent of  $\eta$.	
	 Moreover, the uniform estimates (independent of $\eta$) \ef{key1kk} hold for $(\phi^\eta,u^\eta,l^\eta,h^\eta)$ with $T^*$ replaced by $T_*$. 
\end{theorem}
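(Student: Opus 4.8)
The plan is to solve the nonlinear problem \ef{nl} by a standard iteration scheme built on the linear theory of Lemma \ref{epsilon0}, using the uniform bounds \ef{key1kk} to pass to the limit. First I would set up the iteration: starting from $(v^0,g^0,w^0)=(u_0,h_0,l_0)$ (suitably extended in time so as to satisfy \ef{4.1*} and \ef{2.14} with the constant $c_0$), define $(\phi^{k+1},u^{k+1},l^{k+1},h^{k+1})$ to be the unique strong solution given by Lemma \ref{epsilon0} of the linearized problem \ef{ln} with $\epsilon=0$ and with $(v,g,w)=(u^k,h^k,l^k)$. The crucial point, already emphasised in \S 3.1, is that the linearization \ef{ln} is designed so that the relation $\psi^{k+1}=\frac{a\delta}{\delta-1}\nabla h^{k+1}$ is preserved, and the a priori bounds \ef{key1kk} hold \emph{uniformly in $k$} on a common time interval $[0,T_*]$ with $T_*$ depending only on $c_0$ and the fixed parameters. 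Thus the whole sequence lies in the bounded set defined by \ef{2.16} (with $c_i$ the fixed constants displayed after \ef{timedefinition}), so the induction closes and the approximating sequence is well-defined on $[0,T_*]$.

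Next I would prove that the sequence is Cauchy in a lower-order norm. Writing $\bar\phi=\phi^{k+1}-\phi^k$, $\bar u=u^{k+1}-u^k$, $\bar l=l^{k+1}-l^k$, $\bar h=h^{k+1}-h^k$ and $\bar\psi=\frac{a\delta}{\delta-1}\nabla\bar h$, one subtracts the equations at levels $k+1$ and $k$ and performs energy estimates at the level of $L^2$ for $(\bar\phi,\bar l,\bar h,\bar\psi)$ and of $H^1$ (with the weight $\sqrt h$) for $\bar u$. The transport equations for $\bar\phi,\bar l,\bar h$ and the symmetric hyperbolic one for $\bar\psi$ give, via Gronwall, control by $\int_0^t\bigl(\|\bar u^{\,k}\|_{\cdot}+\|\bar\phi^{\,k}\|_{\cdot}+\dots\bigr)$; the equation for $\bar u$ is the inhomogeneous Lamé system and, using the uniform weighted bounds on $u^{k+1}$ and the already-established $\varphi<\infty$, $h>\frac1{2c_0}$, $l^{-1}\in L^\infty$ bounds, one derives
\begin{equation*}
\sup_{[0,t]}\bigl(\|\bar u\|_1^2+|\sqrt h\,\bar\psi\|_2^2+\dots\bigr)+\int_0^t|\bar u|_{D^2}^2
\le C\,t\,\sup_{[0,t]}\bigl(\text{same quantities at level }k\bigr)+\text{lower order},
\end{equation*}
so that for $T_*$ small the map is a contraction in this weak norm. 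Hence $(\phi^k,u^k,l^k,h^k)\to(\phi^\eta,u^\eta,l^\eta,h^\eta)$ strongly in $C([0,T_*];L^2_{loc})$ (and by interpolation with the uniform high-order bounds, in $C([0,T_*];H^{3-}_{loc})$), which is enough to pass to the limit in every term of \ef{nl}, exactly as in the $\epsilon\to0$ limit of Lemma \ref{epsilon0}. Lower semicontinuity of the weak/weak$^*$ limits then transfers all the bounds \ef{key1kk} to the limit, so $(\phi^\eta,u^\eta,l^\eta,h^\eta)$ is a strong solution of \ef{nl}; uniqueness follows from the same Cauchy estimate applied to the difference of two solutions, and time continuity (upgrading weak$^*$-in-time to $C$-in-time) is obtained by the Lions–Aubin type argument as in Lemma \ref{ls}.

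Finally I would verify the identity $h^\eta=(\phi^\eta)^{2\iota}$, which is asserted in the statement but is \emph{not} automatic from the linear scheme: in \ef{ln} the equations for $h$ and $\phi$ were deliberately decoupled. However, once the limit solves the \emph{nonlinear} system \ef{nl}, the coefficient in $\eqref{nl}_4$ is $(\delta-1)(\phi^\eta)^{2\iota}$, so $(\phi^\eta)^{2\iota}$ satisfies the transport equation
\begin{equation*}
\bigl((\phi^\eta)^{2\iota}\bigr)_t+u^\eta\cdot\nabla(\phi^\eta)^{2\iota}+(\delta-1)(\phi^\eta)^{2\iota}\,\mathrm{div}\,u^\eta=0,
\end{equation*}
obtained by applying $2\iota(\phi^\eta)^{2\iota-1}$ to $\eqref{nl}_1$; this is exactly $\eqref{nl}_4$ with $h^\eta$ replaced by $(\phi^\eta)^{2\iota}$. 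Since the initial data agree, $h^\eta_0=(\phi^\eta_0)^{2\iota}$, and both satisfy the same linear transport equation with the Lipschitz field $u^\eta$, uniqueness for transport equations forces $h^\eta=(\phi^\eta)^{2\iota}$ on $[0,T_*]$, and consequently $n^\eta=(ah^\eta)^b=\rho^{2-\delta-\gamma}$ and $\psi^\eta=\frac{a\delta}{\delta-1}\nabla(\phi^\eta)^{2\iota}$, as required. The main obstacle I anticipate is the contraction estimate for $\bar u$: the singular elliptic operator $a_2(l^\eta)^\nu h^\eta L$ degenerates (its coefficient blows up) as $\phi^\eta\to\eta$ only mildly here since $\eta>0$, but one must be careful that the constant in the contraction does not depend on the lower bound $\eta$ — this is where the weighted formulation $L(\sqrt{h^2+\epsilon^2}\,u)=\dots$ from \ef{singularelliptic} and the uniform bounds on $\psi^\eta$, $\varphi^\eta$ must be used rather than naive elliptic regularity, mirroring the estimates already carried out in Lemmas \ref{lu}–\ref{hu2}.
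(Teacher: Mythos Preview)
Your strategy is the same as the paper's: iterate via Lemma \ref{epsilon0}, close the uniform bounds by induction, prove Cauchy in a low norm, pass to the limit, recover the relation $h^\eta=(\phi^\eta)^{2\iota}$ a posteriori, then uniqueness and time continuity. Two technical points deserve more care than your sketch gives them.

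First, the seed of the iteration. You write ``$(v^0,g^0,w^0)=(u_0,h_0,l_0)$ suitably extended in time'', but the bounds \ef{2.16} demand time-regularity and weighted estimates (e.g.\ $|\sqrt g\,\nabla v_t|_2$, $|g\nabla^2 v|_{D^2}$) that a generic extension will not deliver. The paper constructs $(\phi^0,u^0,l^0,h^0)$ by solving the auxiliary problem \ef{xyz}: linear transport equations for $\phi^0,l^0,h^0$ with velocity $u_0$, and the linear parabolic equation $Y_t-h^0\triangle Y=0$ for $u^0$. This specific choice is what makes \ef{it0} checkable on a short interval $\bar T$.

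Second, the Cauchy estimate is not a one-step contraction. Because $\eqref{k+1}_4$ reads $h^{k+1}_t+u^k\cdot\nabla h^{k+1}+(\delta-1)h^k\text{div}u^k=0$ (coefficient $h^k$, not $h^{k+1}$), the difference system \ef{k+3} couples three consecutive levels: $\bar\psi^{k+1}$ is driven by $h^k\nabla^2\bar u^k$ and $\bar h^k$, and controlling $|h^k\nabla^2\bar u^k|_2$ via $\ef{k+3}_2$ brings in $\bar u^{k-1}$, $\bar\psi^{k-1}$, etc. The paper therefore does not obtain $\Gamma^{k+1}\le \tfrac12\Gamma^k$ but rather an inequality of the form $\Gamma^{k+1}\le C(\upsilon)\bigl(\Gamma^k+\Gamma^{k-1}\bigr)$ with carefully tuned small parameters $\sigma,\upsilon_1,\upsilon_2,\epsilon_1,\epsilon_2$ (see \ef{allbar}--\ef{Gamma}), and then sums over $k$. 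A prerequisite here is Lemma \ref{cancel}: since $h^k$ and $h^{k+1}$ are individually only in $L^\infty$ (not $L^2$), one must first show $\bar h^{k+1}\in L^\infty([0,\bar T];H^2)$ by a cutoff argument before the $L^6$ and $L^2$ estimates on $\bar h^{k+1}$, $\bar\psi^{k+1}$ make sense. Your sketch does not flag either issue.

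Your verification of $h^\eta=(\phi^\eta)^{2\iota}$ is correct in spirit; the paper argues equivalently at the level of $\psi^*=\psi^\eta-\frac{a\delta}{\delta-1}\nabla(\phi^\eta)^{2\iota}$ via the homogeneous symmetric hyperbolic system \ef{liyhn-recover}.
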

The proof is given  by an iteration scheme described below. \\

 Let $(\phi^0,u^0,l^0,h^0)$ be the solution to the following Cauchy problem
\begin{equation}\label{xyz}\left\{\begin{aligned}
\displaystyle
&U_t+u_0\cdot \nabla U=0,\ \ \text{in} \ \ (0,\infty)\times\mathbb{R}^3, \\[3pt]
&Y_t-W\triangle Y=0,\ \ \text{in} \ \ (0,\infty)\times\mathbb{R}^3, \\[3pt]
\displaystyle
&Z_t+u_0\cdot \nabla Z=0,\ \ \text{in} \ \ (0,\infty)\times\mathbb{R}^3, \\[3pt]
\displaystyle
&W_t+u_0\cdot \nabla W=0,\ \ \text{in} \ \ (0,\infty)\times\mathbb{R}^3, \\[3pt]
\displaystyle
&(U,Y,Z,W)|_{t=0}=(\phi^\eta_0,u^\eta_0,l^\eta_0,h^\eta_0)\\[3pt]
=& (\phi_0+\eta,u_0,l_0,(\phi_0+\eta)^{2\iota})  \ \ \text{in} \ \ \mathbb{R}^3, \\[3pt]
\displaystyle
&(U,Y,Z,W)\rightarrow (\eta,0,\bar{l},\eta^{2\iota}) \quad \text{as} \hspace{2mm}|x|\rightarrow \infty \quad \rm for\quad t\geq 0.
\end{aligned}\right.\end{equation}
Choose a time $\bar{T}\in (0,T^*]$ small enough such that
\begin{equation}\begin{aligned}\label{it0}
\sup_{0\leq t\leq \bar{T}}\|\nabla h^0(t)\|^2_{L^q\cap D^{1,3}\cap D^2}\leq c^2_1,\ \ \sup_{0\leq t\leq \bar{T}}\|(l^0-\bar{l})(t)\|^2_{D^1_*\cap D^3}\leq c_1^2,&\\
\inf_{[0,T_*]\times \mathbb{R}^3} l^0(t,x)\geq c^{-1}_0,\quad 
\sup_{0\leq t\leq \bar{T}}\|u^0(t)\|^2_1+\int^{\bar{T}}_0\big(|u^0|^2_{D^2}+|u^0_t|^2_2\big)\text{d}t\leq c_2^2,&\\
\sup_{0\leq t\leq \bar{T}}(|u^0|_{D^2}^2+|h^0\nabla^2u^0|^2_2+|u^0_t|^2_2)(t)+\int^{\bar{T}}_0(|u^0|^2_{D^3}+|u^0_t|^2_{D^1_*})\text{d}t\leq c_3^2,&\\
\sup_{0\leq t\leq \bar{T}}(|u^0_t|^2_{D^1_*}+|h^0_t|^2_{D^1_*}+|u^0|^2_{D^3})(t)
+\int^{\bar{T}}_0(|u^0_t|^2_{D^2}+|u^0|^2_{D^4}+|u^0_{tt}|^2_2)\text{d}t\leq c_4^2,&\\
\sup_{0\leq t\leq \bar{T}}|h^0_t(t)|^2_\infty+\int^{\bar{T}}_0(|(h^0\nabla^2u^0)_t|^2_2+|h^0\nabla^2u^0|^2_{D^2})\text{d}t\leq c_4^2,&\\
\sup_{0\leq t\leq \bar{T}}\big(|\sqrt{h^0}\nabla u^0_t|^2_2+|h^0\nabla^2u^0|^2_{D^1_*}+|l^0_t|^2_{3}+|l^0_t|^2_{D^1_*}\big)(t)\leq c^2_4,&\\
\sup_{0\leq t\leq \bar{T}}\big(|l^0_t|^2_{\infty}+ |\nabla^2l^0_t|_2^2)(t)+\int^{\bar{T}}_0|\nabla l^0_{tt}|^2_2\text{d}t\leq c_5^2,&\\
\text{ess}\sup_{0\leq t\leq T^*}(t|u^0_t(t)|^2_{D^2}+t|h^0\nabla^2u^0_t(t)|^2_2+t|u^0_{tt}(t)|^2_2+t|u^0(t)|^2_{D^4})\leq  c_5^2,\\
	\int^t_0s(|u^0_{tt}|_{D^1_*}^2+|u^0_t|^2_{D^3}+|\sqrt{h^0} u^0_{tt}|_{D^1_*}^2)\text{d}s\leq  c_5^2.&
\end{aligned}\end{equation}

\begin{proof} \textbf{Step 1:} Existence. One starts with the initial  iteration $(v,w,g)=(u^0,l^0, h^0)$, and  can obtain a classical solution $(\phi^1,u^1,l^1,h^1)$ to the problem  \ef{ln} with $\epsilon=0$. Inductively, one constructs approximate sequences $(\phi^{k+1},u^{k+1},l^{k+1},h^{k+1})$ as follows:
given $(u^k,l^k, h^k)$ for $k\geq 1$, define  $(\phi^{k+1},u^{k+1},l^{k+1},h^{k+1})$ by solving the following problem:
\begin{equation}\label{k+1}\left\{\begin{aligned}
\displaystyle
&\phi^{k+1}_t+u^k\cdot\nabla\phi^{k+1}+(\gamma-1)\phi^{k+1}\text{div}u^k=0,\\[2pt]
\displaystyle
&(l^{k+1})^{-\nu}(u_t^{k+1}+u^k\cdot\nabla u^k+a_1\phi^{k+1}\nabla l^{k+1}+l^{k+1}\nabla\phi^{k+1})\\[2pt]
\displaystyle
&+a_2h^{k+1}Lu^{k+1}
=a_2(l^{k+1})^{-\nu}h^k\nabla(l^{k+1})^{\nu}\cdot Q(u^k)+a_3\psi^{k+1}\cdot Q(u^k),\\[2pt]
\displaystyle
&l^{k+1}_t+u^k\cdot\nabla l^{k+1}=a_4(l^k)^\nu n^{k+1}(h^k)^2H(u^k),\\[2pt]
\displaystyle
&h^{k+1}_t+u^k\cdot \nabla h^{k+1}+(\delta-1)h^k\text{div}u^k=0,\\[2pt]
\displaystyle
&(\phi^{k+1},u^{k+1},l^{k+1},h^{k+1})|_{t=0}=(\phi^\eta_0,u^\eta_0,l^\eta_0,h^\eta_0)\\[2pt]
=& (\phi_0+\eta,u_0,l_0,(\phi_0+\eta)^{2\iota}) \ \ \text{in} \ \ \mathbb{R}^3, \\[2pt]
\displaystyle
&(\phi^{k+1},u^{k+1},l^{k+1},h^{k+1})\longrightarrow (\eta,0,\bar{l},\eta^{2\iota}) \quad \text{as} \hspace{2mm}|x|\rightarrow \infty \quad \rm for\quad t\geq 0,
\end{aligned}\right.\end{equation}
where $\psi^{k+1}=\fr{a\delta}{\delta-1}\nabla h^{k+1}$ and $ n^{k+1}=(a h^{k+1})^b$.
It follows from Lemma \ref{epsilon0} and  mathematical induction that, by replacing $(v,w, g)$ with $(u^k,l^k, h^k)$,  one  can solve $\ef{k+1}$ locally in time, and the solution  $(\phi^{k+1},u^{k+1},l^{k+1},h^{k+1})$ $(k=0,1,2,\cdots)$ satisfies  the uniform estimates \ef{key1kk}. Moreover, $\psi^{k+1}$ satisfies
\begin{equation}\label{k+2}
\psi^{k+1}_t+\nabla(u^k\cdot \psi^{k+1})+(\delta-1)\psi^k\text{div}u^k+a\delta h^k\nabla\text{div}u^k=0.
\end{equation}

To show the strong convergence of $(\phi^k,u^k,l^k,\psi^k)$, we set
\begin{equation*}
\begin{split}
&\bar{\phi}^{k+1}=\phi^{k+1}-\phi^k,\ \ \bar{u}^{k+1}=u^{k+1}-u^k,\ \ \bar{l}^{k+1}=l^{k+1}-l^k,\ \ \\
& \bar{\psi}^{k+1}=\psi^{k+1}-\psi^k,\ \  \bar{h}^{k+1}=h^{k+1}-h^k,\ \  \bar{n}^{k+1}=n^{k+1}-n^k.
\end{split}
\end{equation*}
Then \eqref{k+1} and \ef{k+2} yield 
\begin{equation}\label{k+3}\left\{\begin{aligned}
\displaystyle
&\bar{\phi}^{k+1}_t+u^k\cdot\nabla\bar{\phi}^{k+1}+\bar{u}^k\cdot\nabla\phi^k+(\gamma-1)(\bar{\phi}^{k+1}\text{div}u^k+\phi^k\text{div}\bar{u}^k)=0,\\[4pt]
\displaystyle
&(l^{k+1})^{-\nu}(\bar{u}_t^{k+1}+u^k\cdot\nabla \bar{u}^k+\bar{u}^k\cdot\nabla u^{k-1}+a_1\bar{\phi}^{k+1}\nabla l^{k+1}+a_1\phi^k\nabla\bar{l}^{k+1}\\[2pt]
&+\bar{l}^{k+1}\nabla\phi^{k+1}+l^k\nabla\bar{\phi}^{k+1})
+a_2h^{k+1}L\bar{u}^{k+1}+a_2\bar{h}^{k+1}Lu^{k}\\[2pt]
\displaystyle
&=-\big((l^{k+1})^{-\nu}-(l^k)^{-\nu}\big)(u_t^{k}+u^{k-1}\cdot\nabla u^{k-1}+a_1\phi^{k}\nabla l^{k}+l^{k}\nabla\phi^{k})\\[2pt]
\displaystyle
&+a_2(l^{k+1})^{-\nu}\bigg(h^k\big(\nabla(l^{k+1})^{\nu}-\nabla(l^{k})^{\nu}\big)\cdot Q(u^k)
+h^k\nabla(l^k)^\nu\cdot Q(\bar{u}^k)\\[2pt]
\displaystyle
&+\bar{h}^k\nabla (l^k)^\nu\cdot Q(u^{k-1})\bigg)
+a_3\bar{\psi}^{k+1}\cdot Q(u^k)+a_3\psi^{k}\cdot Q(\bar{u}^k)\\[2pt]
\displaystyle
&+a_2\big((l^{k+1})^{-\nu}-(l^k)^{-\nu})\big)\big(h^{k-1}\nabla(l^k)^\nu\cdot Q(u^{k-1})\big),\\[2pt]
\displaystyle
&\bar{l}^{k+1}_t+u^k\cdot\nabla \bar{l}^{k+1}+ \bar{u}^{k}\cdot\nabla l^k=a_4(l^k)^\nu\big(n^{k+1}(h^k)^2(H(u^k)-H(u^{k-1}))\\[2pt]
&+\bar{h}^k(h^k+h^{k-1})n^{k+1}H(u^{k-1})+(h^{k-1})^2\bar{n}^{k+1}H(u^{k-1})\big)\\
&+a_4\big((l^k)^\nu-(l^{k-1})^\nu\big)n^{k}(h^{k-1})^2H(u^{k-1}),\\[2pt]
\displaystyle
&\bar{\psi}^{k+1}_t+\nabla(u^k\cdot \bar{\psi}^{k+1}+\bar{u}^k\cdot\psi^k)+(\delta-1)(\bar{\psi}^k\text{div}u^k+\psi^{k-1}\text{div}\bar{u}^k)\\[2pt]
&+a\delta(h^k\nabla\text{div}\bar{u}^k+\bar{h}^k\nabla\text{div}u^{k-1})=0,\\[4pt]
\displaystyle
&(\bar{\phi}^{k+1},\bar{u}^{k+1},\bar{l}^{k+1},\bar{\psi}^{k+1})|_{t=0}=(0,0,0,0)\ \  \text{in}\ \ \mathbb{R}^3,
\\[4pt]
\displaystyle
&(\bar{\phi}^{k+1},\bar{u}^{k+1},\bar{l}^{k+1},\bar{\psi}^{k+1})\longrightarrow (0,0,0,0) \quad \text{as} \ \ |x|\rightarrow \infty \quad \rm for\quad t\geq 0.
\end{aligned}\right.\end{equation}

We now give some necessary estimates on 
$(\bar{\phi}^{k+1},\bar{u}^{k+1},\bar{l}^{k+1},\bar{\psi}^{k+1},\bar{h}^{k+1})$ to be used later. We start with $\bar{h}^{k+1}$, for which one need the following lemma whose proof is given in Remark  {\rm\ref{cancelproof}} later. 
\begin{lemma}\label{cancel}
\begin{equation*}\label{hbar}
\begin{split}\bar{h}^{k+1},\ \bar{\phi}^{k+1},\ \bar{\psi}^{k+1} \in L^\infty([0,\bar{T}];H^2(\mathbb{R}^3))  \quad \text{for} \quad k=1,2,....
\end{split}
\end{equation*}
\end{lemma}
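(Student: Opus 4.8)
The plan is to establish the $H^2$-regularity of the differences $\bar h^{k+1}$, $\bar\phi^{k+1}$, $\bar\psi^{k+1}$ by exploiting the \emph{transport} structure of their evolution equations $\ef{k+3}_1$, $\ef{k+3}_4$ together with the uniform estimates already available from the iteration, rather than trying to solve any new equation. The crucial observation is that at the level of each fixed iterate $k$, the quantities $\phi^{k+1}$, $h^{k+1}$ (and hence $\psi^{k+1}=\frac{a\delta}{\delta-1}\nabla h^{k+1}$) enjoy the regularities recorded in $\ef{2.13}$ and $\ef{key1kk}$; the difficulty is purely that a priori a difference of two $H^3$ (or $L^q\cap D^{1,3}\cap D^2$) functions need not lie in an \emph{inhomogeneous} $H^2$, because the individual iterates are not globally $L^2$ (only $\phi^{k+1}-\eta$, $h^{k+1}-\eta^{2\iota}$ etc. are). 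So the content of the lemma is that the \emph{cancellation at the initial time} — $\bar\phi^{k+1}|_{t=0}=\bar h^{k+1}|_{t=0}=\bar\psi^{k+1}|_{t=0}=0$, as stated in $\ef{k+3}_5$ — propagates an $L^2$ (indeed $H^2$) bound forward in time.

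Concretely, I would argue inductively on $k$. For $\bar h^{k+1}$: from $\ef{k+1}_4$ one has $\bar h^{k+1}_t+u^k\cdot\nabla\bar h^{k+1}+\bar u^k\cdot\nabla h^k+(\delta-1)(\bar h^k\,\mathrm{div}\,u^k+h^{k-1}\,\mathrm{div}\,\bar u^k)=0$, which is a linear transport equation for $\bar h^{k+1}$ with source term $F^k:=-\bar u^k\cdot\nabla h^k-(\delta-1)(\bar h^k\,\mathrm{div}\,u^k+h^{k-1}\,\mathrm{div}\,\bar u^k)$. The induction hypothesis gives $\bar h^k\in L^\infty([0,\bar T];H^2)$ and $\bar u^k\in L^\infty([0,\bar T];H^2)$ (the latter being part of the iteration's working space, cf.\ $\ef{2.13}$ and the estimates $\ef{key1kk}$ applied to $u^k$, combined with $\bar u^k|_{t=0}=0$), while $\nabla h^k\in L^\infty([0,\bar T];H^2)$, $h^{k-1}\in L^\infty$, $\mathrm{div}\,u^k\in L^\infty([0,\bar T];H^2)$. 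Hölder, Sobolev embedding $H^2(\mathbb R^3)\hookrightarrow L^\infty$, and the algebra property of $H^2$ then give $F^k\in L^\infty([0,\bar T];H^2)$ — note $\bar u^k\cdot\nabla h^k$ is a product of an $H^2$ function with an $H^2$ function, hence $H^2$; the base case $k=1$ uses $\bar h^1=h^1-h^0$ together with the explicit regularity of $h^0$ from $\ef{xyz}$ and of $h^1$ from Lemma \ref{epsilon0}. Standard energy estimates for the transport equation $\partial_t+u^k\cdot\nabla$ acting on $\bar h^{k+1}$ and on $\partial_x^\varsigma\bar h^{k+1}$ ($|\varsigma|\le 2$), using $\nabla u^k\in L^\infty([0,\bar T];H^2\hookrightarrow L^\infty)$ to absorb commutator terms via Gronwall, then yield $\bar h^{k+1}\in L^\infty([0,\bar T];H^2)$, since the data vanish. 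Differentiating the transport equation in $x$ (once and twice) and using that $\psi^{k+1}=\frac{a\delta}{\delta-1}\nabla h^{k+1}$ immediately gives $\bar\psi^{k+1}\in L^\infty([0,\bar T];H^1)$ from $\nabla\bar h^{k+1}\in L^\infty H^1$; for the full $H^2$ statement on $\bar\psi^{k+1}$ one reads $\ef{k+3}_4$ as a transport equation for $\bar\psi^{k+1}$ directly (its source involves $h^k\nabla\mathrm{div}\,\bar u^k$, $\bar h^k\nabla\mathrm{div}\,u^{k-1}$, $\bar\psi^k\,\mathrm{div}\,u^k$, $\psi^{k-1}\,\mathrm{div}\,\bar u^k$, $\nabla(\bar u^k\cdot\psi^k)$, all of which lie in $L^\infty([0,\bar T];H^1)$ with a logical-induction input of $\bar\psi^k\in H^1$), and again runs the transport energy estimate with zero data. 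The estimate for $\bar\phi^{k+1}$ is entirely parallel using $\ef{k+3}_1$.

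The main obstacle I anticipate is bookkeeping the \emph{regularity budget} so that every product appearing in the source terms of $\ef{k+3}_1$ and $\ef{k+3}_4$ genuinely closes in $H^2$ and not merely in a homogeneous space: the iterates $u^k$ are in $H^3$ but the differences $\bar u^k$ are only controlled in $H^2$ (one derivative worse), and the source terms contain $\nabla\mathrm{div}\,\bar u^k=\nabla^2\bar u^k$-type factors, so one needs $\bar u^k\in L^2 H^3$ or at least $\nabla^2\bar u^k\in L^2 L^2$ with a time integral, together with the $L^\infty$ weights $h^k$, $\psi^k$ being bounded in $L^\infty_{t,x}$ (available from Lemma \ref{varphi}-type bounds and $\ef{key1kk}$). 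The resolution is to carry the estimate in the form $\sup_{[0,t]}\|\bar h^{k+1}\|_{H^2}^2\le C\int_0^t(\text{products})\,ds$ and to use that the problematic top-order term $h^k\nabla\mathrm{div}\,\bar u^k$ enters linearly, so Gronwall closes after pairing it against $\nabla^2\bar h^{k+1}$ and integrating by parts to move one derivative onto $h^k$ (whose gradient $\psi^k$ is in $L^\infty$) — this is exactly the device used in the proof of Lemma \ref{psi} for the $\psi$-equation. A secondary, purely technical point is the base case $k=1$, where one must check that $h^1-h^0$, $\phi^1-\phi^0$, $\psi^1-\psi^0$ are in $H^2$; this follows because both $h^0,\phi^0$ (from the explicitly solved linear system $\ef{xyz}$) and $h^1,\phi^1$ (from Lemma \ref{epsilon0}) have the required local regularity and share the \emph{same} initial data and far-field constants, so their differences decay at infinity and lie in the inhomogeneous space. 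Once Lemma \ref{cancel} is in hand, it feeds directly into the subsequent contraction estimate for $(\bar\phi^{k+1},\bar u^{k+1},\bar l^{k+1},\bar\psi^{k+1})$ that establishes convergence of the iteration.
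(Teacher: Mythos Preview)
Your strategy is correct in spirit --- the vanishing initial data together with the transport structure is indeed the mechanism --- but there is one genuine gap and one substantial inefficiency compared with the paper's argument.

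The gap: you invoke ``standard energy estimates for the transport equation'' to conclude $\bar h^{k+1}\in L^\infty_t L^2_x$, but a priori $\bar h^{k+1}$ is only known to lie in $L^\infty_{t,x}$ (a difference of two $L^\infty$ functions), so the pairing $\int \bar h^{k+1}\,(\partial_t\bar h^{k+1}+\cdots)$ is not justified and Gronwall cannot be launched. This is precisely the point the paper addresses: it localizes with a cutoff $X_R(x)=X(|x|/R)$, writes an equation for $\bar h^{k+1,R}=\bar h^{k+1}X_R$ (picking up harmless commutator terms with $\nabla X_R$), and observes that every source term --- $|\bar h^k|_\infty|\text{div}\,u^{k-1}|_2$, $|h^k|_\infty|\text{div}\,\bar u^k|_2$, $|u^k|_2|\bar h^{k+1}|_\infty$, $|\bar u^k|_2|\psi^k|_\infty$ --- is bounded \emph{uniformly in $R$}. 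Gronwall then gives $|\bar h^{k+1,R}(t)|_2\le \hat C$ independent of $R$, whence $\bar h^{k+1}\in L^\infty_t L^2_x$. Your base-case remark (``their differences decay at infinity and lie in the inhomogeneous space'') is not a substitute: matching far-field constants does not by itself give $L^2$-integrability.

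The inefficiency: the paper needs \emph{no induction on $k$} and \emph{no direct $H^2$ transport estimate}. In the cutoff argument the $\bar h^k$-term is controlled through $|\bar h^k|_\infty$ (finite because $h^k,h^{k-1}\in L^\infty$), not through any $H^2$ norm, so the $L^2$ bound on $\bar h^{k+1}$ holds for every $k\ge 1$ at once. The upgrade to $H^2$ (in fact $H^3$) is then a one-line interpolation: since $\nabla h^j=\tfrac{\delta-1}{a\delta}\psi^j\in L^\infty_t D^2$ gives $\nabla^3\bar h^{k+1}\in L^\infty_t L^2$, Gagliardo--Nirenberg yields $\bar h^{k+1}\in L^\infty_t H^3$, and $\bar\psi^{k+1}=\tfrac{a\delta}{\delta-1}\nabla\bar h^{k+1}\in L^\infty_t H^2$ follows automatically. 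Your separate transport analysis for $\bar\psi^{k+1}$, the bookkeeping of $H^2$ source terms, and the concern about $h^k\nabla\text{div}\,\bar u^k$ are therefore unnecessary; the same simplification applies verbatim to $\bar\phi^{k+1}$.
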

\begin{remark}This lemma  is helpful to deal with some singular terms \textcolor{red}{of} forms of  $\infty-\infty$.\end{remark}
Assume that Lemma \ref{cancel} holds at this moment, one can deduce the uniform estimate for $\bar{h}^{k+1}$ as follows. It follows from $\ef{k+1}_4$ that 
\begin{equation}\label{hbar}
\begin{split}
&\bar{h}^{k+1}_t+u^k\cdot\nabla \bar{h}^{k+1}+\bar{u}^k\cdot\nabla h^k+(\delta-1)(h^k\text{div}{u}^k-{h}^{k-1}\text{div}u^{k-1})=0.
\end{split}
\end{equation}
Multiplying \ef{hbar} by $5|\bar{h}^{k+1}|^4\bar{h}^{k+1}$
and using integration by parts yield
\begin{equation}\label{hbarkey}
\begin{split}
\fr{d}{dt}|\bar{h}^{k+1}(t,x)|^6_6\leq & C(|\nabla u^k|_\infty|\bar{h}^{k+1}|^6_6+|\bar{u}^k|_6|\psi^k|_\infty|\bar{h}^{k+1}|^5_6\\
&+(|h^k\text{div}{u}^k|_6+|h^{k-1}\text{div}{u}^{k-1}|_6)|\bar{h}^{k+1}|^5_6),
\end{split}
\end{equation} 
 with a generic constant $C$  independent of $\eta$ and $k$, which implies 
\begin{equation}\label{hbar6}
|\bar{h}^{k+1}(t,x)|_6\leq C\quad \text{for}\quad 0<t\leq \bar{T} \quad \text{and} \quad  k=0,1,2,....
\end{equation}



Second, 
multiplying $\ef{k+3}_4$ by $2\bar{\psi}^{k+1}$ and integrating over $\mathbb{R}^3$ lead to
\begin{equation}\label{psibarmmm}
\begin{split}
\fr{d}{dt}|\bar{\psi}^{k+1}|^2_2\leq& C|\nabla u^k|_\infty|\bar{\psi}^{k+1}|^2_2+C(|\bar{u}^k|_6|\nabla\psi^k|_3+|\psi^k|_\infty|\nabla \bar{u}^{k}|_2\\
&+|\bar{\psi}^k\text{div} u^{k}|_2+|h^k\nabla^2\bar{u}^k|_2\\
&+|\psi^{k-1}|_\infty|\nabla \bar{u}^{k}|_2+|\bar{h}^{k}|_6|\nabla^2u^{k-1}|_3)|\bar{\psi}^{k+1}|_2,
\end{split}
\end{equation}
which, along with  the fact 
\begin{equation}\label{barpsi2}
\begin{split}
|\bar{\psi}^k\text{div} u^{k}|_2=&|\psi^k\text{div} u^{k}-\psi^{k-1}\text{div} u^{k}|_2 
\leq C(|\psi^k|_\infty+|\psi^{k-1}|_\infty)|\text{div} u^{k}|_2, \\
|h^k\nabla^2\bar{u}^k|_2=&|h^k\nabla^2u^k-h^{k-1}\nabla^2u^{k-1}-\bar{h}^k\nabla^2 u^{k-1}|_2 \\
\leq &C\big(|h^k\nabla^2u^k|_2+|h^{k-1}\nabla^2u^{k-1}|_2+|\bar{h}^k|_6|\nabla^2 u^{k-1}|_3\big),
\end{split}
\end{equation}
the uniform estimates \eqref{key1kk}  for 
$(\phi^{k},u^{k},l^{k},h^{k})$ $(k=1,2,...)$,  \eqref{hbar6} and    Gronwall's inequality   that 
\begin{equation}\label{barpsi6}
|\bar{\psi}^{k+1}(t,x)|_2+|\bar\psi^{k+1}(t,x)|_6\leq C\quad \text{for}\quad  0<t\leq \bar{T} \quad \text{and} \quad  k=0,1,2,....
\end{equation}
Moreover, \eqref{psibarmmm}, Young's inequality and the uniform estimates \eqref{key1kk}  for 
$(\phi^{k},u^{k},\\ l^{k},h^{k})$ $(k=1,2,...)$ also imply that 
\begin{equation}\label{psibar}
\begin{split}
\fr{d}{dt}|\bar{\psi}^{k+1}|^2_2
\leq & C\sigma^{-1}|\bar{\psi}^{k+1}|^2_2+\sigma(|\sqrt{h^k}\nabla\bar{u}^k|^2_2+|\bar{\psi}^{k}|_2^2+|h^k\nabla^2\bar{u}^k|^2_2),
\end{split}
\end{equation}
where $\sigma\in (0,1)$ is a constant to be determined later.

Next,
multiplying $\ef{k+3}_1$ by $2\bar{\phi}^{k+1}$ and integrating over $\mathbb{R}^3$ give 
\begin{equation}\label{phibar1}
\fr{d}{dt}|\bar{\phi}^{k+1}|^2_2\leq C(|\nabla u^k|_\infty|\bar{\phi}^{k+1}|_2+|\bar{u}^{k}|_6|\nabla\phi^k|_3+|\nabla\bar{u}^{k}|_2|\phi^k|_\infty)|\bar{\phi}^{k+1}|_2.
\end{equation}
Applying $\partial^\varsigma_x$ $(|\varsigma|=1)$ to $\ef{k+3}_1$, multiplying by $2\partial^\varsigma_x\bar{\phi}^{k+1}$ and  integrating over $\mathbb{R}^3$, one gets
\begin{equation*}
\begin{split}
\fr{d}{dt}|\partial^\varsigma_x\bar{\phi}^{k+1}|^2_2\leq& C(|\nabla u^k|_\infty|\nabla\bar{\phi}^{k+1}|_2+|\nabla \phi^k|_\infty|\nabla\bar{u}^k|_2+|\bar{u}^k|_6|\nabla^2\phi^k|_3)|\nabla\bar{\phi}^{k+1}|_2\\
&+C(|\nabla^2u^k|_3|\bar{\phi}^{k+1}|_6+|\phi^k|_\infty|\nabla\text{div}\bar{u}^k|_2)|\nabla\bar{\phi}^{k+1}|_2.
\end{split}
\end{equation*}
Hence, it holds that for $t\in [0,\bar{T}]$, 
\begin{equation}\label{phibar2}
\fr{d}{dt}\|\bar{\phi}^{k+1}\|^2_1\leq C\sigma^{-1}\|\bar{\phi}^{k+1}\|^2_1+\sigma(|\sqrt{h^k}\nabla\bar{u}^k|^2_2+|h^k\nabla^2\bar{u}^k|^2_2),
\end{equation}
where  one has used that 
\begin{equation*}
|\nabla\bar{u}^k|_2\leq C|\sqrt{h^k}\nabla\bar{u}^k|_2,\ \ |\nabla^2\bar{u}^k|_2\leq C|h^k\nabla^2\bar{u}^k|_2.
\end{equation*}

On the other hand, it follows from the definition of $n^{k}=(ah^{k})^b$ and $h^k\geq Cc_0^{-1}$ that 
\begin{equation}\label{nbar}
\begin{split}
a^{-b}|\bar{n}^{k+1}|_6&=|(h^{k+1})^b-(h^{k})^b|_6\leq C|\bar{\psi}^{k+1}|_2,\\
a^{-b}\nabla\bar{n}^{k+1}&
=b((h^{k+1})^{b-1}-(h^{k})^{b-1})\psi^{k+1}+b(h^{k})^{b-1}\bar{\psi}^{k+1}.
\end{split}
\end{equation}
Applying derivative $\partial_x^\varsigma$ $(|\varsigma|=1)$ to $\ef{k+3}_3$, multiplying by $2\partial_x^\varsigma\bar{l}^{k+1}$ and integrating over $\mathbb{R}^3$ lead to
\begin{equation}\label{barl}
\begin{split}
&\fr{d}{dt}|\partial_x^\varsigma \bar{l}^{k+1}|^2_2
=-\int\partial_x^\varsigma(u^k\cdot\nabla \bar{l}^{k+1}+ \bar{u}^{k}\cdot\nabla l^k) 2\partial_x^\varsigma\bar{l}^{k+1}\\
&+a_4\int\partial_x^\varsigma\bigg((l^k)^\nu\big(n^{k+1}(h^k)^2(H(u^k)-H(u^{k-1}))\\
&+\bar{h}^k(h^k+h^{k-1})n^{k+1}H(u^{k-1})+(h^{k-1})^2\bar{n}^{k+1}H(u^{k-1})\big) \bigg) 2\partial_x^\varsigma\bar{l}^{k+1}\\
&+a_4\int\partial_x^\varsigma\bigg(\big((l^k)^\nu-(l^{k-1})^\nu\big)n^{k}(h^{k-1})^2H(u^{k-1})\bigg) 2\partial_x^\varsigma\bar{l}^{k+1}
\equiv\sum^3_{i=1}N_i.
\end{split}
\end{equation}
$N_1$, $N_2$ and $N_3$ can be estimated by \ef{key1kk}, \eqref{hbar6}, \eqref{barpsi6} and  \eqref{nbar} as follows: 
\begin{equation*}\label{i1}
\begin{split}
|N_1|
\leq & C|\nabla\bar{l}^{k+1}|^2_2+C|\sqrt{h^k}\nabla\bar{u}^k|_2|\nabla\bar{l}^{k+1}|_2,\\
|N_2|
\leq& C(|h^k\nabla\bar{u}^{k}|_6+|h^k\nabla^2\bar{u}^{k}|_2)|\nabla\bar{l}^{k+1}|_2\\
&+C(|\bar{\psi}^{k}|_2+|\bar{n}^{k+1}|_6+|\bar{\psi}^{k+1}|_2)|\nabla\bar{l}^{k+1}|_2\\
\leq& C(|\sqrt{h^k}\nabla\bar{u}^{k}|_2+|h^k\nabla^2\bar{u}^{k}|_2)|\nabla\bar{l}^{k+1}|_2
+C(|\bar{\psi}^{k}|_2+|\bar{\psi}^{k+1}|_2)|\nabla\bar{l}^{k+1}|_2,\\
|N_3|
\leq& C|\nabla\bar{l}^k|_2|\nabla\bar{l}^{k+1}|_2.
\end{split}
\end{equation*}
These, along  with $\ef{nbar}$-$\ef{barl}$,  yield that 
\begin{equation}\label{nbarl}
\begin{split}
\fr{d}{dt}|\nabla\bar{l}^{k+1}|^2_2\leq & C\sigma^{-1}|\nabla\bar{l}^{k+1}|^2_2+\sigma(|\sqrt{h^k}\nabla\bar{u}^k|^2_2+|h^k\nabla^2\bar{u}^{k}|^2_2)\\
&+\sigma(|\bar{\psi}^{k}|^2_2+|\nabla\bar{l}^k|_2^2+|\bar{\psi}^{k+1}|^2_2).
\end{split}
\end{equation}

We now estimate  $\bar{u}^{k+1}$. Multiplying $\ef{k+3}_2$ by $2\bar{u}^{k+1}$ and integrating over $\mathbb{R}^3$ yield that
\begin{equation}\label{ubar}
\begin{split}
&\fr{d}{dt}|(l^{k+1})^{-\fr{\nu}{2}}\bar{u}^{k+1}|^2_2+a_2\alpha|\sqrt{h^{k+1}}\nabla\bar{u}^{k+1}|^2_2+a_2(\alpha+\beta)|\sqrt{h^{k+1}}\text{div}\bar{u}^{k+1}|^2_2\\
\leq& C\sigma^{-1}|(l^{k+1})^{-\fr{\nu}{2}}\bar{u}^{k+1}|^2_2+\sigma(|\sqrt{h^{k}}\nabla\bar{u}^{k}|^2_2
+|h^k\nabla^2\bar{u}^k|^2_2+|\bar{\psi}^{k}|^2_2)\\
&+C(\|\bar{\phi}^{k+1}\|^2_1+|\bar{\psi}^{k+1}|^2_2+|\nabla \bar{l}^{k+1}|^2_2).
\end{split}
\end{equation}
Multiplying $\ef{k+3}_2$ by $2\bar{u}_t^{k+1}$ and integrating over $\mathbb{R}^3$ yield that
\begin{equation}\label{utbar}
\begin{split}
2|(l^{k+1})^{-\fr{\nu}{2}}\bar{u}^{k+1}_t|^2_2+\fr{d}{dt}(a_2\alpha|\sqrt{h^{k+1}}\nabla\bar{u}^{k+1}|^2_2&\\
+a_2(\alpha+\beta)|\sqrt{h^{k+1}}\text{div}\bar{u}^{k+1}|^2_2)
\equiv & \sum^8_{i=1}O_i,
\end{split}
\end{equation}
where, $O_i$, $i=1,2,\cdots,8$, are defined and estimated  as follows:
\begin{equation}\label{j7}
\begin{split}
O_1=&-2\int(l^{k+1})^{-\nu}(u^k\cdot\nabla\bar{u}^k+\bar{u}^k\cdot\nabla u^{k-1}+a_1\bar{\phi}^{k+1}\nabla l^{k+1}\\
&+a_1\phi^k\nabla\bar{l}^{k+1}+\bar{l}^{k+1}\nabla\phi^{k+1}+l^k\nabla\bar{\phi}^{k+1})\cdot\bar{u}^{k+1}_t\\
\leq& C(|\sqrt{h^k}\nabla \bar{u}^k|_2+\|\bar{\phi}^{k+1}\|_1+|\nabla\bar{l}^{k+1}|_2)|(l^{k+1})^{-\fr{\nu}{2}}\bar{u}^{k+1}_t|_2,\\
O_2=&-2\fr{\delta-1}{a\delta}a_2\int\psi^{k+1}\cdot Q(\bar{u}^{k+1})\cdot \bar{u}^{k+1}_t\\
\leq &  C|\sqrt{h^{k+1}}\nabla\bar{u}^{k+1}|_2|(l^{k+1})^{-\fr{\nu}{2}}\bar{u}^{k+1}_t|_2,\\
O_3=&-2a_2\int\bar{h}^{k+1}Lu^k\cdot \bar{u}^{k+1}_t\leq C|\bar{\psi}^{k+1}|_2|(l^{k+1})^{-\fr{\nu}{2}}\bar{u}^{k+1}_t|_2,\\
O_4=& -2\int\big((l^{k+1})^{-\nu}-(l^{k})^{-\nu}\big)(u_t^{k}+u^{k-1}\cdot\nabla u^{k-1}
\\
&+a_1\phi^{k}\nabla l^{k}+l^{k}\nabla\phi^{k})\cdot\bar{u}^{k+1}_t
\leq  C|\nabla\bar{l}^{k+1}|_2|(l^{k+1})^{-\fr{\nu}{2}}\bar{u}^{k+1}_t|_2,\\
O_5=&2a_3\int(\bar{\psi}^{k+1}\cdot Q(u^k)+\psi^{k}\cdot Q(\bar{u}^k))\cdot\bar{u}^{k+1}_t\\
\leq & C(|\bar{\psi}^{k+1}|_2+|\sqrt{h^k}\nabla\bar{u}^k|_2)|(l^{k+1})^{-\fr{\nu}{2}}\bar{u}^{k+1}_t|_2,\\
O_6=&2a_2\int\big((l^{k+1})^{-\nu}-(l^{k})^{-\nu}\big)h^{k-1}\nabla(l^k)^{\nu}\cdot Q(u^{k-1})\cdot\bar{u}^{k+1}_t\\
\leq& C|\nabla\bar{l}^{k+1}|_2|(l^{k+1})^{-\fr{\nu}{2}}\bar{u}^{k+1}_t|_2,\\
O_7=&2a_2\int(l^{k+1})^{-\nu}\bigg(h^k\big(\nabla(l^{k+1})^{\nu}-\nabla(l^{k})^{\nu}\big)\cdot Q(u^k)
+h^k\nabla(l^k)^\nu\cdot Q(\bar{u}^k)\\
&+\bar{h}^k\nabla(l^k)^\nu\cdot Q(u^{k-1}) \bigg)\cdot\bar{u}^{k+1}_t\\
\leq& C(|\nabla\bar{l}^{k+1}|_2+|\bar{\psi}^k|_2+|{(h^k)}^{\fr{1}{4}}\nabla l^k|_6|{(h^k)}^{\fr{3}{4}}\nabla\bar{u}^k|_3)|(l^{k+1})^{-\fr{\nu}{2}}\bar{u}^{k+1}_t|_2\\
\leq& C(|\nabla\bar{l}^{k+1}|_2+|\bar{\psi}^k|_2+|h^k\nabla^2\bar{u}^k|_2^{\fr{1}{2}}|\sqrt{h^k}\nabla\bar{u}^k|_2^{\fr{1}{2}})|(l^{k+1})^{-\fr{\nu}{2}}\bar{u}^{k+1}_t|_2,\\
O_8=&a_2\int h^{k+1}_t(\alpha|\nabla\bar{u}^{k+1}|^2+(\alpha+\beta)|\text{div}\bar{u}^{k+1}|^2)
\leq C|\sqrt{h^{k+1}}\nabla\bar{u}^{k+1}|^2_2.
\end{split}
\end{equation}
It follows from \ef{utbar}-\ef{j7} and Young's inequality that
\begin{equation}\label{utbar1}
\begin{split}
&|(l^{k+1})^{-\fr{\nu}{2}}\bar{u}^{k+1}_t|_2^2+\fr{d}{dt}a_2\alpha|\sqrt{h^{k+1}}\nabla\bar{u}^{k+1}|^2_2\\
\leq& C(|\sqrt{h^k}\nabla\bar{u}^k|^2_2+|\bar{\psi}^k|^2_2+\epsilon_1^{-1}|\sqrt{h^k}\nabla\bar{u}^k|_2^2)+\epsilon_1|h^k\nabla^2\bar{u}^k|_2^2\\
&+C(|\sqrt{h^{k+1}}\nabla\bar{u}^{k+1}|^2_2+\|\bar{\phi}^{k+1}\|^2_1+|\nabla \bar{l}^{k+1}|^2_2+|\bar{\psi}^{k+1}|^2_2),
\end{split}
\end{equation}
where $\epsilon_1>0$ is a small enough constant.

On the other hand, it follows directly from  $\ef{k+3}_2$ that 
\begin{equation}\label{hubar}
\begin{split}
|h^k\nabla^2\bar{u}^k|^2_2\leq& C(|\bar{u}^k_t|^2_2+|\sqrt{h^k}\nabla\bar{u}^k|^2_2+|\sqrt{h^{k-1}}\nabla\bar{u}^{k-1}|^2_2+\|\bar{\phi}^{k}\|^2_1+|\bar{\psi}^{k-1}|_2^2\\
&+|\nabla\bar{l}^{k}|^2_2+|\bar{\psi}^{k}|^2_2+\epsilon_2|h^{k-1}\nabla^2\bar{u}^{k-1}|^2_2+\epsilon_2^{-1}|\sqrt{h^{k-1}}\nabla\bar{u}^{k-1}|^2_2),
\end{split}
\end{equation}
where $\epsilon_2>0$ is a small  constant to be chosen.

Hence \ef{psibar}, \ef{phibar2},  \ef{nbarl}-\ef{ubar}, \ef{utbar1}-\ef{hubar} imply that 
\begin{equation}\label{allbar}
\begin{split}
&\fr{d}{dt}(\|\bar{\phi}^{k+1}\|^2_1+|\bar{\psi}^{k+1}|^2_2+|\nabla\bar{l}^{k+1}|^2_2+|(l^{k+1})^{-\fr{\nu}{2}}\bar{u}^{k+1}|^2_2\\
&+\upsilon_1a_2\alpha|\sqrt{h^{k+1}}\nabla\bar{u}^{k+1}|^2_2)\\
&+a_2\alpha|\sqrt{h^{k+1}}\nabla\bar{u}^{k+1}|^2_2+\upsilon_1|(l^{k+1})^{-\fr{\nu}{2}}\bar{u}_t^{k+1}|^2_2+\upsilon_2|h^k\nabla^2\bar{u}^k|^2_2\\
\leq& C\sigma^{-1}(\|\bar{\phi}^{k+1}\|^2_1+|\nabla\bar{l}^{k+1}|^2_2+|\bar{\psi}^{k+1}|^2_2+|(l^{k+1})^{-\fr{\nu}{2}}\bar{u}^{k+1}|^2_2)\\
&+C\sigma^{-1}\upsilon_1a_2\alpha|\sqrt{h^{k+1}}\nabla\bar{u}^{k+1}|^2_2+C(\sigma+\upsilon_1+\upsilon_1\epsilon_1^{-1}+\upsilon_2)|\sqrt{h^k}\nabla\bar{u}^k|^2_2\\
&+C(\sigma+\upsilon_1+\upsilon_2)(\|\bar{\phi}^k\|^2_1+|\bar{\psi}^k|^2_2+|\nabla\bar{l}^k|^2_2+|\bar{\psi}^{k-1}|_2^2)\\
&+C(\upsilon_2\epsilon_2^{-1}+\upsilon_2)|\sqrt{h^{k-1}}\nabla\bar{u}^{k-1}|^2_2+(6\sigma+\upsilon_1\epsilon_1)|h^k\nabla^2\bar{u}^k|_2^2\\
&+C\upsilon_2\epsilon_2|h^{k-1}\nabla^2\bar{u}^{k-1}|_2^2+C\upsilon_2|(l^{k})^{-\fr{\nu}{2}}\bar{u}_t^{k}|^2_2,
\end{split}
\end{equation}
where $\upsilon_1>0$, $\upsilon_2>0$ are small constants  to  be determined later.

Now, define 
\begin{equation*}
\begin{split}
\varGamma^{k+1}(t,\upsilon_1)=&\sup_{0\leq s \leq t}\|\bar\phi^{k+1}\|_1^2+\sup_{0\leq s \leq t}|\bar\psi^{k+1}|_2^2+\sup_{0\leq s \leq t}|\nabla\bar{l}^{k+1}|_2^2\\
&+\sup_{0\leq s \leq t}\alpha a_2\upsilon_1|\sqrt{h^{k+1}}\nabla\bar{u}^{k+1}|_2^2
+\sup_{0\leq s \leq t}|(l^{k+1})^{-\fr{\nu}{2}}\bar{u}^{k+1}|^2_2,
\end{split}
\end{equation*}
and set $\upsilon_2=8\sigma$, $\sigma=\upsilon_1^{\fr{3}{2}}$, $\epsilon_1=\upsilon_1^{\fr{1}{2}}$, $\epsilon_2=\upsilon_2^{\fr{1}{2}}$ so that $$
\upsilon_2-6\sigma-\upsilon_1\epsilon_1=\upsilon_1^{\fr{3}{2}}>0.
$$ 
Then it follows from \ef{allbar}  and Gronwall's inequality  that 
\begin{equation}\label{Gamma}
\begin{split}
&\varGamma^{k+1}(t,\upsilon_1)+\int^t_0\Big(a_2\alpha|\sqrt{h^{k+1}}\nabla\bar{u}^{k+1}|^2_2+\upsilon_1|(l^{k+1})^{-\fr{\nu}{2}}\bar{u}^{k+1}_t|^2_2
\\
&+\upsilon_1^{\fr{3}{2}}|h^k\nabla^2\bar{u}^k|^2_2\Big)\text{d}s\\
\leq  &C\Big(\int^t_0\Big(a_2\alpha(\sigma+\upsilon_1+\upsilon_1\epsilon_1^{-1}+\upsilon_2)|\sqrt{h^{k}}\nabla\bar{u}^{k}|^2_2\\
&+(\upsilon_2\epsilon_2^{-1}+\upsilon_2)|\sqrt{h^{k-1}}\nabla\bar{u}^{k-1}|^2_2\\
&+\upsilon_2\epsilon_2|h^{k-1}\nabla^2\bar{u}^{k-1}|^2_2+\upsilon_2|(l^{k})^{-\fr{\nu}{2}}\bar{u}^{k}_t|^2_2\Big)\text{d}s\\
&+(\sigma+\upsilon_1+\upsilon_2)t(\varGamma^{k}(t,\upsilon_1)+\varGamma^{k-1}(t,\upsilon_1)\Big)\exp{(C\sigma^{-1}t)}\\
\leq&C\Big(\int^t_0\big(a_2\alpha\upsilon_1^{\fr{1}{2}}|\sqrt{h^{k}}\nabla\bar{u}^{k}|^2_2
+\upsilon_2^{\fr{1}{2}}|\sqrt{h^{k-1}}\nabla\bar{u}^{k-1}|^2_2\\
&+\upsilon_2^{\fr{3}{2}}|h^{k-1}\nabla^2\bar{u}^{k-1}|^2_2+\upsilon_2|(l^{k})^{-\fr{\nu}{2}}\bar{u}^{k}_t|^2_2\big)\text{d}s
\\
&+\upsilon_1t(\varGamma^{k}(t,\upsilon_1)+\varGamma^{k-1}(t,\upsilon_1))\Big)\exp{(C\sigma^{-1}t)}.
\end{split}
\end{equation}
Now one first chooses  $\upsilon_1=\bar{\upsilon}\in (0,1)$ such that
$C\bar{\upsilon}^{\fr{1}{2}}\leq \fr{1}{64}$, and then chooses  $T_*\in (0,\bar{T}]$ such that
\begin{equation*}
\begin{split}
&(T_*+1)\exp (C\bar{\upsilon}^{-\fr{3}{2}}T_*)\leq 2,\\
&C\upsilon_1^{\fr{1}{2}}\exp(C\upsilon_1^{-\fr{3}{2}}T_*)=C\bar{\upsilon}^{\fr{1}{2}}\exp(C\bar{\upsilon}^{-\fr{3}{2}}T_*)\leq \fr{1}{32},\\
&C\upsilon_2^{\fr{1}{2}}\exp(C\upsilon_1^{-\fr{3}{2}}T_*)=C(8\upsilon_1^{\fr{3}{2}})^{\fr{1}{2}}\exp(C\upsilon_1^{-\fr{3}{2}}T_*)\leq \fr{1}{16},\\
&C\upsilon_2^{\fr{3}{2}}\exp(C\upsilon_1^{-\fr{3}{2}}T_*)\leq \fr{\sqrt{2}}{4}\bar{\upsilon}^{\fr{3}{2}},\\
&C\upsilon_2\exp(C\upsilon_1^{-\fr{3}{2}}T_*)\leq \fr{\bar{\upsilon}}{2},\quad C\upsilon_1 T_*\exp(C\upsilon_1^{-\fr{3}{2}}T_*)\leq\fr{1}{32}.
\end{split}
\end{equation*}
We can get finally that
\begin{equation*}\label{Gamma1}
\begin{split}
&\sum_{k=1}^{\infty}\Big(\varGamma^{k+1}(T_*,\bar{\upsilon})+\int^{T_*}_0(a_2\alpha|\sqrt{h^{k+1}}\nabla\bar{u}^{k+1}|^2_2+\bar{\upsilon}|\bar{u}^{k+1}_t|^2_2
+\bar{\upsilon}^{\fr{3}{2}}|h^k\nabla^2\bar{u}^k|^2_2)\text{d}s\Big)<\infty,
\end{split}
\end{equation*}
which, along with   the local estimates \ef{key1kk} independent of $k$, yields in particular that 
\begin{equation}\label{barpsi}
\begin{split}
\lim_{k\rightarrow \infty}(\|\bar\phi^{k+1}\|_{s'}+\|\bar u^{k+1}\|_{s'}+\|\bar l^{k+1}\|_{L^\infty\cap D^1\cap D^{s'}})=&0,\\
\lim_{k\rightarrow \infty}(\|\bar\psi^{k+1}\|_{L^\infty\cap L^q}+|\bar h^{k+1}|_\infty)=&0.
\end{split}
\end{equation}
 for any $s'\in [1,3)$.
Then   there exist a subsequence (still denoted by $(\phi^k,u^k, l^k, \psi^k)$) and   limit functions  $(\phi^\eta,u^\eta,l^\eta,\psi^\eta)$ such that
\begin{equation}\label{key0}
\begin{split}
&(\phi^k-\eta,u^k)\rightarrow (\phi^\eta-\eta, u^\eta) \ \ \text{in}\ \ L^\infty([0,T_*];H^{s'}(\mathbb{R}^3)),\\
 & l^k-\bar{l}\rightarrow l^\eta-\bar{l}\ \ \text{in}\ \ L^\infty([0,T_*];L^\infty\cap D^1\cap D^{s'}(\mathbb{R}^3)),\\
 &\psi^k\rightarrow\psi^\eta \ \ \text{in}\ \ L^\infty([0,T_*];L^\infty\cap L^q(\mathbb{R}^3)),\\
&h^k\rightarrow h^\eta \ \ \text{in}\ \ L^\infty([0,T_*];L^\infty(\mathbb{R}^3)).
\end{split}
\end{equation}
 Again by virtue of   the local estimates  \ef{key1kk} independent of $k$,  there exists a subsequence (still denoted by $(\phi^k,u^k, l^k, \psi^k)$) converging  to the  limit $(\phi^\eta,u^\eta,l^\eta,\psi^\eta)$ in the weak or weak* sense.
According to the lower semi-continuity of norms,  the corresponding estimates in \ef{key1kk}  for  $(\phi^\eta,u^\eta, l^\eta,\psi^\eta)$ still hold except those weighted estimates on $u^\eta$.

Next, it remains  to show 
\begin{equation}\label{relationrecover}
 \psi^\eta=\frac{a\delta}{\delta-1}\nabla (\phi^\eta)^{2\iota}.
\end{equation}
Set
$$
  \psi^*=\psi^\eta-\frac{a\delta}{\delta-1}\nabla (\phi^\eta)^{2\iota}.
$$
Then it follows from $(\ref{nl})_1$ and  $(\ref{nl})_4$ that
\begin{equation}\label{liyhn-recover}
\begin{cases}
\displaystyle
\psi^*_t+\sum_{k=1}^3 A_k(u^\eta) \partial_k\psi^*+B^*(u^\eta)\psi^*=0,\\[12pt]
\displaystyle
 \psi^*|_{t=0}=0 \quad \text{in}\quad \mathbb{R}^3,\\[12pt]
\psi^*\rightarrow 0 \quad \text{as} \ \ |x|\rightarrow \infty \quad \rm for\quad t\geq 0,
 \end{cases}
\end{equation}
which, together with the standard energy method for symmetric  hyperbolic systems, implies that
$$
\psi^*=0 \quad \text{for}\quad (t,x)\in [0,T_*]\times \mathbb{R}^3.
$$
Thus (\ref{relationrecover}) has been verified.\\

Note also that the following weak convergence holds true:
\begin{equation*}
 h^k\nabla^2u^k\rightharpoonup h^\eta\nabla^2u^\eta \ \  \text{weakly*} \ \ \text{in} \ \ L^\infty([0,T_*];L^2).
\end{equation*}
Indeed, \eqref{key0} gives
\begin{equation*}
\begin{split}
&\int^{T_*}_0\int_{\mathbb{R}^3}(h^k\nabla^2u^k-h^\eta\nabla^2u^\eta)X\text{d}x\text{d}t\\
=&\int^{T_*}_0\int_{\mathbb{R}^3}\big((h^k-h^\eta)\nabla^2u^k+h^\eta(\nabla^2u^k-\nabla^2u^\eta)\big)X\text{d}x\text{d}t\\
\leq &C(\sup_{0\leq t\leq T_*}|h^k-h^\eta|_\infty+\|\nabla^2u^k-\nabla^2u^\eta\|_{L^\infty([0,T_*];L^2)})T_*\rightarrow 0\ \ \text{as}\ \ k\rightarrow\infty
\end{split}
\end{equation*}
for any test function $X(t,x)\in C^\infty_c([0,T_*)\times \mathbb{R}^3)$, which implies that
\begin{equation*}
 h^k\nabla^2u^k\rightharpoonup h^\eta\nabla^2u^\eta \ \  \text{weakly*} \ \ \text{in} \ \ L^\infty([0,T_*];L^2).
\end{equation*}
Similarly, one can also obtain that
\begin{equation}\label{wes}
\begin{split}
&\sqrt{h^k}(\nabla u^k,\nabla u^k_t)\rightharpoonup \sqrt{h^\eta}(\nabla u^\eta,\nabla u^\eta_t) \ \  \text{weakly*} \ \ \text{in} \ \ L^\infty([0,T_*];L^2),\\
&h^k\nabla^2u^k\rightharpoonup h^\eta\nabla^2u^\eta\ \  \text{weakly*} \ \ \text{in} \ \  L^\infty([0,T_*];D^1_*),\\
&h^k\nabla^2u^k\rightharpoonup h^\eta\nabla^2u^\eta\ \  \text{weakly} \ \ \text{in} \ \ L^2([0,T_*];D^1_*\cap D^2),\\
&(h^k\nabla^2u^k)_t\rightharpoonup (h^\eta\nabla^2u^\eta)_t\ \  \text{weakly} \ \ \text{in} \ \ L^2([0,T_*];L^2).\\
\end{split}
\end{equation}
Hence the corresponding weighted estimates for the velocity in  \ef{key1kk} still hold for the limit. Thus,  $(\phi^\eta,u^\eta, l^\eta,\psi^\eta)$ is a weak solution in the sense of distributions to the  Cauchy problem \eqref{nl}.

\textbf{Step 2:} Uniqueness.  Let $(\phi_1,  u_1, l_1,\psi_1)$ and $(\phi_2, u_2, l_2,\psi_2)$ be two strong solutions to the   Cauchy problem (\ref{nl}) satisfying the  estimates in \ef{key1kk}.

Set
\begin{equation*}
\begin{split}
h_i=&\phi^{2\iota}_i,\quad n_i=(ah_i)^b,\quad  i=1,2; \quad \bar{h}=h_1-h_2,\\
\bar{\phi}=& \phi_1-\phi_2,\  \    \bar{u}=u_1-u_2, \ \  \bar{l}=l_1-l_2,\ \  \bar{\psi}=\psi_1-\psi_2.
\end{split}
\end{equation*}
Then it follows from the equations in  (\ref{nl}) that
 \begin{equation}
\label{eq:1.2wcvb}
\begin{cases}
  \displaystyle
\quad \bar{\phi}_t+u_1\cdot \nabla\bar{\phi}+\bar{u} \cdot\nabla\phi_2+(\gamma-1)(\bar{\phi}\text{div}u_1 +\phi_2\text{div}\bar{u})=0,\\[8pt]
 \displaystyle
\quad  \bar{u}_t+ u_1\cdot\nabla \bar{u}+l_1\nabla \bar{\phi}+a_1\phi_1\nabla \bar{l}+ a_2l^\nu_1 h_1L\bar{u} \\[8pt]
\displaystyle
=- \bar{u} \cdot \nabla u_2-a_1\bar{\phi} \nabla l_2-\bar{l} \nabla \phi_2- a_2(l^\nu_1 h_1-l^\nu_2 h_2)Lu_2\\[8pt]
\displaystyle
\quad +a_2( h_1 \nabla l^\nu_1 \cdot Q(u_1)-h_2\nabla l^\nu_2 \cdot Q(u_2))\\[8pt]
\displaystyle
\quad +a_3(l^\nu_1 \psi_1 \cdot Q(u_1)-l^\nu_2 \psi_2 \cdot Q(u_2)),\\[8pt]
\quad \bar{l}_t+u_1\cdot\nabla \bar{l}+\bar{u}\cdot \nabla l_2=a_4(l^\nu_1 n_1\phi^{4\iota}_1 H(u_1)-l^\nu_2 n_2\phi^{4\iota}_2 H(u_2)),\\[8pt]
\quad \bar{h}_t+u_1\cdot \nabla\bar{h}+\bar{u}\cdot\nabla h_2+(\delta-1)(\bar{h} \text{div}u_2 +h_1\text{div}\bar{u})=0,\\[2pt]
 \displaystyle
\quad \bar{\psi}_t+\sum_{k=1}^3 A_k(u_1)
\partial_k\bar{\psi}+B(u_{1})\bar{\psi}+a\delta(\bar{h} \nabla\text{div}u_2 +h_1\nabla \text{div}\bar{u})\\[2pt]
\displaystyle
=-\sum_{l=k}^3A_k(\bar{u})
\partial_k\psi_{2}-B(\bar{u}) \psi_{2},\\[2pt]
\displaystyle
\quad
(\bar{\phi},\bar{u},\bar{l},\bar{h},\bar{\psi})|_{t=0}=(0,0,0,0,0) \quad \text{in}\quad \mathbb{R}^3,\\[4pt]
\displaystyle
\quad
(\bar{\phi},\bar{u},\bar{l},\bar{h},\bar{\psi})\longrightarrow (0,0,0,0,0) \quad \text{as} \ \ |x|\rightarrow \infty \quad \rm for\quad t\geq 0.
\end{cases}
\end{equation}

Set
\begin{equation*}
\begin{split}
\Phi(t)=&\|\bar\phi\|_1^2+|\bar\psi|_2^2+|\nabla\bar{l}|_2^2+a_2\alpha|\sqrt{h_1}\nabla\bar{u}|_2^2
+|l_1^{-\fr{\nu}{2}}\bar{u}|^2_2.
\end{split}
\end{equation*}
In a similar way for  the derivation of (\ref{Gamma}), one  can show that
\begin{equation}\label{gonm}\begin{split}
\frac{d}{dt}\Phi(t)+C\big( |\nabla \bar{u}|^2_2+ |l_1^{-\fr{\nu}{2}}\bar{u}_t|^2_2\big)\leq H(t)\Phi (t),
\end{split}
\end{equation}
with a continuous  function $H(t)$ satisfying 
$$ \int_{0}^{t}H(s)\ \text{\rm d}s\leq C \quad \text{for} \quad 0\leq t\leq T_*.$$ It follows from  Gronwall's inequality that
$\bar{\phi}=\bar{l}=0$ and $\bar{\psi}=\bar{u}=0$.
Thus the uniqueness is obtained.

\textbf{Step 3.} The time-continuity  follows easily from  the same  procedure as in Lemma \ref{ls}. 

Thus the proof of Theorem \ref{nltm} is completed.

\end{proof}

\begin{remark}\label{cancelproof}
It remains to prove Lemma {\rm\ref{cancel}}.
\begin{proof}

Define
$X_R(x)=X(x/R)$,
where $X(x)\in C^\infty_c(\mathbb{R}^3)$ is a  truncation function  satisfying
 \begin{equation}\label{xr}
0\leq X(x) \leq 1, \quad \text{and} \quad X(x)=
 \begin{cases}
1 \;\qquad  \text{if} \ \ |x|\leq 1,\\[8pt]
0   \ \ \ \ \ \ \    \text{if} \ \   |x|\geq 2.
 \end{cases}
 \end{equation}

Set $\bar{h}^{k+1,R}=\bar{h}^{k+1} X_R$. Then $\ef{hbar}$ yields
 \begin{equation}
\label{localversion}
\begin{split}
\displaystyle
& \bar{h}^{k+1,R}_t+u^k\cdot \nabla\bar{h}^{k+1,R} +(\delta-1)(\bar{h}^{k,R} \text{div}u^{k-1} +h^{k}\text{div}\bar{u}^kX_R)\\[6pt]
=&u^k\bar{h}^{k+1}\cdot \nabla X_R-\frac{\delta-1}{a\delta}\bar{u}^k\cdot\psi^{k}X_R.
\end{split}
\end{equation}
 Multiplying \ef{localversion} by $2\bar{h}^{k+1,R}$ and integrating over $\mathbb{R}^3$, one can get
 \begin{equation}
\label{localversion2}
\begin{split}
\displaystyle
\frac{d}{dt}|\bar{h}^{k+1,R}|_2\leq & C|\nabla u^k|_\infty|\bar{h}^{k+1,R}|_2 +C\big(|\bar{h}^{k}|_\infty |\text{div}u^{k-1}|_2 +|h^{k}|_\infty|\text{div}\bar{u}^k|_2)\\[6pt]
&+C\big(|u^k|_2|\bar{h}^{k+1}|_\infty+|\bar{u}^k|_2|\psi^{k}|_\infty\big)\\
\leq & \hat{C}|\bar{h}^{k+1,R}|_2+\hat{C},
\end{split}
\end{equation}
with $\hat{C}>0$ being a generic constant depending on $\eta$, but independent of $R$. Then  Gronwall's inequality yields that
$$
|\bar{h}^{k+1,R}(t)|_2\leq  \hat{C}\exp{(\hat{C}\bar{T})} \quad \text{for} \quad (t,R)\in [0,\bar{T}]\times [0,\infty).$$
Hence, $\bar{h}^{k+1}\in L^\infty([0,\bar{T}]; L^2(\mathbb{R}^3))$, which, along with
$\bar{h}^{k+1}=h^{k+1}-h^k$ and $$\frac{a\delta}{\delta-1}\nabla h^k=\psi^k\in L^\infty([0,\bar{T}];L^q\cap D^{1,3}\cap D^2),$$ implies  that $\bar{h}^{k+1}\in L^\infty([0,\bar{T}]; H^3(\mathbb{R}^3))$.

Similarly, one can show that $\bar{\phi}^{k+1},\ \bar{\psi}^{k+1} \in L^\infty([0,\bar{T}];H^2(\mathbb{R}^3))$.

\end{proof}

\end{remark}

\subsection{Limit from the non-vaccum flows to the flow with far field vacuum}
Based on the uniform estimates in \ef{key1kk}, we are ready to prove Theorem \ref{3.1}.

\begin{proof} \textbf{Step 1:} The locally uniform positivity of $\phi$. For any $\eta\in (0,1)$, set 
\begin{equation*}
\phi_0^\eta=\phi_0+\eta,\ \ \psi^\eta_0=\fr{a\delta}{\delta-1}\nabla (\phi_0+\eta)^{2\iota},\ \ h_0^\eta=(\phi_0+\eta)^{2\iota}.
\end{equation*}
Then the corresponding initial compatibility conditions can be written as 
\begin{equation}\label{cc2}
\begin{split}
&\nabla u_0=(\phi_0+\eta)^{-\iota}g^\eta_1,\ \  Lu_0=(\phi_0+\eta)^{-2\iota}g^\eta_2,\\
&\nabla((\phi_0+\eta)^{2\iota} Lu_0)=(\phi_0+\eta)^{-\iota}g^\eta_3,\ \ \nabla^2l_0=(\phi_0+\eta)^{-\iota}g^\eta_4,
\end{split}
\end{equation}
where $g_i^\eta (i=1,2,3,4)$ are given as
\begin{equation*}
\begin{cases}
\displaystyle
g_1^\eta=\fr{\phi_0^{-\iota}}{(\phi_0+\eta)^{-\iota}}g_1,\ \ g_2^\eta=\fr{\phi_0^{-2\iota}}{(\phi_0+\eta)^{-2\iota}}g_2,\\[4pt]
\displaystyle
g_3^\eta=\fr{\phi_0^{-3\iota}}{(\phi_0+\eta)^{-3\iota}}(g_3-\fr{\eta\nabla\phi_0^{2\iota}}{\phi_0+\eta}\phi_0^\iota Lu_0),\\[4pt]
\displaystyle
g_4^\eta=\fr{\phi_0^{-\iota}}{(\phi_0+\eta)^{-\iota}}g_4.
\end{cases}
\end{equation*}
It follows from  \ef{a}-\ef{2.8*} that there exists a $\eta_1>0$ such that if $0<\eta<\eta_1$, then 
\begin{equation}\label{inda}
\begin{split}
&1+\eta+\bar{l}+\|\phi_0^\eta-\eta\|_{D^1_*\cap D^3}+\|u_0\|_{3}+\|(h^\eta_0)^{-1}\|_{L^\infty\cap D^{1,q}\cap D^{2,3}\cap D^3}\\
&+\|\psi ^\eta_0\|_{L^q\cap D^{1,3}\cap D^2}+|\nabla(h^\eta_0)^{\fr{1}{2}}|_6+|g_1^\eta|_2+|g_2^\eta|_2+|g_3^\eta|_2+|g_4^\eta|_2\\
&+\|l_0-\bar{l}\|_{D^1_*\cap D^3}+|l_0^{-1}|_\infty\leq \bar{c}_0,
\end{split}
\end{equation}
where $\bar{c}_0$ is a positive constant independent of $\eta$. Therefore, it follows from Theorem \ref{nltm} that for initial data $(\phi^\eta_0,u^\eta_0,l^\eta_0,\psi^\eta_0)$, the problem \ef{nl} admits a unique strong solution  $(\phi^\eta,u^\eta,l^\eta, \psi^\eta)$ in $[0,T_*]\times \mathbb{R}^3$ satisfying the local estimate in \ef{key1kk} with $c_0$ replaced by $\bar{c}_0$, and the life span $T_*$ is also independent of $\eta$.

Moreover,  $\phi^\eta$ is uniform positive locally as shown below.
\begin{lemma}\label{phieta}
For any $R_0>0$ and $\eta\in (0,1]$, there esists a constant $a_{R_0}$ independent of  $\eta$ such that
\begin{equation}\label{phieta1}
\phi^\eta(t,x)\geq a_{R_0}>0,\ \ \ \forall (t,x)\in [0,T_*]\times B_{R_0}.
\end{equation}
\end{lemma}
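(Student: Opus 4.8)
\textbf{Proof plan for Lemma \ref{phieta}.}
The plan is to track the value of $\phi^\eta$ along particle paths and to exploit the transport equation $(\ref{nl})_1$ together with the uniform-in-$\eta$ bounds in \ef{key1kk}. First I would introduce the flow map $X^\eta(t;x)$ generated by the velocity $u^\eta$, i.e. the solution of $\frac{d}{ds}X^\eta(s;x)=u^\eta(s,X^\eta(s;x))$ with $X^\eta(0;x)=x$; this is well defined on $[0,T_*]$ because $u^\eta\in C([0,T_*];H^3)$ is Lipschitz in $x$ with a Lipschitz constant controlled by $\|u^\eta\|_3\le M(\bar c_0)$, uniformly in $\eta$. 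Along such a path, $(\ref{nl})_1$ becomes the ODE
\begin{equation*}
\frac{d}{ds}\phi^\eta(s,X^\eta(s;x))=-(\gamma-1)\phi^\eta(s,X^\eta(s;x))\,\mathrm{div}\,u^\eta(s,X^\eta(s;x)),
\end{equation*}
which integrates to
\begin{equation*}
\phi^\eta(t,X^\eta(t;x))=\phi^\eta_0(x)\exp\Big(-(\gamma-1)\int_0^t \mathrm{div}\,u^\eta(s,X^\eta(s;x))\,\mathrm{d}s\Big).
\end{equation*}
Since $|\mathrm{div}\,u^\eta|_\infty\le C\|u^\eta\|_3\le M(\bar c_0)$ uniformly in $\eta$ and $s\in[0,T_*]$, the exponential factor is bounded below by a positive constant $\kappa_0=\exp(-(\gamma-1)M(\bar c_0)T_*)$ depending only on $\bar c_0$ and the fixed parameters, hence $\phi^\eta(t,X^\eta(t;x))\ge \kappa_0\,\phi^\eta_0(x)=\kappa_0(\phi_0(x)+\eta)\ge \kappa_0\,\phi_0(x)>0$.

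Next I would transfer this lower bound from Lagrangian back to Eulerian coordinates. Fix $R_0>0$ and a point $y\in B_{R_0}$; write $y=X^\eta(t;x)$ for $x=(X^\eta(t;\cdot))^{-1}(y)$, which exists because $X^\eta(t;\cdot)$ is a homeomorphism of $\mathbb{R}^3$. The point is to show that the pre-image $x$ stays in a fixed ball $B_{R_1}$ with $R_1$ independent of $\eta$: indeed $|x-y|=|X^\eta(0;x)-X^\eta(t;x)|\le\int_0^t|u^\eta(s,X^\eta(s;x))|\,\mathrm{d}s\le T_* \sup_{[0,T_*]}|u^\eta|_\infty\le T_* M(\bar c_0)=:\rho_0$, so $x\in B_{R_1}$ with $R_1=R_0+\rho_0$. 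Now since $\phi_0>0$ is continuous (it lies in $D^1_*\cap D^3\hookrightarrow C(\mathbb{R}^3)$, via the embedding $D^1_*\cap D^2\hookrightarrow C$), it attains a positive minimum $m_{R_1}:=\min_{\overline{B_{R_1}}}\phi_0>0$ on the compact set $\overline{B_{R_1}}$, and this minimum depends only on $R_1$, hence only on $R_0$ and the fixed data, not on $\eta$. Combining, $\phi^\eta(t,y)\ge\kappa_0\,\phi_0(x)\ge\kappa_0\,m_{R_1}=:a_{R_0}>0$ for all $(t,y)\in[0,T_*]\times B_{R_0}$, which is the claimed estimate.

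The main obstacle is purely bookkeeping rather than analytical: one must make sure every constant invoked — the Lipschitz bound and sup-norm bound of $u^\eta$, the $L^\infty$ bound of $\mathrm{div}\,u^\eta$, and the radius dilation $\rho_0$ — is genuinely uniform in $\eta$, which is exactly what the $(\epsilon,\eta)$-independent a priori estimates \ef{key1kk} (together with the Sobolev embedding $H^3(\mathbb{R}^3)\hookrightarrow W^{1,\infty}(\mathbb{R}^3)$) guarantee, with $\bar c_0$ in place of $c_0$. A minor subtlety worth a line is the invertibility and properness of the flow map $X^\eta(t;\cdot)$: since $\nabla u^\eta$ is bounded, $\det(\nabla_x X^\eta)(t;x)=\exp(\int_0^t\mathrm{div}\,u^\eta\,\mathrm{d}s)$ stays in a fixed compact subset of $(0,\infty)$, so $X^\eta(t;\cdot)$ is a bi-Lipschitz diffeomorphism of $\mathbb{R}^3$ with constants independent of $\eta$, which legitimizes taking the pre-image $x$ of $y$ and the ball containment argument above. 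No estimate beyond those already established in \ef{key1kk} is needed.
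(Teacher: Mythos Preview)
Your proposal is correct and follows essentially the same approach as the paper: both track $\phi^\eta$ along the characteristics of the transport equation $(\ref{nl})_1$, use the uniform-in-$\eta$ bounds from \ef{key1kk} to control the exponential factor and the displacement of the flow map, and then invoke the local positivity of $\phi_0$ on compact sets. The only cosmetic differences are that the paper runs the flow map forward (showing $B_{R'/2}\subset B(t,R')$) while you run it backward (pulling $B_{R_0}$ back into $B_{R_1}$), and the paper cites $\nabla\phi_0^{2\iota}\in L^\infty$ rather than the Sobolev embedding $D^1_*\cap D^2\hookrightarrow C$ to justify the local lower bound of $\phi_0$.
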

\begin{proof} It suffices to consider the case that $R_0$ is sufficiently large. 

It follows from \ef{a} and Gagliardo-Nirenberg inequality that
$\nabla \phi^{2\iota}_0\in L^\infty$. This implies that  the initial  vacuum  does not  occur in the interior point but  in the far field, and for every $R'>2$, there exists a constant $C_{R'}$ such that
\begin{equation}\label{hao1}
\phi^\eta_0(x)  \geq C_{R'}+\eta>0, \quad \forall \ x\in  B_{R'},
\end{equation}
where $C_{R'}$ is independent of $\eta$.

Now, let $x(t;x_0)$ be the particle path starting from $x_0$ at $t=0$, i.e.,
\begin{equation}\label{gobn}
\begin{cases}
\displaystyle
\frac{d}{dt}x(t;x_0)=u^\eta(t,x(t;x_0)),\\
\displaystyle
x(0;x_0)=x_0,
\end{cases}
\end{equation}
and $B(t,R')$ be the image of $B_{R'}$  under the flow map (\ref{gobn}). 

It follows from $\ef{nl}_1$ that
\begin{equation}\label{zhengleme}
\phi^\eta(t,x)=\phi^\eta_0(x_0)\exp\Big(-\int_{0}^{t}(\gamma-1)\textrm{div} u^\eta(s;x(s;x_0))\text{d}s\Big).
\end{equation}
It follows from (\ref{key1kk})  that  for $ 0\leq t \leq T_*$,
\begin{equation}\label{zhengle}
\begin{split}
&\int_0^t|\textrm{div} u^\eta(t,x(t;x_0)|\text{d}s\leq 
\int_0^t \|\nabla u^\eta\|_2\text{d}s
\leq c_3T^{\frac{1}{2}}_*.
\end{split}
\end{equation}
This, together with $(\ref{hao1})$ and $(\ref{zhengleme})$, 
yields that for $ 0\leq t \leq T_*$,
\begin{equation}\label{hao2}
 \phi^\eta(t,x)\geq C^*(C_{R'}+\eta)>0, \quad \forall \ x\in  B(t,R'),
\end{equation}
where $C^*=\exp\big(-(\gamma-1)c_3T^{\frac{1}{2}}_*\big)$.

On the other hand, it follows from (\ref{gobn}) and (\ref{timedefinition})-(\ref{key1kk}) that
\begin{equation*}
\begin{split}
&|x_0-x|=|x_0-x(t;x_0)|
\leq  \int_0^t| u^\eta(s,x(s;x_0))|\text{d}s\leq c_3t\leq 1\leq  R'/2,
\end{split}
\end{equation*}
for all $(t,x)\in [0,T_*]\times B_{R'}$,  which  implies $
B_{R'/2} \subset B(t,R')
$.
Thus, one can  choose
$$R'=2R_0,\quad \text{and} \quad a_{R_0}=C^*C_{R'}.$$
This Lemma \ref{phieta} is proved.
\end{proof}
\textbf{Step 2:} Taking limit $\eta\rightarrow 0^+$. Due to the $\eta$-independent estimate \ef{key1kk},  there exists a subsequence $(\phi^\eta,u^\eta,l^\eta, \psi^\eta)$ converging to a limit $(\phi,u,l, \psi)$ in weak or weak$^*$ sense:
\begin{equation}\label{key}
\begin{split}
&\phi^\eta-\eta\rightharpoonup \phi \ \ \text{weakly}^* \ \ \text{in} \ \ L^\infty([0,T_*];D^1_*\cap D^3),\\
&u^\eta\rightharpoonup u \ \ \text{weakly} \ \ \text{in} \ \ L^2([0,T_*];H^4),\\
&\psi^\eta\rightharpoonup \psi  \ \ \text{weakly}^* \ \ \text{in} \ \ L^\infty([0,T_*];L^q\cap D^{1,3}\cap D^2),\\
&\phi_t^\eta\rightharpoonup \phi_t \ \ \text{weakly}^* \ \ \text{in} \ \ L^\infty([0,T_*];H^2),\\
&(u_t^\eta,\psi_t^\eta)\rightharpoonup (u_t,\psi_t) \ \ \text{weakly}^* \ \ \text{in} \ \ L^\infty([0,T_*];H^1),\\
&l^\eta-\bar{l}\rightharpoonup l-\bar{l} \ \ \text{weakly}^* \ \ \text{in} \ \ L^\infty([0,T_*];D^1_*\cap D^3),\\
&l_t^\eta\rightharpoonup l_t \ \ \text{weakly}^* \ \ \text{in} \ \ L^\infty([0,T_*];L^\infty\cap D^1_*\cap D^2).\\
\end{split}
\end{equation}
Then by the lower semi-continuity of weak convergences, $(\phi,u,l,\psi)$ satisfies the corresponding estimates as in \ef{key1kk} except weighted ones on $u$.

On  the other hand, for any $R>0$, the Aubin-Lions Lemma and Lemma \ref{phieta} imply that there exists a subsequence (still denote by ($\phi^\eta,u^\eta,l^\eta,\psi^\eta$)) satisfying 
\begin{equation}\label{svs}
\begin{split}
\phi^\eta-\eta\rightarrow \phi \ \ \text{in} \ \ C([0,T_*];D^1_*(B_R)),\ \ \psi^\eta\rightarrow\psi\ \ \text{in} \ \ C([0,T_*];D^{1,3}(B_R)),&\\
u^\eta\rightarrow u \ \ \text{in} \ \ C([0,T_*]; H^2(B_R)), \ \ l^\eta-\bar{l}\rightarrow l-\bar{l} \ \ \text{in}\ \  C([0,T_*];D^1_*(B_R)),&\\
h^\eta\rightarrow h\ \ \text{in} \ \ C([0,T_*];H^2(B_R)).
\end{split}
\end{equation}

Also,  one can verify that: 
\begin{equation}\label{rela}
h=\phi^{2\iota}, \ \ \ \psi=\fr{a\delta}{\delta-1}\nabla h=\fr{a\delta}{\delta-1}\nabla\phi^{2\iota},
\end{equation}
 by the same argument used in the proof of  \ef{relationrecover}.

Furthermore, one has
\begin{equation*}
\begin{split}
&\int^{T_*}_0\int_{\mathbb{R}^3}(h^\eta\nabla^2u^\eta-h\nabla^2u)X\text{d}x\text{d}t\\
&=\int^{T_*}_0\int_{\mathbb{R}^3}\big((h^\eta-h)\nabla^2u^\eta+h(\nabla^2u^\eta-\nabla^2u))\big)X\text{d}x\text{d}t
\end{split}
\end{equation*}
for any test function $X(t,x)\in C^\infty_c([0,T_*]\times \mathbb{R}^3)$. Due to \ef{svs} and Lemma \ref{phieta}, it holds that 
\begin{equation}\label{svu1}
h^\eta\nabla^2u^\eta\rightharpoonup h\nabla^2u \ \ \text{weakly}^*\ \ \text{in}\ \ L^\infty([0,T_*];L^2).
\end{equation}
Similarly, one can also get that 
\begin{equation}\label{svu2}
\begin{split}
\sqrt{h^
\eta}(\nabla u^\eta,\nabla u^\eta_t)\rightharpoonup \sqrt{h}(\nabla u,\nabla u_t)\ \ \text{weakly}^*\ \ \text{in}\ \ L^\infty([0,T_*];L^2),&\\
h^\eta\nabla^2 u^\eta\rightharpoonup h\nabla^2u\ \ \text{weakly}^*\ \ \text{in}\ \ L^\infty([0,T_*];D^1_*),&\\
h^\eta\nabla^2u^\eta\rightharpoonup  h\nabla^2u \ \ \text{weakly}\ \ \text{in}\ \ L^2([0,T_*];D^1_*\cap D^2),&\\
(h^\eta\nabla^2 u^\eta)_t\rightharpoonup (h\nabla^2u)_t\ \ \text{weakly}\ \ \text{in}\ \ L^2([0,T_*];L^2).&
\end{split}
\end{equation}
Hence, the corresponding weighted estimates for $u$ in \ef{key1kk} still hold for the limit functions. Furthermore, $(\phi,u,l,\psi)$ is a weak solution to the Cauchy problem \ef{2.3}-\ef{2.5} in the sense of distributions.

\textbf{Step 3.}  The uniqueness follows easily from  the same  procedure as that for     Theorem \ref{nltm}.

\textbf{Step 4:} Time continuity. The time continuity of $(\phi,\psi,l)$ can be obtained by a  similar argument as for Lemma \ref{ls}.

For the velocity $u$,  the a priori estimates obtained above and Sobolev embedding theorem imply that
\begin{equation}\label{zheng1}
\begin{split}
 u\in C([0,T_*]; H^2)\cap  C([0,T_*]; \text{weak-}H^3) \quad \text{and} \quad   \phi^{\iota}\nabla u\in  C([0,T_*]; L^2).
 \end{split}
\end{equation}
It then follows from $(\ref{2.3})_2$ that
$$
\phi^{-2\iota} u_t \in L^2([0,T_*];H^2),\quad (\phi^{-2\iota} u_t)_t \in L^2([0,T_*];L^2),
$$
which implies that
$
\phi^{-2\iota} u_t \in C([0,T_*];H^1)
$.
This and the classical elliptic estimates for
\begin{equation*}
\begin{split}
a_2Lu&=-l^{-\nu}\phi^{-2\iota}(u_t+u\cdot\nabla u +a_1\phi \nabla l+l\nabla\phi-a_2\phi^{2\iota}\nabla l^\nu \cdot Q(u)-a_3l^\nu \psi  \cdot Q(u))
\end{split}
\end{equation*}
show that   $
 u\in C([0,T_*]; H^{3})$ immediately.

Finally, note that 
$$
h\nabla^2 u \in L^\infty([0,T_*]; H^1)\cap L^2([0,T_*] ; D^2) \quad \text{and} \quad   (h\nabla^2 u)_t \in  L^2([0,T_*] ; L^2).
$$
Thus the classical Sobolev embedding theorem implies that
$$
h\nabla^2 u\in C([0,T_*]; H^1).
$$
Then the  time continuity of $u_t$ follows easily. We conclude that \ef{b} holds.

In summary,  $(\phi,u,l,\psi)$  is the unique strong  solution in $[0,T_*]\times \mathbb{R}^3$ to the Cauchy problem \ef{2.3}-\ef{2.5}.

Thus the proof of  Theorem \ref{3.1} is complete.
\end{proof}

\subsection{The proof for Theorem \ref{th21}.} Now we are ready to establish the local-in-time well-posedness of regular solutions  stated in Theorem \ref{th21}  to the Cauchy problem   \eqref{8} with \eqref{2} and \eqref{QH}-\eqref{7}.

\begin{proof} \textbf{Step 1.} 
It follows from the initial assumptions (\ref{2.7})-(\ref{2.8}) and Theorem \ref{3.1} that there exists  a time $T_{*}> 0$ such that the problem \ef{2.3}-\ef{2.5} has a unique strong  solution $(\phi,u,l,\psi)$ satisfying the regularity (\ref{b}), which implies that
\begin{equation*}
\phi\in C^1([0,T_*]\times\mathbb{R}^3), \ \ (u,\nabla u)\in C([0,T_*]\times\mathbb{R}^3), \ \ l\in C^1([0,T_*]\times\mathbb{R}^3).
\end{equation*}

Set $\rho=(\fr{\gamma-1}{A\gamma}\phi)^{\fr{1}{\gamma-1}}$ with $\rho(0,x)=\rho_0$. According to  the relations between $(\varphi, \psi)$  and $\phi$, one can obtain 
\begin{equation*}
\varphi=a\rho^{1-\delta}, \ \ \psi=\fr{\delta}{\delta-1}\nabla\rho^{\delta-1}.
\end{equation*}

Then multiplying $\ef{2.3}_1$ by $\fr{\partial\rho}{\partial\phi}$, 
$\ef{2.3}_2$ by $\rho$, and $\ef{2.3}_3$ by $Ac_v\rho^\gamma$ respectively
shows that the equations in \ef{8} are satisfied.

Hence, we have shown  that the triple  $(\rho,u,S)$ satisfied the Cauchy problem \ef{8} with \ef{2} and \eqref{QH}-\eqref{7}  in the sense of distributions and the regularities in Definition 1.1. Moreover, it follows  from the continuity equation that $\rho(t,x)>0$ for $(t,x)\in [0,T_*]\times \mathbb{R}^3$. In summary, the Cauchy problem \ef{8} with \ef{2} and \eqref{QH}-\eqref{7} has a unique regular solution $(\rho,u,S)$.

\textbf{Step 2.}  Now we will show that the regular solution obtained in the above step in fact is  also a classical one  within its life span.

First, according to the regularities of $(\rho,u,S)$ and the fact 
$$\rho(t,x)>0\quad \text{for}\quad (t,x)\in [0,T_*]\times \mathbb{R}^3,$$
one can obtain that 
\begin{equation*}
(\rho,\nabla\rho,\rho_t,u,\nabla u, S, S_t, \nabla S)\in C([0,T_*]\times\mathbb{R}^3).
\end{equation*}

Second, by the classical  Sobolev embedding theorem:
\begin{equation*}
L^2([0,T_*];H^1)\cap W^{1,2}([0,T_*];H^{-1})\hookrightarrow C([0,T_*];L^2),
\end{equation*}
and the regularity \ef{2.9}, one gets that
\begin{equation*}
tu_t\in C([0,T_*];H^2), \ \ \text{and}\ \ u_t\in C([\tau,T_*]\times \mathbb{R}^3).
\end{equation*}

Finally, it remains  to show that $\nabla^2u\in  C([\tau,T_*]\times \mathbb{R}^3)$. Note that the following elliptic system holds
\begin{equation*}
\begin{split}
a_2Lu&=-l^{-\nu}\phi^{-2\iota}(u_t+u\cdot\nabla u +a_1\phi \nabla l+l\nabla\phi-a_2\phi^{2\iota}\nabla l^\nu \cdot Q(u)-a_3l^\nu\psi \cdot  Q(u))\\
&\equiv l^{-\nu}\phi^{-2\iota}\mathbb{M}.
\end{split}
\end{equation*}

It follows from the definition of regular solutions and  \ef{2.9} directly  that
\begin{equation*}
t l^{-\nu}\phi^{-2\iota}\mathbb{M}\in L^\infty([0,T_*];H^2),
\end{equation*}
and 
\begin{equation*}\begin{split}
(t l^{-\nu}\phi^{-2\iota}\mathbb{M})_t
=& l^{-\nu}\phi^{-2\iota}\mathbb{M}+t(l^{-\nu})_t\phi^{-2\iota}\mathbb{M}+t l^{-\nu}(\phi^{-2\iota})_t\mathbb{M}\\
&+t l^{-\nu}\phi^{-2\iota}\mathbb{M}_t\in L^2([0,T_*];L^2),
\end{split}
\end{equation*}
which, along with  the classical Sobolev embedding theorem:
\begin{equation*}
L^\infty([0,T_*];H^2)\cap W^{1,2}([0,T_*];H^{-1})\hookrightarrow C([0,T_*];L^r),
\end{equation*}
for any $ r \in [2,6)$, yields  that 
\begin{equation*}
t l^{-\nu}\phi^{-2\iota}\mathbb{M}\in C([0,T_*];W^{1,4}), \ \ t\nabla^2u \in  C([0,T_*];W^{1,4}).
\end{equation*}
These and the standard elliptic regularity yield immediately that $\nabla^2 u\in C((0,T_*]\times \mathbb{R}^3)$. 

By the way, according to the relation \eqref{keyobsevarion1}, one also has that  $(\rho,u,S)$ is also  a classical solution to  the Cauchy problem   \eqref{1}-\eqref{3} with $\kappa=0$ and \eqref{6}-\eqref{7}) in $(0,T_*]\times \mathbb{R}^3$.

\textbf{Step 3.} We  show finally  that
if one assumes   $m(0)<\infty$ additionally, then $(\rho,u,S)$ preserves the conservation of   total mass, momentum and total energy within its life span.
First, we  show that $(\rho,u,S)$ has finite total mass $m(t)$,  momentum $\mathbb{P}(t)$ and  total energy $E(t)$.
\begin{lemma}
\label{lemmak-1} 
Under the additional assumption,   $0<m(0)<\infty$, it holds that
$$  m(t)+| \mathbb{P}(t)|+E(t)<\infty \quad \text{for} \quad t\in [0,T_*]. $$
\end{lemma}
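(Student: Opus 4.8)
\textbf{Proof plan for Lemma \ref{lemmak-1}.}
The plan is to show that each of the three quantities $m(t)$, $\mathbb{P}(t)$, $E(t)$ is finite for every $t\in[0,T_*]$ by combining the decay of $\rho$ at infinity with the established regularity of $(\rho,u,S)$ from Definition \ref{def21} and \eqref{2.9}. The key observation is that the finiteness hypothesis is imposed only at $t=0$ through $m(0)<\infty$, so one must propagate it. I would first treat $m(t)$: from $\rho^{\gamma-1}\in C([0,T_*];D^1_*\cap D^3)$ one has $\rho^{\gamma-1}\in C([0,T_*];L^6)$ by the Sobolev embedding $D^1_*\hookrightarrow L^6$, hence $\rho\in L^{6(\gamma-1)}$ uniformly in time; this controls $\rho$ in the far field but not automatically near the origin — however $\rho$ is continuous and positive on $[0,T_*]\times\mathbb{R}^3$, so the only issue is integrability at infinity. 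The clean route is to use the transport structure: $m(t)=\int\rho(t,\cdot)$, and from the continuity equation $\partial_t\rho+\dv(\rho u)=0$ together with $u\in C([0,T_*];H^3)$ (so $u$ decays at infinity) one obtains, after multiplying by a cutoff $X_R$ as in Remark \ref{cancelproof} and letting $R\to\infty$, that $\frac{d}{dt}\int\rho X_R\to 0$ once $\int\rho$ is known to be finite for some time. To start the argument one needs $m(0)=\int\rho_0<\infty$, which is exactly the added hypothesis, and then a Gronwall-type bound on $\int\rho X_R$ uniform in $R$ using $|u|_\infty\lesssim\|u\|_3$ gives $m(t)\le m(0)\exp(C\int_0^t\|\nabla u\|_2\,ds)<\infty$.

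Next I would bound $\mathbb{P}(t)=\int\rho u$. By H\"older, $|\mathbb{P}(t)|\le |\rho|_{6/5}|u|_6\lesssim |\rho|_{6/5}\|u\|_1$, so it suffices to show $\rho\in L^{6/5}$ uniformly in $t$. This follows by interpolating $\rho\in L^1$ (just established) with $\rho\in L^\infty$ (from $|\rho|_\infty\lesssim|\phi|_\infty^{1/(\gamma-1)}$ and Lemma \ref{phiphi}, or directly from the regularity of $\rho^{\gamma-1}$ and positivity/continuity): $|\rho|_{6/5}\le|\rho|_1^{5/6}|\rho|_\infty^{1/6}<\infty$. For the total energy $E(t)=E_k(t)+E_p(t)$, the kinetic part is $E_k(t)=\frac12\int\rho|u|^2\le\frac12|\rho|_{3/2}|u|_6^2\lesssim|\rho|_{3/2}\|u\|_1^2$, again finite by interpolating $L^1$ and $L^\infty$ bounds on $\rho$. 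The potential part is $E_p(t)=\frac{1}{\gamma-1}\int P=\frac{A}{\gamma-1}\int e^{S/c_v}\rho^\gamma$; here $e^{S/c_v}$ is bounded above (from $l-\bar l\in C([0,T_*];D^1_*\cap D^3)\hookrightarrow L^\infty$ plus the constant $\bar l$, equivalently $S\in L^\infty$ from Definition \ref{def21}), so $E_p(t)\lesssim\int\rho^\gamma\le|\rho|_\gamma^\gamma\le|\rho|_1^{\gamma-1}|\rho|_\infty$ (using $\gamma>1$ and interpolation), which is finite.

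The main obstacle I anticipate is the propagation of finiteness of $m(t)$ itself, i.e., justifying rigorously that $\int\rho(t,\cdot)<\infty$ for $t>0$ given only $\int\rho_0<\infty$, since a priori the solution's regularity is formulated in terms of $\rho^{\gamma-1}$ and its derivatives rather than $\rho\in L^1$. The resolution is the cutoff-and-Gronwall argument sketched above: set $m_R(t)=\int\rho X_R$, compute $\frac{d}{dt}m_R(t)=\int\rho u\cdot\nabla X_R$, bound $|\nabla X_R|\le CR^{-1}$ on the annulus $R\le|x|\le 2R$, estimate $|\int\rho u\cdot\nabla X_R|\le CR^{-1}|u|_\infty\int_{R\le|x|\le2R}\rho\le C|u|_\infty m_R(t)\cdot\text{(bounded)}$ — more carefully one gets $\frac{d}{dt}m_R(t)\le C\|\nabla u\|_2\,m_{2R}(t)$, and after integrating and letting $R\to\infty$ with $m(0)<\infty$ one concludes $m(t)\le m(0)e^{C\int_0^t\|\nabla u\|_2ds}<\infty$ on $[0,T_*]$; the remaining bounds then follow by the interpolation inequalities listed above, with all $L^\infty$-in-$x$ bounds on $\rho$, $S$, $e^{S/c_v}$ coming directly from Definition \ref{def21} and \eqref{2.9}. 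Once finiteness is in hand, the actual conservation laws $m(t)=m(0)$, $\mathbb{P}(t)=\mathbb{P}(0)$, $E(t)=E(0)$ follow by integrating the equations in \eqref{8} (equivalently \eqref{1}) against the cutoff $X_R$ and passing to the limit, using that all the flux terms are integrable and vanish at infinity by the decay of $u$ and the integrability of $\rho$.
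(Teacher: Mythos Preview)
Your treatment of $\mathbb{P}(t)$ and $E(t)$, once $m(t)<\infty$ is known, is correct and essentially the same as the paper's (the paper uses the even simpler pairings $|\rho|_1|u|_\infty$ and $|\rho|_1|\rho|_\infty^{\gamma-1}|e^{S/c_v}|_\infty$; your interpolation estimates work equally well, modulo the harmless slip $|\rho|_\gamma^\gamma\le|\rho|_1|\rho|_\infty^{\gamma-1}$ rather than $|\rho|_1^{\gamma-1}|\rho|_\infty$).

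The gap is in the propagation of $m(t)<\infty$. With the \emph{compactly supported} cutoff $X_R$ from Remark~\ref{cancelproof} you have $\nabla X_R$ supported on the annulus $\{R\le|x|\le 2R\}$, where $X_R$ itself vanishes on the outer half. Hence
\[
\frac{d}{dt}\int\rho X_R=\int\rho u\cdot\nabla X_R
\]
is \emph{not} bounded by a multiple of $\int\rho X_R$: the mass on the annulus is not captured by $m_R(t)$. Your attempted fix, $\frac{d}{dt}m_R\le C\|\nabla u\|_2\,m_{2R}$, neither arises naturally (the bound is $|u|_\infty/R$, not $\|\nabla u\|_2$) nor closes into a Gronwall inequality, since $m_{2R}\ge m_R$ and you do not know $\sup_R m_R(t)<\infty$ a priori. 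Iterating produces a telescoping sum over dyadic scales that you cannot control without already knowing $\rho\in L^1$.

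The paper resolves this with a different cutoff: a non-increasing $C^2$ function $f$ with $f(s)=1$ for $s\le\tfrac12$ and $f(s)=e^{-s}$ for $s\ge1$, so that the crucial pointwise inequality $|f'(s)|\le Cf(s)$ holds. Setting $f_R(x)=f(|x|/R)$ gives $|\nabla f_R|\le \tfrac{C}{R}f_R$ \emph{everywhere}, and therefore
\[
\frac{d}{dt}\int\rho f_R\le \frac{C|u|_\infty}{R}\int\rho f_R,
\]
which closes directly via Gronwall to give $\int\rho f_R\le C\int\rho_0 f_R$ uniformly in $R$. One then passes to the limit $R\to\infty$ by Fatou's lemma (Lemma~\ref{Fatou}) to conclude $m(t)<\infty$. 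This self-referential cutoff is the missing ingredient in your plan.
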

\begin{proof}
Let $f: \mathbb{R}^{+} \to \mathbb{R}$ be a non-increasing $C^2$ function satisfying
\begin{align*}
f(s)=\begin{cases}
1 & s \in [0,\frac{1}{2}],\\[2pt]
\text{non-negative polynomial} & s \in [\frac{1}{2}, 1),\\[2pt]
e^{-s} & s \ge 1.
\end{cases}
\end{align*}
 It is obvious that  there exists a generic constant $C >0$ such that 
\begin{align*}
|f'(s)| \le C f(s).
\end{align*}
For any $R>1$, define $
f_R(x)=f (\tfrac{|x|}{R})$. Then it holds that for any $p\ge 0$,
\begin{align}\label{601}
|x|^p f_R(x) \leq C \quad \text{and} \quad \lim_{|x|\to \infty} |x|^p f_R(x) =0.
\end{align}

According to the regularity of solutions obtained and the definition of $f$, one can make sure that, 
\begin{align*}
&\int \big(\rho + |\dv (\rho u)|  \big) f_R \le C(R),\\
& \int \Big(\big| \rho u \cdot x f' (\tfrac{|x|}{R}) \tfrac{1 }{R|x|}\big|+ \rho |u| f(\tfrac{|x|}{R}) \tfrac{1 }{R}\Big) \le C(R),
\end{align*}
for any fixed $R>1$, where $C(R)$ is a positive constant depending on $R$.

Since the continuity equation $\eqref{1}_1$ holds  everywhere, one can multiply $\eqref{1}_1$  by $ f_R(x)$ and integrate with respect to $x$ to get
\begin{align}\label{xccv}
\frac{d}{dt} \int \rho  f_R(x) =-\int \dv (\rho u)  f_R(x) .
\end{align}
Then it follows from integration by parts that
\begin{align*}
&-\int \dv (\rho u)  f_R(x) 
=  \int \rho u \cdot x f' (\tfrac{|x|}{R}) \tfrac{1 }{R|x|} 
\le  C  \frac{|u|_{\infty} }{R} \int \rho   f_R (x),
\end{align*}
which,  along with (\ref{xccv}) and Gronwall's inequality,  shows that 
\begin{align*}
\text{ess}\sup_{0\leq t \leq T} \int \rho  f_R(x)  \le C \int \rho_0  f_R(x),
\end{align*}
with $C$ a generic  constant independent of $R$. 
Note that 
$$
\rho  f_R(x)  \to \rho  \quad \text{as}\quad  R\to \infty$$
for all $x\in \mathbb{R}^3$, thus by  Fatou's lemma (i.e., Lemma \ref{Fatou})
\begin{equation}\label{keji}
\text{ess}\sup_{0\leq t \leq T} \int \rho   \le \text{ess}\sup_{0\leq t \leq T}  \liminf_{R \to \infty}\int \rho  f_R(x)  <\infty.
\end{equation}

Second, based on \eqref{keji},  one has
\begin{equation}\label{finite}\begin{split}
\displaystyle
|\mathbb{P}(t)|=&\Big|\int  \rho u \Big|\leq C|\rho|_1|u|_\infty<\infty,\\
\displaystyle
E(t)=&\int  \Big(\frac{1}{2}\rho|u|^2+\frac{P}{\gamma-1}\Big) \\
\leq & C(|\rho|_1|u|^2_\infty+|\rho|^{\gamma-1}_\infty|\rho|_1|e^{\frac{S}{c_v}}|_\infty)<\infty.
\end{split}
\end{equation}
The proof of this lemma is complete.

\end{proof}

We are now ready to  prove the conservation of  total mass, momentum and total energy.
\begin{lemma}
\label{lemmak}Under the additional assumption,    $0<m(0)<\infty$,  it holds that 
$$  m(t)=m(0),\quad  \mathbb{P}(t)=\mathbb{P}(0),\quad  E(t)=E(0) \quad \text{for} \quad t\in [0,T_*]. $$

\end{lemma}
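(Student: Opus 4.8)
The strategy is to first establish the conservation of total mass, then of momentum, and finally of total energy, exploiting the fact (guaranteed by the regularity in Definition \ref{def21} and Theorem \ref{th21}, together with Lemma \ref{lemmak-1}) that all three quantities are finite on $[0,T_*]$, and that $(\rho,u,S)$ is a classical solution on $(0,T_*]\times\mathbb{R}^3$ satisfying the equations in \eqref{1}--\eqref{3} pointwise. The main technical device throughout is the smooth cut-off $f_R$ introduced in the proof of Lemma \ref{lemmak-1}: multiply the relevant conservation-law form of each equation by $f_R(x)$, integrate over $\mathbb{R}^3$, integrate by parts to move derivatives onto $f_R$, use the decay estimate $|x|^pf_R(x)\le C$ and $\lim_{|x|\to\infty}|x|^pf_R(x)=0$ together with the fact that $|u|_\infty$, the Sobolev norms of $u$, and the bounds on $\rho^{\gamma-1}$, $e^{S/c_v}$ are all controlled uniformly in time on $[0,T_*]$, and then pass to the limit $R\to\infty$ by dominated convergence to remove the cut-off. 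First I would treat mass: from $\rho_t+\dv(\rho u)=0$ one gets $\frac{d}{dt}\int\rho f_R=\int\rho u\cdot\nabla f_R$, the right side is bounded by $C R^{-1}|u|_\infty\int\rho f_R$ (using $|\nabla f_R|\le CR^{-1}f_R$), so Gronwall plus $R\to\infty$ already gives $\frac{d}{dt}\int\rho=0$; this is essentially contained in \eqref{keji}, and one upgrades the $\mathrm{ess\,sup}$ estimate to a genuine identity $m(t)=m(0)$ using the time-continuity of $\int\rho f_R$ (from $\rho,\rho_t\in C([0,T_*];L^2_{loc})$) and the uniform-in-$R$ integrable domination provided by $\rho\in L^\infty([0,T_*];L^1)$.

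For momentum, I would multiply $(\rho u)_t+\dv(\rho u\otimes u)+\nabla P=\dv\mathbb{T}$ by $f_R$ and integrate. The time-derivative term yields $\frac{d}{dt}\int\rho u f_R$; the convective and pressure and viscous terms, after integration by parts, become $\int(\rho u\otimes u):\nabla f_R+\int P\,\nabla f_R-\int\mathbb{T}\cdot\nabla f_R$. Each of these is $O(R^{-1})$ times a bounded quantity: $\int|\rho||u|^2|\nabla f_R|\le CR^{-1}|u|^2_\infty m(t)$, $\int P|\nabla f_R|\le CR^{-1}|P|_1$ (and $P=R\rho\theta=A e^{S/c_v}\rho^\gamma$ is integrable since $\rho\in L^1$, $\rho^{\gamma-1}$ and $e^{S/c_v}$ are bounded), and $\int|\mathbb{T}||\nabla f_R|\le CR^{-1}|\mathbb{T}|_1$ with $\mathbb{T}=2\mu D(u)+\lambda\dv u\,\mathbb{I}_3$, $\mu,\lambda=O(\theta^\nu)=O(\rho^\delta e^{S\nu/c_v})$; here one uses $\rho^{\delta/2}\nabla u\in L^2$ (which follows from $\rho^{(\delta-1)/2}\nabla u\in C([0,T_*];L^2)$ and $\rho$ bounded) so that $\mathbb{T}\in L^1$. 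Letting $R\to\infty$ gives $\frac{d}{dt}\mathbb{P}(t)=0$, and again time-continuity of the finite-$R$ integral plus uniform domination promotes this to $\mathbb{P}(t)=\mathbb{P}(0)$.

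For total energy, the natural starting point is $(\rho\mathcal{E})_t+\dv(\rho\mathcal{E}u+Pu)=\dv(u\mathbb{T})$ (recall $\kappa=0$), which holds in the classical sense on $(0,T_*]$. Multiplying by $f_R$ and integrating by parts gives $\frac{d}{dt}\int\rho\mathcal{E}f_R=\int(\rho\mathcal{E}+P)u\cdot\nabla f_R-\int(u\mathbb{T})\cdot\nabla f_R$, and every term on the right is again $O(R^{-1})$ times something bounded uniformly on $[0,T_*]$: $\int\rho\mathcal{E}|u||\nabla f_R|$ and $\int P|u||\nabla f_R|$ are controlled by $E(t)$, $m(t)$ and $|u|_\infty$, while $\int|u||\mathbb{T}||\nabla f_R|\le CR^{-1}|u|_\infty|\mathbb{T}|_1$. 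Sending $R\to\infty$ yields $\frac{d}{dt}E(t)=0$ on $(0,T_*]$, and continuity of $E$ up to $t=0$ (which follows from the time-continuity of $\rho$, $u$, $\rho^{\gamma-1}$, $e^{S/c_v}$ in the relevant local and weighted norms guaranteed by \eqref{b}) gives $E(t)=E(0)$ on $[0,T_*]$. I expect the main obstacle to be the justification of integrating $(\rho\mathcal{E}u+Pu-u\mathbb{T})$ against $\nabla f_R$ near the far-field vacuum — one must check that the degenerate viscous flux $u\mathbb{T}$ is genuinely $L^1$, which requires combining the weighted regularity $\rho^{\delta-1}\nabla^2u\in L^\infty([0,T_*];H^1)$ and $\rho^{(\delta-1)/2}\nabla u\in L^2$ with the boundedness of $e^{S/c_v}$ and $\rho$, together with the decay of the density encoded in $\nabla\rho^{\delta-1}\in L^\infty$; a secondary subtlety is upgrading the $R\to\infty$ limit from an a.e.-in-$t$ differential identity to an identity holding for \emph{all} $t\in[0,T_*]$, which is handled by the time-continuity already recorded in Definition \ref{def21} and \eqref{2.9}.
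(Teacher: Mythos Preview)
Your proposal is correct. The paper's own proof is much more concise: rather than using the cut-off $f_R$ and sending $R\to\infty$, it simply asserts that each flux appearing in divergence form belongs to $W^{1,1}(\mathbb{R}^3)$---specifically, $\rho u^{(i)}u^{(j)}$, $\rho^\gamma e^{S/c_v}$, and $\rho^\delta e^{S\nu/c_v}\nabla u$ for momentum, and $\tfrac12\rho|u|^2u$, $\rho^\gamma e^{S/c_v}u$, and $\rho^\delta e^{S\nu/c_v}u\nabla u$ for energy---so that the divergence of each integrates to zero directly. Your cut-off argument is the standard way to justify that $\int\dv F=0$ for $F\in W^{1,1}$, so the two approaches are essentially the same in spirit; yours is simply more explicit. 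One small remark: your parenthetical justification that $\mathbb{T}\in L^1$ follows from $\rho^{\delta/2}\nabla u\in L^2$ needs one more ingredient, namely $\rho^{\delta/2}\in L^2$ (equivalently $\rho^\delta\in L^1$), which holds since $\rho\in L^1\cap L^\infty$ and $\delta>0$; with that pairing, Cauchy--Schwarz gives $\rho^\delta\nabla u\in L^1$ and hence $\mathbb{T}\in L^1$. The paper's $W^{1,1}$ claim demands one more derivative in $L^1$ than your route, but the available weighted regularities ($\rho^{\delta-1}\nabla^2u\in H^1$, $\psi\in L^q\cap D^{1,3}\cap D^2$, etc.) supply it; your approach has the mild advantage of requiring only the fluxes themselves to be integrable.
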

\begin{proof}
First, $(\ref{1})_2$ and the regularity of the solution imply  that
\begin{equation}\label{deng1}
\mathbb{P}_t=-\int \text{div}(\rho u \otimes u)-\int \nabla P+\int \text{div}\mathbb{T}=0,
\end{equation}
where one has used the fact that
$$
\rho u^{(i)}u^{(j)},\quad \rho^\gamma e^{\frac{S}{c_v}} \quad \text{and} \quad \rho^\delta e^{\frac{S}{c_v}\nu} \nabla u \in W^{1,1}(\mathbb{R}^3)\quad \text{for} \quad i,\ j=1,\ 2,\ 3.
$$

Second,  the energy equation $(\ref{1})_3$ implies  that
\begin{equation}\label{dengn1}
E_t=-\int \text{div}(\rho \mathcal{E} u+ Pu-u\mathbb{T})=0,
\end{equation}
where the following facts have been used:
$$
\frac{1}{2}\rho |u|^2u,\quad \rho^\gamma e^{\frac{S}{c_v}} u \quad \text{and} \quad \rho^\delta e^{\frac{S}{c_v}\nu} u \nabla u \in W^{1,1}(\mathbb{R}^3).
$$

Similarly, one  can show  the conservation of the total mass.
\end{proof}

Hence the proof of Theorem \ref{th21} is complete.
\end{proof}

\section{Remarks on the asymptotic behavior of  $u$}

\subsection{Non-existence of global solutions with $L^\infty$ decay on $u$ }
\subsubsection{Proof of Theorem \ref{th25}}
Now  we  prove Theorem \ref{th25}.  Let $T>0$ be any constant, and    $(\rho,u,S)\in D(T)$. It  follows from the definitions of  $m(t)$, $\mathbb{P}(t)$ and $E_k(t)$ that
$$
 |\mathbb{P}(t)|\leq \int \rho(t,x)|u(t,x)|\leq  \sqrt{2m(t)E_k(t)},
$$
which, together with the definition of the solution class $D(T)$, implies that
$$
0<\frac{|\mathbb{P}(0)|^2}{2m(0)}\leq E_k(t)\leq \frac{1}{2} m(0)|u(t)|^2_\infty \quad \text{for} \quad t\in [0,T].
$$
Then one obtains that there exists a positive constant $C_u=\frac{|\mathbb{P}(0)|}{m(0)}$ such that
$$
|u(t)|_\infty\geq C_u  \quad \text{for} \quad t\in [0,T].
$$
Thus one obtains     the desired conclusion as shown in Theorem \ref{th25}.

\subsubsection{Proof of Corollary \ref{co23}}

Let  $ (\rho,u,S)(t,x)$ defined  in $[0,T]\times \mathbb{R}^3$ be the regular solution obtained  in Theorem  \ref{th21}. It follows from Theorem  \ref{th21} that $(\rho,u,S)\in D(T)$, which, along with  Theorem \ref{th25}, yields that   Corollary \ref{co23} holds.

\subsection{Non-conservation of momentum  for  constant viscosities}
However, for flows of constant viscosities and thermal conductivity \cites{CK,  wenzhu}, in Corollary \ref{th:2.20-HLX}, we  will show that the classical solution exists globally and keeps the conservation of total mass,  but can not keep the conservation of momentum for large time for a class of initial data with far field vacuum.  This is essentially due to   that   $\mathbb{T}$  does not belong to $ W^{1,1}(\mathbb{R}^3)$.

Set:
$$
F\triangleq (2\mu+\lambda)\text{div} u-P,\quad \omega\triangleq\nabla \times u,
$$
to be  the effective viscous flux and the vorticity respectively. Then  $F$ and $\omega$ satisfy the following elliptic equations:
  \begin{equation}\label{fomega} 
  \triangle F=\text{div}(\rho \dot{u}) \quad \text{and}\quad  \mu\triangle \omega=\nabla \times (\rho \dot{u}).
  \end{equation}

 The proof of Corollary \ref{th:2.20-HLX} is divided into three steps: 

 \textbf{Step 1:} Local-in-time well-posedness.  It follows easily from the initial assumption \eqref{initialhxl}-\eqref{changxiangrong} and    the arguments  used in Appendix B of  \cite{wenzhu} that there exists a time $T_0>0$ and a unique classical solution $(\rho,u,\theta)$ in $(0,T_0]\times \mathbb{R}^3$  to the Cauchy  problem  \eqref{1}-\eqref{3} with \eqref{c6}-\eqref{c7}  satisfying \eqref{lagrangian3q}-\eqref{lagrangian5q}, and 
$$ \rho(t,x)\geq 0,\quad  \theta(t,x)\geq 0,  \quad (t,x)   \in [0,\infty)\times \mathbb{R}^3.$$

  \textbf{Step 2:} Global-in-time well-posedness.  It follows from the local-in-time well-posedenss obtained in Step 1,  the smallness assumption (\ref{smallness}) and the conclusions obtained   in  Theorems 2.13 and  2.17, Remark 2.14, Lemma 3.1 and Proposition 4.1 of  \cite{wenzhu} that   the Cauchy problem  \eqref{1}-\eqref{3} with \eqref{c6}-\eqref{c7} 
  has a unique  global classical solution $(\rho,u,\theta)$ in $(0,\infty)\times \mathbb{R}^3$ satisfying 
\begin{equation}\label{cvv2} m(t)=m(0)\quad \text{and} \quad 0\leq E_k(t)\leq \frac{1}{2}m(0)|u(t,\cdot)|^2_\infty<\infty \quad \text{for} \quad t\in [0,T],
\end{equation}
 for arbitrarily large $T>0$, and \eqref{lagrangian1q}-\eqref{largetimehlx} for any $0<\tau <T<\infty$.

 \textbf{Step 3:} Large time behavior on $u$.
First, by the Gagliardo-Nirenberg  inequality  in Lemma \ref{lem2as}  and the standard regularity theory for elliptic systems, one has
\begin{equation}\label{keyobservation}
\begin{split}
|u|_\infty\leq & C|u|^{\frac{1}{2}}_6|\nabla u|^{\frac{1}{2}}_6\leq C|\nabla u|^{\frac{1}{2}}_2 (|F|^{\frac{1}{2}}_6+|\omega|^{\frac{1}{2}}_6+|P|^{\frac{1}{2}}_6)\\
\leq &C|\nabla u|^{\frac{1}{2}}_2(|\rho \dot{u}|_2+|P|_6)^{\frac{1}{2}}\leq C|\nabla u|^{\frac{1}{2}}_2(|\sqrt{\rho} \dot{u}|_2+|\nabla \theta|_2)^{\frac{1}{2}},
\end{split}
\end{equation}
which, together with (\ref{lagrangian1q})-(\ref{lagrangian2q}) and (\ref{largetimehlx}), implies that
\begin{equation}\label{laile}
\limsup_{t\rightarrow \infty} |u(t,x)|_{\infty}=0.
\end{equation}

Finally, it follows from \eqref{cvv2},  (\ref{laile}), and  the proof of   Theorem \ref{th25} that  if  $m(0)>0$ and $|\mathbb{P}(0)|>0$,  the law of conservation of momentum of the global solution obtained in Step 2  can not  be preserved
 for all the time $t\in (0,\infty)$.

Thus the proof of Corollary \ref{th:2.20-HLX} is complete.

\bigskip

\section{Appendix}

For convenience of readers, we  list some basic facts which have been  used frequently in this paper.


\bigskip




The first one is the  well-known Gagliardo-Nirenberg inequality.

\begin{lemma}\cite{oar}\label{lem2as}\
	Let function $u\in L^{q_1}\cap D^{1,r}(\mathbb{R}^d)$ for   $1 \leq q_1,  r \leq \infty$.  Suppose also that  real numbers $\xi$ and $q_2$,  and  natural numbers $m$, $i$  and $j$ satisfy
	$$\frac{1}{{q_2}} = \frac{j}{d} + \left( \frac{1}{r} - \frac{i}{d} \right) \xi + \frac{1 - \xi}{q_1} \quad \text{and} \quad 
	\frac{j}{i} \leq \xi \leq 1.
	$$
	Then $u\in D^{j,{q_2}}(\mathbb{R}^d)$, and  there exists a constant $C$ depending only on $i$, $d$, $j$, $q_1$, $r$ and $\xi$ such that
	\begin{equation}\label{33}
	\begin{split}
	\| \nabla^{j} u \|_{L^{{q_2}}} \leq C \| \nabla^{i} u \|_{L^{r}}^{\xi} \| u \|_{L^{q_1}}^{1 - \xi}.
	\end{split}
	\end{equation}
	Moreover, if $j = 0$, $ir < d$ and $q_1 = \infty$, then it is necessary to make the additional assumption that either u tends to zero at infinity or that u lies in $L^s(\mathbb{R}^d)$ for some finite $s > 0$;
	if $1 < r < \infty$ and $i -j -d/r$ is a non-negative integer, then it is necessary to assume also that $\xi \neq 1$.
\end{lemma}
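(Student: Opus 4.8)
The plan is to reduce the general statement to two classical building blocks and then combine them by Hölder's inequality together with a scaling argument. The first building block is the basic Gagliardo–Nirenberg–Sobolev estimate and its $L^{q_1}$-interpolated version: for $1\le q_1\le\infty$, $1\le r\le\infty$, $0\le\xi\le1$ with the dimensional relation $\frac{1}{q_2}=\left(\frac{1}{r}-\frac{1}{d}\right)\xi+\frac{1-\xi}{q_1}$, one has $\|u\|_{L^{q_2}}\le C\|\nabla u\|_{L^r}^{\xi}\|u\|_{L^{q_1}}^{1-\xi}$; this is the case $i=1$, $j=0$. The second is the interior interpolation of intermediate derivatives, $\|\nabla^{j}u\|_{L^p}\le C\|\nabla^{i}u\|_{L^p}^{j/i}\|u\|_{L^p}^{1-j/i}$ for $0\le j\le i$ and $1\le p\le\infty$, which itself follows from integration by parts and Hölder.

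First I would prove the case $i=1$, $j=0$ by Gagliardo's slicing argument: writing $|u(x)|\le\int_{\mathbb{R}}|\partial_k u(x)|\,dx_k$ for each coordinate direction and taking the geometric mean over $k=1,\dots,d$, an iterated application of the generalized Hölder inequality across the variables yields $\|u\|_{L^{d/(d-1)}}\le C\prod_{k}\|\partial_k u\|_{L^1}^{1/d}\le C\|\nabla u\|_{L^1}$. Applying this to a power $|u|^{s}$ with $s>1$, using Hölder to split $\|\,|u|^{s-1}\nabla u\,\|_{L^1}$ into $\|\nabla u\|_{L^r}$ times a power of an $L^{q_1}$ norm, and optimizing $s$ then gives the one-derivative estimate for the admissible range of $(q_1,r,q_2,\xi)$. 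For $j\ge1$ I would apply the one-derivative estimate to the function $\nabla^{j}u$, interpolating $\|\nabla^{j}u\|_{L^{q_2}}$ between $\|\nabla^{j+1}u\|_{L^r}$ and $\|\nabla^{j}u\|_{L^{q_1}}$, and then use the intermediate-derivative interpolation repeatedly to convert every occurrence of $\|\nabla^{j}u\|$ and $\|\nabla^{j+1}u\|$ into products of powers of $\|\nabla^{i}u\|_{L^r}$ and $\|u\|_{L^{q_1}}$; collecting the exponents and checking that they reproduce the stated identity for $1/q_2$ completes the argument. Testing the resulting inequality on the rescaled functions $u_\lambda(x):=u(\lambda x)$ (and on translates $u(x+h)$) confirms that the exponent identity and the constraint $j/i\le\xi\le1$ are exactly the necessary and sufficient conditions, and makes transparent why $C$ depends only on $i,d,j,q_1,r,\xi$.

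The delicate points I would treat with care rather than gloss over are the two borderline configurations flagged in the statement. When $j=0$, $ir<d$ and $q_1=\infty$, the slicing/Hölder scheme only controls a homogeneous seminorm, so additive constants must be excluded separately; this is where the hypothesis that $u$ vanishes at infinity or lies in some $L^s$ with $s<\infty$ enters, handled by a truncation/approximation argument (replacing $u$ by $u\chi_R$ or by $(|u|-\varepsilon)_+$ and passing to the limit). When $1<r<\infty$ and $i-j-d/r$ is a nonnegative integer, the endpoint $\xi=1$ corresponds to a critically scaled $L^{\infty}$ (or $C^0$) bound by a single $L^r$ derivative norm, which fails in general via the standard logarithmically divergent example, so the restriction $\xi\ne1$ is genuinely needed; away from that endpoint the interpolation scheme is robust. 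I expect the main obstacle to be precisely the bookkeeping that pins down the admissible parameter region together with the careful handling of these two exceptional cases — the core inequalities themselves are classical and their verification is routine once the building blocks are in hand.
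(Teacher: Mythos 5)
The paper does not prove Lemma~\ref{lem2as}; it is quoted verbatim as a classical fact with a citation to Ladyzenskaja--Solonnikov--Ural'ceva \cite{oar}, so there is no in-paper argument to compare against. Your sketch is the standard classical route to the Gagliardo--Nirenberg inequality and is sound in outline: Gagliardo's slicing/iterated-H\"older argument for the base case $\|u\|_{L^{d/(d-1)}}\le C\|\nabla u\|_{L^1}$, elevation to general $(q_1,r,q_2)$ by applying it to $|u|^s$ and splitting with H\"older, reduction of the $j\ge 1$ case by applying the one-derivative estimate to $\nabla^{j}u$ and then interpolating intermediate derivatives, a scaling and translation argument to pin down the exponent identity and the form of the constant, and separate handling of the two exceptional configurations $(j=0,\,ir<d,\,q_1=\infty)$ and $(1<r<\infty,\,i-j-d/r\in\mathbb{Z}_{\ge0},\,\xi=1)$.

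One place where the sketch glides over real work: the intermediate-derivative interpolation you invoke, $\|\nabla^{j}u\|_{L^p}\le C\|\nabla^{i}u\|_{L^p}^{j/i}\|u\|_{L^p}^{1-j/i}$, holds for a fixed $p$, whereas in the reduction you must convert terms such as $\|\nabla^{j}u\|_{L^{q_1}}$ and $\|\nabla^{j+1}u\|_{L^{r}}$ into products of powers of $\|\nabla^{i}u\|_{L^{r}}$ and $\|u\|_{L^{q_1}}$ with \emph{different} exponents on the two sides. In the classical treatments (Nirenberg's original paper, or Friedman's book) this requires a careful induction on $i$ together with a one-dimensional interpolation lemma across mixed $L^p$ spaces, and it is precisely where most of the bookkeeping lives; your sketch acknowledges the bookkeeping but stops short of supplying the mixed-exponent interpolation ingredient. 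Also, the scaling argument only establishes necessity of the exponent relation for $1/q_2$; the bound $j/i\le\xi\le 1$ comes from the range that the iteration can reach, not from scaling. Neither of these is a wrong turn, but they are gaps that would have to be filled in to turn the outline into a complete proof.
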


The second  one  concerns  commutator  estimates, which can be found in \cite{amj}.
\begin{lemma}\cite{amj}\label{zhen1}
	Let  $r$, ${r_1}$ and ${r_2}$  be constants such that
	$$\frac{1}{r}=\frac{1}{r_1}+\frac{1}{{r_2}},\quad \text{and} \quad 1\leq r_1,\ {r_2}, \ r\leq \infty.$$  For $  s\geq 1$, if $f, g \in W^{s,r_1} \cap  W^{s,{r_2}}(\mathbb{R}^3)$, then it holds that
	\begin{equation}\begin{split}\label{ku11}
	&|\nabla^s(fg)-f \nabla^s g|_r\leq C_s\big(|\nabla f|_{r_1} |\nabla^{s-1}g|_{r_2}+|\nabla^s f|_{r_2}|g|_{r_1}\big),\\
	\end{split}
	\end{equation}
	\begin{equation}\begin{split}\label{ku22}
	&|\nabla^s(fg)-f \nabla^s g|_r\leq C_s\big(|\nabla f|_{r_1} |\nabla^{s-1}g|_{r_2}+|\nabla^s f|_{r_1}|g|_{r_2}\big),
	\end{split}
	\end{equation}
	where $C_s> 0$ is a constant depending only on $s$, and $\nabla^s f$ ($s\geq 1$) is the set of  all $\partial^\varsigma_x f$  with $|\varsigma|=s$. Here $\varsigma=(\varsigma_1,\varsigma_2,\varsigma_3)^\top\in \mathbb{R}^3$ is a multi-index.
\end{lemma}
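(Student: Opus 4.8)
The plan is to reduce \eqref{ku11}--\eqref{ku22} to the Leibniz rule combined with repeated application of the Gagliardo--Nirenberg inequality (Lemma \ref{lem2as}). First I would fix a multi-index $\varsigma\in\mathbb{Z}^3_+$ with $|\varsigma|=s$ and expand
$$\partial_x^\varsigma(fg)-f\,\partial_x^\varsigma g=\sum_{\beta\leq\varsigma,\ |\beta|\geq 1}\binom{\varsigma}{\beta}\,\partial_x^\beta f\,\partial_x^{\varsigma-\beta}g ,$$
so that the commutator is a \emph{finite} sum of products of the form $\partial_x^\beta f\,\partial_x^{\varsigma-\beta}g$ carrying $k:=|\beta|$ derivatives on $f$ and $s-k$ derivatives on $g$, with $1\le k\le s$. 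It therefore suffices to bound one such product in $L^r(\mathbb{R}^3)$ for each $k$ and then to sum over the boundedly many choices of $\beta$.

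Consider \eqref{ku22} first, which is the cleaner of the two. For $k=1$ and $k=s$ one simply uses H\"older's inequality with exponents $(r_1,r_2)$, producing exactly the two terms $|\nabla f|_{r_1}|\nabla^{s-1}g|_{r_2}$ and $|\nabla^s f|_{r_1}|g|_{r_2}$ on the right-hand side. For $2\le k\le s-1$, I would split $|\partial_x^\beta f\,\partial_x^{\varsigma-\beta}g|_r\le|\nabla^k f|_{r_1}|\nabla^{s-k}g|_{r_2}$ by H\"older (using $1/r_1+1/r_2=1/r$), and then apply the interior-derivative interpolation that follows from Lemma \ref{lem2as}, namely $|\nabla^k f|_{r_1}\le C|\nabla^s f|_{r_1}^{\theta_k}|\nabla f|_{r_1}^{1-\theta_k}$ and $|\nabla^{s-k}g|_{r_2}\le C|\nabla^{s-1}g|_{r_2}^{1-\theta_k}|g|_{r_2}^{\theta_k}$ with $\theta_k=(k-1)/(s-1)\in(0,1)$; here the admissibility parameter in Lemma \ref{lem2as} is forced to equal the left endpoint $j/i$, which is allowed. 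Writing the resulting bound as $(XW)^{\theta_k}(YZ)^{1-\theta_k}$ with $X=|\nabla^s f|_{r_1}$, $W=|g|_{r_2}$, $Y=|\nabla f|_{r_1}$, $Z=|\nabla^{s-1}g|_{r_2}$ and invoking Young's inequality gives the uniform bound $C(|\nabla f|_{r_1}|\nabla^{s-1}g|_{r_2}+|\nabla^s f|_{r_1}|g|_{r_2})$ for every intermediate term, and summation yields \eqref{ku22}. For \eqref{ku11} the scheme is identical except that the base integrabilities are mixed: one interpolates $|\nabla^k f|_{a_k}$ between $|\nabla f|_{r_1}$ and $|\nabla^s f|_{r_2}$, and $|\nabla^{s-k}g|_{b_k}$ between $|g|_{r_1}$ and $|\nabla^{s-1}g|_{r_2}$, using the \emph{same} parameter $\theta_k$, so that the exponents coming out of Lemma \ref{lem2as} satisfy $1/a_k+1/b_k=1/r$; a short computation confirms that this last identity reduces precisely to the hypothesis $1/r_1+1/r_2=1/r$. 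Young's inequality, now grouping $|\nabla^s f|_{r_2}|g|_{r_1}$ with $|\nabla f|_{r_1}|\nabla^{s-1}g|_{r_2}$, completes \eqref{ku11}, and the degenerate case $s=1$ is immediate since then $\nabla(fg)-f\nabla g=g\,\nabla f$.

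The substance of the argument, and essentially its only delicate point, is the \textbf{exponent bookkeeping}: I must verify that each intermediate exponent $a_k,b_k$ produced above lies in $[1,\infty]$ and that the associated interpolation parameter satisfies the constraints of Lemma \ref{lem2as} in dimension three (including the proviso that excludes $\xi=1$ when $i-j-3/r$ is a nonnegative integer, which is automatic here because $\theta_k<1$ for $2\le k\le s-1$), and that the two reciprocal exponents in each H\"older split genuinely add to $1/r$. One also has to treat with a little care the endpoint cases $r_1,r_2,r\in\{1,\infty\}$: when $r_1$ or $r_2$ equals $\infty$ the Gagliardo--Nirenberg inequality carries the additional ``vanishing at infinity / membership in some $L^\sigma$'' requirement recorded in Lemma \ref{lem2as}, and this is dispatched by the standard mollification-and-truncation density argument before passing to the limit. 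I expect no genuinely new difficulty beyond this, since \eqref{ku11}--\eqref{ku22} are classical Moser-type commutator estimates (cf. \cite{amj}).
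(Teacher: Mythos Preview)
Your approach is correct: the Leibniz expansion followed by H\"older, Gagliardo--Nirenberg interpolation at parameter $\theta_k=(k-1)/(s-1)$, and Young's inequality is exactly the classical Moser--Kato--Ponce derivation of these commutator bounds, and your exponent bookkeeping for the mixed case \eqref{ku11} (with $1/a_k=\theta_k/r_2+(1-\theta_k)/r_1$ and $1/b_k=(1-\theta_k)/r_2+\theta_k/r_1$, so $1/a_k+1/b_k=1/r$) checks out. Note, however, that the paper does not supply its own proof of this lemma; it is simply recorded in the Appendix as a known result cited from \cite{amj}, so there is no in-paper argument to compare against.
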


The third lemma gives some compactness results obtained via the Aubin-Lions Lemma.
\begin{lemma}\cite{jm}\label{aubin} Let $X_0\subset X\subset X_1$ be three Banach spaces.  Suppose that $X_0$ is compactly embedded in $X$ and $X$ is continuously embedded in $X_1$. Then the following statements hold.
	
	\begin{enumerate}
		\item[i)] If $J$ is bounded in $L^r([0,T];X_0)$ for $1\leq r < +\infty$, and $\frac{\partial J}{\partial t}$ is bounded in $L^1([0,T];X_1)$, then $J$ is relatively compact in $L^r([0,T];X)$;\\
		
		\item[ii)] If $J$ is bounded in $L^\infty([0,T];X_0)$  and $\frac{\partial J}{\partial t}$ is bounded in $L^r([0,T];X_1)$ for $r>1$, then $J$ is relatively compact in $C([0,T];X)$.
	\end{enumerate}
\end{lemma}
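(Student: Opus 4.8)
The statement in question is the Aubin--Lions--Simon compactness lemma (Lemma~\ref{aubin}). The plan is to reduce both assertions to the vector-valued Fr\'echet--Kolmogorov--Simon compactness criterion in $L^p((0,T);X)$, the single analytic input being the interpolation (Ehrling) inequality forced by the compact embedding. So the first step is to prove: for every $\varepsilon>0$ there is a constant $C_\varepsilon>0$ with $\|u\|_X\le \varepsilon\|u\|_{X_0}+C_\varepsilon\|u\|_{X_1}$ for all $u\in X_0$. This is established by contradiction: were it false for some $\varepsilon_0$, one could pick $u_n\in X_0$ with $\|u_n\|_X=1$, $\|u_n\|_{X_0}\le 1/\varepsilon_0$, but $\|u_n\|_{X_1}\to 0$; compactness of $X_0\hookrightarrow X$ yields a subsequence converging in $X$ to some $u$ with $\|u\|_X=1$, while continuity of $X\hookrightarrow X_1$ forces $u=0$ -- a contradiction. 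The second preliminary step is to extract the uniform control of time translations from the bound on $\partial_t J$: since $u(t+h)-u(t)=\int_t^{t+h}\partial_t u(s)\,ds$ in $X_1$, one gets $\|\tau_h u-u\|_{L^1((0,T-h);X_1)}\le h\,\|\partial_t u\|_{L^1((0,T);X_1)}$, uniformly over $J$; and by selecting for each $u$ a time slice $s_u$ where $\|u(s_u)\|_{X_0}$ is controlled by the $L^r$ (resp.\ $L^\infty$) hypothesis and integrating $\partial_t u$ from $s_u$, one also obtains a uniform bound for $J$ in $C([0,T];X_1)$.

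For part~i), I would check Simon's criterion for relative compactness in $L^r((0,T);X)$, $1\le r<\infty$. Boundedness in $L^r((0,T);X)$ is immediate from $X_0\hookrightarrow X$. The precompactness in $X$ of the time-averages $\{\int_{t_1}^{t_2}u: u\in J\}$ for $0<t_1<t_2<T$ follows because $\|\int_{t_1}^{t_2}u\|_{X_0}$ is uniformly bounded (H\"older in time plus the $L^r((0,T);X_0)$ bound) and $X_0$ embeds compactly in $X$. The remaining point, uniform smallness of translations, is obtained by applying the Ehrling inequality pointwise in $t$ and integrating: $\|\tau_h u-u\|_{L^r((0,T-h);X)}\le \varepsilon\,\|\tau_h u-u\|_{L^r((0,T-h);X_0)}+C_\varepsilon\,\|\tau_h u-u\|_{L^r((0,T-h);X_1)}$, where the first term is $\le 2\varepsilon\,\|u\|_{L^r((0,T);X_0)}$ and the second is interpolated between the $L^1$-in-time bound $\le hM$ from the preliminary step and the uniform $C([0,T];X_1)$ bound, giving $\le C_\varepsilon(2C')^{1-1/r}(hM)^{1/r}$; sending first $h\to0$ and then $\varepsilon\to0$ produces a uniform modulus, and Simon's criterion concludes.

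For part~ii), the bound on $\partial_t J$ in $L^r((0,T);X_1)$ with $r>1$ gives, by H\"older, the uniform H\"older-type estimate $\|u(t)-u(t')\|_{X_1}\le |t-t'|^{1-1/r}\|\partial_t u\|_{L^r((0,T);X_1)}$. Combined through the Ehrling inequality with the uniform $L^\infty((0,T);X_0)$ bound, this yields equicontinuity of $J$ in $C([0,T];X)$; moreover, for each fixed $t$ the set $\{u(t):u\in J\}$ is bounded in $X_0$, hence precompact in $X$. The Arzel\`a--Ascoli theorem then gives relative compactness of $J$ in $C([0,T];X)$.

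The genuinely delicate point is the last estimate in the proof of part~i): upgrading the smallness of time translations from the ambient space $X_1$, where the derivative bound lives and is only $L^1$ in time, to the intermediate space $X$. This is exactly where the compactness of $X_0\hookrightarrow X$ is indispensable, entering through the Ehrling inequality, and one must be careful that the interpolation constant $C_\varepsilon$ and the weak power $h^{1/r}$ still conspire to yield a modulus of continuity uniform in the family; handling the boundary-layer intervals $(0,h)$ and $(T-h,T)$ and keeping the $L^r$-in-time (rather than $L^1$) bookkeeping correct is the only technically fussy part. The remaining ingredients (Ehrling, the fundamental-theorem-of-calculus bound on translations, Simon's and Arzel\`a--Ascoli's theorems) are standard.
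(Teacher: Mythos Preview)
Your proof is correct and follows the standard Simon--Aubin--Lions argument: Ehrling's lemma by contradiction, the fundamental theorem of calculus for the $X_1$-translation estimate, Simon's $L^r$-compactness criterion for part~i), and Arzel\`a--Ascoli with H\"older-in-time equicontinuity for part~ii). Note that the paper itself does not prove this lemma; it is stated in the appendix with a citation to Simon \cite{jm}, so there is no in-paper proof to compare against. Your write-up is essentially a sketch of the proof in that reference.
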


The following  lemma is used to improve weak convergence to strong one.
\begin{lemma}\cite{amj}\label{zheng5}
	If the  sequence $\{w_k\}^\infty_{k=1}$ converges weakly  to $w$ in a Hilbert space $X$, then it converges strongly to $w$ in $X$ if and only if
	$$
	\|w\|_X \geq \lim \text{sup}_{k \rightarrow \infty} \|w_k\|_X.
	$$
\end{lemma}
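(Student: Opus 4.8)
The plan is to prove the two implications of the equivalence separately, relying only on the polarization (parallelogram) expansion of the Hilbert norm together with the weak lower semicontinuity of the norm.

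For the forward direction, I would assume $w_k\to w$ strongly in $X$. The reverse triangle inequality then gives $\big|\|w_k\|_X-\|w\|_X\big|\le\|w_k-w\|_X\to 0$, so $\|w_k\|_X\to\|w\|_X$; in particular $\limsup_{k\to\infty}\|w_k\|_X=\|w\|_X$, which yields the claimed inequality $\|w\|_X\ge\limsup_{k\to\infty}\|w_k\|_X$. (Strong convergence also implies weak convergence, so this is consistent with the standing hypothesis of the lemma.)

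For the converse, I would assume $w_k\rightharpoonup w$ weakly in $X$ together with $\|w\|_X\ge\limsup_{k\to\infty}\|w_k\|_X$. First I recall that the norm on a Hilbert space is weakly lower semicontinuous, i.e. $\|w\|_X\le\liminf_{k\to\infty}\|w_k\|_X$; combining this with the hypothesis forces $\liminf_{k\to\infty}\|w_k\|_X=\limsup_{k\to\infty}\|w_k\|_X=\|w\|_X$, hence $\|w_k\|_X\to\|w\|_X$. Next, writing $\langle\cdot,\cdot\rangle$ for the inner product inducing the norm, I expand
\begin{equation*}
\|w_k-w\|_X^2=\|w_k\|_X^2-2\,\mathrm{Re}\,\langle w_k,w\rangle+\|w\|_X^2 .
\end{equation*}
Since $w_k\rightharpoonup w$ and $u\mapsto\langle u,w\rangle$ is a bounded linear functional, $\langle w_k,w\rangle\to\langle w,w\rangle=\|w\|_X^2$. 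Letting $k\to\infty$ in the identity above and using $\|w_k\|_X^2\to\|w\|_X^2$ gives $\|w_k-w\|_X^2\to\|w\|_X^2-2\|w\|_X^2+\|w\|_X^2=0$, which is exactly strong convergence. For a real Hilbert space one drops the $\mathrm{Re}$; the complex case is identical.

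I do not anticipate any genuine obstacle here, as the argument is elementary. The only step worth a careful word is the invocation of weak lower semicontinuity of the norm (equivalently, that norm-closed balls are weakly closed, via Hahn--Banach), which is what upgrades the one-sided hypothesis on the $\limsup$ to full convergence of the norms $\|w_k\|_X\to\|w\|_X$; once that is available, the polarization identity closes the proof immediately.
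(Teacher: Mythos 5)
Your proof is correct and complete. The paper does not supply its own argument for this lemma — it is stated in the appendix as a cited fact (attributed to Majda's book) — so there is no competing proof in the text to compare against. Your argument is the standard one: the forward direction is the reverse triangle inequality, and the converse combines weak lower semicontinuity of the norm (to upgrade the one‑sided $\limsup$ bound to $\|w_k\|_X\to\|w\|_X$) with the Hilbert‑space expansion $\|w_k-w\|_X^2=\|w_k\|_X^2-2\,\mathrm{Re}\,\langle w_k,w\rangle+\|w\|_X^2$ and the fact that $\langle w_k,w\rangle\to\|w\|_X^2$ by weak convergence. Every step is justified; nothing is missing.
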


The following lemma is used to obtain the time-weighted estimates of $u$.
\begin{lemma}\cite{bjr}\label{bjr}
If $f(t,x)\in L^2([0,T]; L^2)$, then there exists a sequence $s_k$ such that
$$
s_k\rightarrow 0 \quad \text{and}\quad s_k |f(s_k,x)|^2_2\rightarrow 0 \quad \text{as} \quad k\rightarrow\infty.
$$
\end{lemma}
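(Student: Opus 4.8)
The plan is to discard the spatial structure and work with the scalar function $g(t):=|f(t,\cdot)|_2^2$, which by hypothesis lies in $L^1([0,T])$, i.e. $\int_0^T g(t)\,\mathrm{d}t<\infty$. The assertion that there is a sequence $s_k\to 0$ with $s_k g(s_k)\to 0$ is then a purely one–dimensional integrability fact, so the whole proof amounts to choosing good sample points inside a shrinking family of intervals near the origin.

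First I would fix an integer $k_0$ with $2^{-k_0}<T$ and, for $k\ge k_0$, set $I_k:=[2^{-(k+1)},2^{-k}]\subset(0,T)$, so that $|I_k|=2^{-(k+1)}$ and the $I_k$ are pairwise disjoint up to endpoints with $\bigcup_{k\ge k_0}I_k=(0,2^{-k_0}]$. Since
\[
\sum_{k\ge k_0}\int_{I_k}g(t)\,\mathrm{d}t=\int_0^{2^{-k_0}}g(t)\,\mathrm{d}t\le\int_0^T g(t)\,\mathrm{d}t<\infty ,
\]
the summands must vanish in the limit, so $\varepsilon_k:=\int_{I_k}g(t)\,\mathrm{d}t\to 0$ as $k\to\infty$.

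Next I would extract $s_k$ by an averaging argument. On $I_k$ the mean value of $g$ is $|I_k|^{-1}\varepsilon_k=2^{k+1}\varepsilon_k$; hence the measurable set $\{t\in I_k:\,g(t)\le 2^{k+1}\varepsilon_k\}$ has positive measure — otherwise $\int_{I_k}g>2^{k+1}\varepsilon_k\,|I_k|=\varepsilon_k$, a contradiction — and in particular is nonempty. Choosing any $s_k$ in this set, we have $s_k\le 2^{-k}\to 0$ and
\[
s_k\,|f(s_k,\cdot)|_2^2=s_k\,g(s_k)\le 2^{-k}\cdot 2^{k+1}\varepsilon_k=2\varepsilon_k\longrightarrow 0,
\]
which is exactly the claimed conclusion.

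There is no serious obstacle here; the one point requiring a little care is the measure–theoretic reading of ``there exists a point $s_k\in I_k$'': since $g$ is only integrable and defined almost everywhere, the selection must be phrased via a positive–measure sublevel set of the average, as above, rather than by a pointwise mean–value statement. If a contradiction format is preferred, the same estimate shows that $\liminf_{t\to 0^+}t\,g(t)>0$ would force $g(t)\ge c/(2t)$ for $t$ near $0$ and hence $\int_0^{\delta}g(t)\,\mathrm{d}t=\infty$, contradicting $g\in L^1([0,T])$; either route completes the argument.
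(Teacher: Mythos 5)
The paper states this lemma with a citation to \cite{bjr} and does not include a proof of its own, so there is no paper argument to compare against; one can only assess correctness. Your proof is correct. The reduction to the scalar function $g(t)=|f(t,\cdot)|_2^2\in L^1([0,T])$ is the right move, the dyadic decomposition with $\varepsilon_k=\int_{I_k}g\to 0$ is sound, and the measure-theoretic selection of $s_k$ from the positive-measure sublevel set $\{t\in I_k:\ g(t)\le 2^{k+1}\varepsilon_k\}$ is stated carefully (the potential pitfall of treating $g$ as defined pointwise is explicitly avoided). The final estimate $s_k g(s_k)\le 2\varepsilon_k$ closes the argument. The contradiction variant you sketch at the end — that $\operatorname*{ess\,liminf}_{t\to 0^+}t\,g(t)>0$ would force $g(t)\gtrsim 1/t$ near the origin and hence $g\notin L^1$ — is the more commonly seen one-line phrasing of this fact and is equally valid; the constructive dyadic version has the slight advantage of producing explicit $s_k\in[2^{-(k+1)},2^{-k}]$ with a quantitative bound $s_k g(s_k)\le 2\int_{I_k}g$.
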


Finally, we list the well-known Fatou's lemma.
\begin{lemma}\label{Fatou}
Given a measure space $(V,\mathcal{F},\digamma)$ and a set $X\in \mathcal{F}$, let  $\{f_k\}$ be a sequence of $(\mathcal{F} , \mathcal{B}_{\mathbb{R}_{\geq 0}} )$-measurable non-negative functions $f_k: X\rightarrow [0,\infty]$. Define a function $f: X\rightarrow [0,\infty]$ by 
$$
f(x)= \liminf_{k\rightarrow \infty} f_k(x),
$$
for every $x\in X$. Then $f$ is $(\mathcal{F},  \mathcal{B}_{\mathbb{R}_{\geq 0}})$-measurable, and   
$$
\int_X f(x) \text{\rm d}\digamma \leq \liminf_{k\rightarrow \infty} \int_X f_k(x) \text{\rm d}\digamma.
$$
\end{lemma}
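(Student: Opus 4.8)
The statement is the classical Fatou Lemma, so the plan is to reduce it to the Monotone Convergence Theorem by introducing the standard auxiliary sequence of running infima. First I would settle measurability: since
\[
f=\liminf_{k\to\infty} f_k=\sup_{k\ge 1}\,\inf_{n\ge k} f_n,
\]
and a countable infimum (resp. supremum) of $(\mathcal{F},\mathcal{B}_{\mathbb{R}_{\ge 0}})$-measurable functions is again measurable, $f$ is $(\mathcal{F},\mathcal{B}_{\mathbb{R}_{\ge 0}})$-measurable. This takes care of the first assertion.

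Next, the core of the argument. Define, for each $k\ge 1$, the function $g_k\colon X\to[0,\infty]$ by $g_k(x)=\inf_{n\ge k} f_n(x)$. Then $g_k$ is non-negative and measurable (as noted above), the sequence is non-decreasing, $0\le g_k\le g_{k+1}$, and $g_k(x)\uparrow f(x)$ for every $x\in X$ by definition of $\liminf$. Applying the Monotone Convergence Theorem to $\{g_k\}$ gives
\[
\lim_{k\to\infty}\int_X g_k\,\mathrm{d}\digamma=\int_X f\,\mathrm{d}\digamma .
\]
On the other hand, for each fixed $k$ and every $n\ge k$ we have $g_k\le f_n$ pointwise, whence by monotonicity of the integral $\int_X g_k\,\mathrm{d}\digamma\le\int_X f_n\,\mathrm{d}\digamma$ for all $n\ge k$, and therefore $\int_X g_k\,\mathrm{d}\digamma\le\inf_{n\ge k}\int_X f_n\,\mathrm{d}\digamma$.

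Finally I would let $k\to\infty$ in this last inequality: the left-hand side tends to $\int_X f\,\mathrm{d}\digamma$ by the MCT step, while the right-hand side is exactly the increasing limit defining $\liminf_{n\to\infty}\int_X f_n\,\mathrm{d}\digamma$, which yields $\int_X f\,\mathrm{d}\digamma\le\liminf_{n\to\infty}\int_X f_n\,\mathrm{d}\digamma$, as claimed. There is no genuine obstacle here: the only ingredient beyond elementary measure theory is the Monotone Convergence Theorem, and the single point requiring care is that the passage to $g_k=\inf_{n\ge k}f_n$ is precisely what converts the $\liminf$ into a monotone increasing limit to which the MCT applies; the hypothesis $f_k\ge 0$ is used essentially (both to apply the MCT to $\{g_k\}$ and because the conclusion is false without a uniform lower bound).
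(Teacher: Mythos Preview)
Your proof is correct and is the standard textbook argument for Fatou's Lemma via the Monotone Convergence Theorem. The paper does not supply its own proof: this lemma appears only in the Appendix as one of the ``basic facts which have been used frequently in this paper,'' so there is nothing to compare against.
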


\bigskip

{\bf Acknowledgement:}  The research is partially supported by Zheng Ge Ru Foundation, Hong Kong RGC Earmarked Research Grants CUHK-14301421, CUHK-14300917, CUHK-14302819, and CUHK-14300819. Duan's research is also supported in part by National Natural Science Foundation of China under Grant 11771300, Natural Science Foundation of Guandong under Grant 2020A1515010554 and Research Foundation for "Kongque" Talents of Shenzhen. Xin's research is also supported in part by the key project of National Natural Science Foundation of China (No. 12131010) and Guangdong Province Basic and Applied Basic Research Foundation 2020B1515310002. Zhu's research is also  supported in part by National Natural Science Foundation of China under Grants 12101395 and 12161141004, The Royal Society-Newton International Fellowships NF170015, and Newton International Fellowships Alumni AL/201021 and AL/211005.

\bigskip

{\bf Conflict of Interest:} The authors declare that they have no conflict of interest.

\bigskip

\end{document}